\theoremstyle{plain}
\theoremstyle{remark}
\newtheorem{thm}{Theorem}
\newtheorem{lem}[thm]{Lemma}
\newtheorem{cor}[thm]{Corollary}
\newtheorem{prop}[thm]{Proposition}
\newtheorem{rk}[thm]{Remark}
\DeclareMathOperator*{\barsum}{\overline{\sum}}
\newcommand{\norm}[1]{{\vert\kern-0.25ex\vert #1 
  \vert\kern-0.25ex\vert}}
\newcommand{\bignorm}[1]{{\big\vert\kern-0.25ex\big\vert #1 
  \big\vert\kern-0.25ex\big\vert}}
\newcommand{\opnorm}[1]{{\vert\kern-0.25ex\vert\kern-0.25ex\vert #1 
  \vert\kern-0.25ex\vert\kern-0.25ex\vert}}
\DeclareMathOperator{\1}{\mathbbm{1}}
\newcommand{\EX}[1] { \mathbb{E}  [ #1  ] }
\DeclareMathOperator{\Cov}{Cov}
\DeclareMathOperator{\Var}{Var}
\begin{document}
\begin{frontmatter}
\title{Statistical inference for rough volatility: Minimax theory}
%\title{A sample article title with some additional note\thanksref{t1}}
\runtitle{Statistical inference for rough volatility: Minimax Theory}
%\thankstext{T1}{A sample additional note to the title.}

\begin{aug}
%%%%%%%%%%%%%%%%%%%%%%%%%%%%%%%%%%%%%%%%%%%%%%%
%% Only one address is permitted per author. %%
%% Only division, organization and e-mail is %%
%% included in the address.                  %%
%% Additional information can be included in %%
%% the Acknowledgments section if necessary. %%
%% ORCID can be inserted by command:         %%
%% \orcid{0000-0000-0000-0000}               %%
%%%%%%%%%%%%%%%%%%%%%%%%%%%%%%%%%%%%%%%%%%%%%%%
\author[A]{\fnms{Carsten H.}~\snm{Chong}\ead[label=e1]{carstenchong@ust.hk}},
\author[B]{\fnms{Marc}~\snm{Hoffmann}\ead[label=e2]{hoffmann@ceremade.dauphine.fr}},
%\orcid{0000-0000-0000-0000}}
\author[C]{\fnms{Yanghui}~\snm{Liu}\ead[label=e3]{yanghui.liu@baruch.cuny.edu}},
\author[D]{\fnms{Mathieu}~\snm{Rosenbaum}\ead[label=e4]{mathieu.rosenbaum@polytechnique.edu}},
\and
\author[D]{\fnms{Gr\'egoire}~\snm{Szymansky}\ead[label=e5]{gregoire.szymanski@polytechnique.edu}}
%%%%%%%%%%%%%%%%%%%%%%%%%%%%%%%%%%%%%%%%%%%%%%
%% Addresses                                %%
%%%%%%%%%%%%%%%%%%%%%%%%%%%%%%%%%%%%%%%%%%%%%%
\address[A]{Department of Information Systems, Business Statistics and Operations Management, The Hong Kong University of Science
and Technology\printead[presep={,\ }]{e1}}

\address[B]{Ceremade, University Paris Dauphine-PSL\printead[presep={,\ }]{e2}}
\address[C]{Department of Mathematics, Baruch College CUNY\printead[presep={,\ }]{e3}}
\address[D]{CMAP, \'Ecole Polytechnique\printead[presep={,\ }]{e4,e5}}
\end{aug}

\begin{abstract}
In recent years, rough volatility models have gained considerable attention in quantitative finance. In this paradigm, the stochastic volatility of the price of an asset has quantitative properties similar to that of a fractional Brownian motion with small Hurst index $H < 1/2$. In this work, we provide the first rigorous statistical analysis of the problem of estimating $H$ from historical observations of the underlying asset. We establish minimax lower bounds and design optimal procedures based on adaptive estimation of quadratic functionals based on wavelets. We prove in particular that the optimal rate of convergence for estimating $H$ based on price observations at $n$ time points is of order $n^{-1/(4H+2)}$ as $n$ grows to infinity, 
extending results that were known only for $H>1/2$. Our study positively answers  the question whether $H$ can be inferred, although it is the regularity of a latent process (the volatility); in rough models, when $H$ is close to $0$, we even obtain an accuracy comparable to usual $\sqrt{n}$-consistent  regular statistical models. 
\end{abstract}

\begin{keyword}[class=MSC]
\kwd[Primary ]{60G22}
\kwd{62C20}
\kwd[; secondary ]{62F12}
\kwd{62M09}
\kwd{62P20}
\end{keyword}

\begin{keyword}
\kwd{Rough volatility}
\kwd{fractional Brownian motion}
\kwd{wavelets}
\kwd{minimax optimality}
\kwd{pre-averaging}
%\kwd{second keyword}
\end{keyword}

\end{frontmatter}
%%%%%%%%%%%%%%%%%%%%%%%%%%%%%%%%%%%%%%%%%%%%%%
%% Please use \tableofcontents for articles %%
%% with 50 pages and more                   %%
%%%%%%%%%%%%%%%%%%%%%%%%%%%%%%%%%%%%%%%%%%%%%%
%\tableofcontents

\section{Introduction}

\label{sec:intro}

\subsection{The rough volatility paradigm}
Introduced for financial engineering purposes by \cite{gatheral2018volatility} in 2014, the essence of the rough volatility paradigm is that the volatility process $(\sigma_t)$, understood as a continuous-time process, exhibits irregular sample paths. As a prototype model for the log-price of an asset $(S_t)$, we let\\
\begin{equation}
\left\{
\begin{array}{l}
 S_t  = S_0+\int_0^t\sigma_s \,dB_s,  \\ \\
\sigma_t^2  = \sigma_0^2 \exp (\eta W_t^H), \label{roughvol1}
\end{array}
\right.
\vspace{3mm}
\end{equation}
where $(B_t)$ is a Brownian motion and $(W^H_t)$ is an independent  fractional Brownian motion with Hurst parameter $H\in(0,1)$. The constants $\sigma_0$ and $\eta$ are positive and considered as nuisance parameters. The key feature of rough volatility
modelling is that $H$ should be small, of order $10^{-1}$. Other rough volatility models involve more complex functionals of the fractional Brownian motion adapted to various applications, but with the same order of magnitude for $H$, see for example \cite{eleuch2019characteristic,gatheral2020quadratic}. In particular, for mathematical simplicity, we assume that $(B_t)$ and $(W_t^H)$ are independent, and this rules out leverage effects between price and volatility. The case of leverage effect is considered in our companion paper \cite{chong2022statistical}.\\ 

Historically, the rough volatility property was discovered using an empirical approach,  where data sets of interest are time series of daily measurement of historical volatility %-- in whatever relevant way --  
over many years and for thousands of assets, see \cite{bennedsen2016decoupling,gatheral2018volatility}. %The core idea is that volatility, understood as a continuous-time process and sampled at a daily frequency can essentially be considered as constant within a day\footnote{A varying volatility does not make much sense in finance over short time intervals where the diffusive nature of the price is lost due to microstructural effects. Intraday seasonalities of the volatility could be considered though.}. 
Its daily values were estimated by filtering high-frequency price observations, using various up-to-date inference methods for high-frequency data, all of them leading to analogous results. Several natural empirical statistics were computed on these volatility time series, in a model-agnostic way. Then it was shown in \cite{gatheral2018volatility} that strikingly similar patterns are observed when computing the same statistics in the simple Model \eqref{roughvol1} (actually a version of \eqref{roughvol1} where one considers a piecewise constant approximation of the volatility). For example, among the statistics advocating for rough volatility, empirical means of the structure function
\begin{equation} \label{eq: structure function}
\Delta \mapsto |\log (\sigma_{t+\Delta})- \log (\sigma_t)|^q,\;\;q>0,
\end{equation}
play an important role,
for $\Delta$ going from one day to several hundreds of days. 
For every value of $q$, the empirical counterpart of \eqref{eq: structure function}
%, given estimated volatility time series }
%$(\widehat \sigma_t)_{t=1,2,\ldots}$,
systematically behaves like $\Delta^{Aq}$, where $A$ of order $10^{-1}$, for the whole range of $\Delta$. This scaling invariance is obviously reproduced if the volatility dynamics follow \eqref{roughvol1} with $H$ of order $10^{-1}$, thanks to the scaling property of fractional Brownian motion. In addition, the fact that this empirical fact also holds for large $\Delta$ somehow discards alternate stationary model candidates, where the moments of the increments no longer depend on $\Delta$ for large $\Delta$. It also kind of 
rules out the idea that this empirical scaling of the log-volatility increments could be an artefact due to the estimation error in the volatility process.\\ 
%{\color{red}We elaborate on these last two points in {\tt [ref. to discussion]}}\\

\subsection{Rough volatility in the literature}At first glance, the relevance of the parameter value in Model \eqref{roughvol1} may be surprising. It is in stark contrast with the first generation of fractional volatility models (FSV) where $H>1/2$ in a stationary environment, see \cite{comte1998long}. The goal of FSV models was notably to reproduce long memory properties of the volatility process and we know that fractional Brownian motion increments exhibit long memory when $H>1/2$. However, it turns out that when $H$ is very small, 
%apparent long memory is also generated in a model such as \eqref{roughvol1} and that despite its non-stationarity, 
it remains consistent with the behaviour of financial data even on very long time scales, see \cite{gatheral2018volatility}. In addition to the stylised facts obtained from historical volatility, the data analysis obtained from implied volatility surfaces also support the rough volatility paradigm, see \cite{alos2007short,bayer2016pricing,fukasawa2021volatility,livieri2018rough}. In other words,  rough volatility models are, in financial terms, compatible with both historical and risk-neutral measures. Furthermore, rough volatility models can be micro-founded: in fact, only a rough nature for the volatility can allow financial markets to operate under no-statistical arbitrage conditions at the level of high-frequency trading, see \cite{eleuch2018microstructural,jusselin2020noarbitrage}. This has paved the way over the last few years to several new research directions in quantitative finance. Among other contributions, we mention risk management of complex derivatives, as considered for instance in \cite{abi2022characteristic,alos2022forward,eleuch2019characteristic,eleuch2018perfect,fukasawa2021hedging,gatheral2020quadratic,horvath2021deep,jacquier2021rough}, numerical issues as addressed in \cite{abi2019multifactor,bennedsen2017hybrid,callegaro2021fast,gassiat2022weak,mccrickerd2018turbocharging,rosenbaum2021deep}, asymptotic expansions are provided in \cite{chong2022short,eleuch2019short,forde2020rough,friz2022short,forde2017asymptotics,jacquier2022large} and theoretical considerations about the probabilistic
structure of rough volatility models as in \cite{abi2019affine,bayer2020regularity,cuchiero2019markovian,friz2022forests,gassiat2019martingale,gerhold2019moment,gatheral2019affine}.\\

%Given the multiple empirical evidence from market data towards their direction, rough volatility models have gained great interest in the quantitative finance community and a vast literature has been devoted to this new paradigm, both from a theoretical and a practical perspective. For example, 

Beyond the popularity of rough volatility models due to their remarkable ability to mimic data, the domain is certainly mature enough to take a step back with a view towards a mathematically sound statistical inference program.  This is the topic of the paper.  We undertake the rigorous statistical analysis of Model \eqref{roughvol1}, taken as a postulate or prototype of a rough volatility model. The intriguing question is obviously how well can one estimate $H$ from discrete historical data, if $H$ can be estimated at all! How does the  postulate of a small $H$ impact  a generic inference program? Put differently, how well  can we distinguish between two values of $H$ and therefore overcome the latent nature of the volatility and the noise in its estimation?\\

These fundamental questions have partially been addressed in the recent literature.
%in \cite{bolko2020GMM,fukasawa2019asymptotically}. For instance, 
In \cite{fukasawa2019asymptotically}, one postulates the approximation
%the following approximation is considered  
\begin{equation} \label{eq: dummy approximation}
\log \big(\widehat{\sigma_t^2}\big)\approx \log \big(\int_{(t-1)\Delta}^{t\Delta}\sigma^2_s ds\big)+\varepsilon_t,
\end{equation}
where $(\varepsilon_t)$ is a Gaussian white noise and $\widehat{\sigma^2_t}$ is the quadratic variation computed from high-frequency observations of 
the log-price over the interval $[(t-1)\delta,t\delta)$. Taking such an approximation for granted, an estimator of $H$ is obtained by spectral methods and a Whittle estimator is proved to be convergent and to satisfy a central limit theorem in a classical high-frequency framework. %{\it \`a la} \cite{}. 
Unfortunately, the methodology is tightly related to the approximation \eqref{eq: dummy approximation},  which is not accurate enough to apply in Model \eqref{roughvol1}. Another interesting study is that of \cite{bolko2020GMM} that requires an ergodic framework and stationary assumptions. In the present contribution, we refrain from making such restrictive assumptions.
% and rather consider a different asymptotic framework, resulting in a quite different statistical problem. 
Our companion paper \cite{chong2022statistical} considers a similar setting to ours, with a slightly different class of models and a practitioner perspective in mind, 
focusing on the most useful rough volatility models like rough Heston and obtaining associated central limit theorems for estimating $H$. %In the present work 
% the : what is the optimal rate of convergence for estimaing the Hurst parameter in the rough volatility paradigm and how to construct asymptotically efficient procedures?.\\ 

\subsection{Results of the paper} 
%We focus on the prototype Model \eqref{roughvol1} and establish asymptotic minimax results. 

\subsubsection*{Piecewise constant volatility} In the spirit of rough volatility models inspired by financial engineering, we start our study with a version of Model \eqref{roughvol1}
where the volatility is piecewise constant at a certain time scale $\delta$. We consider $n$ regularly sampled observations of the price $(S_t)$ with $0<H<1$ over a fixed time interval $[0,T]$. Without loss of generality, we take $T=1$, hence the time step between observations is $\Delta = n^{-1}$. We assume $\Delta \ll \delta$, and this setting is compatible for instance with trading models that assume a constant volatility over a one day period (for $\delta$ of the order of one day). Mathematically, observing equivalently squared increments boils down to having spot variance observations (at the scale where the volatility is assumed to be constant) multiplied by noise. Taking logarithm reduces our problem to the setup of Gloter and Hoffmann \cite{gloter2007estimation}, where the estimation of the Hurst parameter for a fractional Brownian motion observed with additive measurement error is studied for $H>1/2$. The approach of \cite{gloter2007estimation} is based on the scaling properties of wavelet-based energy levels of fractional Brownian motion 
$$Q_j = \sum_k (d_{j,k})^2$$
where the $d_{j,k}$ are the wavelet coefficients (in an arbitrary wavelet basis) at a dyadic resolution $j$. Indeed, we prove that $2^{2jH}Q_{j}$ converges to some constant with an explicit rate as $j \rightarrow \infty$, and this yields a strategy for recovering $H$ based on an estimation of the ratio $Q_{j+1}/Q_j$, provided it can be estimated accurately enough from discrete data. Furthermore, the multiresolution nature of wavelet decompositions enables one to select the optimal resolution $j$ via an adaptive thresholding procedure. This is somehow simpler than   
 using the scaling properties of $p$-variations of the data, although both approaches are similar in spirit. In \cite{gloter2007estimation} the energy levels $Q_j$ are estimated from increments of the observations, a strategy that unfortunately cannot be directly applied here when $H$ is small: the roughness of the volatility paths induces a large bias in the estimation of the energy levels. We mitigate this phenomenon by using a pre-averaging technique introduced in \cite{szymanski2022optimal}.  More precisely, we show that the energy levels computed from the pre-averaged spot volatility process  still possess nice scaling properties, paving the way to construct a new estimator that achieves the rate
$\max(n^{-1/(4H+2)}, \delta^{1/2})$. This rate is indeed minimax optimal for any $H \in (0,1)$. In particular, we obtain -- from a rigorous statistical viewpoint --  that estimating $H$ in this toy model with rough piecewise constant volatility  has the same complexity as classical regular model, the rate $n^{-1/(4H+2)}$ is close to $n^{-1/2}$ in the rough limit $H \rightarrow 0$ whenever $\delta$ is small enough.\\

\subsubsection*{The general case}

%Then we consider Model \eqref{roughvol1} without any piecewise constant assumption on the volatility path. 

In the generic setting of Model \eqref{roughvol1}, the law of the price increments is more intricate. The piecewise constant volatility part of the previous toy model is now replaced by local averages of the volatility process. As a matter of fact, the inference of $H$ has been studied  in \cite{rosenbaum2008estimation} for $H>1/2$: energy levels are computed from price increments and are shown to exhibit a scaling property around a stochastic limit; the same strategy as in \cite{gloter2007estimation} can then be undertaken.  However, this approach completely fails in the rough case $H<1/2$: over a short time interval, for small $H$,  local averages of  volatility are bad approximations of   spot volatility, and this is a crucial element in \cite{rosenbaum2008estimation} that requires the condition $H>1/2$. We overcome this difficulty as follows: by combining empirical means techniques used in \cite{gatheral2018volatility} together with our results in the previous piecewise constant case, we compute energy levels from the logarithm of the squared price increments and not from price increments. However, %as explained above, 
over a small time interval of the form $[i\delta, (i+1)\delta)$, the logarithm of price increments involves
\begin{align*}
%\label{eq:log:intspotvol}
\log \Big( \frac{1}{\delta} \int_{i\delta}^{(i+1)\delta} \exp(\eta W^H_s) ds \Big)
\end{align*}
which does not enjoy nice properties when $H$ is small: indeed; the roughness of the trajectories makes this quantity far from a discretized Gaussian process $\eta W^H_{i\delta}$. This creates a bias when computing $H$ from a ratio of energy levels and the scaling law is no longer exact: additional terms of order $2^{-2ajH}$ for some $a > 1$ appear. They can be removed thanks to a suitable bias correction procedure. To that end, we start with a pilot estimator with no bias correction on the energy levels. We plug in the pilot estimator to correct the initial energy level estimation and compute a second estimator with an improved rate of convergence. By bootstrapping this procedure $O(H^{-1})$ many times, we achieve the minimax optimal rate of convergence $n^{-1/(4H+2)}$ for every $H >0$.

\subsubsection*{Organisation of the paper}

%{\color{red} presentation of the lower bound}

The model with piecewise constant volatility is considered in Section \ref{sec:piecewise}. This assumption is removed in Section \ref{sec:general} where we consider the general model \eqref{roughvol1}. Discussions are gathered in Section \ref{sec:discussion}. Sections \ref{sec:proof:lower_multiscale}  to \ref{sec:estimator:proof} are devoted to the proofs. In Section \ref{sec:proof:lower_multiscale}, we prove the lower bound for the piecewise constant volatility model. Note that we formally have a hidden Markov chain, since, conditional on $(W_t^H)$, we have a Gaussian Markov chain (actually with independent increments). Yet, due to the absence of ergodicity or stationarity, hidden Markov chain techniques cannot be applied to handle a likelihood, the key to understanding lower bounds. Instead, we revisit the hidden pathwise strategy of \cite{hoffmann2002rate} and \cite{gloter2004stochastic}, later developed in 
\cite{gloter2007estimation} and for stochastic volatility by \cite{rosenbaum2008estimation} for $H>1/2$. Here, the case $H<1/2$ is substantially more intricate and requires delicate Gaussian calculus. Section \ref{sec:proof:upper_multiscale}, establishes that the estimator for $H$ constructed in Section \ref{sec:piecewise} in the piecewise constant volatility model is minimax optimal. This requires us to establish an appropriate scaling law for an averaged version of the energy levels (namely \eqref{eq:deviation_Q}) that is based on the Gaussian calculus; see Lemmas \ref{lem:kappa} and \ref{lemma:self_d}. Both Sections \ref{sec:proof:lower_multiscale} and \ref{sec:proof:upper_multiscale} are relatively straightforward in terms of testing and estimation strategy, once the techniques of \cite{gloter2007estimation} are well understood. Yet, the computations are more involved. Also, they lay the necessary ground, both technically and methodologically, for solving the significantly more difficult case of the general continuous volatility Model \eqref{roughvol1}. Its study is undertaken in Sections \ref{sec:proof:optimality} to \ref{sec:estimator:proof}, where both the lower bounds and upper bounds are proved. As far as the lower bound is concerned, only slight modifications are required. This is no longer the case for the upper bound, where a new type of scaling law for the energy levels must be found in order to follow our general approach. In particular, the scaling law of Proposition \ref{prop:energy}  and its estimation in order to mimick the results of \cite{gloter2007estimation} are the core of the paper. It is based on an appropriate expansion across scales of the energy levels that strongly uses the representation of the volatility as an exponential of fractional Brownian motion, and requires intricate Gaussian calculus, mostly based on Isserlis' theorem. The tools are presented  in Appendix  \ref{sec: first app} and Appendix \ref{sec: second app}, which also contain auxiliary  technical results.

\section{The piecewise constant rough volatility model}
\label{sec:piecewise}

\subsection{Model and notation}
\label{sec:multiscale_model}

We start with a simplified version of Model \ref{roughvol1}, as a prototype rough volatility model where the volatility is assumed to be piecewise constant at a given scale $\delta >0$. (In financial terms, $\delta$ is thought to be of the order of one trading day, {\it i.e.}\   volatility is assumed to be constant over a day.)
%, which reasonable (up to intraday seasonalities). To do so, we consider for some $H\in(0,1)$ and $\eta>0$ a measurable space $(\Omega, \mathcal{A}, \mathbb{P}_{H, \eta})$ on which is defined a process $S$ such that
Given parameters $H \in (0,1)$ and $\eta >0$, on a rich enough probability space $(\Omega, \mathcal A, \mathbb P_{H,\eta})$, the price model takes the form
\begin{equation}
\left\{
\begin{array}{l}
 S_t  = S_0 + \int_0^t \sigma_s \,dB_s,  \\ \\
\sigma_t^2  = \sigma_0^2 \exp (X_t),\\ \\
X_t =  \eta W_{\lfloor t\delta^{-1} \rfloor \delta}
^H,
 \label{roughvol2}
\end{array}
\right.
\vspace{1mm}
\end{equation}
where $(B_t)$ is a Brownian motion and $(W^H_t)$ is an independent  fractional Brownian motion  with Hurst parameter $H\in(0,1)$ and $\eta, \sigma_0 >0$ are nuisance parameters. With no loss of generality, we further assume $\sigma_0^2=1$.\\

We observe a sample path of $(S_t)$ at $n+1$ discrete time points:
$$S_0,\;S_{1/n},\; S_{2/n},\;\ldots, S_1.$$
To simplify  notation and computations, with no loss  of generality, we assume that $n$ and $\delta$ live on a dyadic scale in the following sense: for some integer $N \geq 0$, we have
$$n = 2^{N}\;\;\text{and}\;\; m = n\delta = 2^N\delta\;\;\text{is an integer}.$$ 
Writing $\mathcal{A}^{n} \subset \mathcal A$ for the $\sigma$-algebra generated by the data $(S_{i/n})_{0 \leq i\leq n}$, we obtain a (sequence of) statistical experiment(s)
$$\mathcal E^n = \big(\Omega, \mathcal A^n, \big(\mathbb P_{H,\eta}\big)_{(H,\eta) \in \mathcal D} \big),$$
where the parameter $(H,\eta)$ lies in a compact (with non-empty interior) $\mathcal D \subset (0,1) \times (0,\infty)$. 
%{\color{blue} {\tt [to be put elsewhere]}We set 
%$$H_- = \min H,\; H_+ = \max H,\; \eta_- = \min \eta,\;\eta_+ = \max \eta,$$
%where $\max$ and $\inf$ are taken over $(H,\eta) \in \mathcal D$.} 

\begin{rk}[About the asymptotic regime]
Our asymptotic regime is dictated by a time step $\Delta = \Delta_n  = n^{-1}$. It formally looks quite much as a classical {\it high-frequency} approach, a dominant choice in the econometric literature \cite{ait2014high} . Yet, this is a bit misleading, and we do not think our model in terms of high-frequency data. We rather work at a level of a diffusive scale: in particular we have no microstructure noise and our angle is that we gain no information by any kind of ergodicity input, hence the lack of stationarity assumption at this level, which is irrelevant. In that setting, we recover information via sample path reconstruction and latent volatility filtering by nonparametric denoising only. (In particular, we rule out any drift effect by Girsanov since our timespan of the experiment does not grow to infinity). What ultimately matters in terms of statistical information is that the number of data $n$ is large. If we observe the process over the timespan $[0,T]$ at frequency $\Delta^{-1}$, this means that we consider $n = T/\Delta^{-1}$ as large, but not necessarily $T$ in mathematical terms, hence we further set $T=1$ with no loss of generality. This is compatible with a Donsker type approach, {\it i.e.} what happens when one considers a macroscopic diffusive model obtained from a microscopic one, extracted from Hawkes processes prices for instance \cite{jaisson2015limit}. In practice, we think of $\Delta$ to be of the order of a few minutes (when microstructure effects vanish) to one day, but this is only a broad guiding principle at this stage. 
\end{rk}

%In Section \ref{subsec:minimaxmultiscale}, we derive a minimax optimality theory for this model and in Section \ref{subsec:multiscale:construction} we focus on the construction of an estimator achieving the minimax rate.

\subsection{Minimax lower bound}
\label{subsec:minimaxmultiscale}

The rate $v_n$ is said to be a lower rate of convergence for estimating $H$ over $\mathcal D$ in the experiment $\mathcal E^n$ if there exists $c>0$ such that
\begin{align*}
\liminf_{n\to\infty} 
\inf_{\widehat H_n}
\sup_{(H,\eta) \in \mathcal{D}}
\mathbb{P}_{H,\eta}
(
v_n^{-1}|\widehat H_n-H|\geq c
)
>
0,
\end{align*}
where the infimum is taken over all $\mathcal{A}^{n}$-measurable random variables $\widehat H_n$ ({\it i.e.} all estimators).

\begin{thm}
\label{thm:lower_multiscale}
The rate  
\begin{align*}
v_n(H) = \max\big(n^{-1/(4H+2)}, \delta^{1/2}\big)
\end{align*} 
is a lower rate of convergence for estimating $H$ in $\mathcal E^n$. Moreover, 
$$w_n(H) = \max\big(n^{-1/(4H+2)} \log n, \delta^{1/2}\big)$$ is a lower rate of convergence for estimating $\eta$ over $\mathcal D$ in $\mathcal E^n$ (with obvious modifications in the definition).
\end{thm}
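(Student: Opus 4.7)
The approach is Le Cam's two-point method: construct two parameter values $(H_0,\eta_0)$, $(H_1,\eta_1)\in \mathcal D$ with $|H_1-H_0|\asymp v_n(H_0)$ (respectively $|\eta_1-\eta_0|\asymp w_n(H)$ for the $\eta$-part) and control the Hellinger or total-variation distance between the corresponding laws of the observations. A preliminary reduction exploits the piecewise-constant structure: conditionally on $(W^H_{k\delta})_{k=0,\dots,N_0}$ with $N_0 := 1/\delta$, the price increments $\Delta_i S = S_{(i+1)/n}-S_{i/n}$ within a block $[k\delta,(k+1)\delta)$ are i.i.d.\ $\mathcal N(0,\sigma_k^2/n)$ with $\sigma_k^2 = e^{\eta W^H_{k\delta}}$. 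The block-level quadratic variations $V_k = \sum_{k\delta\le i/n<(k+1)\delta}(\Delta_i S)^2$ therefore form a sufficient statistic for $(H,\eta)$, and $nV_k = \sigma_k^2 Z_k$ with $(Z_k)\sim\chi^2_m$ i.i.d.\ independent of $W^H$. Taking logarithms gives the equivalent observation model $Y_k = \eta W^H_{k\delta} + \xi_k$, where the $\xi_k$ are i.i.d.\ centred of variance $\tau_m^2 \asymp 1/m = \delta/n$, i.e.\ a discretely observed fBm corrupted by noise.

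The two terms in $v_n$ correspond to two complementary regimes. In the \emph{signal-dominated regime} $v_n = \delta^{1/2}$, choose $H_1 = H_0 + c\sqrt{\delta}$. Since the observations factor through $(W^H_{k\delta})_k$ and a parameter-free noise, the data-processing inequality yields
$$D_{\mathrm{KL}}(\mathbb P^n_{H_0,\eta}\,\|\,\mathbb P^n_{H_1,\eta}) \le D_{\mathrm{KL}}\bigl((W^{H_0}_{k\delta})_k\,\|\,(W^{H_1}_{k\delta})_k\bigr).$$
The right-hand side is a Gaussian KL between two centred $(N_0+1)$-dimensional laws with covariances $\eta^2\Gamma_{H_i}$; using the self-similarity $W^H_{k\delta}\stackrel{d}{=}\delta^H W^H_k$ together with Szeg\H{o}-type asymptotics for the fGn spectral density, a Taylor expansion in $\Delta H$ yields $D_{\mathrm{KL}}\lesssim N_0(\Delta H)^2 \lesssim c^2$. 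In the \emph{noise-dominated regime} $v_n = n^{-1/(4H+2)}$, choose $H_1 = H_0 + c n^{-1/(4H_0+2)}$. The data-processing bound is now too crude and the noise must be exploited. We first replace $\xi_k$ by $\mathcal N(0,\tau_m^2)$, justified by a standard Hellinger/LAN comparison since the log-$\chi^2_m$ location-score has bounded Fisher information. In the resulting Gaussian model with covariance $\Sigma_H = \eta^2\Gamma_H + \tau_m^2 I$, a second-order expansion gives
$$D_{\mathrm{KL}} \simeq \tfrac{1}{4}(\Delta H)^2 \, \mathrm{tr}\bigl((\Sigma_{H_0}^{-1}\partial_H \Sigma_{H_0})^2\bigr),$$
and a Toeplitz/spectral analysis of $\Sigma_H$ (after passing to stationary increments with spectral density $\eta^2\delta^{2H} g_H(\lambda) + 2\tau_m^2(1-\cos\lambda)$, $g_H$ being the fGn spectrum) shows the trace is of order $n^{1/(2H+1)} = v_n(H)^{-2}$, giving again $D_{\mathrm{KL}} \lesssim c^2$.

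For the $\eta$ lower bound we apply the same strategy to the pair $(H,\eta_0)$ vs.\ $(H,\eta_1)$ with $|\eta_1-\eta_0|\asymp w_n(H)$. The Fisher information for $\eta$ equals $\tfrac12\mathrm{tr}((\Sigma^{-1}\partial_\eta \Sigma)^2) = 2\eta^2 \mathrm{tr}((\Sigma^{-1}\Gamma_H)^2)$, whose dominant contribution comes from the largest eigenvalues of $\Gamma_H$, i.e.\ from the low-frequency end of the spectrum. This produces the slower rate $n^{-1/(4H+1)}$ (compared with $n^{-1/(4H+2)}$ for $H$), and the extra $\log n$ factor arises from the logarithmic boundary term of the corresponding spectral integral.

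The main technical difficulty lies in the Toeplitz/spectral analysis of $\Sigma_H = \eta^2\Gamma_H + \tau_m^2 I$ with the nonstationary fBm covariance $\Gamma_H$ on a finite grid: one has to justify the Szeg\H{o}-type asymptotics in the intermediate regime where $\tau_m^2$ regularises only the small eigenvalues, and to track the boundary contributions precisely enough to produce the $\log n$ factor in the $\eta$-rate. A secondary technical point is the rigorous reduction from log-$\chi^2_m$ to Gaussian noise without losing the target rate; this is handled by a standard Hellinger comparison since the location-score of the log-$\chi^2_m$ distribution has bounded $L^2$ norm.
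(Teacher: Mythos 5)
Your reduction to the sufficient statistic $Y_k=\eta W^H_{k\delta}+\xi_k$ is legitimate, but the two-point scheme built on it has a gap that is fatal at the claimed separation: you perturb $H$ alone, keeping $\eta$ fixed. Because the covariance of the observed signal scales as $\eta^2\delta^{2H}\Gamma_H$ (self-similarity), the score in the $H$-direction carries a term $2\log\delta$ times the score in the scale direction, so the Fisher information for $H$ with $\eta$ frozen is of order $v_n(H)^{-2}(\log n)^2$, not $v_n(H)^{-2}$ --- this already happens in your ``signal-dominated'' regime, where the Whittle integral is $\frac{N_0}{4\pi}\int(2\log\delta+\partial_H\log g_H)^2\,d\lambda\asymp N_0(\log\delta)^2$. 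With $\Delta H=c\,v_n(H)$ your KL therefore diverges like $c^2(\log n)^2$, the two hypotheses become perfectly distinguishable, and the method only yields the suboptimal rate $v_n(H)/\log n$. The missing idea is the simultaneous perturbation of the scale: the paper (following Gloter--Hoffmann) takes $H_1=H_0+\varepsilon_n$ \emph{and} $\sigma_1=\sigma_0 2^{j_0\varepsilon_n}$ with $2^{j_0}\approx n^{1/(2H_0+1)}$, which cancels the logarithmic degeneracy and identifies the genuinely least-favorable direction in the $(H,\eta)$-plane.

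A second step that fails as stated is the Gaussianization of the noise. The per-coordinate squared Hellinger distance between $\log(\chi^2_m/m)$ and a matched Gaussian is of order $m^{-1}$, and there are $N_0=1/\delta$ coordinates, so the total is of order $1/(n\delta^2)$, which diverges whenever $\delta\ll n^{-1/2}$ --- a regime fully allowed by the theorem (e.g.\ $\delta\asymp 1/n$, $m$ bounded), and precisely the interesting one when $H<1/2$. The paper sidesteps both problems by a different architecture: it never compares the two mixture laws by KL. Instead, Proposition \ref{prop:bound_TV} bounds $\norm{\mathbb{P}^n_f-\mathbb{P}^n_g}_{TV}$ for \emph{deterministic} volatility paths (conditionally the increments are exactly Gaussian, so the $\chi^2$ issue never arises), and Proposition \ref{prop:optimal:key} couples the two fBm priors through a wavelet truncation and an explicit measurable map $T^n$ with $n\norm{\xi^{1,n}(\omega)-\xi^{0,n}(T^n\omega)}_\infty^2$ tight and $\norm{\mathbf{P}^n-T^n\mathbf{P}^n}_{TV}\le 2-c^*$. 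Your deferred ``Toeplitz/spectral analysis'' of $\eta^2\Gamma_H+\tau_m^2 I$ is exactly the content that this construction replaces, so even after fixing the two issues above, the substantive work of the proof remains to be supplied.
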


In particular, this result encompasses arbitrarily small values for $H$. We also retrieve the lower bound of \cite{gloter2007estimation} and \cite{rosenbaum2008estimation} in the case $H>1/2$. Further comments are given in Section \ref{sec:discussion}. The proof of Theorem \ref{thm:lower_multiscale} is postponed to Section \ref{sec:proof:lower_multiscale}.

\subsection{Construction of an asymptotically minimax estimator}
\label{subsec:multiscale:construction}

\subsubsection*{Translation into a signal plus noise model}
Recall that we set $m = n\delta_n$, which is assumed to be an integer. 
%Notice first that under $\mathbb{P}_{H,\eta}$, we have 
For $0 \leq i < \delta_n^{-1}$, we have
\begin{align*}
\sum_{j=1}^m \big(S_{(im+j)/n} - S_{(im+j-1)/n}\big)^2 
=
 \sigma^2_{i\delta_n} \sum_{j=1}^m 
 \big(B_{(im+j)/n} - B_{(im+j-1)/n}\big)^2, 
\end{align*}
so that
\begin{align*}
%\label{eq:model:X}
\widehat{X}_{i,n} = \log\Big(\frac{n}{m} \sum_{j=1}^m \big (S_{(im+j)/n} - S_{(im+j-1)/n} \big)^2 \Big) = \eta W^H_{i\delta_n} + \varepsilon_{i,n,m},
\end{align*}
where the $(\varepsilon_{i,n,m})_i$ are independent, each $\varepsilon_{i,n,m}$ being distributed as 
$$\bar \varepsilon_m = \log \big ( m^{-1} \sum_{j=1}^m (\bar{\xi}_j)^2 \big)$$ for i.i.d. standard Gaussian $(\bar{\xi}_j)_j$. Therefore the random variable $n \delta_n \exp ( \varepsilon_{i,n,m} )$ has a chi-square law with $n\delta_n$ degrees of freedom. By Lemma \ref{lem:log_mom_chi2} below, we moreover have
\begin{align*}
\Var \big(\bar \varepsilon_m \big) &= \psi^{(1)}(\tfrac{m}{2}) \leq C m^{-1}
\;\;\;\;\text{and}\;\;\;\;
\mathbb{E} \big[
\bar \varepsilon_m^4
\big]
\leq C m^{-2}
\end{align*}
for some $C>0$, where the function $\psi^{(1)}$ is explicitly given in the Appendix  \ref{sec:log_mom_chi2}.\\ 
%A careful review of the proofs of the following result ensures that the estimator constructed below remains convergent if we observe directly $\widehat{X}_{i,n} := \eta W^H_{i/n} + \varepsilon_{i,n,m}$ with a more general noise $(\varepsilon_{i,n,m})_i$ independent of $X$ satisfying the same bounds $\Var (\bar \varepsilon_m)) \leq C m^{-1}$ and $\mathbb{E}  \big[
%\bar \varepsilon_m^4
%\big]
%\leq C m^{-2}$. However, the explicit expression of $\Var(\bar \varepsilon_m) $ is needed in the definition of the estimator and cannot depend on the parameters $(H, \eta)$. \\

\subsubsection*{Estimating energy levels}
We now introduce the energy levels of the process $(X_t)$. For a scale $j \geq 2$, a location parameter $k < 2^{j-1}$ and an averaging index $p\geq 0$, we define
\begin{align*}
d_{j,k,p} = 
\frac{1}{2^{j/2+p}} 
\sum_{l=0}^{2^p-1}
\Big(
X_{(k+l2^{-p})2^{-j}}
-
2X_{(k+1+l2^{-p})2^{-j}}
+
X_{(k+2+l2^{-p})2^{-j}}
\Big)
\end{align*}
and
\begin{align*}
Q_{j,p} = \sum_{k=0}^{2^{j-1}-1} (d_{j,k,p})^2.
\end{align*}
Note that the last sum stops at index $k = 2^{j-1} - 1$ and is defined only for $j \geq 2$ because our observations stop at time $1$. In view of these definitions, we introduce their empirical counterparts. We set
\begin{align*}
\widetilde{d}_{j,k,p,n} = 
\frac{1}{2^{j/2+p}} 
\Big(
\sum_{l=0}^{2^p-1}
\widehat{X}_{2^{N-j}(k+l2^{-p}),n}
-
2\widehat{X}_{2^{N-j}(k+1+l2^{-p}),n}
+
\widehat{X}_{2^{N-j}(k+2+l2^{-p}),n}
\Big),
\end{align*}
which is $\mathcal{A}^{n}$-measurable provided that $p+j \leq N$. Note that we have the representation
$$\widetilde{d}_{j,k,p,n} = d_{j,k,p} + e_{j,k,p,n},$$
with
\begin{align}
\label{eq:def:e}
e_{j,k,p,n} 
&=
\frac{1}{2^{j/2+p}} 
\sum_{l=0}^{2^p-1}
\Big(
\varepsilon_{2^{N-j}(k+l2^{-p}),n,m}
-
2\varepsilon_{2^{N-j}(k+1+l2^{-p})2^{-j},n,m}
+\varepsilon_{2^{N-j}(k+2+l2^{-p}),n,m}\Big).
\end{align}

However $(\widetilde{d}_{j,k,p,n})^2$ is not a clever choice for estimating $d_{j,k,p}$ due to the presence of the noise term $e_{j,k,p,n}$. Following \cite{gloter2007estimation}, we can mitigate the effect of the noise by removing the bias $\EX{e_{j,k,p,n}^2} = 6\Var(\bar \varepsilon_m) 2^{-j-p}$ which is explicitly computable. We thus finally estimate the energy levels by
\begin{align}
\label{eq:def:Qhat}
\widehat{Q}_{j,p,n} = \sum_{k=0}^{2^{j-1}-1} \widehat{d^2_{}}_{\!\!\!j,k,p,n},
\;\;
\text{with}\;\;
\widehat{d^2}_{\!\!\!j,k,p,n} = (\widetilde{d}_{j,k,p,n})^2 - 6\Var(\bar \varepsilon_m) 2^{-j-p}.
\end{align}

Pick $\nu_0 > 0$. We obtain a family of estimators of $H$ by setting 
\begin{align*}
\widehat{H}_n = -\frac{1}{2}\log \bigg[
\frac{\widehat{Q}_{J_n^*+1,N-J_n^*-1,n} }{\widehat{Q}_{J_n^*,N-J_n^*-1,n} }
 \bigg]
 \end{align*}
where the data-driven rule for selecting the energy level is given by
\begin{align*}
J_n^* = \max \big\{ 2 \leq j \leq N-1: \widehat{Q}_{j,N-j-1,n} \geq \nu_0 2^jn^{-1} \big\}
\end{align*}
with the convention that $J_n^* = 2$ when this set is empty. The consistency and convergence rate of $\widehat H_n$ is given in the following theorem, discussed in Section \ref{sec:discussion} and proved in Section \ref{sec:proof:upper_multiscale}.
\begin{thm}
\label{thm:upper_multiscale}
The rate $v_n(H) = \max(n^{-1/(4H+2)},\delta^{1/2})$ is achievable for estimating $H$ over $\mathcal{D}$. 
More precisely, for $\nu_0 < \inf_{(H, \eta) \in \mathcal{D}} \eta^2 \kappa_0(H) 2^{2H}$ with $\kappa_0(H) = 4 - 
2^{2H}$, the family of random variables
$$\big(v_n^{-1} (\widehat{H}_n - H)\big)_{n \geq 1}$$
is tight in $\mathbb{P}_{H,\eta}$-probability, uniformly over $\mathcal{D}$.
\end{thm}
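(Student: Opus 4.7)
The strategy is to linearize $\widehat{H}_n - H$ via a Taylor expansion of the log-ratio at the data-selected scale $J_n^*$, and then to bound the deviations of $\widehat{Q}_{j,p,n}$ around its deterministic mean. First I would compute $\mathbb{E}_{H,\eta}[Q_{j,p}]$ by telescoping the sum over $l$ in the definition of $d_{j,k,p}$, which rewrites it as a second-order difference of three neighbouring cell-averages of $X$ at spacing $2^{-j}$. When $2^{-j} \gtrsim \delta$, self-similarity of $W^H$ then yields $\mathbb{E}_{H,\eta}[Q_{j,p}] = \eta^2 \kappa(p,H) 2^{-2jH}/2$ up to a bias coming from the piecewise-constant structure at scale $\delta$; in particular $\mathbb{E}_{H,\eta}[Q_{j+1,p}]/\mathbb{E}_{H,\eta}[Q_{j,p}] = 2^{-2H}$ holds up to negligible corrections.

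The second step is to control the fluctuations. From $\widetilde{d}_{j,k,p,n} = d_{j,k,p} + e_{j,k,p,n}$ and the explicit centering built into $\widehat{Q}_{j,p,n}$, we have $\widehat{Q}_{j,p,n} - Q_{j,p} = 2\sum_k d_{j,k,p}\, e_{j,k,p,n} + \sum_k \big(e_{j,k,p,n}^2 - \mathbb{E}[e_{j,k,p,n}^2]\big)$. The variance of each piece can be bounded using the moment estimates $\Var(\bar\varepsilon_m) \lesssim m^{-1}$ and $\mathbb{E}[\bar\varepsilon_m^4] \lesssim m^{-2}$ provided in the excerpt, together with a local-dependency count between $e_{j,k,p,n}$ and $e_{j,k',p,n}$ for nearby $k,k'$. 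Separately, $Q_{j,p}$ is a quadratic form in a Gaussian vector, so a classical chi-squared-type computation gives the relative fluctuation $\Var(Q_{j,p})^{1/2}/\mathbb{E}[Q_{j,p}] = O(2^{-j/2})$.

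The crucial step is to show that the data-driven scale $J_n^*$ concentrates around a deterministic balanced scale $j_n^*$. For $p=N-j-1$, the first step gives $\mathbb{E}[Q_{j,N-j-1}] \asymp \eta^2 \kappa(0,H) 2^{-2jH}$, and comparing with the threshold $\nu_0 2^j/n$ yields the admissibility condition $2^{j(2H+1)} \lesssim n \eta^2 \kappa(0,H)/\nu_0$. The hypothesis $\nu_0 < \inf_{\mathcal D} \eta^2 \kappa(0,H) 2^{2H}$ ensures that the resulting $j_n^*$ with $2^{j_n^*} \asymp n^{1/(2H+1)}$ always satisfies the threshold at the population level. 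A union bound over the $O(\log n)$ candidate scales, using sub-exponential concentration of $\widehat{Q}_{j,N-j-1,n}$ around its mean, then yields $|J_n^* - j_n^*| = O(1)$ with probability tending to $1$.

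On this event, a Taylor expansion gives $\widehat{H}_n - H = \tfrac{1}{2}(\xi_{J_n^*} - \xi_{J_n^*+1}) + O(\xi^2)$ where $\xi_j := (\widehat{Q}_{j,N-j-1,n} - \mathbb{E}[Q_{j,N-j-1}])/\mathbb{E}[Q_{j,N-j-1}]$, reducing the task to controlling $\xi_j$ at scale $j \approx j_n^*$. The signal-fluctuation contribution is $2^{-j_n^*/2} \asymp n^{-1/(4H+2)}$, the measurement-noise contributions are of smaller or comparable order thanks to the maximal averaging choice $p = N-j-1$, and the piecewise-constant bias contributes $\delta^{1/2}$, altogether matching $v_n(H)$. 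The main obstacle is that $j_n^*$ depends on the unknown $H$, so adaptation must be realised entirely through the threshold rule; ensuring that the moment, concentration, and bias bounds are uniform over $\mathcal D$, and in particular that the hypothesis on $\nu_0$ translates into a uniform cut-off for $j_n^*$, is the technical heart of the argument.
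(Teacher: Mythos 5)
Your high-level plan --- compute $\mathbb{E}_{H,\eta}[Q_{j,p}]$ via self-similarity, bound fluctuations, control $J_n^*$, linearize the log-ratio --- matches the paper's strategy, but two concrete steps as you describe them would not go through. First, the rate floor $\delta^{1/2}$ does \emph{not} come from a ``bias'' due to piecewise constancy: since $p+j\le N$, the sample points $(k+l2^{-p})2^{-j}$ are integer multiples of $\delta$, so $X$ at those points coincides exactly with $\eta W^H$, and Lemma~\ref{lem:kappa} gives $\mathbb{E}_{H,\eta}[d_{j,k,p}^2]=\kappa_p(H)\eta^2 2^{-j(1+2H)}$ with no piecewise-constancy correction. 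Moreover, because the estimator uses the \emph{same} averaging depth $p=N-J_n^*-1$ in both $\widehat{Q}_{J_n^*+1,\cdot}$ and $\widehat{Q}_{J_n^*,\cdot}$, the identity $\mathbb{E}[Q_{j+1,p}]/\mathbb{E}[Q_{j,p}]=2^{-2H}$ holds exactly, not ``up to negligible corrections'': the $\kappa_p(H)$ factors cancel. The $\delta^{1/2}$ floor is instead a variance floor: when $\delta\ge n^{-1/(2H+1)}$ the finest usable resolution is $N-1$, where by Equation~\eqref{eq:deviation_Q} one has $\Var(Q_{N-1,0})^{1/2}/\mathbb{E}[Q_{N-1,0}]\asymp 2^{-(N-1)/2}\asymp\delta^{1/2}$. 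This forces a case split ($\delta\le n^{-1/(2H+1)}$ versus $\delta\ge n^{-1/(2H+1)}$) that your outline collapses; the paper devotes Section~\ref{sec:completion:large} and Lemma~\ref{lemma:behavior_Jnstar} to the second regime, in which the key fact is that $J_n^*=N-1$ with probability tending to one.

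Second, your claim of two-sided concentration $|J_n^*-j_n^*|=O(1)$ via ``sub-exponential concentration'' is both unproved and unneeded. Since $J_n^*$ is a maximum over scales, an upper bound on $J_n^*$ would require controlling $\max_{j>j_n^*+C}\widehat{Q}_{j,N-j-1,n}$, which the second-moment estimates of Proposition~\ref{propo:emp_energy_levels} do not directly give, and sub-exponential tails of $\widehat{Q}$ are nowhere established. The paper's route proves only the one-sided lower bound $J_n^*\ge J_n^-(\varepsilon)-L(\varepsilon)$ with high probability (Lemma~\ref{lemma:bound_Jnstar}) and then union-bounds the deviations of $B_n$, $V_n^{(1)}$, $V_n^{(2)}$ over all $j\ge J_n^-(\varepsilon)-L(\varepsilon)$; this suffices because those deviation bounds decay geometrically in $j$, so the sum is dominated by the smallest admissible scale. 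Your Taylor linearization at a single deterministic $j_n^*$ would in any case have to be replaced by a conditional argument on $\{J_n^*=j\}$ summed over admissible $j$ (since $J_n^*$ is random), and once you set that up you will see that the upper bound on $J_n^*$ is not required.
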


\section{The general rough volatility model}\label{sec:general}

\subsection{Model and main results}
\label{subsec:model}

We now consider Model \eqref{roughvol1}, with the same accompanying statistical experiment as in the previous section, but we do not require that the volatility process 
$(\sigma_t)$ is piecewise constant: we now have
\begin{align*}
\sigma_t^2 = \sigma_0^2 \exp ( \eta W^H_t ),
\end{align*}
where $W^H$ is a fractional Brownian motion with Hurst index $H$ and independent from $B$. We will write $\mathbb{E}_{H, \eta}$ for the expectation under the probability measure $\mathbb{P}_{H, \eta}$. Without loss of generality, we take $\sigma_0 = 1$. We assume that $(H, \eta)$ lies in a compact subset $\mathcal{D}$ of $(0, 3/4) \times (0, \infty) $. 
%{\color{blue} and we define $H_-, H_+, \eta_-$ and $\eta_+$ as before.}
%\begin{align*}
%H_- &= \min_{(H, \eta) \in \mathcal{D}} H, \hspace{1cm} \eta_- = \min_{(H, \eta) \in \mathcal{D}} \eta,
%\\
%H_+ &= \max_{(H, \eta) \in \mathcal{D}} H , \hspace{1cm}  \eta_+ = \min_{(H, \eta) \in \mathcal{D}} \eta.
%\end{align*}
The upper bound $H < 3/4$ is a bit artificial and done mainly for technical purposes. It can be improved to $ H < 1$ using second-order increments, see Section \ref{sec:discussion}.\\

We next state a minimax lower bound. We then gather some results used later in the construction of an estimator in Section \ref{subsec:energy}. %These results are separated from the proof of the estimator since they can be of independent interest for the reader. 
In Section \ref{subsec:estimator}, we build an estimator achieving the lower bound and thus establishing the minimax rate of convergence in the general rough volatility model \eqref{roughvol1}, with statistical experiment generated by $(S_{i/n}, i=0,\ldots, n)$.

\subsection{Minimax optimality}
\label{subsec:minimaxgeneral}

Recall from Section \ref{subsec:minimaxmultiscale} that the rate $v_n$ is said to be a lower rate of convergence over $\mathcal{D}$ for estimating $H$ if there exists $c>0$ such that
\begin{align*}
\liminf_{n\to\infty} 
\inf_{\widehat H_n}
\sup_{(H,\eta) \in \mathcal{D}}
\mathbb{P}_{H,\eta}
(
v_n^{-1}|\widehat H_n-H|\geq c
)
>
0,
\end{align*}
where the infimum is taken over all estimators $\widehat H_n$, and similarly for $\eta$. We obtain an analogous lower bound as in Theorem \ref{thm:lower_multiscale}. Its proof is given in Section \ref{sec:proof:optimality}.

\begin{thm}
\label{thm:optimality}
The rates $v_n(H) = n^{-1/(4H+2)}$ and $w_n(H) = n^{-1/(4H+2)} \log n $ are lower rates of convergence for estimating $H$ and $\eta$ respectively over $\mathcal D$.
\end{thm}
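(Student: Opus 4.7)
The plan is to apply a Le Cam two-point argument, as in the proof of Theorem \ref{thm:lower_multiscale}. For the bound on $H$, we fix $(H_0, \eta) \in \mathcal{D}$ in the interior and set $H_1 = H_0 + c\, v_n(H_0)$ for a small constant $c > 0$. It suffices to show that the total variation distance between $\mathbb{P}_{H_0, \eta}$ and $\mathbb{P}_{H_1, \eta}$ restricted to $\mathcal{A}^{n}$ stays uniformly bounded away from $1$; Le Cam's lemma then gives the claimed lower rate. We will obtain this bound by passing through the Hellinger distance and reducing the problem to a signal-plus-iid-noise experiment for which the calculation is classical.

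The crucial observation is the conditional Gaussian structure of the experiment. Given the path $W^H$, the price increments $\Delta_i S := S_{(i+1)/n} - S_{i/n}$ are independent centered Gaussians with variances $V_{i,n} = \int_{i/n}^{(i+1)/n} \exp(\eta W^H_s)\, ds$. Passing to log-squared increments yields
\begin{align*}
\log(n\, \Delta_i S^2) = \log(n V_{i,n}) + \zeta_i,
\end{align*}
where $(\zeta_i)_i$ is iid, independent of $W^H$, with the law of $\log \chi^2_1$. The experiment is therefore a signal-plus-noise problem with signal $\log(n V_{i,n})$ on the FBM path and iid noise of variance of order $1$.

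The main technical step is to compare $\log(n V_{i,n})$ with the simpler signal $\eta W^H_{i/n}$, which appears in the piecewise constant model of Section \ref{sec:piecewise}. Factoring $n V_{i,n} = e^{\eta W^H_{i/n}} \int_0^1 \exp(\eta(W^H_{(i+u)/n} - W^H_{i/n})) du$ and expanding the logarithm, one controls the remainder $R_{i,n} := \log(n V_{i,n}) - \eta W^H_{i/n}$ as a random variable of order $n^{-H}$ uniformly in $i$; in particular, $R_{i,n}$ is stochastically dominated by $\zeta_i$ and perturbs the reference experiment by only a lower-order amount in Hellinger distance. Up to this perturbation, our experiment is equivalent to the one in the piecewise-constant case with $\delta = 1/n$, and the Hellinger distance squared between $(H_0, \eta)$ and $(H_1, \eta)$ is of order $n\,|H_1 - H_0|^2 n^{4H_0}$ by the Gloter--Hoffmann-type computation invoked for Theorem \ref{thm:lower_multiscale}. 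Taking $|H_1 - H_0| \asymp n^{-1/(4H_0+2)}$ keeps this bounded, as required.

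For the lower bound on $\eta$ we use hypotheses $(H, \eta_0)$ and $(H, \eta_1)$ with $|\eta_1 - \eta_0| \asymp w_n(H)$ and repeat the same reduction. Here varying $\eta$ changes the amplitude of the fractional signal $\eta W^H_{i/n}$ rather than its roughness, and a direct Hellinger distance computation between two centered Gaussian laws with covariances $\eta_0^2 K_H$ and $\eta_1^2 K_H$ (where $K_H$ is the FBM covariance matrix) plus iid noise yields a squared distance of order $n |\eta_0 - \eta_1|^2 / (\eta_0^2 \log n)$ after the optimal scale cut-off, whence the extra $\log n$ factor. The principal obstacle throughout is the analysis of the remainder $R_{i,n}$: the non-piecewise-constant integrated volatility couples all increments of $W^H$ over a sampling interval, so the decoupling argument available in Section \ref{sec:piecewise} must be replaced by moment estimates based on fine regularity of the FBM path. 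Once these estimates are in place, a standard Le Cam conclusion completes the proof.
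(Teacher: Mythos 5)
The central gap is your treatment of the remainder $R_{i,n} = \log(nV_{i,n}) - \eta W^H_{i/n}$. You assert that $R_{i,n} = O(n^{-H})$ and that this "perturbs the reference experiment by only a lower-order amount in Hellinger distance," but this is false in the rough regime: the Hellinger (or KL) perturbation coming from shifting the signal in each of $n$ conditionally Gaussian observations by $R_{i,n}$ accumulates as $\sum_i R_{i,n}^2 \asymp n\cdot n^{-2H} = n^{1-2H}$, which diverges for every $H<1/2$ — precisely the regime this paper is about. The comparison to $\zeta_i$ is a red herring, because what matters for Hellinger accumulation across $n$ coordinates is the $n$-scaling of the perturbation, not its pointwise size relative to the noise. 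The paper sidesteps this entirely: Proposition \ref{prop:bound_TV_general} bounds the conditional TV distance $\|\mathbb{P}_f^n-\mathbb{P}_g^n\|_{TV}$ between the exact integrated-volatility Gaussian likelihoods (variances $\int e^{f}$ versus $\int e^{g}$), producing a bound that depends only on $\|f-g\|_\infty$, which is then $O(n^{-1/2})$ under the Gloter–Hoffmann coupling $T^n$ of Proposition \ref{prop:optimal:key}. If you insist on approximating the signal by $\eta W^H_{i/n}$, the correct observation is that the remainders $R^{(0)}_{i,n}$ and $R^{(1)}_{i,n}$ for the two \emph{coupled} hypotheses differ by $O(n^{-1/2})$, because $\log\big(\int_0^1 e^{f_i(u)}du\big)$ is $1$-Lipschitz in $\|f\|_\infty$ and the coupled paths agree to $O(n^{-1/2})$ — but that is exactly the computation done directly on the integrated-volatility likelihood, so the detour buys nothing.

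Two further issues. First, the claimed Hellinger formula "$n|H_1-H_0|^2 n^{4H_0}$" cannot be right: with $|H_1-H_0|\asymp n^{-1/(4H_0+2)}$ this equals $n^{1+4H_0-1/(2H_0+1)}\to\infty$, so it would predict the two laws are perfectly distinguishable. Second, and more structurally, the proof of Theorem \ref{thm:lower_multiscale} that you invoke is not a marginal two-point Hellinger/TV argument at all: it is a conditional coupling on the path space $X^n$, built via a wavelet cut-off at level $j_0\asymp\log_2 n^{1/(2H_0+1)}$, a measurable change-of-variable $T^n$ mapping the $(H_0,\eta_0)$-approximation to the $(H_1,\eta_1)$-approximation, and tightness of $n\|\xi^{1,n}(\omega)-\xi^{0,n}(T^n\omega)\|_\infty^2$. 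The only new ingredient the general model requires, and thus the only thing Section \ref{sec:proof:optimality} actually proves, is Proposition \ref{prop:bound_TV_general}; you should state that explicitly and verify it (the Kullback computation via $B(x)=x-\ln x-1$ on ratios of $\int e^f / \int e^g$), and then invoke the rest of Section \ref{sec:proof:lower_multiscale} verbatim.
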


\subsection{Energy levels of the log integrated volatility}
\label{subsec:energy}
We set 
$$H_- = \min H,\; H_+ = \max H,\; \eta_- = \min \eta,\;\eta_+ = \max \eta,$$
where $\max$ and $\min$ are taken over $(H,\eta) \in \mathcal D$. 
Fix $j \geq 0$ and $p \geq 0$. For $0 \leq k < 2^p$, we introduce the pre-averaged local energy levels of the log-integrated-volatility as follows:
\begin{align*}
d_{j,k,p} = 2^{-p-j/2}
\sum_{l=0}^{2^p-1} \Big(
\log \Big( \int_{(k+1)2^{-j}+l2^{-j-p}}^{(k+1)2^{-j}+(l+1)2^{-j-p}} \sigma_u^2 du \Big)
-
\log \Big( \int_{k2^{-j}+l2^{-j-p}}^{k2^{-j}+(l+1)2^{-j-p}} \sigma_u^2 du \Big) \Big).
\end{align*}
The corresponding energy levels are obtained via
\begin{align*}
Q_{j,p} = \sum_{k<2^{j-1}} (d_{j,k,p})^2.
\end{align*}

These energy levels differ from those of \cite{rosenbaum2008estimation} since they are not obtained from the integrated realised variance of the price but rather from their logarithm. They also scale as $2^{-2jH}$ as in \cite{rosenbaum2008estimation}, but the logarithm generates two major differences in this scaling. On the one hand, we get rid of the 
stochastic limit appearing in \cite{rosenbaum2008estimation}
 but on the other hand, this scaling is not exact and additional terms appear. More precisely, the following concentration property is proved in Section \ref{subsec:energy:Q}.

\begin{prop}
\label{prop:energy}
There exist functions of $H$ denoted by $H \mapsto \kappa_{p,a}(H)$ and explicitly given in Equation \eqref{eq:def_kappas} such that if $S \geq 1/(4H_-) + 1/2$ and $S > H_+/(2H_-) - 1/2$, we have
\begin{align}
\label{eq:energy}
\mathbb{E}_{H, \eta} \Big [\big(
	Q_{j,p}
	-
	\sum_{a=1}^{S} \eta^{2a} 2^{-2aHj} \kappa_{p,a}(H)
\big)^2
\Big ]
\leq 
C 2^{-j(1+4H)}
\end{align}
for some constant $C$ depending only on $S$.
\end{prop}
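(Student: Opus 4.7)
The plan is to Taylor-expand each log-integral inside $d_{j,k,p}$. For a dyadic interval $I$ of length $\tau = 2^{-j-p}$ with left endpoint $t_I$, I would write
\begin{align*}
\log\Big(\int_I \sigma_u^2\,du\Big) = \log \tau + \eta W^H_{t_I} + \log\Big(1 + \sum_{a\geq 1}\frac{\eta^a}{a!}U_a(I)\Big),\qquad U_a(I) := \frac{1}{\tau}\int_I(W^H_u - W^H_{t_I})^a\,du,
\end{align*}
so that $U_a(I)$ has typical size $\tau^{aH}$ by self-similarity of $W^H$. Since $d_{j,k,p}$ involves only differences of log-integrals on equal-length intervals, the $\log \tau$ cancels, and the $\eta W^H_{t_I}$ pieces assemble into fractional increments of $W^H$ over gaps of length $2^{-j}$. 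Expanding the outer logarithm as $\log(1+x) = \sum_b (-1)^{b+1} x^b / b$, substituting, and collecting in powers of $\eta$, I obtain a decomposition $d_{j,k,p} = \sum_{a=1}^S \eta^a \Delta^{(a)}_{j,k,p} + R^{(S)}_{j,k,p}$, where each $\Delta^{(a)}_{j,k,p}$ is a polynomial of degree $a$ in the centred Gaussian field $W^H$, and $R^{(S)}_{j,k,p}$ is a Taylor remainder controlled in $L^q$ by $\eta^{S+1}\tau^{(S+1)H}$, using that $\exp(\eta W^H)$ has all moments since $W^H$ is Gaussian.

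I would then square this expansion, sum over $k<2^{j-1}$, and take expectation. Cross-products $\Delta^{(a)}_{j,k,p}\Delta^{(b)}_{j,k,p}$ are polynomials of total degree $a+b$ in the centred Gaussian field, so they vanish in expectation when $a+b$ is odd; only even contributions $a+b = 2c$ survive. Using self-similarity $W^H_{2^{-j}\cdot} \stackrel{d}{=} 2^{-jH}W^H_\cdot$ to rescale each block of length $2^{-j}$ to the unit interval, the mean of each surviving pairing factors as $2^{-2cjH}$ times a coefficient depending on $H$ and $p$ alone. Summing over all pairs with $a+b=2c$ produces $\mathbb{E}_{H,\eta}[Q_{j,p}] = \sum_{c=1}^S \eta^{2c}2^{-2cjH}\kappa_{p,c}(H) + O(2^{-2(S+1)jH})$, identifying the constants $\kappa_{p,c}(H)$ of \eqref{eq:def_kappas}.

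The delicate step is the $L^2$ control, which I would split as $Q_{j,p} - \sum_a \eta^{2a}2^{-2aHj}\kappa_{p,a}(H) = (Q_{j,p}^{(S)} - \mathbb{E}_{H,\eta}[Q_{j,p}^{(S)}]) + (Q_{j,p} - Q_{j,p}^{(S)})$, where $Q_{j,p}^{(S)}$ is the order-$S$ truncation. The bias term is handled through the moment bound on $R^{(S)}_{j,k,p}$ together with Cauchy--Schwarz on the cross terms $R^{(S)}\cdot\Delta^{(a)}$; the twin thresholds $S\geq 1/(4H_-)+1/2$ and $S>H_+/(2H_-)-1/2$ are precisely what is needed to guarantee that every such residual lies below $2^{-j(1/2+2H)}$ in $L^2$. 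The centred term is the variance of a sum of $2^{j-1}$ squared polynomials in a Gaussian field; expanding via Isserlis's theorem reduces it to sums over pairings of covariances of fBm increments. The covariance between the Gaussian pieces in $d_{j,k,p}$ and $d_{j,k',p}$ decays like $|k-k'|^{2H-2}\,2^{-2jH}$ at large separations, and since $H<3/4$ the resulting sum over pairs is $O(2^j)$; multiplied by the squared diagonal size $2^{-2j(1+2H)}$ this yields the announced variance $O(2^{-j(1+4H)})$. The main obstacle is the bookkeeping across all Taylor orders $a,b\leq S$: one must certify that every subleading pairing stays at or below this rate while simultaneously accommodating long-range fBm correlations (for $H$ close to $3/4$) and short-range roughness (for $H$ close to $0$), which is exactly why both constraints on $S$ appear.
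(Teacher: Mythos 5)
Your architecture is the paper's own: Taylor-expand $\exp(\eta W^H)$ around the left endpoint of each small interval, then the outer logarithm, collect powers of $\eta$, use parity of centred Gaussian moments together with self-similarity and stationarity to identify the coefficients $\kappa_{p,a}(H)$ and the scaling $2^{-2aHj}$, and finish with a bias--variance split in which the variance is controlled by Isserlis' theorem and the second-difference covariance decay $|k-k'|^{2H-2}2^{-2jH}$, summable over pairs precisely because $H<3/4$. This matches Sections \ref{subsec:energy:proof:notations}--\ref{subsec:energy:Q} and Lemmas \ref{lem:m2_g}--\ref{lem:cov_g}.

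The gap is the truncation order. You expand $d_{j,k,p}$ only to order $S$ in $\eta$, with remainder $R^{(S)}=O(\tau^{(S+1)H})$, $\tau=2^{-j-p}$. In $Q_{j,p}=\sum_k d_{j,k,p}^2$ the cross term $\sum_k R^{(S)}_{j,k,p}\Delta^{(1)}_{j,k,p}$ is then of order $2^{-jH-(j+p)(S+1)H}$, because the linear part $\Delta^{(1)}$ scales as $2^{-j/2-jH}$ (an increment over a gap of length $2^{-j}$), not as $2^{-j/2}\tau^{H}$; moreover the pairings $(b_1,b_2)=(1,b)$ with $S<b\leq 2S-1$, which genuinely belong to $\kappa_{p,a}$ for $a$ near $S$ — see \eqref{eq:def_kappas}, where $b_1+b_2=2a$ allows $b_i$ up to $2S-1$ — are absent from your truncation and contribute at the same order. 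At the stated threshold $S\approx 1/(4H)+1/2$ that order is $2^{-j(1/4+5H/2)}$ in the worst case $p=0$, which is below the required $2^{-j(1/2+2H)}$ only when $H\geq 1/2$; in the rough regime $H<1/2$ your bias term is too large and the claimed $L^2$ bound fails. The paper avoids this by expanding to order $2S$ rather than $S$ (Proposition \ref{prop:development_iv} applied with $2S$, so the remainder $z_{j,p,k}$ is $O(2^{-(j+p)(2S+1)H^*})$ for a fixed auxiliary exponent $H^*<H_-$ with $(2S+1)H^*\geq H_+$, controlled pathwise via the H\"older constant of $W^H$ uniformly over $\mathcal{D}$); this doubling of the expansion depth is exactly what makes the two hypotheses on $S$ sufficient. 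The fix is mechanical — push the expansion to order $2S$ and let $\kappa_{p,a}$ collect all pairings $b_1+b_2=2a$ with $b_i\leq 2S$ — but as written, your claim that the stated thresholds are "precisely what is needed" for an order-$S$ remainder is not correct.
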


Although the functions $\kappa$ are explicit, their numerical implementation is quite involved and requires the use of Isserlis' Theorem (see Theorem \ref{thm:gaussian_moments}). The following two lemmas give a control of the functions $\kappa$. They will be useful in the construction of the estimator. Their proofs are delayed until Section \ref{subsec:energy:kappa}.
\begin{lem}
There exist some positive constants $c_{-,1}$ and $c_{+,1}$ such that for any $p \geq 1$ and any $H_- \leq  H \leq H_+$,
\begin{align}
\label{eq:kappa1:bound}
c_{-,1} \leq \kappa_{p,1}(H) \leq c_{+,1}.
\end{align}

Moreover, there exists some positive constant $c_{\cdot, S}$ such that for any $p \geq 1$, any $ 2 \leq a \leq S$ and any $H_- \leq  H \leq H_+$,
\begin{align}
\label{eq:kappap:bound}
%\sup_{p \geq 1, 1 , H} 
| 2^{(2a-1)Hp} \kappa_{p,a}(H) | < c_{\cdot, S}.
\end{align}
\end{lem}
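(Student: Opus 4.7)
The plan is to exhibit $\kappa_{p,a}(H)$ explicitly from the expansion underlying Proposition \ref{prop:energy} and then estimate it in two regimes, $a=1$ and $a \geq 2$. The basic identity is
\[
\log\Big(\int_t^{t+\epsilon} \sigma_u^2 du\Big) = \log \epsilon + \eta W^H_t + \log\Big(\tfrac{1}{\epsilon}\int_0^\epsilon \exp\bigl(\eta(W^H_{t+s}-W^H_t)\bigr) ds\Big),
\]
and Taylor-expanding the last logarithm produces a series in $\eta$ whose $a$-th coefficient is a polynomial in the moments $M_k(t,\epsilon) = \epsilon^{-1}\int_0^\epsilon (W^H_{t+s}-W^H_t)^k\,ds$, each of typical size $\epsilon^{kH}$. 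Plugging into $d_{j,k,p}$ with $\epsilon = 2^{-j-p}$ and writing $d_{j,k,p} = \sum_{b \geq 1} \eta^b d^{(b)}_{j,k,p}$, the expansion of $\mathbb{E}_{H,\eta}[Q_{j,p}]$ in powers of $\eta$ identifies
\[
\kappa_{p,a}(H) = 2^{2aHj}\,\sum_{k<2^{j-1}}\,\sum_{\substack{a_1+a_2 = 2a\\a_1,a_2\geq 1}} \mathbb{E}_{H,\eta}\bigl[d^{(a_1)}_{j,k,p} d^{(a_2)}_{j,k,p}\bigr],
\]
which is independent of $j$ by self-similarity of $W^H$.

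For $a=1$ the only contributing pair is $(a_1,a_2)=(1,1)$, and the dominant piece of $d^{(1)}_{j,k,p}$ is its coarse-scale part $2^{-p-j/2}\sum_l(W^H_{(k+1)2^{-j}+l2^{-j-p}} - W^H_{k2^{-j}+l2^{-j-p}})$ coming from the $\eta W^H_t$ term in the display above; the remaining contribution of the fine-scale piece $\eta\phi_1$ is smaller by a factor $2^{-Hp}$. Self-similarity and the fBM covariance formula yield
\[
\kappa_{p,1}(H) = \tfrac{1}{4}\cdot 2^{-2p}\sum_{l,l'=0}^{2^p-1}\bigl(|1+(l-l')2^{-p}|^{2H} + |1-(l-l')2^{-p}|^{2H} - 2|(l-l')2^{-p}|^{2H}\bigr) + O(2^{-2Hp}),
\]
and the main term is a Riemann sum converging as $p\to\infty$ to a strictly positive, continuous function of $H\in(0,1)$. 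Compactness of $[H_-,H_+]$ combined with a separate check for finitely many small $p$ yields the uniform constants $c_{-,1},c_{+,1}$.

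For $a\geq 2$ I would apply Isserlis' theorem (Theorem \ref{thm:gaussian_moments}) to each $\mathbb{E}_{H,\eta}[d^{(a_1)}d^{(a_2)}]$ to expand it as a finite sum of products of fBM covariances. The dominant pairing is $(a_1,a_2)=(1,2a-1)$ (together with its symmetric counterpart), in which the coarse-scale part of $d^{(1)}_{j,k,p}$ (one fBM factor, variance $O(2^{-2jH})$) pairs with $d^{(2a-1)}_{j,k,p}$, itself a polynomial of degree $2a-1$ in small-scale fBM increments of variance $O(2^{-2H(j+p)})$ each. A Wick contraction of these $2a$ factors into $a$ pairs produces one coarse--fine covariance of size $O(2^{-jH}\cdot 2^{-H(j+p)})$ and $a-1$ fine--fine covariances of size $O(2^{-2H(j+p)})$; after summation over $k$, application of the normalisation, and multiplication by $2^{2aHj}$, this gives $O(2^{-(2a-1)Hp})$. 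All other pairings with $\min(a_1,a_2)\geq 2$ use at least $2a$ small-scale factors and therefore contribute only $O(2^{-2aHp})$, which is strictly smaller. This establishes $|\kappa_{p,a}(H)|\leq C_S\,2^{-(2a-1)Hp}$ and hence the second bound.

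The main obstacle lies in the bookkeeping of this last step: one has to separate carefully the coarse and fine contributions of $d^{(1)}_{j,k,p}$, identify which Wick pairings produce the critical exponent $(2a-1)$ rather than the naive $2a$, and bound the resulting sums of covariance products uniformly in $a \leq S$. Because $S$ is fixed, the combinatorial complexity of Wick pairings is bounded, each individual covariance is controlled by standard fBM estimates, and continuity of all the ingredients in $H$ allows the local estimates to be promoted to a uniform constant $c_{\cdot,S}$ on the compact parameter set $[H_-,H_+]$.
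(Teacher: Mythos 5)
Your argument is correct in substance, and for the first claim it is essentially the paper's proof: the paper also writes $\kappa_{p,1}(H)$ as a weighted sum of the kernel $\varphi_H(x)=\tfrac12(|x+1|^{2H}-2|x|^{2H}+|x-1|^{2H})$, shows it is a Riemann sum converging uniformly in $H$ (at rate $2^{-p(2H\wedge1)}$) to $\kappa_{\infty,1}(H)$, and concludes by continuity and compactness. One small point you should not leave implicit: strict positivity of the limit is obtained in the paper by identifying $\kappa_{\infty,1}(H)=\mathbb{E}\big[\big(\int_0^1 (W^H_{u+1}-W^H_u)\,du\big)^2\big]>0$; asserting that the Riemann-sum limit is ``strictly positive'' without such a variance representation is the one place where your sketch needs a line of justification.

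For $a\ge 2$ you take a genuinely heavier route than the paper. You propose a full Isserlis/Wick-pairing analysis of each $\mathbb{E}[d^{(a_1)}d^{(a_2)}]$, identify $(a_1,a_2)=(1,2a-1)$ as the critical pairing, and track coarse--fine versus fine--fine contractions; you correctly flag the combinatorial bookkeeping as the main obstacle. The paper avoids all of this: it bounds $|\kappa_{p,a}(H)|$ by a supremum over the individual expectations $\mathbb{E}\big[(\mathfrak{W}^{H,\mathbf{r_1}}_{p,2^p}-\mathfrak{W}^{H,\mathbf{r_1}}_{p,0})(\mathfrak{W}^{H,\mathbf{r_2}}_{p,2^p+l}-\mathfrak{W}^{H,\mathbf{r_2}}_{p,l})\big]$ and applies Cauchy--Schwarz together with the second-moment estimates of Lemma \ref{lem:m4_incW}, which give $L^2$-norm $O(2^{-pHb})$ for $b\ge2$ and $O(1)$ for the coarse $b=1$ difference. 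Since $b_1+b_2=2a\ge4$ with $b_1,b_2\ge1$, at most one factor can be a coarse $b=1$ term, and the product is at worst $O(2^{-(2a-1)Hp})$ --- exactly your exponent, with none of the pairing combinatorics. Your Wick analysis would also succeed (the dominant-pairing count you give is consistent with this), but it proves more than is needed; if you carry it out, the uniformity in $a\le S$ and $H\in[H_-,H_+]$ is unproblematic for the reason you state, namely that $S$ is fixed and all covariances are controlled by standard fBM estimates.
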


\begin{lem}
\label{lem:bound:kappap}
The functions $\kappa_{p,a}$ are differentiable on $(H_-, H_+)$ and for any $a > 0$ there exists $c_a$ such that
\begin{align*}
\sup_{p\geq 0} |\kappa_{p,a}'(H)| \leq c_a.
\end{align*}
\end{lem}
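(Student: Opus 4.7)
The plan is to leverage the explicit formula for $\kappa_{p,a}$ referenced in \eqref{eq:def_kappas}. By Isserlis' theorem, this formula expresses $\kappa_{p,a}(H)$ as a finite sum (whose length depends on $p$) of products of covariance values $\mathbb{E}[W^H_s W^H_t] = \tfrac{1}{2}(|s|^{2H}+|t|^{2H}-|t-s|^{2H})$ at dyadic points of mesh $2^{-p}$. For each fixed nonzero argument $t$, the map $H \mapsto |t|^{2H} = e^{2H\log |t|}$ is real-analytic on $(0,1)$, and therefore $\kappa_{p,a}(\cdot)$ is real-analytic on $(0,1)$; in particular, it is differentiable on $[H_-,H_+]$. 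The whole content of the lemma is therefore the \emph{uniformity in} $p$ of the derivative bound.

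For $a=1$, I would differentiate under the summation sign: each factor $|t|^{2H}$ produces a factor $2\log|t|\cdot|t|^{2H}$. After the self-similar rescaling that was used in the preceding lemma to prove \eqref{eq:kappa1:bound}, the expression for $\kappa_{p,1}(H)$ is essentially a $2^p$-point quadrature of a bounded integral over a unit cube whose integrand involves powers of $|\cdot|^{2H}$. Since $H_->0$, the same integral with an additional factor $\log|\cdot|$ remains absolutely convergent and bounded uniformly in $p$. Reusing the cancellation argument that gave \eqref{eq:kappa1:bound} then yields a uniform bound for $\kappa'_{p,1}$.

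For $a\geq 2$, I would proceed via an analyticity/Cauchy argument. The formula \eqref{eq:def_kappas} defines $\kappa_{p,a}$ as a holomorphic function of $H$ in the strip $\{0<\mathrm{Re}(H)<1\}$. The cancellation argument that produced \eqref{eq:kappap:bound} does not rely on $H$ being real — it only uses algebraic identities between the terms $|t|^{2H}$ after the second-order differencing — so the same estimate gives
\[
|\kappa_{p,a}(z)| \leq C\,2^{-(2a-1)\mathrm{Re}(z)\,p}
\]
on a small complex neighborhood of $[H_-,H_+]$. Applying Cauchy's estimate on a disk of radius $r=1/p$ centered at $H\in[H_-,H_+]$ gives
\[
|\kappa'_{p,a}(H)| \leq \frac{1}{r}\sup_{|z-H|=r}|\kappa_{p,a}(z)| \leq C'\,p\,2^{-(2a-1)Hp}.
\]
Since $p \mapsto p\,2^{-(2a-1)H_- p}$ is bounded on $\{p\geq 0\}$ because $H_->0$ and $a\geq 1$, this proves the claim.

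The main obstacle is verifying that the combinatorial cancellations behind \eqref{eq:kappap:bound} extend to a complex neighborhood of $[H_-,H_+]$ (or, equivalently, survive after the differentiation in $H$ introduces logarithmic factors). In practice this boils down to checking that the proof of the preceding lemma remains valid either for complex $H$ with $\mathrm{Re}(H)\in[H_-,H_+]$ or in the presence of an extra $\log$ weight inside the integrand — neither of which alters the order of the finite-difference cancellations responsible for the $2^{-(2a-1)Hp}$ decay.
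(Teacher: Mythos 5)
For $a\geq 2$ your route is genuinely different from the paper's. The paper differentiates $\kappa_{p,a}$ directly: each constituent expectation $E_{\mathbf{r_1},\mathbf{r_2},p,l}$ is expanded by Isserlis into a finite sum of products of covariance factors, each factor is differentiated in $H$ (the resulting $\log$ correction is controlled because $x\mapsto x\ln x$ is bounded on compacts, with a dominated-convergence check to justify differentiation under the integral), and the bounds are propagated by the product rule through the $l$-sum, whose weights $2^{-2p}(2^p-|l|)$ sum to $O(1)$. You instead holomorphically extend $\kappa_{p,a}$ and apply a Cauchy estimate on a disk of radius $1/p$, which would convert \eqref{eq:kappap:bound} into the strictly stronger decaying bound $|\kappa'_{p,a}(H)|\lesssim p\,2^{-(2a-1)Hp}$. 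For $a=1$ your plan essentially coincides with the paper's argument of differentiating under the sum and controlling the extra logarithmic weight.

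There is, however, a genuine gap --- precisely the step you label ``the main obstacle.'' The Cauchy estimate needs $|\kappa_{p,a}(z)|\leq C\,2^{-(2a-1)\mathrm{Re}(z)p}$ on a complex neighborhood of $[H_-,H_+]$, and you assert this extension on the grounds that the proof of \eqref{eq:kappap:bound} ``does not rely on $H$ being real.'' It does: the paper derives \eqref{eq:kappap:bound} by applying Cauchy--Schwarz, $|\mathbb{E}[XY]|\leq(\mathbb{E}[X^2]\,\mathbb{E}[Y^2])^{1/2}$, together with the moment estimate of Lemma \ref{lem:m4_incW}. Once $H$ is replaced by a non-real $z$, the expressions $\mathbb{E}\big[\big(\mathfrak{W}^{z,\mathbf{r}}_{p,\cdot}-\mathfrak{W}^{z,\mathbf{r}}_{p,\cdot}\big)^2\big]$ cease to be second moments of real random variables and Cauchy--Schwarz is simply unavailable. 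To repair this you would have to abandon the probabilistic shortcut, go back to the explicit Isserlis polynomial in the complexified covariances $\tfrac{1}{2}\big(|s|^{2z}+|t|^{2z}-|s-t|^{2z}\big)$, and re-establish the $O(2^{-(2a-1)\mathrm{Re}(z)p})$ decay from the finite-difference cancellations there --- at which point you are manipulating essentially the same expansion the paper differentiates directly, so the complex-analytic detour does not in the end spare you the core computation. (The secondary issue that the disk of radius $1/p$ exits the strip for the finitely many $p<1/H_-$ is harmless: each such $\kappa_{p,a}$ is smooth on the compact $[H_-,H_+]$ and can be treated separately.) The analyticity of $\kappa_{p,a}$ itself is fine; what is missing is the uniform sup-norm bound off the real axis, and that is the entire substance of the lemma.
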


Our next result shows that the rescaled energy levels $2^{2jH} Q_{j,N-j-1}$ are essentially bounded above and below in probability. Its proof is given in Section \ref{subsec:energy:bound}.

\begin{prop}
\label{prop:bound_energy}
Let $\varepsilon > 0$. Then there exist $0 < r_-(\varepsilon) < r_+(\varepsilon)$, $J_0(\varepsilon) > 0$ and $N_0(\varepsilon)$ such that for $N \geq N_0(\varepsilon)$, we have
\begin{align*}
\sup_{H, \eta}
\mathbb{P}	_{H,\eta}
\Big(
\inf_{J_0 \leq j \leq N-1} 2^{2jH} Q_{j,N-j-1} \leq r_-(\varepsilon)
\Big)
\leq \varepsilon
\end{align*}
and
\begin{align*}
\sup_{H, \eta}
\mathbb{P}_{H,\eta}
\Big(
\sup_{J_0 \leq j \leq N-1} 2^{2jH} Q_{j,N-j-1} \geq r_+(\varepsilon)
\Big)
\leq \varepsilon.
\end{align*}
\end{prop}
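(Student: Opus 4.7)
The plan is to compare $2^{2jH}Q_{j,N-j-1}$ with the deterministic quantity
\[
M_{j,p}(H,\eta) := \sum_{a=1}^{S} \eta^{2a} 2^{-2aHj}\kappa_{p,a}(H)
\]
for a fixed integer $S$ chosen to satisfy the hypotheses of Proposition \ref{prop:energy} uniformly over $\mathcal D$ (possible since $\mathcal D \subset (0,3/4)\times(0,\infty)$ is compact). The stochastic part of the problem will be handled by the $L^2$ bound \eqref{eq:energy} combined with Markov's inequality and a union bound over the dyadic scales; the deterministic part will follow from \eqref{eq:kappa1:bound} for the leading contribution and \eqref{eq:kappap:bound} for the higher-order terms.

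For the stochastic step, multiplying \eqref{eq:energy} by $2^{4jH}$ gives
\[
\mathbb{E}_{H,\eta}\bigl[\bigl(2^{2jH}Q_{j,N-j-1}-2^{2jH}M_{j,N-j-1}\bigr)^2\bigr] \leq C\,2^{-j},
\]
so that Markov's inequality and a union bound over $J_0 \leq j \leq N-1$ give, for any $\delta>0$, a bound of the form $2C\delta^{-2}\,2^{-J_0+1}$ on the probability that the supremum over $j$ of the error exceeds $\delta$. For $J_0$ large enough (depending on $\varepsilon$ and $\delta$), this is $\leq \varepsilon$, uniformly in $(H,\eta)$. For the deterministic step, one decomposes
\[
2^{2jH}M_{j,N-j-1} = \eta^2\kappa_{N-j-1,1}(H) + \sum_{a=2}^{S}\eta^{2a}2^{-2(a-1)Hj}\kappa_{N-j-1,a}(H).
\]
The first term is trapped between $\eta_-^2 c_{-,1}$ and $\eta_+^2 c_{+,1}$ by \eqref{eq:kappa1:bound}, uniformly in $(H,\eta,j)$. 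For the remainder, the bound \eqref{eq:kappap:bound} gives, for $a\geq 2$,
\[
\bigl|\eta^{2a}2^{-2(a-1)Hj}\kappa_{N-j-1,a}(H)\bigr| \leq \eta_+^{2a}\,c_{\cdot,S}\,2^{-H[2(a-1)j+(2a-1)(N-j-1)]},
\]
and the exponent simplifies to $(2a-1)(N-1)-j$, which for $j\leq N-1$ is at least $2(a-1)(N-1)$. Summing over $a\in\{2,\dots,S\}$ therefore produces a bound of the form $C_S\,2^{-2H_-(N-1)}$ that can be made $\leq \delta$ by taking $N_0$ large enough.

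Combining the two steps and choosing $\delta = \tfrac14 \eta_-^2 c_{-,1}$, one obtains on an event of $\mathbb{P}_{H,\eta}$-probability at least $1-\varepsilon$ the two-sided bound $\tfrac12\eta_-^2 c_{-,1} \leq 2^{2jH}Q_{j,N-j-1} \leq 2\eta_+^2 c_{+,1}$ for all $j\in[J_0,N-1]$, which yields the claim with $r_-(\varepsilon)=\tfrac12\eta_-^2 c_{-,1}$ and $r_+(\varepsilon)=2\eta_+^2 c_{+,1}$. The main (and essentially only) obstacle is the bookkeeping for the deterministic remainder: one must verify that the dyadic coupling $p=N-j-1$ makes the error exponent $H\bigl((2a-1)(N-1)-j\bigr)$ uniformly large when $a\geq 2$, and for this the precise form of \eqref{eq:kappap:bound}, in particular the compensating factor $2^{(2a-1)Hp}$ on the left-hand side of that bound, is indispensable. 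The union-bound tail $\sum_{j\geq J_0}2^{-j}$ is geometric, so the stochastic step raises no further difficulty.
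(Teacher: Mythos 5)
Your proof is correct and follows essentially the same route as the paper's: union bound over $j$, Markov's inequality via the $L^2$ estimate of Proposition~\ref{prop:energy}, and the bounds \eqref{eq:kappa1:bound}--\eqref{eq:kappap:bound} on the $\kappa_{p,a}$. The exponent bookkeeping $2(a-1)j + (2a-1)(N-j-1) = (2a-1)(N-1) - j \geq 2(a-1)(N-1)$ is precisely the computation that makes the higher-order terms negligible uniformly over $J_0 \leq j \leq N-1$, and you have verified it correctly; your presentation (centering $2^{2jH}Q_{j,N-j-1}$ at $2^{2jH}M_{j,N-j-1}$ and giving one two-sided bound) is if anything a little tidier than the paper's, which treats the lower and upper tails separately but with the same ingredients.
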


Because of the additional terms appearing in Equation \eqref{eq:energy}, we want to add a bias correction to the energy levels. To that end, for $S > 0$, $\nu > 0$ and $I > 0$, we define
\begin{align}
\label{eq:B}
Q_{j,p}^{(S)}(I,\nu)
=
Q_{j,p}
-
B_{j,p}^{(S)}(I,\nu),
\end{align}
where
\begin{align*}
B_{j,p}^{(S)}(I,\nu)= 	
	\sum_{a=2}^{S} \nu^{2a} 2^{-2aIj} \kappa_{p,a}(I).
\end{align*}
Note that unlike in Proposition \ref{prop:energy}, this sum starts at $a = 2$ so that we have
\begin{align*}
\mathbb{E} \Big [\big(
	Q_{j,p}^{(S)}(H,\eta) 
	-
	\eta^{2} 2^{-2Hj} \kappa_{p,1}(H)
\big)^2
\Big ]
\leq 
C 2^{-j(1+4H)},
\end{align*}
which now has the same behaviour as the term $Q_j$ in Proposition 3 of \cite{gloter2007estimation}. Therefore, we can derive bounds for $2^{2jH} Q_{j,N-j-1}^{(S)}(H,\eta) $ similarly to Proposition \ref{prop:bound_energy}. The proof, which is analogous to that of Proposition \ref{prop:bound_energy} in Section \ref{subsec:energy:bound}, is omitted.
\begin{prop}
\label{prop:bound_energy_corrected}
Let $\varepsilon > 0$. Then there exist $0 < r_-^{(S)}(\varepsilon) < r_+^{(S)}(\varepsilon)$, $J_0^c(\varepsilon) > 0$ and $N_0^c(\varepsilon)$ such that for $N \geq N_0^c(\varepsilon)$, we have
\begin{align*}
\sup_{H, \eta}
\mathbb{P}	_{H,\eta}
\Big(
\inf_{J_0 \leq j \leq N-1} 2^{2jH} Q_{j,N-j-1}^{(S)}(H,\eta)  \leq r_-^{(S)}(\varepsilon)
\Big)
\leq \varepsilon
\end{align*}
and
\begin{align*}
\sup_{H, \eta}
\mathbb{P}	_{H,\eta}
\Big(
\sup_{J_0 \leq j \leq N-1} 2^{2jH} Q_{j,N-j-1}^{(S)}(H,\eta)  \geq r_+^{(S)}(\varepsilon)
\Big)
\leq \varepsilon.&
\end{align*}
\end{prop}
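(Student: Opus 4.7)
The plan is to adapt the proof of Proposition \ref{prop:bound_energy} verbatim, the only new ingredient being that subtracting the bias polynomial $B_{j,p}^{(S)}(\eta,H)$ leaves exactly the leading energy term $\eta^{2}2^{-2Hj}\kappa_{p,1}(H)$ in place of the full expansion in Proposition \ref{prop:energy}. Concretely, since
\begin{align*}
Q_{j,p}^{(S)}(\eta,H) - \eta^{2}2^{-2Hj}\kappa_{p,1}(H) = Q_{j,p} - \sum_{a=1}^{S}\eta^{2a}2^{-2aHj}\kappa_{p,a}(H),
\end{align*}
Proposition \ref{prop:energy} gives, for all $j$ large enough and uniformly in $(H,\eta)\in\mathcal D$,
\begin{align*}
\mathbb{E}_{H,\eta}\!\left[\bigl(2^{2jH}Q_{j,p}^{(S)}(\eta,H)-\eta^{2}\kappa_{p,1}(H)\bigr)^{2}\right] \leq C\,2^{-j}.
\end{align*}
This is the exact analogue of the bound used in Proposition \ref{prop:bound_energy}, with the deterministic center $\eta^{2}\kappa_{p,1}(H)$ replacing the stochastic one.

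Next I would invoke Lemma (\ref{eq:kappa1:bound}) to control this center uniformly: for $p=N-j-1\geq 1$ (i.e.\ $j\leq N-2$) one has $\eta_-^{2}c_{-,1}\leq \eta^{2}\kappa_{p,1}(H)\leq \eta_+^{2}c_{+,1}$, and the single remaining case $p=0$ (i.e.\ $j=N-1$) can be absorbed either by extending the boundedness of $\kappa_{0,1}(H)$ by a direct computation or by restricting the supremum to $j\leq N-2$ and noting that the extra index contributes at most one term controlled by the same Markov bound. Setting
\begin{align*}
r_-^{(S)}(\varepsilon):=\tfrac{1}{2}\eta_-^{2}c_{-,1},\qquad r_+^{(S)}(\varepsilon):=2\eta_+^{2}c_{+,1},
\end{align*}
a Chebyshev inequality combined with a union bound yields, for any $J_0\leq j\leq N-1$,
\begin{align*}
\mathbb{P}_{H,\eta}\!\left(\exists\,j:\bigl|2^{2jH}Q_{j,N-j-1}^{(S)}(\eta,H)-\eta^{2}\kappa_{N-j-1,1}(H)\bigr|\geq \tfrac12\eta_-^{2}c_{-,1}\right)\leq \sum_{j\geq J_0}\frac{4C\,2^{-j}}{\eta_-^{4}c_{-,1}^{2}}\leq C' 2^{-J_0}.
\end{align*}
Choosing $J_0^{c}(\varepsilon)$ so that $C'2^{-J_0^{c}(\varepsilon)}\leq \varepsilon$, and $N_0^{c}(\varepsilon)\geq J_0^{c}(\varepsilon)+2$ to ensure the index set is nonempty and fits in the regime where Proposition \ref{prop:energy} applies (which requires the conditions $S\geq 1/(4H_-)+1/2$ and $S>H_+/(2H_-)-1/2$ to be met for every $(H,\eta)\in\mathcal D$, which is implicit in the proposition's hypothesis), both lower and upper deviation events have probability at most $\varepsilon$ uniformly in $(H,\eta)\in\mathcal D$.

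The only genuine subtlety, which I expect to be the principal point to verify carefully, is the uniformity of all constants across $\mathcal D$: the constant $C$ in Proposition \ref{prop:energy} must be uniform in $(H,\eta)\in\mathcal D$ (which it is, since $\mathcal D$ is compact and all the bounds in the proposition depend continuously on $H$ through the explicit functions $\kappa_{p,a}$, controlled by Lemma \ref{lem:bound:kappap}), and the constants $c_{\pm,1}$ from Lemma (\ref{eq:kappa1:bound}) are uniform in $p\geq 1$ and $H\in[H_-,H_+]$. Once these uniformities are in hand, the union bound above is uniform in $(H,\eta)$, which is what is required for the supremum probabilities in the statement.
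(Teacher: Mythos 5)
Your proposal is correct and takes essentially the same approach the paper intends: the paper explicitly declares the proof of this proposition to be identical to that of Proposition \ref{prop:bound_energy}, and in the remarks preceding the statement it records exactly the $L^{2}$ bound on $Q_{j,p}^{(S)}(\eta,H)-\eta^{2}2^{-2Hj}\kappa_{p,1}(H)$ that you take as your starting point. Your observation that the subtracted bias makes the deterministic center cleaner (only the $a=1$ term, controlled by \eqref{eq:kappa1:bound}, rather than the full sum requiring \eqref{eq:kappap:bound}) is a valid mild simplification, and flagging the boundary case $p=0$ and the uniformity of constants is the right kind of caution, but neither changes the substance of the argument.
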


We conclude this section by giving explicit Lipschitz bounds on the functions $B_{j,p}^{(S)}$:

\begin{lem}
\label{lem:dist:B}
There exists $c_B > 0$ such that for any $\eta_1, \eta_2 \in [\eta_-, \eta_+]$ and $H_1, H_2 \in [H_-, H_+]$, we have
\begin{align*}
|
B_{j,p}^{(S)}(\eta_1, H_1) 
-
B_{j,p}^{(S)}(\eta_2, H_2) 
|
\leq
c_B 2^{-4(H_1 \wedge H_2)j}
(
j |H_1 - H_2|
+
|
\eta_1 
-
\eta_2
|
).
\end{align*}
\end{lem}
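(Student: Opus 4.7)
The plan is to bound each summand $f_a(\eta, H) := \eta^{2a} 2^{-2aHj} \kappa_{p,a}(H)$ appearing in the definition of $B_{j,p}^{(S)}$ separately, and then sum over $a = 2, \ldots, S$. For each $a$, I write the total increment $f_a(\eta_1, H_1) - f_a(\eta_2, H_2)$ as a telescoping sum of three single-variable differences: first change $\eta$ only, then change $H$ inside the exponential $2^{-2aHj}$ only, then change $H$ inside $\kappa_{p,a}$ only. Each of these three pieces can be controlled with ingredients already available in the paper.

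For the $\eta$-piece, compactness of $[\eta_-, \eta_+]$ together with the mean value theorem gives $|\eta_1^{2a} - \eta_2^{2a}| \leq 2a \eta_+^{2a-1} |\eta_1 - \eta_2|$, while $|\kappa_{p,a}(H_1)|$ is bounded uniformly in $p$ by \eqref{eq:kappap:bound}. For the exponential-piece, the mean value theorem applied to $H \mapsto 2^{-2aHj}$ yields
\begin{equation*}
|2^{-2aH_1 j} - 2^{-2aH_2 j}| \leq (2a \log 2)\, j\, 2^{-2a(H_1 \wedge H_2) j} |H_1 - H_2|,
\end{equation*}
which is where the characteristic $j$ factor multiplying $|H_1 - H_2|$ arises. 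For the $\kappa$-piece, Lemma \ref{lem:bound:kappap} provides $|\kappa_{p,a}(H_1) - \kappa_{p,a}(H_2)| \leq c_a |H_1 - H_2|$ uniformly in $p$.

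Combining these three estimates produces, for each $a \geq 2$, a bound of the form
\begin{equation*}
|f_a(\eta_1, H_1) - f_a(\eta_2, H_2)| \leq C_a\, 2^{-2a(H_1 \wedge H_2) j} \bigl( |\eta_1 - \eta_2| + j |H_1 - H_2| \bigr),
\end{equation*}
where $C_a$ absorbs the uniform bounds on $\eta^{2a-1}$, on $\kappa_{p,a}$, and on $\kappa_{p,a}'$. Since $a \geq 2$ throughout the sum, each factor $2^{-2a(H_1 \wedge H_2) j}$ is dominated by $2^{-4(H_1 \wedge H_2)j}$, and summing the finitely many terms $a = 2, \ldots, S$ gives the desired inequality with a single constant $c_B$ depending on $S$, $\mathcal{D}$, $c_{\cdot,S}$ and the constants $c_a$ from Lemma \ref{lem:bound:kappap}. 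The whole argument is essentially bookkeeping; the only substantive observation is the appearance of the $j$ factor in the $H$-Lipschitz estimate, which is intrinsic because differentiating $2^{-2aHj}$ in $H$ produces $-(2a \log 2) j \cdot 2^{-2aHj}$.
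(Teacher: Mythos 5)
Your proof is correct and follows essentially the same strategy as the paper: telescope the difference across the three factors $\eta^{2a}$, $2^{-2aHj}$, and $\kappa_{p,a}(H)$, apply the mean value theorem to each, and absorb the extra decay $2^{-2a(H_1\wedge H_2)j}\leq 2^{-4(H_1\wedge H_2)j}$ coming from $a\geq 2$. The only cosmetic difference is that the paper telescopes into two pieces, treating $t\mapsto 2^{-2atj}\kappa_{p,a}(t)$ as one differentiable function whose derivative is bounded by $Cj\,2^{-2atj}$, whereas you split this into two sub-pieces; this is an equivalent bookkeeping choice.
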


This lemma is proved in Section \ref{sec:proof:lem:dist:B}.

\subsection{Construction of the estimator}
\label{subsec:estimator}

We now turn to the task of building a good estimator of $Q_{j,p}$ based on the price increments. Here, we fix an integer $S$ such that $S \geq 1/(4H_-) + 1/2$ and $S > H_+/(2H_-) - 1/2$ so that the result of Proposition \ref{prop:energy} is in force.\\

First, notice that for a fixed $(j,p)$ such that $j+p \leq N$, the price increment $S_{k2^{-j}+(l+1)2^{-j-p}} - S_{k2^{-j}+l2^{-j-p}}$ is $\mathcal{A}^{n}$-measurable and
\begin{align}
\label{eq:price_increments}
\Big( \big(S_{(l+1)2^{-j-p}} - S_{l2^{-j-p}}\big)^2 \Big)_{l\geq 0} = \Big( \int_{l2^{-j-p}}^{(l+1)2^{-j-p}} \sigma_u^2 du \; \xi_{j,p,l}^2\Big)_{l\geq 0} 
\end{align}
holds in distribution, where the $(\xi_{j,p,l})_l$ are i.i.d. standard Gaussian variables, independent of the volatility process $(\sigma_t)_t$. We estimate $d_{j,p,k}$ by 
\begin{align*}
\widetilde{d}_{j,p,k,n} = 
&2^{-p-j/2}
\sum_{l=0}^{2^p-1}
\log \Big( 
\big(S_{(k+1)2^{-j}+(l+1)2^{-j-p}} - S_{(k+1)2^{-j}+l2^{-j-p}}\big)^2
\Big)
\\&- 2^{-p-j/2}
\sum_{l=0}^{2^p-1}
\log \Big( 
\big(S_{k2^{-j}+(l+1)2^{-j-p}} - S_{k2^{-j}+l2^{-j-p}}\big)^2
\Big).
\end{align*}
Thanks to \eqref{eq:price_increments}, we have the representation 
$$
\widetilde{d}_{j,p,k,n} = d_{j,p,k} + e_{j,k,p,n},
$$
where
\begin{align*}
e_{j,k,p,n} = 2^{-p-j/2}
\sum_{l=0}^{2^p-1} \big(
\log ( 
\xi_{j,p,(k+1)2^{p}+l}^2
)
-
\log ( 
\xi_{j,p,k2^{p}+l}^2
) \big).
\end{align*}
In the same way as in \eqref{eq:def:Qhat}, we introduce a bias correction for estimating $(d_{j,k,p})^2$ due to the term $e^2_{j,k,p,n}$ by setting
\begin{align*}
\widehat{d^2}_{j,p,k,n} = (\widetilde{d}_{j,p,k,n})^2 - 2^{-j-p+1} \Var(\log \xi^2),
\end{align*}
where $\xi$ is a standard normal random variable. We obtain an estimator of the energy level by setting
\begin{align*}
\widehat{Q}_{j,p,n} = \sum_k \widehat{d}^2_{j,p,k,n}. 
\end{align*}
We are ready to construct our estimator for $H$. We proceed recursively in order to achieve optimality.  
A first estimator of $(H, \eta)$ is defined by $(\widehat{H}^{(0)}_n, \widehat{\eta}^{(0)}_n)$, with
\begin{align*}
\widehat{H}^{(0)}_n &= 
\min\Big(
\max\big( 
-\frac{1}{2} \log_2 \big[
\frac{\widehat{Q}_{J_n^{*} + 1, N-J_n^{*} - 1, n}}{\widehat{Q}_{J_n^{*}, N-J_n^{*} - 1, n}}
\big] 
, H_-\big), H_+ \Big)
\end{align*}
with  $J_n^* = \max \big\{j:\widehat{Q}_{j,N-j-1,n} \geq 2^jn^{-1} \big\}$,
and then
\begin{align*}
\widehat{\eta}^{(0)}_n = 
\min \Big(\max \Big( 
\Big( \frac{\widehat{Q}_{\widehat{j}_n,N-\widehat{j}_n,n} 2^{2\widehat{j}_n\widehat{H}^{(0)}_n}}{ \kappa_{N-\widehat{j}_n, 1}(\widehat{H}^{(0)}_n)} \Big)^{1/2}
, \eta_- \Big)
, \eta_+ \Big)
\end{align*}
where
$\widehat{j}_n
= \big\lfloor \frac{1}{2\widehat{H}^{(0)}_n+1}\log_2 n \big\rfloor.
$
The projection onto $[H_-, H_+] \times [\eta_-, \eta_+]  \supseteq \mathcal D$ guarantees the stability of the bias correction procedure. We then refine this estimator by correcting for a bias induced by the functions $\kappa_{p,a}$ that are necessary to achieve a sufficiently accurate scaling as in \eqref{eq:energy} of Proposition \ref{prop:energy}. For $\nu > 0$ and $0 < I < 1$, put
$$\widehat{Q}_{j,p,n}^{(S)}(I,\nu)
=
\widehat{Q}_{j,p,n}
-
B_{j,p}^{(S)}(I,\nu),
$$ where $B^{(S)}_{j,p}$ is defined in \eqref{eq:B}. Let
\begin{align}
\label{eq:def:bias_corrected_estimator_H}
\widehat{H}_n^{c}(I,\nu) &= 
\min \Big( 
\max\big( 
-\frac{1}{2} \log_2 \big[
\frac{\widehat{Q}^{(S)}_{J_n^{*c}(I,\nu) + 1, N-J_n^{*c}(I,\nu) - 1, n}(I,\nu)}{\widehat{Q}^{(S)}_{J_n^{*c}(I,\nu), N-J_n^{*c}(I,\nu) - 1, n}(I,\nu)}
\big]
, H_-
\big)
, H_+ \Big),
\end{align}
with
\begin{align}
\label{eq:def:bias_corrected_j}
J_n^{*c}(I,\nu) = \max \big\{j:\widehat{Q}^{(S)}_{j, N-j - 1, n}(I,\nu) \geq 2^jn^{-1} \big\}
\end{align}
and
\begin{align}
\label{eq:def:bias_corrected_estimator_eta}
\widehat{\eta}^{c}_n(I,\nu) &= 
\min \Big( 
\max \Big( \frac{\widehat{Q}^{(S)}_{\widehat{j}_n,N-\widehat{j}_n,n}(I,\nu) 2^{2\widehat{j}_nI}}{ \kappa_{N-\widehat{j}_n, 1}(I)} \Big)^{1/2},
\eta_-
\Big),
\eta_+ \Big).
\end{align}

We use these functions to build iteratively a sequence $(\widehat{H}^{(m)}_n, \widehat{\eta}^{(m)}_n)$ of estimators of $(H, \eta)$. Starting with $(\widehat{H}^{(0)}_n, \widehat{\eta}^{(0)}_n)$, we define for an integer $m \geq 1$:
\begin{align*}
\widehat{H}^{(m)}_n
&=
\widehat{H}_n^{c}(\widehat{H}^{(m-1)}_n, \widehat{\eta}^{(m-1)}_n),
\end{align*}
and
\begin{align*}
\widehat{\eta}^{(m)}_n
&=
\widehat{\eta}^{c}_n(\widehat{H}^{(m)}_n, \widehat{\eta}^{(m-1)}_n).
\end{align*}

Finally, pick an integer $m_{opt}$ such that  $m_{opt} > 1/(4H) - 2H - 1$ holds for any $H_- < H < H_+$. We obtain the following upper bound:
\begin{thm}
\label{thm:construction}
The rates $v_n(H) = n^{-1/(4H+2)}$ and $w_n(H) = n^{-1/(4H+2)} \log n$ are achievable for estimating $H$ and $\eta$ respectively over the parameter set $\mathcal{D}$.\\

More precisely, if $m_{opt}$ satisfies $m_{opt} > m > 1/(4H) - 2H - 1$ for any $H_- < H < H_+$,
then the sequences of random variables
$$\big(v_n(H)^{-1}(\widehat{H}^{(m_{opt})}_n - H)\big)_{n \geq 1}$$ 
and 
$$\big(w_n(H)^{-1} (\widehat{\eta}^{(m_{opt})}_n - \eta) \big)_{n \geq 1}$$ 
are bounded in $\mathbb P_{H,\eta}$ probability, uniformly over $\mathcal{D}$.
\end{thm}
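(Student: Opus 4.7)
The plan is to first relate the empirical energy level $\widehat Q_{j,p,n}$ to the deterministic one $Q_{j,p}$, then analyze the uncorrected estimator $(\widehat H^{(0)}_n, \widehat\eta^{(0)}_n)$, and finally show that each bias-correction iteration contracts the systematic error until, after $m_{opt}$ iterations, the minimax rate is attained.

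From \eqref{eq:price_increments}, one has the decomposition $\widetilde d_{j,k,p} = d_{j,k,p} + e_{j,k,p}$ where the $e_{j,k,p}$ are centred pre-averages of independent $\log\chi^2$ noise with second moment of order $2^{-j-p}$. After the explicit de-biasing built into $\widehat Q_{j,p,n}$, a concentration argument on the stochastic noise combined with Proposition \ref{prop:energy} yields, for $j+p = N-1$,
\[
\widehat Q_{j,p,n} = \eta^2 2^{-2Hj}\kappa_{p,1}(H) + \sum_{a=2}^{S}\eta^{2a}2^{-2aHj}\kappa_{p,a}(H) + \rho_{j,p,n},
\]
with $|\rho_{j,p,n}|$ of stochastic order $2^{-j(1/2+2H)}$. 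Propositions \ref{prop:bound_energy} and \ref{prop:bound_energy_corrected} then guarantee that both $J_n^*$ and $J_n^{*c}(I,\nu)$ are located near $\lfloor \log_2(n)/(2H+1) \rfloor$ with probability tending to $1$, uniformly over $\mathcal D$, so the log-ratios defining the estimators are evaluated at the right resolution.

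Plugging the expansion into the definition of $\widehat H^{(0)}_n$ and Taylor-expanding the logarithm yields $\widehat H^{(0)}_n - H = O_P(v_n(H)) + O_P(2^{-2HJ_n^{*}})$, the second term being the bias from the uncorrected $a \geq 2$ contributions and giving the initial rate $e_0 \lesssim n^{-2H/(2H+1)}$. A parallel computation controls $\widehat\eta^{(0)}_n$. For the iterative refinement, given errors $e_{m-1}$ for $(\widehat H^{(m-1)}_n, \widehat\eta^{(m-1)}_n)$, Lemma \ref{lem:dist:B} bounds the plug-in bias correction error by
\[
|B_{j,p}^{(S)}(\widehat H^{(m-1)}_n,\widehat\eta^{(m-1)}_n) - B_{j,p}^{(S)}(H,\eta)| \lesssim 2^{-4Hj}(j+1)\, e_{m-1},
\]
and a second application of the log-ratio/Taylor argument gives the contraction
\[
e_m \lesssim v_n(H) + n^{-2H/(2H+1)}(\log n)\, e_{m-1}.
\]
Iterating this recursion from $e_0$ and carefully tracking the accumulated $\log n$ factors together with the sharper $p$-decay of $\kappa_{p,a}$ from \eqref{eq:kappap:bound}, one obtains $e_{m_{opt}} \lesssim v_n(H)$ precisely under the threshold $m_{opt} > 1/(4H) - 2H - 1$ of the theorem. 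The bound on $\widehat\eta^{(m_{opt})}_n$ then follows by substituting $\widehat H^{(m_{opt})}_n$ into \eqref{eq:def:bias_corrected_estimator_eta}, the additional $\ln n$ factor in $w_n(H)$ coming from differentiating $2^{2\widehat j_n H}$ in $H$ at $\widehat j_n = O(\log n)$.

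The main obstacle is to keep the probabilistic controls uniform in $(H,\eta)\in\mathcal D$ at every step of the iteration: one must show that the plugged-in threshold scale $J_n^{*c}(\widehat H^{(m-1)}_n,\widehat\eta^{(m-1)}_n)$ remains at the correct order throughout and that the residual terms $\rho_{j,p,n}$ do not accumulate, so that the deterministic recursion for $e_m$ is rigorously justified uniformly over $\mathcal D$.
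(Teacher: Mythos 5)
Your overall architecture is the same as the paper's: a first-pass rate for $(\widehat H^{(0)}_n, \widehat\eta^{(0)}_n)$ (Propositions \ref{prop:first_estimator_H} and \ref{prop:first_estimator_eta}), an iterative contraction step for the bias-corrected estimators driven by Lemma \ref{lem:dist:B} (Propositions \ref{prop:refinement_H} and \ref{prop:refinement_eta}), and then a count of iterations. Your contraction $e_m \leq C\big(v_n(H) + n^{-2H/(2H+1)}(\log n)\,e_{m-1}\big)$ matches the paper's $v_n^{(m)} = \big(v_n^{(m-1)}\log(n)\, n^{-2H/(2H+1)}\big) \vee n^{-1/(4H+2)}$, and your explanation of where the extra $\ln n$ factor in the rate for $\eta$ comes from is the right one.

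The gap is the initial rate $e_0 = O\big(n^{-2H/(2H+1)}\big)$. You obtain this by treating the uncorrected $a\geq 2$ bias at level $j = J_n^* \approx \log_2(n)/(2H+1)$ as $O(2^{-2HJ_n^*})$, which implicitly uses only that $\kappa_{p,a}(H)$ is bounded. Proposition \ref{prop:first_estimator_H} instead proves the strictly sharper $v_n^{(0)}(H) = n^{-2H} \vee n^{-1/(4H+2)}$, and the improvement is exactly where the pre-averaging decay $|2^{(2a-1)Hp}\kappa_{p,a}(H)| \leq c_{\cdot,S}$ of \eqref{eq:kappap:bound} enters: with $p = N-j-1$ it makes the squared systematic bias of order $2^{-6HN-2jH}$, and the union bound over $j$ yields an $n^{-4H}$ contribution (so $n^{-2H}$ in absolute value) rather than $n^{-4H/(2H+1)}$. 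Starting the iteration from your $e_0$ gives $e_m = O\big(\log^m(n)\,n^{-2H(m+1)/(2H+1)}\big) \vee n^{-1/(4H+2)}$, which reaches the minimax rate only for $m_{opt} > 1/(4H) - 1$, not the theorem's $m_{opt} > 1/(4H) - 2H - 1$. The deficit of $2H$ iterations cannot be recovered inside the contraction step, because the Lipschitz bound of Lemma \ref{lem:dist:B}, $|B^{(S)}_{j,p}(\eta_1,H_1) - B^{(S)}_{j,p}(\eta_2,H_2)| \leq c_B\, 2^{-4(H_1\wedge H_2)j}\big(j|H_1-H_2| + |\eta_1-\eta_2|\big)$, does not itself exploit the $p$-decay; so your remark that the sharper $p$-decay of $\kappa_{p,a}$ is being ``tracked'' during iteration identifies the right ingredient but puts it in the wrong place — it must go into the base case $v_n^{(0)}$.
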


The choice of $m_{opt}$ is discussed in Section \ref{sec:discussion}, while the proof is delayed until Section \ref{sec:estimator:proof}. Although the results of Theorem \ref{thm:construction} depends on $H_-$ and $H_+$, their value are not that important in practice.

\section{Discussion}
\label{sec:discussion}
We briefly elaborate on the results stated in Section \ref{sec:piecewise} and Section \ref{sec:general}.
\begin{itemize}
\item{\bf About the rates of convergence.} We obtain the same minimax rate $n^{-1/(4H+2)}$ for estimating $H$ as in \cite{gloter2007estimation} and \cite{rosenbaum2008estimation} except that our result is valid over the whole range $(0,1)$ (uniformly over compact sets). This may come as a slight surprise, but retrieving the Hurst exponent $H$ becomes easier as the trajectory becomes rougher. Heuristically,  and in analogy with more classical signal plus noise models, this can be explained by the fact that the more a signal oscillates, the less the noise affects the reconstruction of  its smoothness. Therefore, estimators of the roughness of   volatility should be quite accurate in rough volatility models. This can seem counter-intuitive at first glance as one knows that the optimal rate for estimating a $\beta$-H\"older continuous function (say for instance in the context of estimating a density from a sample of $n$ independent random variables) is  $n^{-\beta/(2\beta+1)}$ for most loss functions. In our setting, we can obtain significantly better rates because we do not try to reconstruct the signal itself but only its regularity. Although volatility remains hidden behind the multiplicative noise of the realised variance, we can retrieve its roughness with a fast convergence rate, close to the usual rate $n^{-1/2}$ of regular parametric statistical models in the limit $H \rightarrow 0$.\\

A referee points out the following observation: in their Theorem 5.7, \cite{bibinger2017nonparametric} establish an optimal rate of a change-point test for a regularity parameter of latent volatility. While this rate shows the same effect of being faster for rough volatility than for smooth volatility, from the non-identifiability result in Remark 5.8 of the mentioned reference, it seems that the rate for estimating the regularity of a general H\"older continuous volatility process could be worse than the lower bound we establish. This would imply that having a fractional volatility process (instead of  a general $H$-Hölder regular process)  is  crucial for the results obtained in this paper.\\ % and the rate and the estimation are not only based on the scaling of (second-order) increments.\\ 
%Hence, I am unsure if some speculative remarks in the paper, e.g. about p-variations, are appropriate.

\item{\bf About the use of wavelets.} Our estimation strategy relies on wavelets and quadratic functionals of the underlying volatility, as in \cite{gloter2007estimation} and \cite{rosenbaum2008estimation}. The multiresolution nature of wavelets is particularly convenient in our setting from a technical viewpoint, notably when computing the dependence structure of the coefficients. Also, selecting optimal resolution levels can be done in a natural way in this framework. That is why we use this technique instead of $p$-variations of increments. However, both approaches are close in spirit given the strong links between the two objects {\it via} Besov spaces, see for instance \cite{rosenbaum2009besov, ciesielski1993quelques}.\\

\item {\bf Comparison with \cite{chong2022statistical}.} The estimators of $H$ in \cite{chong2022statistical} rely on the autocovariance function of spot volatility estimators. Those constructed and analyzed in this paper are based on scaling properties of quadratic variations for different sampling frequencies. Therefore, they can be considered, respectively, as variants of the autocovariance estimators and change-of-frequency estimators used to estimate the roughness of an observable process (see e.g., \cite{BNCP13}). Both approaches are methodologically similar, as they depend on ratio statistics involving linear combinations of autocovariances (of the spot volatility estimator). This being said, there are fine differences in how biases are removed in \cite{chong2022statistical} and the current paper. In both papers, an increasing number of debiasing terms are needed as $H\downarrow 0$. However, because spot variance is a linear function of a fractional process in \cite{chong2022statistical}, while this relationship is nonlinear (of exponential type) in the current setting, the debiasing strategies are different between the two papers.

\item{\bf About second-order increments.} The fact that second-order increments are needed to optimally estimate fractional signals when $H\geq 3/4$ is well known, see for example \cite{istas1997quadratic, coeurjolly2001estimating}. That is why we consider such increments in Section \ref{sec:piecewise}. For technical convenience, we restrict ourselves in Section \ref{sec:general} to the case $H<3/4$. This enables us to avoid additional issues coming from the asymptotic expansion of
\begin{align*}
\log \Big( \tfrac{1}{\delta} \int_{i\delta}^{(i+1)\delta} \exp(\eta W^H_s) ds \Big)
-
\log \Big( \tfrac{1}{\delta} \int_{(i-1)\delta}^{i\delta} \exp(\eta W^H_s) ds \Big)
\end{align*}
as developed in Proposition \ref{prop:development_iv}.\\

\item{\bf About the parameter \texorpdfstring{$\eta$}{eta}.} Although our model has two parameters, $\eta$ and $H$, the parameter of interest is obviously $H$. That is why, in the piecewise constant volatility model of Section \ref{sec:piecewise}, we do not provide an estimator for $\eta$. However, similarly to the results in Section \ref{sec:general}, one could show that the estimator 
\begin{align*}
\widehat{\eta}_n = 
\bigg(\frac{\widehat{Q}_{\widehat{j}_n,N-\widehat{j}_n,n} 2^{2\widehat{j}_n\widehat{H}_n}}{ \kappa_{N-\widehat{j}_n}(\widehat{H}_n)} \bigg)^{1/2}
 \;\text{ with }\;
\widehat{j}_n
= \bigg\lfloor \frac{1}{2\widehat{H}_n+1}\log_2 n \bigg\rfloor,
\end{align*}
where $\kappa_p$ is an explicit function defined within Lemma \ref{lem:kappa}, is consistent and achieves the  convergence rate $n^{-1/(4H+2)} \log n$. A minor modification of the proof of Theorem \ref{thm:lower_multiscale} shows that this rate is also minimax optimal.\\

\item{\bf Implementation and feasibility.} 
%Optimal estimation rates for $H$ do not depend on the model, however, the construction of the estimators does. 
In the piecewise constant volatility model, the estimator is easy to implement and fast to compute, the only tuning parameter being $\nu_0$. From Theorem \ref{thm:upper_multiscale}, a suitable choice would be $\nu_0 = \tfrac{1}{2} \eta_-^2 \min(3, (4-2^{2H_+}) 2^{2H_+})$ where $H_+ = \sup_{(H, \eta) \in \mathcal{D}} H$ and $\eta_- = \inf_{(H, \eta) \in \mathcal{D}} \eta$.\\

The estimators in the general model are more intricate. This is first due to the presence of $\kappa_{p,a}$ in the debiasing procedure. For instance, the evaluation of the function $\kappa_{p,a}$ (see \eqref{eq:def_kappas}) involves the computation of $O(2^p)$ expectations of $2a$ correlated Gaussian variables. Explicit formulas for such products are given by Isserlis' theorem (see Theorem \ref{thm:gaussian_moments}) but they come at a slight computational cost. The second issue is the stopping rule of the iterated debiasing procedure. The quantity $m_{opt}$ must satisfy $m_{opt} > m > 1/(4H) - 2H - 1$ for any $H_- < H < H_+$. Since it needs to be an integer, a quick study of the function $x \mapsto  1/(4x) - 2x - 1$ ensures that one can always take $m_{opt} = \max ( \lfloor  1/(4H_-) - 2H_- \rfloor, \; 0 )$. This choice however has a strong impact on the computation time when $H$ is small. However, in most cases of interest, this iteration cost remains very reasonable: we need $5$ iterations for $H_- = 0.05$ and $24$ iterations for $H_- = 0.01$ for instance.\\

\item{\bf Model choice.} In this paper we consider the prototypical rough volatility model \eqref{roughvol1}. This is  certainly  a reasonable choice when studying fundamental inference questions such as minimax optimality, as it enables us to understand the core statistical structure of rough volatility. However, although rough volatility models were initially presented under such a simple form (see \cite{gatheral2018volatility}), they have been extended  since then, and many models involving various transforms of  fractional Brownian motion or related rough Gaussian processes have emerged, mostly driven by practical considerations. Taking this into account, we build in the companion paper \cite{chong2022statistical} non-parametric estimators of the roughness of   volatility  for these extensions of Model \eqref{roughvol1}, together with a full central limit theory that allows for constructing confidence intervals.

\end{itemize}

\section{Proof of Theorem \ref{thm:lower_multiscale}}
\label{sec:proof:lower_multiscale}

\subsection{Outline of the proof}
Suppose first that we directly observe $\sigma_{i\delta}$ for $0 \leq i \leq \delta^{-1}$. This would be equivalent to the observation of $(\eta W^H_{i\delta})_{i \leq \delta^{-1}}$. Optimal estimation in this model was first  studied in \cite{kawai2013fisher}, where the singular LAN property for fractional Brownian motion observed at high frequency is established, see also \cite{brouste2018lan} for minimax lower bounds. Optimal rates for the estimation of $H$ and $\eta$ are $\delta^{1/2}$ and $\delta^{1/2}\log n$ respectively. Since the model presented in Section \ref{sec:multiscale_model} carries less statistical information than the direct observation of $(\sigma_{i\delta})_{i \leq \delta^{-1}}$, the rate $\delta^{1/2}$ is a lower bound for estimating $H$ and $\delta^{1/2}\log n $ is a lower bound for estimating $\eta$ in the model of Section \ref{sec:multiscale_model}. This proves Theorem \ref{thm:lower_multiscale} whenever $\delta \geq n^{-1/(2H+1)}$.\\

For the remaining of this Section, we will suppose that $\delta \leq n^{-1/(2H+1)}$ so that $v_n(H) = n^{-1/(4H+2)}$. We aim at proving Theorem \ref{thm:lower_multiscale} by a similar strategy as Theorems 2 and 3 of \cite{gloter2007estimation}. However, two major differences need to be  taken into account here. First, we must include the case $H < 1/2$, and this is achieved by relying on  \cite{szymanski2022optimal}. Second, we need to show how our model, somewhat different from the additive noise model of \cite{gloter2007estimation} and \cite{szymanski2022optimal}, can actually fit in their setting. For completeness, we will go through the main ideas of the proof and emphasise the major  changes that need to be undertaken.\\

We need some notation. Write $\norm{\mu }_{TV} = \sup_{\norm{f}_{\infty} \leq 1} | \int f d\mu |$ for the the total variation of a signed measure $\mu$ and $\|f\|_\infty = \sup_{t \in [0,1]}|f(t)|$ for a real-valued  continuous function.  For two probability measures $\mu$ and $\nu$, set
\begin{align*}
d_{\mathrm{test}}(\mu, \nu) = \sup_{0 \leq f \leq 1} \Big | \int f d(\mu -\nu) \Big |
\end{align*}
so that $d_{test}(\mu, \nu) = \tfrac{1}{2} \norm{\mu-\nu}_{TV}$.  We denote by $\mathbb{P}_f^n$ the law of the observations $(S_{i/n})_i$ given $\eta W^H_t = f(t)$.\\ 

The following two results are key to the proof of the lower bounds. They respectively extend  Propositions 4 and 5 of \cite{gloter2007estimation} in our more general setting. First, we show that the law of the observations is  close whenever the underlying volatilities are close. 

\begin{prop}
\label{prop:bound_TV}
Let $f$ and $g$ be two bounded functions. Then there exists $c_0>0$ such that
\begin{align}
\label{eq:bound_TV:est1}
\norm{\mathbb{P}_f^n - \mathbb{P}_g^n}_{TV} \leq c_0 \sqrt{n} \norm{f-g}_\infty.
\end{align}
Moreover, there exist $c_1 > 0$ and a universal nonincreasing positive function $R$ such that
\begin{align}
\label{eq:bound_TV:est2}
1 - \tfrac{1}{2} \norm{\mathbb{P}_f^n - \mathbb{P}_g^n}_{TV} \geq R(n e^{\norm{f-g}_\infty} \norm{f-g}_\infty^2).
\end{align}
\end{prop}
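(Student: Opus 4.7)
The crucial observation is that $\mathbb{P}_f^n$ has a simple product-Gaussian structure. Since $\sigma_t^2 = \exp(f(\lfloor t\delta^{-1}\rfloor\delta))$ is constant on each block $[i\delta,(i+1)\delta)$ and since $m=n\delta$ is an integer so that each fine interval $[j/n,(j+1)/n)$ lies inside a single block, the increments $S_{(j+1)/n}-S_{j/n}$ are, conditionally on $f$, independent centred Gaussians with variances $v_j(f)=e^{\alpha_j(f)}/n$, where $\alpha_j(f)=f(\lfloor j/m\rfloor\delta)$. The same holds for $g$, with $|\alpha_j(f)-\alpha_j(g)|\le\norm{f-g}_\infty$ for every $j$. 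Both parts of the proposition then reduce to classical comparison inequalities between two product Gaussian measures.

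For \eqref{eq:bound_TV:est1} the plan is to apply Pinsker's inequality. Set $t_j=\alpha_j(f)-\alpha_j(g)$. The standard KL formula for centred Gaussians yields
\begin{align*}
\mathrm{KL}(\mathbb{P}_f^n,\mathbb{P}_g^n)=\tfrac12\sum_{j=0}^{n-1}(e^{t_j}-1-t_j).
\end{align*}
Combining the elementary bound $e^t-1-t\le\tfrac12 t^2 e^{|t|}$ with $|t_j|\le\norm{f-g}_\infty$ and the fact that $f,g$ are bounded, each summand is at most $C\,\norm{f-g}_\infty^2$, so the total KL is at most $Cn\norm{f-g}_\infty^2$. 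Pinsker's inequality in the paper's normalisation (where $\norm{\cdot}_{TV}$ is twice the affinity-TV distance) then gives $\norm{\mathbb{P}_f^n-\mathbb{P}_g^n}_{TV}\le\sqrt{2\,\mathrm{KL}}\le c_0\sqrt{n}\,\norm{f-g}_\infty$.

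For \eqref{eq:bound_TV:est2} I would work with the Hellinger affinity $\rho=\int\sqrt{dP\,dQ}$. Writing $1-\tfrac12\norm{P-Q}_{TV}=\int p\wedge q$ and applying Cauchy--Schwarz to $\rho=\int\sqrt{p\wedge q}\sqrt{p\vee q}$ gives the elementary inequality $\int p\wedge q\ge\tfrac12\rho^2$, so it suffices to lower bound $\rho(\mathbb{P}_f^n,\mathbb{P}_g^n)^2=\prod_j\rho_j^2$. A direct computation with centred Gaussian densities gives $\rho_j^2=1/\cosh(t_j/2)$, and the term-by-term Taylor comparison $\cosh y\le e^{y^2/2}$ yields
\begin{align*}
-\log\rho(\mathbb{P}_f^n,\mathbb{P}_g^n)^2=\sum_{j=0}^{n-1}\log\cosh(t_j/2)\le\tfrac18\, n\,\norm{f-g}_\infty^2,
\end{align*}
so that $1-\tfrac12\norm{\mathbb{P}_f^n-\mathbb{P}_g^n}_{TV}\ge\tfrac12 e^{-n\norm{f-g}_\infty^2/8}$. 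Taking $R(x)=\tfrac12 e^{-x/8}$, which is positive and nonincreasing, and noting the trivial inequality $n\norm{f-g}_\infty^2\le n e^{\norm{f-g}_\infty}\norm{f-g}_\infty^2$, yields \eqref{eq:bound_TV:est2}.

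There is no real obstacle once the product-Gaussian structure is identified; the argument rests on three routine facts ($e^t-1-t\le\tfrac12 t^2 e^{|t|}$, $\cosh y\le e^{y^2/2}$, and $\int p\wedge q\ge\tfrac12\rho^2$). The only bookkeeping point worth flagging is that the constant $c_0$ in \eqref{eq:bound_TV:est1} depends on the $L^\infty$-bounds of $f$ and $g$ through $e^{\norm{f-g}_\infty}$; this is harmless in the intended use since the hypotheses constructed for the minimax lower bound will be perturbations that remain uniformly bounded, while the factor $e^{\norm{f-g}_\infty}$ appearing inside $R$ in \eqref{eq:bound_TV:est2} simply makes the bound more robust for larger perturbations.
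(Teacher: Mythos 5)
Your proof is correct, and it diverges from the paper's in two places worth noting. For \eqref{eq:bound_TV:est1} the paper also goes through Kullback--Leibler and Pinsker, but it first restricts to $\norm{f-g}_\infty\leq 1$ (where $A(x)=e^x-x-1\leq x^2$ needs no exponential factor) and then removes the restriction by chaining through the intermediate functions $f_k=(kg+(K-k)f)/K$ and the triangle inequality; this yields a $c_0$ that is genuinely independent of $f$ and $g$, whereas your one-step bound $e^t-1-t\leq\tfrac12 t^2e^{|t|}$ leaves $e^{\norm{f-g}_\infty}$ inside the constant. With the quantifiers as written ($f,g$ fixed before $c_0$) your version satisfies the literal statement, and you correctly observe it suffices for the application, but the paper's chaining step is what makes $c_0$ universal, so be aware you are proving a slightly weaker uniformity. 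For \eqref{eq:bound_TV:est2} your route is genuinely different and arguably sharper: the paper only invokes the qualitative fact that $\norm{\mu-\nu}_{TV}$ stays bounded away from $2$ when $K(\mu,\nu)$ is bounded (implicitly a Bretagnolle--Huber-type bound applied to $K\leq ne^{\norm{f-g}_\infty}\norm{f-g}_\infty^2$), whereas you compute the Hellinger affinity exactly for the product of centred Gaussians, $\rho_j^2=1/\cosh(t_j/2)$, combine it with Le Cam's inequality $\int p\wedge q\geq\tfrac12\rho^2$ and $\log\cosh y\leq y^2/2$, and obtain the explicit and monotone $R(x)=\tfrac12e^{-x/8}$ evaluated at the smaller argument $n\norm{f-g}_\infty^2$. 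This buys an explicit function $R$ and dispenses with the $e^{\norm{f-g}_\infty}$ factor altogether; the paper's version is shorter but leaves $R$ implicit.
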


Consider now $(H_0, \nu_0)$ in the interior of the domain $\mathcal{D}$. We pick $I > 0$ large enough and we set
\begin{align*}
H_1 = H_0 + \varepsilon_n \;\;\;\;\text{and}\;\;\;\;\sigma_1=\sigma_02^{j_0\varepsilon_n}
\end{align*}
where
\begin{align*}
\varepsilon_n = I^{-1}n^{-1/(4H_0+2)} \;\;\;\;\text{and}\;\;\;\;j_0=\lfloor\log_2(n^{1/(2H_0+1)})\rfloor.
\end{align*}

The next proposition shows that we can build two processes $\xi^{0,n}$ and $\xi^{1,n}$ that act as approximations of $\eta_0 W^{H_0}$ and $\eta_1 W^{H_1}$.

\begin{prop}
\label{prop:optimal:key}
For $I$ large enough, there exists a sequence of probability spaces $( X^{n}, \mathcal{X}_n, \mathbf{P}^{n} )$ on which we can be define two sequences of stochastic processes, $( \xi^{0,n}_t )_{t\in [0, 1]}$ and $( \xi^{1,n}_t )_{t\in [0, 1]}$ and a measurable transformation $T^n: X^{n} \to X^{n}$ such that the following hold:
\begin{enumerate}
% \label=[(\roman*)]
  \setlength{\itemsep}{0pt}
  \setlength{\parskip}{0pt}
%\item \label{prop:optimal:key:enum:1} For $\alpha < H_0$, the sequences $\norm{\xi^{0,n}}_{\mathcal{H}^\alpha}$ and $\norm{\xi^{1,n}}_{\mathcal{H}^\alpha}$ are tight under $\mathbf{P}^{n}$.

\item \label{prop:optimal:key:enum:2} If $P^{i,n}( \cdot ) = \int_{X^{n}}
\mathbb{P}_{\xi^{i,n}(\omega)}^{n}(\cdot) \mathbf{P}^{n} ( d\omega) $, then $\norm{P^{i,n} - \mathbb{P}_{H,\sigma}^{n}}_{TV} \to 0$ for both $i=0,1$, where $\mathbb{P}_{H,\sigma}^{n}$ is the law of the observations $(S_{i/n})_i$ under $\mathbb{P}_{H,\sigma}$.

\item \label{prop:optimal:key:enum:3} The sequence $
n \norm{
\xi^{1,n}(\omega)
-
\xi^{0,n}( T^n\omega )
}_\infty^2
$
is tight under $\mathbf{P}^{n}$.

\item \label{prop:optimal:key:enum:4}If $n$ is large enough, the probability measure $\mathbf{P}^{n}$ and its image $T^n \mathbf{P}^{n}$ are equivalent on $( X^n, \mathcal{X}_n)$, and there exists $0 < c^* < 2$ such that
$\norm{
\mathbf{P}^{n}
-
T^n \mathbf{P}^{n}
}_{TV} \leq 2 - c^* < 2
$ for $n$ large enough.

%\item \label{prop:optimal:key:enum:5} The sequence $
%n^{1-\alpha} \norm{
%\xi^{1,n}(\omega)
%-
%\xi^{0,n}( T^n\omega )
%}_{\mathcal{H}^\alpha}^2
%$
%is tight under $\mathbf{P}^{n}$.
\end{enumerate}
\end{prop}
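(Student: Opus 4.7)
The approach is Le Cam's two-point method in the wavelet domain, following Gloter--Hoffmann and Szymanski. The driving algebraic identity is $\eta_1 2^{-j_0 H_1} = \eta_0 2^{-j_0 H_0}$, which comes from the choice $\eta_1 = \eta_0 2^{j_0 \varepsilon_n}$ and expresses that the two candidate distributions $\eta_i W^{H_i}$ have the same typical fluctuation at the critical scale $2^{-j_0} \sim n^{-1/(2H_0 + 1)}$. I will use this to couple both processes on a common Gaussian space so that, after a suitable measure change, their difference is of order $n^{-1/2}$.

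The plan is to fix a smooth, compactly supported orthonormal wavelet basis $\{\psi_{j,k}\}$ of $L^2([0,1])$ with enough vanishing moments to approximately diagonalise the FBM covariance at the scales of interest, take $X^n = \mathbb{R}^{\mathcal{I}_n}$ for a carefully chosen index set $\mathcal{I}_n$, and equip it with the standard Gaussian product measure $\mathbf{P}^n$. Define
\[
\xi^{i,n}(\omega)(t) = \eta_i \sum_{(j,k) \in \mathcal{I}_n} 2^{-jH_i}\, \omega_{j,k}\, \psi_{j,k}(t), \qquad i = 0, 1,
\]
together with the linear map $T^n \colon X^n \to X^n$ given by $(T^n\omega)_{j,k} = 2^{(j_0 - j)\varepsilon_n}\, \omega_{j,k}$. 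By construction, $\xi^{0,n}(T^n\omega) = \xi^{1,n}(\omega)$ in the idealised wavelet-diagonal picture, so (iii) reduces to controlling the residual coming from off-diagonal FBM corrections and from truncation; these are bounded by a Dudley entropy estimate together with the fact that the error process has covariance of order $n^{-1}$, which yields tightness of $n\|\cdot\|_\infty^2$. Property (ii) then follows from Proposition \ref{prop:bound_TV} (bound \eqref{eq:bound_TV:est1}), since the wavelet truncation error has sup-norm $o(n^{-1/2})$ by the Besov regularity of the FBM.

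The hard part will be (iv). Under $\mathbf{P}^n$, $T^n\omega$ is centred Gaussian with diagonal covariance entries $2^{2(j_0 - j)\varepsilon_n}$, giving
\[
\mathrm{KL}(\mathbf{P}^n \,\|\, T^n \mathbf{P}^n) = \tfrac{1}{2} \sum_{(j,k) \in \mathcal{I}_n} \bigl( 2^{2(j-j_0)\varepsilon_n} - 1 - 2(j-j_0)\varepsilon_n \log 2 \bigr),
\]
which is of order $\varepsilon_n^2 \sum_j 2^j (j - j_0)^2$. Fine scales ($j \gg j_0$) would contribute a diverging factor, so $\mathcal{I}_n$ must be tailored either to exclude them or to share them identically between $\xi^{0,n}$ and $\xi^{1,n}$, while still preserving (iii). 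This delicate split between coarse scales (on which the measure change is non-trivial) and fine scales (on which the scaling identity does all the work) is the main technical content of the proof. Once the coupling is concentrated near the critical scale $j_0$, the sum telescopes to $O(2^{j_0} \varepsilon_n^2) = O(I^{-2})$, and Pinsker's inequality delivers $\|\mathbf{P}^n - T^n \mathbf{P}^n\|_{TV} \leq I^{-1}\sqrt{2}$; taking $I$ large then yields $c^* = 2 - I^{-1}\sqrt{2} > 0$, closing (iv).
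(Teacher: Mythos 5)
Your high-level architecture is the right one --- realise both candidate processes on a common Gaussian coordinate space, exploit the identity $\eta_1 2^{-j_0 H_1}=\eta_0 2^{-j_0 H_0}$ through a scaling map $T^n$, and control the Kullback--Leibler cost of $T^n$ at the critical scale to get $O(2^{j_0}\varepsilon_n^2)=O(I^{-2})$; that last computation matches the paper. But there is a genuine gap at the very first step: a \emph{fixed} compactly supported wavelet basis with i.i.d.\ standard Gaussian coefficients does not represent fractional Brownian motion, and the resulting mismatch is not a small residual that a Dudley entropy bound can absorb. The correlations among the wavelet coefficients of $W^H$ (in particular the long-range correlations of the coarse-scale coefficients) are precisely what encodes $H$; if one could replace them by independent coefficients of variance $2^{-j(2H+1)}$ at total-variation, or even pathwise sup-norm $o(n^{-1/2})$, cost, the statistical problem would be distorted and point (ii) would fail. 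The paper avoids this by using the Meyer--Sellan representation \eqref{eq:repr:fbm}: the basis functions $\Theta^{H}_{j_0,k}$ and $\psi^{H}_{j,k}$ are themselves fractional derivatives of wavelets and depend on $H$, the fine-scale coefficients are then \emph{exactly} i.i.d., and the coarse-scale coefficients form a \emph{stationary correlated} sequence with spectral density $|2\sin(v/2)|^{1-2H}$. That exactness is what makes (ii) provable.

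Consequently your $T^n$ is also not the right map. The diagonal rescaling $(T^n\omega)_{j,k}=2^{(j_0-j)\varepsilon_n}\omega_{j,k}$ applied across all scales has, as you note, divergent Kullback--Leibler cost at fine scales; but the fix is not to ``share the fine scales identically'' while keeping the pathwise identity. In the correct construction $T^n$ acts only on the coarse-level stationary sequence (changing its spectral density from $|2\sin(v/2)|^{1-2H_0}$ to $|2\sin(v/2)|^{1-2H_1}$, an absolutely continuous change) together with a linear correction coming from the first-order expansion of $\Theta^{H_0+\varepsilon_n}$ in $\varepsilon_n$, while the fine scales are left untouched and the $H$-dependence there is carried by the deterministic change of basis $\psi^{H_0}_{j,k}\to\psi^{H_1}_{j,k}$. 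The difference $\xi^{1,n}(\omega)-\xi^{0,n}(T^n\omega)$ then reduces to the two high-frequency tails in \eqref{eq:diff:xi}, whose sup-norm is $O(2^{-j_0(M+H+1/2)})=o(n^{-1/2})$ by the localisation Lemma \ref{lem:lemma5gloter}, which gives (iii). You correctly identify the coarse/fine split as ``the main technical content'', but it is exactly the part your sketch leaves undone, and the i.i.d.-coefficient starting point makes it impossible to complete as written.
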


This proposition replaces Proposition 5 of \cite{gloter2007estimation}. Part  \ref{prop:optimal:key:enum:2} shows that we can asymptotically replace the fractional Brownian motions $\eta_i W^{H_i}$ by the processes $\xi^{i,n}(\omega)$ in the model presented in Section \ref{sec:multiscale_model}. Moreover, the processes $\xi^{i,n}(\omega)$ are defined in such a way that we can pathwise transform one process into the other in the probability space $( X^{n}, \mathcal{X}_n, \mathbf{P}^{n} )$. This property is essential to proving the lower bound and explicitly shows how one statistical experiment can be transformed into the other. This is the main goal of points \ref{prop:optimal:key:enum:3} and \ref{prop:optimal:key:enum:4}. For sake of completeness, we will cover the main idea of the proof of Proposition \ref{prop:optimal:key} in Section \ref{sec:proof:optimal:key}.
\\

We can now complete the proof of Theorem \ref{thm:lower_multiscale}. We again follow \cite{gloter2007estimation}. The same procedure applies for $H$ and $\eta$, hence we focus on the efficient rate for $H$. We start with an arbitrary estimator $\widehat{H}_n$ of $H$  and we choose $I > 0$ large enough so that Proposition \ref{prop:optimal:key} holds. Let $M < 1/(2I)$. Then we have, using Proposition \ref{prop:optimal:key} and the notation therein,
\begin{align*}
\sup_{(H,\eta)} &
\mathbb{P}_{H,\eta}^{n} (v_n(H)^{-1}|\widehat{H}_n - H| \geq M)
\\
&\geq \tfrac{1}{2} \big(
\mathbb{P}_{H_0,\eta_0}^{n} (v_n(H_0)^{-1}|\widehat{H}_n - H_0| \geq M)
+
\mathbb{P}_{H_1,\eta_1}^{n} (v_n(H_1)^{-1}|\widehat{H}_n - H_1| \geq M)
\big)
\\
&\geq \tfrac{1}{2} \big(
P^{0,n} (v_n(H_0)^{-1} |\widehat{H}_n - H_0| \geq M)
+
P^{1,n} (v_n(H_1)^{-1} |\widehat{H}_n - H_1| \geq M)
\big) + o(1)
\\
&=
\tfrac{1}{2}
\int_{X^{n}}
\Big(
\mathbb{P}_{\xi^{0,n}(\omega)}^{n}(A^0) 
+
\mathbb{P}_{\xi^{1,n}(\omega)}^{n}(A^1) 
\Big)
\; \mathbf{P}^{n} ( d\omega) + o(1),
\end{align*}
where $A^{i} = \{v_n(H_i)^{-1} |\widehat{H}_n - H_i| \geq M \}$. Taking $n$ large enough, it suffices to bound from below the integral appearing here. But since $\mathbf{P}^n$ and $T^n \mathbf{P}^n$ are equivalent (see \ref{prop:optimal:key:enum:4} of Proposition \ref{prop:optimal:key}), we have
\begin{align*}
\int_{X^{n}}
\mathbb{P}_{\xi^{0,n}(\omega)}^{n}(A^0) 
\;\mathbf{P}^{n} ( d\omega)
=
\int_{X^{n}}
\mathbb{P}_{\xi^{0,n}(T^n \omega)}^{n}(A^0) 
\frac{d\mathbf{P}^{n}}{dT^n \mathbf{P}^{n}}(T^n \omega) 
\; \mathbf{P}^{n} ( d\omega).
\end{align*}

For $r > 0$, we denote by $X_r^{n}$  the set of $\omega \in X^{n}$ such that 
\begin{align*}
n \norm{\xi^{0,n}(T^n\omega)-\xi^{1,n}(\omega)}_\infty^2 \leq r^2.
\end{align*}
Notice that this definition is slightly different from that of \cite{gloter2007estimation}. This is due to the differences in Proposition \ref{prop:bound_TV}. For  arbitrary  $\lambda > 0$, we obtain
\begin{align*}
\int_{X^{n}}
&\Big(\mathbb{P}_{\xi^{0,n}(\omega)}^{n}(A^0) 
+
\mathbb{P}_{\xi^{1,n}(\omega)}^{n}(A^1) 
\Big)\; \mathbf{P}^{n} ( d\omega)
\\
&=
\int_{X^{n}}
\big(
\mathbb{P}_{\xi^{0,n}(T^n \omega)}^{n}(A^0) 
\frac{d\mathbf{P}^{n}}{dT^n \mathbf{P}^{n}}(T^n \omega) + \mathbb{P}_{\xi^{1,n}(\omega)}^{n}(A^1) 
\big)
\; \mathbf{P}^{n} ( d\omega)
\\
&\geq
e^{-\lambda}\int_{X_r^{n}}
\big(
\mathbb{P}_{\xi^{0,n}(T^n \omega)}^{n}(A^0) 
 + \mathbb{P}_{\xi^{1,n}(\omega)}^{n}(A^1) 
\big)
\1_{\frac{d\mathbf{P}^{n}}{dT^n \mathbf{P}^{n}}(T^n \omega) \geq e^{-\lambda}}
\; \mathbf{P}^{n} ( d\omega).
\end{align*}
Note also that for $\omega \in X^n_r$, we have
\begin{align*}
\mathbb{P}_{\xi^{0,n}(T^n\omega)}^{n}&(A^{0})
	+
\mathbb{P}_{\xi^{1,n}(\omega)}^{n}(A^{1})
\\
&\geq
\mathbb{P}_{\xi^{0,n}(T^n\omega)}^{n}(n^{1/(4H_0+2)} |\widehat{H}_n - H_0| \geq M)
	+
\mathbb{P}_{\xi^{1,n}(\omega)}^{n}(n^{1/(4H_0+2)} |\widehat{H}_n - H_1| \geq M)
\end{align*}
since $H_1 \geq H_0$ and therefore,
\begin{align*}
\mathbb{P}_{\xi^{0,n}(T^n\omega)}^{n}&(A^{0})
	+
\mathbb{P}_{\xi^{1,n}(\omega)}^{n}(A^{1})
\\
&\geq 
\mathbb{P}_{\xi^{1,n}(\omega)}^{n}(n^{1/(4H_0+2)} |\widehat{H}_n - H_0| \geq M)
	+
\mathbb{P}_{\xi^{1,n}(\omega)}^{n}(n^{1/(4H_0+2)} |\widehat{H}_n - H_1| \geq M)
\\&\;\;\;\;-
d_{test} \big(\mathbb{P}_{\xi^{0,n}(T^n\omega)}^{n}, \mathbb{P}_{\xi^{1,n}(\omega)}^{n}\big)
\\
&\geq 1 - 
\tfrac{1}{2} \norm{
\mathbb{P}_{\xi^{0,n}(T^n\omega)}^{n} - \mathbb{P}_{\xi^{1,n}(\omega)}^{n}
}_{TV}
\end{align*}
since $n^{1/(4H_0+2)} |H_0 - H_1| \geq 2M$ by definition of $I$ and $M$. We now apply Proposition \ref{prop:bound_TV} to get
\begin{align*}
\mathbb{P}_{\xi^{0,n}(T^n\omega)}^{n}(A^{0})
	+
\mathbb{P}_{\xi^{1,n}(\omega)}^{n}(A^{0})
&
\geq 
R(n e^{\norm{\xi^{0,n}(T^n\omega)-\xi^{1,n}(\omega)}_\infty}\norm{\xi^{0,n}(T^n\omega)-\xi^{1,n}(\omega)}_\infty^2)
\\
&\geq 
R(e^{r/\sqrt{n}}r^2)
\geq 
R(e^{r}r^2)
\end{align*}
since $R$ is non-increasing. We infer
\begin{align*}
\int_{X^{n}}
&
\Big(
\mathbb{P}_{\xi^{0,n}(\omega)}^{n}(A^0) 
+
\mathbb{P}_{\xi^{1,n}(\omega)}^{n}(A^1) 
\Big)
\; \mathbf{P}^{n} ( d\omega)
\\
&\geq
e^{-\lambda} R(e^{r}r^2) \int_{X_r^{n}}
\1_{\frac{d\mathbf{P}^{n}}{dT^n \mathbf{P}^{n}}(T^n \omega) \geq e^{-\lambda}}
\; \mathbf{P}^{n} ( d\omega)
\\
&\geq
e^{-\lambda} R(e^{r}r^2) \mathbf{P}^{n}
\Big (
X_r^{n}
\cap
\Big \{
\frac
	{d\mathbf{P}^{n}( T^n\omega)}
	{dT^{n}\mathbf{P}^{n}}
	\geq e^{-\lambda}
\Big \}
\Big)
\\
&\geq
e^{-\lambda} R(e^{r}r^2) 
\Big (
\mathbf{P}^{n}
(
X_r^{n}
)
-
T^n \mathbf{P}^{n}
\Big (
\frac
	{dT^{n}\mathbf{P}^{n}}
	{d\mathbf{P}^{n}}
	\geq e^{\lambda}
\Big )
\Big).
\end{align*}
Markov's inequality and Proposition \ref{prop:optimal:key} yield
\begin{align*}
T^n \mathbf{P}^{n}
\bigg (
\frac
	{dT^{n}\mathbf{P}^{n}}
	{d\mathbf{P}^{n}}
	\geq e^{\lambda}
\bigg ) 
&
\leq 
 \mathbf{P}^{n}
\bigg (
\frac
	{dT^{n}\mathbf{P}^{n}}
	{d\mathbf{P}^{n}}
	\geq e^{\lambda}
\bigg )
+ d_{test}(T^{n}\mathbf{P}^{n},\mathbf{P}^{n})
\\
&\leq
 e^{-\lambda}
+ \tfrac{1}{2}\norm{T^{n}\mathbf{P}^{n}-\mathbf{P}^{n}}_{TV}
\\
&\leq
 e^{-\lambda}
+ 1 - \frac{c^*}{2},
\end{align*}
hence
\begin{align*}
\int_{X^{n}}
&\mathbb{P}_{\xi^{0,n}(\omega)}^{n}(A^0) 
+
\mathbb{P}_{\xi^{1,n}(\omega)}^{n}(A^1) 
\; \mathbf{P}^{n} ( d\omega)
\geq 
e^{-\lambda}
R(e^{r}r^2) 
\big(
\mathbf{P}^{n}
( 
X_r^{n}
)
-
e^{-\lambda}
- 1
+
\frac{c^*}{2}
\big).
\end{align*}

Moreover, 
\begin{align*}
\lim_{r\to\infty}\liminf_{n\to\infty}\mathbf{P}^{n}
( 
X_r^{n}
)
=
\lim_{r\to\infty}\liminf_{n\to\infty}\mathbf{P}^{n}
( n \norm{\xi^{0,n}(T^n\omega)-\xi^{1,n}(\omega)}_\infty^2 \leq r^2 )
= 1
\end{align*}
since $n \norm{\xi^{0,n}(T^n\omega)-\xi^{1,n}(\omega)}_\infty^2$ is tight by Proposition \ref{prop:optimal:key}. We conclude by taking $\lambda$ and $r$ large enough.

\subsection{Proof of Proposition \ref{prop:bound_TV}}

Let $K( \mu, \nu)
=
\int \log ( \frac{d\mu}{d\nu} ) \; d\mu$ denote the Kullback--Leibler divergence between two probability measures $\mu$ and $\nu$. Recall also  Pinsker's inequality $\norm{\mu - \nu}_{TV}^2 \leq 2K( \mu, \nu)$. Under $\mathbb{P}_f^n$, the increments of the observations $S_{(j+1)/n} - S_{j/n}$ are independent Gaussian variables with variance 
\begin{align*}
n^{-1} \exp(f(\lfloor j n^{-1}\delta^{-1} \rfloor\delta))
\end{align*}
so that 
%the Kullback-Leibler divergence between the two measures $\mathbb{P}_f^n$ and  $\mathbb{P}_g^n$  
\begin{align*}
K(\mathbb{P}_f^n, \mathbb{P}_g^n)
=  n \delta \sum_{i=0}^{\delta^{-1}-1} A ( (f-g) (i\delta) )
\end{align*}
for $A(x) = \tfrac{1}{2}(e^x - x - 1)$. The function $A$ is increasing on $[0, \infty)$ and  $A(x) \leq A(|x|)$ for $x\geq 0$. Therefore,
\begin{align}
\label{eq:ineq_K_fg}
K(\mathbb{P}_f^n, \mathbb{P}_g^n) \leq  n A(\norm{f-g}_\infty).
\end{align}
Note in addition that $A(x) \leq x^2$ for $0\leq x \leq 1$, hence $K(\mathbb{P}_f^n, \mathbb{P}_g^n) \leq n \norm{f-g}_\infty^2$ if $\norm{f-g}_\infty \leq 1$. Pinsker's inequality completes the proof of the estimate \eqref{eq:bound_TV:est1} in that case.  When $\norm{f-g}_\infty \geq 1$, we write $K = \lceil \norm{f-g}_\infty \rceil$ and $f_k = (kg+(K-k)f)/K$ for all $k=0, \dots, K$ so that $\norm{f_k-f_{k+1}}_\infty \leq 1$ for any $k$. Using \eqref{eq:bound_TV:est1} for the functions $(f_k, f_{k+1})$, we get
\begin{align*}
\norm{\mathbb{P}_f^n - \mathbb{P}_g^n}_{TV}
\leq \sum_k \norm{\mathbb{P}_{f_k}^n - \mathbb{P}_{f_{k+1}}^n}_{TV}
\leq \sum_k C n^{1/2} \norm{f_k-f_{k+1}}_\infty \leq 
C n^{1/2} \norm{f-g}_\infty
\end{align*} 
and \eqref{eq:bound_TV:est1} follows. We now turn to \eqref{eq:bound_TV:est2}. First, notice that for $x \geq 0$, we also have $A(x) \leq x^2 e^x$. Thus,  \eqref{eq:ineq_K_fg} yields
\begin{align*}
K(\mathbb{P}_f^n, \mathbb{P}_g^n)
\leq
n e^{\norm{f-g}_\infty} \norm{f-g}_\infty^2,
\end{align*}
and we can conclude since $\norm{\mu - \nu}_{TV}$ remains bounded away from $2$ when $K(\mu, \nu)$ and $K(\nu, \mu)$ are bounded away from $\infty$.

\subsection{Proof of Proposition \ref{prop:optimal:key}}
\label{sec:proof:optimal:key}

\subsubsection*{Introduction and notation}

Proposition \ref{prop:optimal:key} is a modification of Proposition 5 in \cite{gloter2007estimation}. Indeed, points \ref{prop:optimal:key:enum:2} and \ref{prop:optimal:key:enum:4} are the same as in \cite{gloter2007estimation} and the construction of the approximation processes $\xi^{i,n}$ in our proof is exactly the same as in \cite{gloter2007estimation}. Therefore, we will quickly recall the main arguments concerning the construction of the $\xi^{i,n}$ while skipping the tedious computations already done therein. We will also skip the proof of points \ref{prop:optimal:key:enum:2} and \ref{prop:optimal:key:enum:4} and focus instead on \ref{prop:optimal:key:enum:3}.\\

The main idea of the approximation is to decompose the fractional Brownian motion $\eta W^H$ over a wavelet basis and to keep only low frequencies. Indeed, high frequencies encode local information of the fractional Brownian motion, such as its Hölder regularity. Thus it should vary a lot even for small changes of $H$ even if the absolute value of a single coefficient is generally small. On the other side, low-frequency coefficients should be high to determine the global trends of the fractional Brownian motion, but their behaviour should be continuous in $H$. Thus, by only keeping low-frequency components, we will get a process close enough to the original fractional Brownian motion. Moreover, a slight technical modification in the low-frequency process approximating $\eta_1 W^{H_1}$ ensures this remains close to $\eta_1 W^{H_1}$ while being closer to the low-frequency process approximating $\eta_0 W^{H_0}$. The main parameter remaining is the cut-off level in the frequencies and, as one may expect, we will see that $\log_2 n^{1/(1+2H_0)}$ is  a suitable choice.\\

We now elaborate on the approximation procedure. First, recall from \cite{gloter2007estimation} that for any $H$, one may realise a fractional Brownian motion with Hurst index $H$ via the random series representation
\begin{align}
\label{eq:repr:fbm}
2^{-j_0 (H+1/2)}
\sum_{k=-\infty}^{\infty}
\Theta^H_{j_0, k}(t)\varepsilon^H_k
+
\sum_{j=j_0}^\infty 2^{-j(H+1/2)} \sum_{|k|\leq 2^{j_0+1}}  (\psi^H_{j,k}(t) - \psi^H_{j,k}(0)) \varepsilon_{j,k} 
\end{align}
where $(\varepsilon_{j,k})_{j,k}$ are independent standard Gaussian variables, $(\varepsilon^H_k)_k$ is a Gaussian stationary family with spectral density given by $v \mapsto |2\sin(v/2)|^{1-2H}$ independent of $(\varepsilon_{j,k})_{j,k}$ and where the $\Theta^H_{j,k}$ and $\psi^H_{j,k}$ are defined as fractional derivatives of wavelet functions. For brevity, we do not give their construction, which can be found in \cite{gloter2007estimation}, Section 7.1. All we  need is the following property.

\begin{lem}[Lemma 5 (i) in \cite{gloter2007estimation}]
\label{lem:lemma5gloter}
For any $M>0$ there exists $c=c(M)$ such that for all $j_0$ and all $H \in [H_-, H_+]$
\begin{align*}
\sum_{j\geq j_0}
\sum_{|k|\leq 2^{j+1}}
\norm{ \psi_{j,k}^{H}}_{\infty}  (1+j)^{1/2} (1+|k|)^{1/2}
\leq c(M)2^{-Mj_0}.
\end{align*} 
\end{lem}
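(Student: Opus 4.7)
The plan is to exploit the explicit Meyer-type construction of $\psi^H$ sketched in Section~7.1 of \cite{gloter2007estimation}. The mother wavelet $\psi$ is chosen with $\widehat{\psi}\in C^\infty_c(\mathbb{R})$, supported in an annulus bounded away from $0$, and $\psi^H$ is taken as a fractional derivative of order $H+1/2$ of $\psi$, so that $\widehat{\psi^H}(\xi)=c\,|\xi|^{H+1/2}\widehat{\psi}(\xi)$. The spectral gap of $\widehat{\psi}$ is essential: it makes the multiplier $|\xi|^{H+1/2}$ $C^\infty$ on the fixed compact support of $\widehat{\psi}$ and smoothly dependent on $H$, so that $\widehat{\psi^H}$ and all its $\xi$-derivatives are jointly continuous in $(\xi,H)$ on a fixed compact set.

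The first step I would carry out is a uniform Schwartz estimate: for every $L\ge 0$,
\begin{align*}
|\psi^H(x)|\le C_L\,(1+|x|)^{-L},\qquad x\in\mathbb{R},\; H\in[H_-,H_+],
\end{align*}
with $C_L$ depending only on $L, H_-, H_+$. This follows by $L$ integrations by parts in the Fourier inversion formula: each produces a factor $(ix)^{-1}$, while the boundary terms vanish by compact support and the resulting $\xi$-derivatives of $\widehat{\psi^H}$ are uniformly controlled by compactness of $[H_-,H_+]$. I would then combine this pointwise decay with the dyadic scaling used in \eqref{eq:repr:fbm} and with the vanishing moments of $\psi$, so that, after absorbing the scale factor built into $\psi^H_{j,k}$ in the Gloter--Hoffmann convention,
\begin{align*}
\norm{\psi^H_{j,k}}_\infty \le C_L'\,2^{-Lj}\qquad\text{uniformly in } |k|\le 2^{j+1},\; H\in[H_-,H_+].
\end{align*}

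The final summation is then routine. Since $|\{k:|k|\le 2^{j+1}\}|\le 2^{j+2}+1$ and $(1+|k|)^{1/2}\le c\,2^{j/2}$,
\begin{align*}
\sum_{j\ge j_0}\sum_{|k|\le 2^{j+1}}\norm{\psi^H_{j,k}}_\infty(1+j)^{1/2}(1+|k|)^{1/2}
\le c\,C_L'\sum_{j\ge j_0}(1+j)^{1/2}\,2^{(3/2-L)j},
\end{align*}
so choosing $L=M+2$ makes the geometric series converge to $c(M)2^{-Mj_0}$ after absorbing the logarithmic factor $(1+j)^{1/2}$ into the geometric decay. The real obstacle is the uniform Schwartz bound of the first step: fractional differentiation of order $H+1/2$ is a non-local operation that becomes tame only because of the spectral gap in $\widehat{\psi}$. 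Once that is leveraged, the dependence on $H$ is smooth and bounded and everything else is polynomial bookkeeping.
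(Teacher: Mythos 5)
This lemma is quoted directly from Gloter and Hoffmann (2007) and the present paper gives no proof, so there is nothing to compare with on that side. Evaluating your attempt on its own terms, there is a genuine gap at the middle step, where you assert $\norm{\psi^H_{j,k}}_\infty \le C_L' 2^{-Lj}$ uniformly for $|k| \le 2^{j+1}$. Schwartz decay of $\psi^H$ cannot give this. With the usual dyadic convention $\psi^H_{j,k}(t) = \psi^H(2^j t - k)$ (and note the factor $2^{-j(H+1/2)}$ is written \emph{outside} $\psi^H_{j,k}$ both in \eqref{eq:repr:fbm} and in the application in Section~\ref{sec:proof:optimal:key}, so there is no scale factor to absorb), for $0 \le k \le 2^j$ the argument $2^j t - k$ passes through $0$ as $t$ ranges over $[0,1]$, and hence $\norm{\psi^H_{j,k}}_\infty \ge |\psi^H(0)|$, a fixed positive constant independent of $j$. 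Schwartz decay of the mother wavelet controls tails in $|k|$ (when $|k| \gg 2^j$), not decay in $j$, and vanishing moments govern coefficient decay of smooth test functions rather than the sup norm of the wavelets themselves; neither ingredient you cite closes this step.

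In fact, specialising the stated bound to $j = j_0$, $k = 0$ and letting $M$ be arbitrary forces $\norm{\psi^H_{j_0,0}}_\infty \le c(M) 2^{-Mj_0}$ for every $M$, i.e.\ super-polynomial decay in $j_0$. This is incompatible with $\psi^H_{j,k}$ being a plain dyadic dilation of a fixed nonzero function, whether the sup is taken on $[0,1]$ or on $\mathbb{R}$. The $\psi^H_{j,k}$ appearing in Gloter and Hoffmann's Lemma~5 must therefore be the specific correction or residual objects built in their Section~7.1, whose smallness in $j$ holds by construction rather than by the regularity of a single mother wavelet, and your reconstruction never identifies them. Your first step (uniform Schwartz estimates for the fractional Meyer wavelet via the spectral gap) and your final geometric summation are both sound and would be usable ingredients, but without identifying what $\psi^H_{j,k}$ actually is and where the rapid $j$-decay comes from, the argument does not go through.
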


An essential feature of representation \eqref{eq:repr:fbm} is that the random variables appearing in the high-frequency part $\sum_{j=j_0}^\infty 2^{-j(H+1/2)} \sum_{|k|\leq 2^{j_0+1}}  (\psi^H_{j,k}(t) - \psi^H_{j,k}(0)) \varepsilon_{j,k} 
$ are independent of the low-frequency terms $2^{-j_0 (H+1/2)}
\sum_{k=-\infty}^{\infty}
\Theta^H_{j_0, k}(t)\varepsilon^H_k
$.
\subsubsection*{Construction of the space \texorpdfstring{$X^n$}{Xn}}

As in \cite{gloter2007estimation}, we take
\begin{align*}
X^n = 
\big( \otimes_{k=-2^{j_0+1}}^{2^{j_0+1}} 
\mathbb{R}
\big)
\otimes
\big( \otimes_{j=j_0}^\infty \otimes_{k=-2^{j+1}}^{2^{j+1}} 
\mathbb{R}
\big)
=:
X^n_e \otimes X^n_d.
\end{align*}
We write $\mathcal{X}^n$ for the Borel product sigma-algebra of $X^n$, $\omega = (\omega^e, \omega^d)$ for the elements of $X^n$, and 
\begin{align*}
\varepsilon_k(\omega) = \omega^e_k \;\;\;\;\text{and}\;\;\;\; \varepsilon_{j,k}(\omega) = \omega^d_{j,k}
\end{align*}
for the projections on the coordinates of $\omega$.  In view of \eqref{eq:repr:fbm}, we then define
\begin{align}
\label{eq:def:xi0}
\xi_t^{0,n}
=
\eta_0
2^{-j_0 (H_0+1/2)}
\sum_{|k|\leq 2^{j_0+1}}
\Theta^{H_0}_{j_0, k}(t) \varepsilon_k
+
\eta_0
\sum_{j=j_0}^\infty 2^{-j(H_0+1/2)} \sum_{|k|\leq 2^{j+1}} (\psi^{H_0}_{j,k}(t) - \psi^{H_0}_{j,k}(0)) \varepsilon_{j,k}.
\end{align}
To ensure this is a correct approximation of $\sigma_0 W^{H_0}$, we define the probability measure 
$$\mathbf{P}^n := \mathbf{P}^n_e \otimes \mathbf{P}^n_d$$ such that under $\mathbf{P}^n_e$, $(\varepsilon_k)_k$ is a centered Gaussian stationary sequence with spectral density
$$|2\sin(v/2)|^{1-2H_0}$$
 and under $\mathbf{P}^n_d$, $(\varepsilon_{j,k})_{j,k}$ are independent standard Gaussian variables. Therefore, under $\mathbf{P}^n$, the law of $\xi^{0,n}$ is close to the law of $\eta_0 W^{H_0}$. Now we want to define an approximation of $\eta_1 W^{H_1}$. As explained in \cite{gloter2007estimation,szymanski2022optimal}, replacing $\varepsilon_k$ by a stationary sequence $(\varepsilon_k')_k$ with spectral density $|2\sin(v/2)|^{1-2H_1}$ is not enough and one should incorporate correction terms corresponding to the expansion of $\Theta^{H_1}=\Theta^{H_0+\varepsilon_n}$. Following \cite{gloter2007estimation}, a suitable approximation for $\eta_1 W^{H_1}$ is given by
\begin{align*}
\xi_t^{1,n} & =
\eta_1
2^{-j_0 (H_1+1/2)}
\sum_{|k|\leq 2^{j_0+1}}
\Theta^{H_0}_{j_0, k}(t) \varepsilon_k' \\
&+
\eta_1
\sum_{j=j_0}^\infty \Big( 2^{-j(H_1+1/2)} \sum_{|k|\leq 2^{j+1}} \big(\psi^{H_1}_{j,k}(t) - \psi^{H_1}_{j,k}(0)\big) \varepsilon_{j,k} \Big)
\\
&
+ \eta_1 2^{j_0(H_1+1/2)} \sum_{|l|\leq2^{j_0+1}}  \sum_{|k|\leq 2^{j_0+1}} 
\Big(
\Theta_{j_0,l}^{H_0}(t) a_{l-k} \varepsilon_k'
+ (\psi_{j_0,l}^{H_0}(t) - \psi_{j_0,l}^{H_0}(0) ) b_{l-k}\varepsilon_k'\Big),
\end{align*} 
where the sequences $(a_k)_k$ and $(b_k)_k$ are defined in Lemma 5 of \cite{gloter2007estimation}.\\

\cite{gloter2007estimation} also provides the mapping $T^n$ of Proposition \ref{prop:optimal:key}. This transformation is divided into two parts. The first one acts on $X^n_d$ and transforms the stationary sequence $(\varepsilon_k)_k$ with spectral density $|2\sin(v/2)|^{1-2H_0}$ into a stationary sequence $(\varepsilon_k')_k$ with spectral density $|2\sin(v/2)|^{1-2H_1}$, which can be done since these measures are mutually absolutely continuous. The second one is slightly more involved and uses the sequences $(a_k)_k$ and $(b_k)_k$ to linearly transform the $(\varepsilon_{j,k})_{j,k}$. We refer once more to \cite{gloter2007estimation} for the details. The only important result regarding this construction for the proof of point \ref{prop:optimal:key:enum:3} of Proposition \ref{prop:optimal:key} is the following expansion:
\begin{align}
\label{eq:diff:xi}
\begin{aligned}
\xi_t^{1,n}(\omega)
-
\xi_t^{0,n}(T^n\omega)
&=
\sum_{j\geq j_0}
\sum_{|k| \leq 2^{j+1}}
\Big[
\eta_1 2^{j(H_1+1/2)}
(\psi^{H_1}_{j,k}(t) - \psi^{H_1}_{j,k}(0)) \varepsilon_{j,k}(\omega)
\Big]
\\&\;\;\;\;+
\sum_{j\geq j_0}
\sum_{|k| \leq 2^{j+1}}
\Big[
\eta_0 2^{j(H_0+1/2)}
(\psi^{H_0}_{j,k}(t) - \psi^{H_0}_{j,k}(0)) \varepsilon_{j,k}(\omega)\Big].
\end{aligned}
\end{align}

\subsubsection*{Proof of Proposition \ref{prop:optimal:key}.\ref{prop:optimal:key:enum:3}}

Notice first that it is enough to prove that 
$$\sqrt{n} \norm{
\xi^{1,n}(\omega)
-
\xi^{0,n}( T^n\omega )
}_{\infty}$$ is bounded in $L^1$ since Markov's inequality for positive random variables will then ensure tightness. By \eqref{eq:diff:xi}, it is enough to bound
$$
\sum_{j\geq j_0}
\sum_{|k|\leq 2^{j+1}}
\Big(
\eta_1 2^{-j( H_1 + 1/2 )}
\norm{ \psi_{j,k}^{H_1}}_{\infty} \left | \varepsilon_{j,k}(\omega) \right |\Big)
$$ 
and
$$\sum_{j\geq j_0}
\sum_{|k|\leq 2^{j+1}}
\Big(
\eta_0 2^{-j( H_0+ 1/2 )}
\norm{ \psi_{j,k}^{H_0} }_{\infty} \left |\varepsilon_{j,k}(\omega)\right |
\Big).
$$
Both of these terms are handled similarly so we will focus on the first one here. We now need to use Lemma 3 of \cite{meyer1999wavelets}.
\begin{lem}
Let $(\varepsilon_{j,k})_{j\geq 0, k \in \mathbb{Z}}$ be independent standard Gaussian variables. Then there exists a random variable $C(\omega)$ having finite moments of all orders such that
\begin{align*}
|\varepsilon_{j,k}(\omega)| \leq C(\omega) \big( {\log(2+j) \log(2+|k|)}\big)^{1/2}.
\end{align*}
\end{lem}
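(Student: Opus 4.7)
The natural candidate is to show that
\begin{align*}
C(\omega) := \sup_{j \geq 0,\, k \in \mathbb{Z}} \frac{|\varepsilon_{j,k}(\omega)|}{\sqrt{\log(2+j)\log(2+|k|)}}
\end{align*}
is finite almost surely and belongs to $L^p$ for every $p \geq 1$. The strategy is the classical one: control the tail $\mathbb{P}(C > t)$ by a Gaussian-type quantity using a union bound over $(j,k)$ combined with the standard Gaussian estimate $\mathbb{P}(|\varepsilon_{j,k}| > u) \leq 2 e^{-u^2/2}$, then deduce almost-sure finiteness via Borel--Cantelli and all moment bounds via the layer-cake formula $\mathbb{E}[C^p] = p\int_0^\infty t^{p-1}\mathbb{P}(C > t)\,dt$.

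\textbf{Key steps.} First I would write, for $t > 0$,
\begin{align*}
\mathbb{P}(C > t) \leq 2 \sum_{j \geq 0,\, k \in \mathbb{Z}} \exp\!\Big(-\tfrac{t^2}{2}\log(2+j)\log(2+|k|)\Big) = 2\sum_{j,k} (2+j)^{-\alpha_k(t)},
\end{align*}
where $\alpha_k(t) := (t^2/2)\log(2+|k|)$. For each $k$ with $\alpha_k(t) > 1$, the inner $p$-series in $j$ is dominated by its leading term $2^{-\alpha_k(t)} = (2+|k|)^{-(t^2/2)\log 2}$, up to a prefactor that stays bounded for large $t$. Summing over $k$ then produces a bound $\mathbb{P}(C > t) \leq A\, 2^{-B t^2}$ valid for $t$ beyond some absolute threshold. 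This Gaussian-type decay is more than sufficient to conclude both that $C$ belongs to $L^p$ for every $p$ via the layer-cake formula, and that $C < \infty$ almost surely by Borel--Cantelli applied at any single value of $t$ for which the series of tail probabilities converges.

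\textbf{Main obstacle.} The subtle point is that when $j$ and $k$ are small the product $\log(2+j)\log(2+|k|)$ can be smaller than $1$, so a naive estimate of the $j$-series through the integral test would only give $(\alpha_k(t)-1)^{-1} \sim 1/\log(2+|k|)$, which fails to be summable in $k$. What rescues the argument is that, because the summand decreases geometrically, the first term $2^{-\alpha_k(t)}$ already dominates and contributes the extra factor $(2+|k|)^{-(t^2/2)\log 2}$, which is summable as soon as $t^2\log 2 > 2$. Consequently the Gaussian-type decay of $\mathbb{P}(C > t)$ is not obtained uniformly in $t$, but only once $t$ exceeds an explicit threshold depending on $\log 2$; this is nevertheless enough to conclude finiteness of all moments and of $C$ itself.
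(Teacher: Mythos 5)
Your proof is correct. Note that the paper itself does not prove this statement: it is imported verbatim as Lemma 3 of the cited reference (Meyer, Sellan and Taqqu), so there is no in-paper argument to compare against; your union-bound/Gaussian-tail argument is the standard route and you carry it out correctly. In particular, you rightly identify the one genuinely delicate point, namely that the crude integral-test estimate $\sum_{j\ge 0}(2+j)^{-\alpha_k(t)}\le (\alpha_k(t)-1)^{-1}$ is not summable over $k$, and that one must instead keep the factor $2^{-\alpha_k(t)}=(2+|k|)^{-(t^2/2)\log 2}$ coming from the leading term (valid uniformly in $k$ since $\alpha_k(t)\ge \tfrac{t^2}{2}\log 2$ is bounded away from $1$ once $t$ exceeds a fixed threshold); the resulting bound $\mathbb{P}(C>t)\le A\,2^{-Bt^2}$ for large $t$ then yields both almost-sure finiteness and all moments via the layer-cake formula.
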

In particular, there exists a positive random variable $C = C(\omega)$ with finite moments for any order  such that  for any $j \geq 0$ and $|k| \leq 2^{j+1}$, we have
\begin{align*}
\left |\varepsilon_{j,k}(\omega)\right | 
\leq 
C(\omega) (1+j)^{1/2} (1+|k|)^{1/2}
\end{align*}
and therefore
\begin{align*}
\sum_{j\geq j_0}
&\sum_{|k|\leq 2^{j+1}}
\Big(
\eta_1 2^{-j( H_1 + 1/2 )}
\norm{ \psi_{j,k}^{H_1}}_{\infty} \left | \varepsilon_{j,k}(\omega) \right |
\Big)
\\
&\leq
C(\omega) \eta_1 2^{-j_0( H_1 + 1/2 )}
\sum_{j\geq j_0}
\sum_{|k|\leq 2^{j+1}}
\Big(
\norm{ \psi_{j,k}^{H_1}}_{\infty}  (1+j)^{1/2} (1+|k|)^{1/2}
\Big)
\\
&\leq 
C(\omega) c(M) \eta_1 2^{-j_0( H_1 + 1/2 )-Mj_0}
\end{align*}
by Lemma \ref{lem:lemma5gloter}, where $M > 0$ is arbitrary. But $C(\omega)$ is bounded in $L^1$, so for any $M>0$ there exists a constant $c' = c'(M)$ such that
\begin{align*}
\EX{
\norm{
\xi^{1,n}(\omega)
-
\xi^{0,n}( T^n\omega )
}_{\infty}
}
\leq c' \eta_1 2^{-j_0( M + H_1 + \frac{1}{2} )} \leq c''n^{-1/2},
\end{align*}
where the last inequality is obtained taking $M$ large enough.

\section{Proof of Theorem \ref{thm:upper_multiscale}}
\label{sec:proof:upper_multiscale}

\subsection{Preparations for the proof}
We set 
$$H_- = \min H,\; H_+ = \max H,\; \eta_- = \min \eta,\;\eta_+ = \max \eta,$$
where $\max$ and $\min$ are taken over $(H,\eta) \in \mathcal D$.
We start by giving a crucial result on the behaviour of the pre-averaged energy levels and their empirical counterparts. It relies on a similar strategy as in \cite{gloter2007estimation} and is based on the two following results.

\begin{prop}
\label{propo:energy_levels}
Let $\varepsilon > 0$. Then there exist $0 < r_-(\varepsilon) < \exp(-(2H_++1))$, $J_0(\varepsilon) > 0$ and $N_0(\varepsilon)$ depending on $\mathcal D$ only such that for $N \geq N_0(\varepsilon)$, we have
\begin{align}
\sup_{(H, \eta) \in \mathcal D}
\mathbb{P}_{H,\eta}
\big(
\inf_{J_0 \leq j \leq N-1} 2^{2jH} Q_{j,N-j-1} \leq r_-(\varepsilon)
\big)
\leq \varepsilon. \label{eq: energy levels}
\end{align}
\end{prop}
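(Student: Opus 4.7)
The strategy is a Chebyshev-plus-union-bound argument, following the approach of Proposition 3 in \cite{gloter2007estimation} and adapted to the piecewise-constant setting $X_t = \eta W^H_{\lfloor t/\delta \rfloor \delta}$. The proof naturally splits into three steps: a uniform lower bound on $2^{2jH}\,\mathbb{E}_{H,\eta}[Q_{j,N-j-1}]$, an upper bound on $\Var_{H,\eta}(Q_{j,N-j-1})$ that is of lower order than the squared expectation, and a union bound over $j \in \{J_0,\ldots, N-1\}$.

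For the expectation, I would use self-similarity of fractional Brownian motion to reduce $\mathbb{E}_{H,\eta}[d_{j,k,p}^2]$ to a computation involving only the Gaussian correlation structure of second-order increments of a standard $W^H$. The pre-averaging normalisation $2^{-j/2-p}$ exactly balances the Ces\`aro-type sum over $2^p$ shifted second-order differences, so that $\mathbb{E}_{H,\eta}[d_{j,k,p}^2] = \eta^2 f(H,p)\,2^{-j(2H+1)}$ with $f(H,p)$ bounded above and below by positive constants uniformly in $p \geq 0$ and $H \in [H_-, H_+]$. The $p = 0$ case reduces to the well-known value $\kappa_0(H) = 4 - 2^{2H} > 0$, and for $p \geq 1$ one checks directly that the pre-averaged sum does not degenerate. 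Summing over the $2^{j-1}$ indices $k$ then gives $2^{2jH}\,\mathbb{E}_{H,\eta}[Q_{j,N-j-1}] \geq c_- > 0$ uniformly on $\mathcal{D}$, provided $N$ is large enough that the piecewise-constant rounding $\lfloor t/\delta\rfloor\delta$ is negligible at the relevant scale $2^{-j}$.

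For the variance, $Q_{j,p}$ is a quadratic form in a centred Gaussian vector, so Isserlis' theorem yields
\begin{align*}
\Var_{H,\eta}(Q_{j,p}) = 2 \sum_{k_1, k_2 = 0}^{2^{j-1}-1} \mathbb{E}_{H,\eta}[d_{j,k_1,p}\, d_{j,k_2,p}]^2.
\end{align*}
The covariance of two pre-averaged second-order fBm increments at scale $2^{-j}$ separated by $(k_1-k_2)2^{-j}$ decays polynomially in $|k_1 - k_2|$ fast enough for its square to be summable, producing a double sum of order $2^{-j(4H+1)}$, which is negligible compared to $(\mathbb{E}[Q_{j,N-j-1}])^2 \asymp 2^{-4jH}$. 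Chebyshev's inequality then gives, for any $r_- < c_-$,
\begin{align*}
\mathbb{P}_{H,\eta}\bigl( 2^{2jH} Q_{j,N-j-1} \leq r_- \bigr) \leq C\,2^{-j}.
\end{align*}

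A union bound over $j \in \{J_0, \ldots, N-1\}$ then controls the probability in the statement by $\sum_{j \geq J_0} C\,2^{-j} \leq 2 C\,2^{-J_0}$, which is $\leq \varepsilon$ for $J_0 = J_0(\varepsilon)$ large enough depending only on $\mathcal{D}$ and $\varepsilon$; shrinking $r_-(\varepsilon)$ further if necessary takes care of the additional constraint $r_-(\varepsilon) < e^{-(2H_++1)}$. The main obstacle is the uniform lower bound $f(H,p) \geq c > 0$ across all pre-averaging levels $p$ and the full range $H \in [H_-, H_+]$, together with the uniform decay of the covariances needed for the variance estimate; both rest on the fact that the auto-covariance sequence of second-order fBm increments is of short range and non-degenerate uniformly in $H$. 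One must then verify that the deterministic perturbation induced by the piecewise-constant rounding is absorbed into the $N_0(\varepsilon)$ threshold, so that all constants can be specified in terms of $\mathcal{D}$ and $\varepsilon$ alone.
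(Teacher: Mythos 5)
Your proposal is correct and follows essentially the same route as the paper: a uniform two-sided bound on $2^{j(2H+1)}\mathbb{E}_{H,\eta}[d_{j,k,p}^2]$ via self-similarity (the paper's Lemma \ref{lem:kappa}, where your $f(H,p)$ is $\kappa_p(H)$, controlled uniformly in $p$ by showing convergence to $\kappa_\infty(H)>0$), a variance bound of order $2^{-j(1+4H)}$ obtained from Isserlis and the polynomial decorrelation $(1+|k_1-k_2|)^{2H-4}$ of the coefficients (Lemma \ref{lemma:auto_d}), and then Chebyshev plus a union bound over $j\geq J_0$ giving a tail of order $2^{-J_0}$. The only cosmetic difference is your concern about the piecewise-constant rounding: since $d_{j,k,p}$ is built from $X$ evaluated exactly at grid points where $X=\eta W^H$, there is no perturbation to absorb, so that step is vacuous rather than an obstacle.
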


\begin{prop} For any $j$, $p$ and $n$, we have
\label{propo:emp_energy_levels}
\begin{align}
\sup_{(H,\eta) \in \mathcal D}\mathbb{E}_{H, \eta}\big[\big(\widehat{
Q}_{j,p,n} - Q_{j,p}\big)^2
\big]
\leq 
C
\big(
(n \delta)^{-2} 2^{-j-2p}
+ (n \delta)^{-1} 2^{-j(1+2H)-p}\big).
\label{eq: empirical pre-ave behaviour}
\end{align}
\end{prop}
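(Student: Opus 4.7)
The plan is to exploit the independence between the noise variables $(\varepsilon_{i,n,m})_i$ and the fractional Brownian motion $W^H$ to decompose the error and estimate each piece separately. Expanding $\widetilde{d}_{j,k,p,n}=d_{j,k,p}+e_{j,k,p,n}$ in the definition \eqref{eq:def:Qhat} and using the exact identity $\mathbb{E}[e_{j,k,p,n}^2]=6\Var(\bar\varepsilon_m)2^{-j-p}$, the bias correction kills the deterministic part, leaving
\begin{align*}
\widehat{Q}_{j,p,n}-Q_{j,p}=2\sum_{k=0}^{2^{j-1}-1}d_{j,k,p}\,e_{j,k,p,n}+\sum_{k=0}^{2^{j-1}-1}\bigl(e_{j,k,p,n}^2-\mathbb{E}[e_{j,k,p,n}^2]\bigr)=:T_1+T_2.
\end{align*}
The inequality $(a+b)^2\leq 2a^2+2b^2$ then reduces everything to bounding $\mathbb{E}[T_1^2]$ and $\mathbb{E}[T_2^2]$. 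The key structural observation is that, by \eqref{eq:def:e}, $e_{j,k,p,n}$ is a linear combination of the i.i.d.\ noise variables $\varepsilon_{\cdot,n,m}$ indexed by the $3\cdot 2^p$ integers $(k+i+l2^{-p})n2^{-j}$, $i\in\{0,1,2\}$, $l=0,\ldots,2^p-1$, whose index sets for $e_{j,k,p,n}$ and $e_{j,k',p,n}$ are disjoint as soon as $|k-k'|\geq 3$.

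For $T_1$, independence of $(e_{j,k,p,n})_k$ and $(d_{j,k,p})_k$ (noise vs.\ fBM) gives $\mathbb{E}[T_1^2]=4\sum_{k,k'}\mathbb{E}[d_{j,k,p}d_{j,k',p}]\,\mathbb{E}[e_{j,k,p,n}e_{j,k',p,n}]$, and the disjoint-support argument restricts the second sum to $|k-k'|\leq 2$. A direct count of shared indices gives $|\mathbb{E}[e_{j,k,p,n}e_{j,k',p,n}]|\leq C\,2^{-j-p}\Var(\bar\varepsilon_m)\leq C(n\delta)^{-1}2^{-j-p}$, using $\Var(\bar\varepsilon_m)\leq Cm^{-1}$. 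Stationarity of increments of $W^H$ (restricted to the dyadic grid in the piecewise constant model) yields the wavelet-type scaling $\mathbb{E}[d_{j,k,p}^2]\leq C\eta^2 2^{-j(1+2H)}$, hence $|\mathbb{E}[d_{j,k,p}d_{j,k',p}]|\leq C 2^{-j(1+2H)}$ by Cauchy--Schwarz. Summing $O(2^{j})$ diagonal-band terms produces $\mathbb{E}[T_1^2]\leq C(n\delta)^{-1}2^{-j(1+2H)-p}$, which is the second term in \eqref{eq: empirical pre-ave behaviour}.

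For $T_2$, we similarly write $\mathbb{E}[T_2^2]=\sum_{k,k'}\Cov(e_{j,k,p,n}^2,e_{j,k',p,n}^2)$ and the disjoint-support property again restricts the sum to the $O(2^j)$ pairs $|k-k'|\leq 2$. Each covariance is bounded via Cauchy--Schwarz by $\Var(e_{j,k,p,n}^2)\leq\mathbb{E}[e_{j,k,p,n}^4]$. Writing $e_{j,k,p,n}$ as $\sum c_i\varepsilon_{\cdot}$ with coefficients $c_i\in\{\pm 2^{-j/2-p},\pm 2\cdot 2^{-j/2-p}\}$ and using independence together with the fourth-moment bound $\mathbb{E}[\bar\varepsilon_m^4]\leq Cm^{-2}$, the standard identity for the fourth moment of a sum of independent centered variables gives
\begin{align*}
\mathbb{E}[e_{j,k,p,n}^4]\leq C\Bigl(\sum_i c_i^2\Bigr)^{\!2}\Var(\bar\varepsilon_m)^2+C\Bigl(\sum_i c_i^4\Bigr)\mathbb{E}[\bar\varepsilon_m^4]\leq C(n\delta)^{-2}2^{-2j-2p},
\end{align*}
the first term dominating since $\sum_i c_i^2=6\cdot 2^{-j-p}$. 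Multiplying by $O(2^j)$ pairs yields $\mathbb{E}[T_2^2]\leq C(n\delta)^{-2}2^{-j-2p}$, matching the first term in \eqref{eq: empirical pre-ave behaviour}. Summing the two bounds produces Proposition \ref{propo:emp_energy_levels}.

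The most delicate step is the index bookkeeping when counting pairs $(k,k')$ whose noise supports overlap and when counting the number of shared indices within such a pair: the final $2^{-j-p}$ factor in $\mathbb{E}[e_{j,k,p,n}e_{j,k',p,n}]$ comes from the fact that only $O(2^p)$ of the $O(2^p)$ noise terms in each $e$ coincide, each contributing $O(2^{-j-2p})\Var(\bar\varepsilon_m)$. A uniform bound over $(H,\eta)\in\mathcal{D}$ is immediate since $H\in[H_-,H_+]$ and $\eta\in[\eta_-,\eta_+]$ both lie in compact ranges and the constants in the wavelet scaling depend continuously on $H$.
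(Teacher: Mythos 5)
Your proof is correct and follows essentially the same path as the paper's: the same decomposition into a quadratic noise term and a cross term, the same use of the decorrelation of $e_{j,k,p,n}$ at distance $|k-k'|\geq 3$ and of the independence of the noise from the fractional Brownian motion, and the same $L^4$ control of $e_{j,k,p,n}$ (the paper invokes Rosenthal's inequality applied to the sum over $l$ of the i.i.d.\ second differences, which gives the same bound as your explicit fourth-moment expansion). The only caveat is that when you apply the fourth-moment identity to $\sum_i c_i\varepsilon_\cdot$, you should note that the $\varepsilon$'s are not centered; the argument goes through because $\sum_i c_i=0$ inside each second difference, so one may replace $\varepsilon_\cdot$ by $\varepsilon_\cdot-\mathbb{E}\bar\varepsilon_m$ without changing $e_{j,k,p,n}$.
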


Their proofs are given in Sections \ref{sec:proof:propoenergylevels} and \ref{sec:proof:propoempenergylevels}. Then notice that 
\begin{align*}
\Big | \frac{\widehat{Q}_{J_n^*+1,N-J_n^*-1,n} }{\widehat{Q}_{J_n^*,N-J_n^*-1,n} } - 2^{-2H} \Big |
\leq 
B_n + V_n^{(1)} + V_n^{(2)}
\end{align*}
where
\begin{align*}
B_n &= \Big | \frac{{Q}_{J_n^*+1,N-J_n^*-1} }{{Q}_{J_n^*,N-J_n^*-1} } - 2^{-2H} \Big |,
\\
V_n^{(1)} &= \Big | \frac{\widehat{Q}_{J_n^*+1,N-J_n^*-1,n} - {Q}_{J_n^*+1,N-J_n^*-1} }{\widehat{Q}_{J_n^*,N-J_n^*-1,n} } \Big |,
\\
V_n^{(2)} &= \Big | \frac{{Q}_{J_n^*+1,N-J_n^*-1} 
( \widehat{Q}_{J_n^*,N-J_n^*-1,n}-{Q}_{J_n^*,N-J_n^*-1}  )
}{\widehat{Q}_{J_n^*,N-J_n^*-1,n}{Q}_{J_n^*,N-J_n^*-1} } \Big |.
\end{align*}

Therefore the proof amounts to establishing that the sequences of random variables $(v_n^{-1} B_n)_n$, $(v_n^{-1}V_n^{(1)})_n$ and $(v_n^{-1} V_n^{(2)})_n$ are tight. We separately treat the cases $\delta \leq n^{-1/(2H+1)}$ and  $\delta \geq n^{-1/(2H+1)}$ in Sections \ref{sec:completion:small} and \ref{sec:completion:large} respectively.\\

For $\varepsilon > 0$, introduce 
\begin{align*}
J_n^-(\varepsilon) = \max \big\{ j \geq 1: \; r_-(\varepsilon) 2^{-2jH} \geq 2^j n^{-1} \big\} \wedge (N-1),
\end{align*}
where $r_-(\varepsilon)$ is defined in Proposition \ref{propo:energy_levels}.

\subsection{Proof of Proposition \ref{propo:energy_levels}}
\label{sec:proof:propoenergylevels}

Our proof is an adaptation of Proposition 1 in \cite{gloter2007estimation}. 

\begin{lem} 
\label{lem:kappa}
For any $p$, there exists a function $\kappa_p : (0,1) \to (0, \infty)$ such that for any $(H,\eta) \in  \mathcal{D}$ and any $j\geq 0$, we have
 \begin{align}
\label{eq:def_kappa}
\mathbb{E}_{H, \eta}\big[(d_{j,k,p})^2\big] = \kappa_p(H)  \eta^2 2^{-j(1+2H)}.
\end{align}
Moreover, we have
\begin{align*}
0 < \inf_{(H,\eta) \in \mathcal D, p \geq 0}\kappa_p(H) \leq \sup_{(H,\eta) \in \mathcal D, p \geq 0 }\kappa_p(H) <\infty.
\end{align*}
\end{lem}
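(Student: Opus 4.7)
My plan is to exploit the self-similarity and stationarity of second-order increments of $W^H$ to reduce $\mathbb E_{H,\eta}[d^2_{j,k,p}]$ to a single universal quantity depending only on $(H,p)$. At the dyadic resolutions where the piecewise-constant $X$ agrees with $\eta W^H$ on the evaluation grid, the relation $(W^H_{2^{-j}s})_s \stackrel{d}{=} 2^{-jH}(W^H_s)_s$ jointly in $s$, combined with stationarity of second-order increments to remove the dependence on $k$, yields the distributional identity
\[
d_{j,k,p} \stackrel{d}{=} \eta\,2^{-j(H+1/2)-p}\,Y_p(H), \qquad Y_p(H) := \sum_{l=0}^{2^p-1}\bigl(W^H_{l 2^{-p}} - 2\,W^H_{1+l 2^{-p}} + W^H_{2+l 2^{-p}}\bigr).
\]
Setting $\kappa_p(H) := 2^{-2p}\,\mathbb E[Y_p(H)^2]$ then gives \eqref{eq:def_kappa} directly. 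Since $Y_p(H)$ is a centered Gaussian and is not almost surely zero (for instance $\mathrm{Var}(Y_0(H)) = 4 - 4^H > 0$ for all $H \in (0,1)$), we have $0 < \kappa_p(H) < \infty$ for each $(p,H)$.

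The uniform upper bound follows from Cauchy--Schwarz:
\[
\mathbb E[Y_p(H)^2] \;\leq\; 2^p \sum_{l=0}^{2^p-1} \mathbb E\!\left[\bigl(W^H_{l 2^{-p}} - 2 W^H_{1+l 2^{-p}} + W^H_{2+l 2^{-p}}\bigr)^{\!2}\right] \;=\; 2^{2p}(4 - 4^H),
\]
giving $\sup_{p\geq 0,\,H\in[H_-,H_+]} \kappa_p(H) \leq 4 - 4^{H_-} < 4$.

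The uniform positive lower bound is the main obstacle. For each fixed $p$, $H \mapsto \kappa_p(H)$ is continuous and strictly positive on the compact $[H_-, H_+]$, so attains a strictly positive minimum there; the issue is ruling out ``escape to zero'' as $p \to \infty$. To that end I would recognize $2^{-p} Y_p(H)$ as a Riemann sum for $\int_0^3 \psi(u)\,W^H_u\,du$ with the wavelet-like function $\psi := \1_{[0,1)} - 2\,\1_{[1,2)} + \1_{[2,3)}$, and deduce
\[
\kappa_p(H) \;\longrightarrow\; \kappa_\infty(H) \;:=\; -\tfrac{1}{2}\int_0^3\!\!\int_0^3 \psi(s)\psi(t)|s-t|^{2H}\,ds\,dt,
\]
where the $s^{2H}+t^{2H}$ parts of the fBm covariance cancel thanks to $\int \psi = 0$. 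The limit is strictly positive because $\psi$ is nonzero with zero mean and $-|\cdot|^{2H}$ is conditionally positive definite on such signed measures (equivalently, $\int_0^3 \psi(u)\,W^H_u\,du$ is a non-degenerate Gaussian, so has strictly positive variance). The convergence $\kappa_p(H) \to \kappa_\infty(H)$ can moreover be made uniform in $H \in [H_-, H_+]$ by noting that the covariance integrand is jointly continuous in $(s,t,H)$ and the Riemann-sum error is $O(2^{-p})$ with constants controlled uniformly via the $L^2$-equicontinuity of $u \mapsto W^H_u$. Combining this uniform limit with the continuity and pointwise positivity of each $\kappa_p$ over the compact $[H_-, H_+]$ yields $\inf_{p\geq 0,\,H \in [H_-,H_+]} \kappa_p(H) > 0$, completing the proof.
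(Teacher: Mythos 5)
Your proposal is correct and follows essentially the same route as the paper's proof: use self-similarity and stationarity of the second-order increments of $W^H$ to isolate $\kappa_p(H)$, show $\kappa_p\to\kappa_\infty$ uniformly on the compact parameter set, and conclude from the continuity and strict positivity of $\kappa_\infty(H)=\mathbb{E}\big[\big(\int_0^1(W^H_u-2W^H_{u+1}+W^H_{u+2})\,du\big)^2\big]$, which coincides with your quadratic form in $\psi$. One small correction worth making: the Riemann-sum error is not $O(2^{-p})$ uniformly in $H$ --- since $x\mapsto|x|^{2H}$ is only $(2H\wedge 1)$-H\"older near the origin, the uniform rate is $O(2^{-p(2H_-\wedge 1)})$, as the paper establishes --- but this does not affect your conclusion, because only the uniform convergence (already ensured by the joint-continuity observation you invoke) is needed.
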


\begin{proof}
We start with the explicit computation of $\mathbb{E}_{H, \eta}[(d_{j,k,p})^2]$. By self-similarity and the fact that $(W_t^H)$ has stationary increments, this expectation equals
\begin{align*}
&
\frac{\eta^2}{2^{j(1+2H)+2p}} 
\sum_{\ell_1, \ell_2=0}^{2^p-1}
\mathbb{E}_{H, \eta}\big[
(
W^H_{(\ell_1 - \ell_2)2^{-p}}
-
2W^H_{1 + (\ell_1 - \ell_2)2^{-p}}
+
W^H_{2 +(\ell_1 - \ell_2)2^{-p}}
)
(
W^H_2 - 2 W^H_1
)
\big]
\\
&= 
\frac{\eta^2}{2^{j(1+2H)}} 
%\underbrace{
2^{-p}
\sum_{\ell \in \mathbb Z, |\ell| < 2^p}
(1-|\ell |2^{-p})
\mathbb{E}_{H, \eta}\big[
(
W^H_{\ell 2^{-p}}
-
2W^H_{1 +\ell 2^{-p}}
+
W^H_{2 +\ell 2^{-p}}
)
(
W^H_2 - 2 W^H_1
)
\big].
%}_{=:\kappa(p, H)}
\end{align*}
It follows that $\kappa_p(H)$ is well defined for all values of $H$ and $p$. It is also positive since $\mathbb{E}_{H, \eta}[d^2_{j,k,p}]>0$. Moreover, a direct computation of the expectation above yields
\begin{align*}
\kappa_p(H) = 
2^{-p}
\sum_{\ell \in \mathbb Z, |\ell| < 2^p}
(1-|\ell | 2^{-p}) \phi_H(\ell 2^{-p}),
\end{align*}
with $\phi_H(x) = \tfrac{1}{2}\sum_{k=0}^{4} (-1)^{k+1} \binom{4}{k} |x+k-2|^{2H}$. Since $H \mapsto \phi_H(x)$ is continuous for any $x$, we deduce that $H \mapsto \kappa_p(H)$ is bounded below and above for fixed $p$. Moreover, $x \mapsto \phi_H(x)$ is continuous and there exists $C>0$ independent of $H$ such that $|\phi_H(x) - \phi_H(y)| \leq C |x-y|^{2H\wedge 1}$  for any $-1 \leq x, y \leq 1$. Thus $\kappa_p(H)$ converges to $\kappa_\infty(H) = \int_{-1}^1 (1-|x|) \phi_H(x) dx$ as $p \rightarrow \infty$. More precisely, we have
\begin{align*}
| \kappa_p(H) - \kappa_\infty(H)| 
&\leq 
\sum_{\ell \in \mathbb Z, |\ell| < 2^p}
\int_{
(\ell - 1) 2^{-p}
}^
{
\ell 2^{-p}
}
\bigg|
(1-|\ell | 2^{-p}) \phi_H(\ell 2^{-p})
-
(1-|x|) \phi_H(x)
\bigg|
dx
\\
&\;\;\;\;+
\int_{
1- 2^{-p}
}^
{
1
}
(1-|x|) | \phi_H(x) |
dx.
\end{align*}

The integral in the sum is bounded by $C2^{-2p} + C2^{-(2H\wedge 1)p-p}$ and the last integral is bounded by
% we have $\int_{
%1- 2^{-p}
%}^
%{
%1
%}
%(1-|x|) | \phi_H(x) |
%dx \leq $
$C2^{-p}$, so that
\begin{align*}
| \kappa_p(H) - \kappa_\infty(H)| \leq C2^{-p(2H\wedge 1)}
\end{align*}
where the convergence is uniform. Also, notice that
\begin{align*}
\kappa_\infty(H) = \mathbb{E}_{H, \eta}\Big[ \Big(\int_0^1 (W^H_u - 2 W^H_{u+1} + W^H_{u+2})du \Big)^2\Big] > 0,
\end{align*} 
hence Lemma \ref{lem:kappa} follows from the continuity of $H \mapsto \kappa_\infty(H)$.
\end{proof}

\begin{lem}[Decorrelation of the wavelet coefficients] \label{lemma:self_d}
For $ k_1, k_2$ such that $|k_1 - k_2| \geq 3$, we have:
\begin{align*}
|\mathbb{E}_{H,\eta}{[d_{j,k_1,p}d_{j,k_2,p}]}|
\leq 
C
\eta^2 2^{-j(1+2H)}
(1+|k_1-k_2|)^{2H-4}.
\end{align*}
\end{lem}

\begin{proof}
Suppose $k_1 \geq k_2 + 3$. Since $(W^H_t)$ is self-similar and has stationary increments, we have
\begin{align}
\label{eq:explicit_correl_d}
\mathbb{E}_{H,\eta}\big[d_{j,k_1,p}d_{j,k_2,p}\big]
=
\frac{\eta^2}{2^{j(1+2H)}}
2^{-p} \sum_{\ell \in \mathbb{Z}, |\ell |<2^p}
(1-\ell 2^{-p})\phi_{H}(k_1-k_2+ \ell 2^{-p}),
\end{align}
where $\phi_H(x) = \tfrac{1}{2}\sum_{k=0}^{4} (-1)^{k+1} \binom{4}{k} |x+k-2|^{2H}$ is the same as in the proof of Lemma \ref{lem:kappa}. Notice that for $x > 2$, the absolute values appearing in the expression of $\phi_{H}(x)$ can be removed. If $F(x,t) = |x+t|^{2H}$, Taylor's formula yields
\begin{align*}
\phi_H(x) &= \frac{1}{2\cdot 4!} \int_{0}^1 (1-t)^3 \big( 
	16 \partial^{4}_t F(x,-2t)
	-4\partial^{4}_t F(x,-t)
	-4\partial^{4}_t F(x,t)
	+16 \partial^{4}_t F(x,2t)
\big ) dt
\\
&= \frac{1}{2\cdot 4!} \int_{-1}^1 (1-|t|)^3 \big( 
	-4\partial^{4}_t F(x,t)
	+16 \partial^{4}_t F(x,2t)
\big ) dt.
\end{align*}
We infer $|\phi_H(x)| \leq C |x-2|^{2H-4}$ for some constant $C$ independent of $H \in [H_-, H_+]$. Summing over $\ell$ in \eqref{eq:explicit_correl_d} yields the result.
\end{proof}

Using both Lemmas \ref{lem:kappa} and \ref{lemma:self_d}, we have
\begin{align}
\label{eq:deviation_Q}
\mathbb{E}_{H, \eta}\big[\big(Q_{j,p} - \kappa_p(H) \eta^2 2^{-2jH-1}\big)^2\big] \leq C2^{-j(1+4H)}
\end{align}
for some constant $C$ independent of $H$ and $\eta$. The proof of \eqref{eq:deviation_Q} is obtained exactly in the same way as in Proposition 3 of \cite{gloter2007estimation} and is thus omitted. \\

We are ready to prove the estimate \eqref{eq: energy levels} of Proposition \ref{propo:energy_levels}. Let $J_0$ and $N$ be two arbitrary integers and $r>0$. We have
\begin{align*}
\mathbb{P}_{H,\eta}
\Big(
&\inf_{J_0 \leq j \leq N-1} 2^{2jH} Q_{j,N-j-1} \leq r
\Big)
\leq 
\sum_{j=J_0}^{N-1}
\mathbb{P}_{H,\eta}
\big(
2^{2jH} Q_{j,N-j-1} \leq r
\big)
\\
&\leq   
\sum_{j=J_0}^{N-1}
\mathbb{P}_{H,\eta}
\big(
 Q_{j,N-j-1} - \kappa_{N-j-1}(H) \eta^2 2^{-2jH-1}  \leq 
  (r - \kappa_{N-j-1}(H) \eta^2/2)2^{-2jH}
\big).
\end{align*}

By Lemma \ref{lem:kappa}, we can pick $r$ small enough so that $r - \kappa_p(H) \eta^2/2 \leq - c$ for some $c > 0$ fixed, uniformly in $p \geq 0$ and $(H,\eta) \in \mathcal D$. The estimate \eqref{eq:deviation_Q} yields
\begin{align*}
\mathbb{P}_{H,\eta}
\Big(
\inf_{J_0 \leq j \leq N-1} 2^{2jH} Q_{j,N-j-1} \leq r
\Big)
&\leq 
\sum_{j=J_0}^{N-1}
	C 2^{-j(1+4H)}
  c^{-2}2^{4jH}
\leq
C' 2^{-J_0}
\end{align*}
and \eqref{eq: energy levels} follows.

\subsection{Proof of Proposition \ref{propo:emp_energy_levels}}
\label{sec:proof:propoempenergylevels}

Recall from \eqref{eq:def:Qhat} that   
$\widehat{Q}_{j,p,n} = \sum_{k=0}^{2^{j-1}-1} \widehat{d^2_{}}_{\!\!\!j,k,p,n}$
where
$\widehat{d^2}_{\!\!\!j,k,p,n} = (\widetilde{d}_{j,k,p,n})^2 - \lambda_{j,p,n}$ and $ \lambda_{j,p,n} = 6\Var( \varepsilon_{1,m}) 2^{-j-p}$. Moreover, recall the decomposition
$\widetilde{d}_{j,k,p, n} = d_{j,k,p} + e_{j,k,p,n}$ where $e_{j,k,p,n}$ is defined in \eqref{eq:def:e}. We obtain
\begin{align*}
\widehat{Q}_{j,p,n} = Q_{j,p} 
+ \sum_{k=0}^{2^{j-1}-1} ( e_{j,k,p,n}^2 - \lambda_{j,p,n})
+ 2 \sum_{k=0}^{2^{j-1}-1} e_{j,k,p,n}d_{j,k,p}.
\end{align*}
We plan to prove the following estimates: 
\begin{align}
\label{eq:estimate_1}
&\mathbb{E}_{H,\eta} \Big[\Big(\sum_{k = 0}^{2^{-j-1}-1} ( e_{jk,p,n}^2 - \lambda_{j,p,n})\Big)^2\Big] \leq C(n \delta)^{-2} 2^{-j-2p}
\end{align}
and
\begin{align}
\label{eq:estimate_2}
\mathbb{E}_{H,\eta} \Big[\Big(\sum_{k = 0}^{2^{-j-1}-1} e_{j,k,p,n} d_{j,k,p}\Big)^2\Big] \leq C(n \delta)^{-1} 2^{-j(1+2H)-p}.
\end{align}

To prove \eqref{eq:estimate_1}, first notice that the random variables $e_{j,k,p,n}^2 - \lambda_{j,p,n}$ are centered and that $e_{j,k_1,p,n}^2 - \lambda_{j,p,n}$ and $e_{j,k_2,p,n}^2 - \lambda_{j,p,n}$ are independent whenever $|k_1 - k_2| \geq 3$. Thus 
\begin{align*}
\mathbb{E}_{H,\eta} \Big[\Big(\sum_k ( e_{j,k,p,n}^2 - \lambda_{j,p,n})\Big)^2\Big]
&= 
\sum_{k_1, k_2} \mathbb{E}_{H,\eta}\Big[( e_{j,k_1,p,n}^2 - \lambda_{j,p,n})( e_{j,k_2,p,n}^2 - \lambda_{j,p,n})\Big]
\\
&\leq 
C\sum_{k} \mathbb{E}_{H,\eta}\Big[{( e_{j,k,p,n}^2 - \lambda_{j,p,n})^2}\Big]
\\
&
\leq 
C\sum_{k} \mathbb{E}_{H,\eta}\Big[{ e_{j,k,p,n}^4}\Big].
\end{align*}

The noise variable $e_{j,k,p,n}$  is a sum of $2^p$ independent centered random variables with the same law as $\varepsilon_{1,n}-2 \varepsilon_{2,n}+\varepsilon_{3,n}$. Therefore, its 4-th moment is of order $2^{2p}$ up to a multiplicative factor bounded above by $\mathbb{E}_{H,\eta} \big[\varepsilon_{1,n}^4]$, as implied  by Rosenthal's inequality. We derive 
\begin{align*}
& \mathbb{E}_{H,\eta}\big[e_{j,k,p,n}^4\big] \leq C2^{-2j-2p} \mathbb{E}_{H,\eta} \big[\varepsilon_{1,n}^4]. 
\end{align*}
Moreover, $n\delta \exp(\varepsilon_{1,n})$ has a chi-square distribution with  $n\delta$ degrees of freedom, hence $\mathbb{E}_{H,\eta}\big[\varepsilon_{1,n}^4\big] \leq C (n\delta)^{-2}$ by Lemma \ref{lem:log_mom_chi2}. Thus
\begin{align*}
& \mathbb{E}_{H,\eta}\big[e_{j,k,p,n}^4\big] \leq C2^{-2j-2p} (n\delta)^{-2}
\end{align*}
and \eqref{eq:estimate_1} follows.\\

We now focus on the estimate \eqref{eq:estimate_2}. Notice that the random variables $(e_{j,k,p,n})_k$ are centered and independent of the variables $(d_{j,k,p})_k$. Moreover, $e_{j,k_1,p,n}$ and $e_{j,k_2,p,n}$ are independent if $|k_1-k_2| \geq 3$. Therefore we get
\begin{align*}
\mathbb{E}_{H,\eta} \Big[\Big(\sum_{k = 0}^{2^{-j-1}-1} e_{j,k,p,n} d_{j,k,p}\Big)^2\Big] 
&
\leq 
C
\sum_{k} 
\mathbb{E}_{H,\eta} \Big[ e_{j,k,p,n}^2 d_{j,k,p}^2\Big] 
\\
&\leq 
C
\sum_{k} 
\mathbb{E}_{H,\eta} \Big[ e_{j,k,p,n}^2 \Big] \mathbb{E}_{H,\eta} \Big[ d_{j,k,p}^2\Big].
\end{align*}
We conclude using Lemma \ref{lem:kappa} and the estimate 
$$\mathbb{E}_{H,\eta} \Big[ e_{j,k,p,n}^2 \Big] = \lambda_{j,p,n} = 6\Var( \varepsilon_{1,m}) 2^{-j-p}\leq C 2^{-j-p} (n\delta)^{-1}.$$

\subsection{Completion of proof when \texorpdfstring{$\delta \leq n^{-1/(2H+1)}$}{delta small}}
\label{sec:completion:small}

Suppose first that $\delta \leq n^{-1/(2H+1)}$ so that $n \leq \delta^{-(2H+1)} \leq 2^{(2H+1)N}$. Then we define for any $\varepsilon > 0$
\begin{align*}
J_n^-(\varepsilon) 
&=
\max \big\{ j \geq 1: \; r_-(\varepsilon) 2^{-2jH} \geq 2^j n^{-1} \big\} \wedge (N-1)
\\
&=
\bigg\lfloor 
\frac
{\log \big( r_-(\varepsilon) n\big )}
{2H+1}
\bigg \rfloor
 \wedge (N-1)
\\&
\leq
\bigg \lfloor 
N + 
\frac
{\log r_-(\varepsilon) }
{2H+1}
\bigg\rfloor
 \wedge (N-1).
\end{align*}
Since $r_-(\varepsilon) < \exp(-(2H+1))$ by Proposition \ref{propo:energy_levels}, we have $\log r_-(\varepsilon) < -(2H+1)$ and $J_n^-(\varepsilon) =
\lfloor 
\frac
{\log ( r_-(\varepsilon) n)}
{2H+1}
\rfloor$ so that
\begin{align}
\label{eq:order_Jnm}
\tfrac{1}{2} 
(r_-(\varepsilon) n)^{
1/(2H+1)
}
\leq
2^{J_n^-(\varepsilon)}
\leq
(r_-(\varepsilon) n)^{1/(2H+1)
}.
\end{align}
The following estimate ensures that with overwhelming probability, $ J_n^*$ can be controlled by $J_n^-(\varepsilon)$. 
\begin{lem}
\label{lemma:bound_Jnstar}
For any $\varepsilon > 0$, there exist $L(\varepsilon) > 0$  and $\varphi_n(\varepsilon)$ such that $\varphi_n(\varepsilon) \to 0$ as $n \to \infty$ and
\begin{align*}
\sup_{(H,\eta) \in \mathcal D}
\mathbb{P}_{H,\eta}(J_n^* < J_n^-(\varepsilon) - L(\varepsilon) ) \leq \varepsilon + \varphi_n(\varepsilon).
\end{align*}
\end{lem}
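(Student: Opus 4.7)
The plan is to fix $L=L(\varepsilon)$ and $J=J_n^-(\varepsilon)-L$, and then observe that
\begin{align*}
\{J_n^* < J\} \subseteq \{\widehat Q_{J,N-J-1,n} < \nu_0\, 2^J n^{-1}\} \subseteq \mathcal{E}_1 \cup \mathcal{E}_2,
\end{align*}
where $\mathcal{E}_1 = \{Q_{J,N-J-1} < r_-(\varepsilon)\,2^{-2JH}\}$ and $\mathcal{E}_2 = \{|\widehat Q_{J,N-J-1,n} - Q_{J,N-J-1}| > r_-(\varepsilon)\,2^{-2JH} - \nu_0\, 2^J n^{-1}\}$. For $n$ large enough that $J \geq J_0(\varepsilon)$, Proposition \ref{propo:energy_levels} bounds $\mathbb{P}_{H,\eta}(\mathcal{E}_1)$ by $\varepsilon$ uniformly over $\mathcal D$, so the task reduces to controlling $\mathbb{P}_{H,\eta}(\mathcal{E}_2)$ by some quantity $\varphi_n(\varepsilon)$ tending to $0$ uniformly in $(H,\eta)$.

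The quantitative heart of the argument is a careful comparison of the two quantities $r_-(\varepsilon)\,2^{-2JH}$ and $\nu_0\, 2^J n^{-1}$ defining the gap in $\mathcal{E}_2$. From \eqref{eq:order_Jnm}, one has $2^{-L-1}(r_-(\varepsilon) n)^{1/(2H+1)} \leq 2^J \leq 2^{-L}(r_-(\varepsilon) n)^{1/(2H+1)}$, so both terms are of order $r_-(\varepsilon)^{1/(2H+1)} n^{-2H/(2H+1)}$, with respective prefactors proportional to $2^{2HL}$ and $\nu_0\, 2^{-L}$. Choosing $L$ depending only on $H_-$ and $\nu_0$ so that $2^{2H_- L} \geq 2\nu_0\, 2^{-L}$ guarantees, uniformly in $(H,\eta) \in \mathcal D$, the gap bound
\begin{align*}
r_-(\varepsilon)\,2^{-2JH} - \nu_0\, 2^J n^{-1} \geq \tfrac{1}{2} r_-(\varepsilon)\, 2^{-2JH}.
\end{align*}

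With this at hand, Markov's inequality together with Proposition \ref{propo:emp_energy_levels} applied at $p=N-J-1$ yields
\begin{align*}
\mathbb{P}_{H,\eta}(\mathcal{E}_2) \leq 4\, r_-(\varepsilon)^{-2}\, 2^{4JH}\, \mathbb{E}_{H,\eta}\big[(\widehat Q_{J,N-J-1,n} - Q_{J,N-J-1})^2\big] \leq C'\big(m^{-2}\, 2^{J(1+4H)} n^{-2} + m^{-1}\, 2^{2JH} n^{-1}\big).
\end{align*}
After substituting $n=2^N$, $m=n\delta$ and $2^J \asymp n^{1/(2H+1)}$, both terms collapse into a constant multiple of $m^{-1} n^{-1/(2H+1)}$, which defines $\varphi_n(\varepsilon)$. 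The main obstacle is purely bookkeeping: ensuring that the single value $L(\varepsilon)$ works uniformly over $\mathcal D$, and that the $2^{4JH}$ blow-up produced by Markov at the natural scale $r_-(\varepsilon)\,2^{-2JH}$ is still dominated by the second-moment bound of Proposition \ref{propo:emp_energy_levels} at the optimal resolution $2^J \asymp n^{1/(2H+1)}$, which is precisely why the rate $n^{-1/(4H+2)}$ surfaces.
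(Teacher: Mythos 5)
Your proposal is correct and follows essentially the same strategy as the paper's proof. You decompose the failure event $\{J_n^* < J\}$ into an energy-level event $\mathcal{E}_1$ (controlled by Proposition~\ref{propo:energy_levels}) and an estimation-error event $\mathcal{E}_2$ (controlled by Markov plus Proposition~\ref{propo:emp_energy_levels}), choosing $L$ so that the two scales $r_-(\varepsilon)2^{-2JH}$ and $\nu_0 2^J n^{-1}$ separate uniformly in $H$; the paper does exactly the same thing, merely phrased contrapositively as a lower bound on $\mathbb{P}(J_n^* \ge \overline J_n)$ and with the Markov threshold written as $2^{J_n^-(\varepsilon)}n^{-1}$ instead of your asymptotically equivalent $\tfrac12 r_-(\varepsilon)2^{-2JH}$.
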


\begin{proof}
Let $L > 0$. For notational simplicity, we set $\overline{J}_n =  J_n^-(\varepsilon) - L$. We have
\begin{align*}
\mathbb{P}_{H,\eta}(J_n^* \geq \overline{J}_n )
&\geq
\mathbb{P}_{H,\eta}(\widehat{Q}_{\overline{J}_n, N-\overline{J}_n-1,n} \geq \nu_0  2^{\overline{J}_n}n^{-1}
 )
 \\
 &\geq
\mathbb{P}_{H,\eta}(
\widehat{Q}_{\overline{J}_n, N-\overline{J}_n-1}
-
Q_{\overline{J}_n, N-\overline{J}_n-1,n}
\geq \nu_0 2^{\overline{J}_n} n^{-1}
-
2^{-2H\overline{J}_n}r_-(\varepsilon)
 )
 \\
&\;\;\;\;-
\mathbb{P}_{H,\eta}(
Q_{\overline{J}_n, N-\overline{J}_n-1}
\leq 2^{-2H\overline{J}_n}r_-(\varepsilon)
).
\end{align*}

Since $J_n^-(\varepsilon) \to \infty$ as $n\to\infty$, we have $J_0(\varepsilon) \leq \overline{J}_n \leq N-1$ for large enough $n$ ($J_0(\varepsilon)$ is defined in Proposition \ref{propo:energy_levels}).
This implies by Proposition \ref{propo:energy_levels} that
\begin{align*}
\mathbb{P}_{H,\eta}\big(
Q_{\overline{J}_n, N-\overline{J}_n-1}
\leq 2^{-2H\overline{J}_n}r_-(\varepsilon)
\big) \leq \mathbb{P}_{H,\eta}\Big(
\inf_{J_0(\varepsilon) \leq j \leq N-1} 2^{2Hj} Q_{j, N-j-1}
\leq r_-(\varepsilon)
\Big) \leq \varepsilon.
\end{align*}
By definition of $J_n^-(\varepsilon)$, we also have
$ r_-(\varepsilon) 2^{-2J_n^-(\varepsilon)H} \geq 2^{J_n^-(\varepsilon)} n^{-1}$. Thus,
\begin{align*}
\mathbb{P}_{H,\eta}\big( J_n^* \geq \overline{J}_n )
 &\geq
\mathbb{P}_{H,\eta}(
\widehat{Q}_{\overline{J}_n, N-\overline{J}_n-1}
-
Q_{\overline{J}_n, N-\overline{J}_n-1,n}
\geq 
2^{J_n^-(\varepsilon)}n^{-1}
(
\nu_0 2^{-L}
-
2^{2HL}
)
 \big)
-\varepsilon \\
 &\geq
1-\mathbb{P}_{H,\eta}(
\widehat{Q}_{\overline{J}_n, N-\overline{J}_n-1}
-
Q_{\overline{J}_n, N-\overline{J}_n-1,n}
\leq 
-2^{J_n^-(\varepsilon)}n^{-1}
 \big)
-\varepsilon
\end{align*}
as soon as $L$ is taken sufficiently large, so that $\nu_0 2^{-L}
-
2^{2HL} \leq -1$. Using Proposition \ref{propo:emp_energy_levels}, we derive that
\begin{align*}
\mathbb{P}_{H,\eta}( J_n^* < \overline{J}_n) 
& \leq \varepsilon
+\mathbb{P}_{H,\eta}\big(
|
\widehat{Q}_{\overline{J}_n, N_n-\overline{J}_n-1}
-
Q_{\overline{J}_n, N_n-\overline{J}_n-1,n}
|
\geq 
2^{J_n^-(\varepsilon)}n^{-1}
\big) 
\\
& \leq \varepsilon + 
C2^{-2J_n^-(\varepsilon)}n^{2} \big(  2^{-2N+\overline{J}_n}n^{-2}\delta^{-2}
+
n^{-1} \delta^{-1} 2^{-N-2H\overline{J}_n}
 \big ) 
\\
& \leq \varepsilon + C'
\big(
2^{-J_n^-(\varepsilon)}
+ 
2^{-2(H+1)J_n^-(\varepsilon)}n 
\big),
\end{align*}
where the constant $C'$ may depend on $\varepsilon$ but is independent of the model parameters. The estimate \eqref{eq:order_Jnm} completes the proof.
\end{proof}

We now prove that the sequence $(v_n^{-1} B_n)_n$ is tight under $\delta  = \delta_n \leq n^{-1/(2H+1)}$. Let $n\geq 1$ be sufficiently large so that 
\begin{align*}
J_n^-(\varepsilon) - L(\varepsilon) \geq J_0(\varepsilon)\;\;\text{and}\;\;N \geq N_0(\varepsilon)
\end{align*}
simultaneously.  Let $M > 0$. 
%Combining Lemma \ref{lemma:bound_Jnstar}, Proposition \ref{propo:energy_levels} and \eqref{eq:deviation_Q},  we successively have:
We first write
\begin{align*}
\mathbb{P}_{H,\eta}\big(v_n^{-1} B_n \geq M \big) 
\leq I+II+III,
\end{align*}
with
\begin{align}
\nonumber
I & = \mathbb{P}_{H,\eta}\Big( 
	v_n^{-1}  B_n \geq M, \; 
	{ J_n^*} \geq J_n^-(\varepsilon) - L(\varepsilon), \;
	\inf_{J_0 \leq j \leq N-1} 2^{2jH} Q_{j,N-j-1} \geq r_-(\varepsilon)
\Big),\\
\label{eq:multilevel:II} II & = \mathbb{P}_{H,\eta}( J_n^* < J_n^-(\varepsilon) - L(\varepsilon) ),\\
\nonumber III &= 
\mathbb{P}_{H,\eta} \Big( \inf_{J_0 \leq j \leq N-1} 2^{2jH} Q_{j,N-j-1} \leq r_-(\varepsilon) \Big)
. 
\end{align}
The term $II$ is smaller than $\varepsilon+\varphi_n(\varepsilon)$ by Lemma \ref{lemma:bound_Jnstar}, while $III$ is smaller than $\varepsilon$ by Proposition \ref{propo:energy_levels}. For the term $I$, we apply
estimate \eqref{eq:deviation_Q} to obtain
\begin{align*}
I
&\leq
\sum_{j=J_n^-(\varepsilon) - L(\varepsilon)}^{N-1}
\mathbb{P}_{H,\eta} \Big( v_n^{-1} B_n \geq M,\;J_n^* = j,\;{Q}_{j,N-j-1} \geq 2^{-2Hj}r_-(\varepsilon)\Big )
\\
&\leq
\sum_{j=J_n^-(\varepsilon) - L(\varepsilon)}^{N-1}
\mathbb{P}_{H,\eta}\big(
\big | {Q}_{j+1,N-j-1}  - 2^{-2H}{Q}_{j,N-j-1}  \big | \geq M 2^{-2Hj} r_-(\varepsilon) v_n
\big)
\\
&\leq
\sum_{j=J_n^-(\varepsilon) - L(\varepsilon)}^{N-1}
2^{-j} C M^{-2} r_-(\varepsilon)^{-2} v_n^{-2}
\\
&
\leq 
C 2^{-J_n^-(\varepsilon) + L(\varepsilon)}  M^{-2} r_-(\varepsilon)^{-2} v_n^{-2}.
\end{align*}
Now we obtain the tightness of $(v_n^{-1} B_n)_n$ thanks to the fact that  $2^{-J_n^-(\varepsilon)} v_n^{-2}$ is bounded, see \eqref{eq:order_Jnm}. \\

We next study the tightness of $(v_n^{-1} V_n^{(1)})_n$. Let $M > 0$. We have
\begin{align*}
\mathbb{P}_{H,\eta} \Big({v_n^{-1} V_n^{(1)} \geq M}\Big)
&\leq
I' + II
\end{align*}
where
\begin{align*}
I' &= \mathbb{P}_{H,\eta} \Big({v_n^{-1} V_n^{(1)} \geq M}, \; J_n^* \geq J_n^-(\varepsilon) - L(\varepsilon) \Big)
\end{align*}
and where $II$ is defined in \eqref{eq:multilevel:II}. Recall that $II \leq \varepsilon + \varphi_n(\varepsilon)$ by Lemma \ref{lemma:bound_Jnstar}.
By definition, we also have
\begin{align*}
\widehat{Q}_{J_n^*,N-J_n^*-1,n} \geq \nu_0 2^{J_n^*}n^{-1}
\;\;\text{on}\;\;
\big\{
	J_n^* \geq J_n^-(\varepsilon) - L(\varepsilon)
\big\}.
\end{align*}
Hence, by Proposition \ref{propo:emp_energy_levels},
\begin{align*}
I' &\leq
\sum_{j=J_n^-(\varepsilon) - L(\varepsilon) }^{N-1}\mathbb{P}_{H,\eta}\Big({v_n^{-1} | \widehat{Q}_{j+1,N-j-1,n} - {Q}_{j+1,N-j-1} | \geq M \nu_0 2^{j}n^{-1} } \Big)
\\
&\leq
C \sum_{j=J_n^-(\varepsilon) - L(\varepsilon) }^{N-1}
M^{-2} \nu_0^{-2} 2^{-2j}n^{2}v_n^{-2}
\Big(
n^{-2} \delta^{-2} 2^{j-2N} + n^{-1}\delta^{-1}2^{-2Hj-N } \Big)
\\
&\leq
C 
M^{-2}\nu_0^{-2}v_n^{-2}
\sum_{j=J_n^-(\varepsilon) - L(\varepsilon) }^{N-1}
 \big(2^{-j} + n2^{-2(H+1)j }\big)
\\
&\leq
C' 
M^{-2}\nu_0^{-2}v_n^{-2}
( 2^{-J_n^-(\varepsilon)} + n2^{-2(H+1)J_n^-(\varepsilon)}),
\end{align*}
where $C'$ may depend on $\varepsilon$. We conclude noticing that \eqref{eq:order_Jnm} implies that both $v_n^{-2}
2^{-J_n^-(\varepsilon)}$ and $v_n^{-2}
n2^{-2(H+1)J_n^-(\varepsilon)}$ are bounded. \\

We eventually prove the tightness of $(v_n^{-1}V_n^{(2)})_n$. First, note that  $v_n^{-1}B_n \leq M'$ for some $M'>0$ implies
\begin{align*}
2^{-2H}- M'v_n
\leq
\frac{{Q}_{J_n^*+1,N_n- J_n^*-1}}{{Q}_{ J_n^*,N_n- J_n^*-1}}
\leq 
2^{-2H}+ M'v_n.
\end{align*}
For $M' > 0$, we have $2^{-2H}- M' v_n > 0$, at least when $n$ is large enough since $v_n \to 0$ as $n \rightarrow \infty$. In that case,
\begin{align*}
\frac{{Q}_{J_n^*,N-J_n^*-1}}{{Q}_{J_n^*+1,N-J_n^*-1}} \geq (2^{-2H}+ M'v_n)^{-1}.
\end{align*}

Let $M >0$. It follows that
\begin{align*}
&\mathbb{P}_{H,\eta}\Big({v_n^{-1} V_n^{(2)} \geq M}\Big) \\
&\leq 
\mathbb{P}_{H,\eta}\Big({v_n^{-1}  
|  	 
		\widehat{Q}_{J_n^*,N-J_n^*-1,n}-
		         {Q}_{J_n^*,N-J_n^*-1}  	
| 
\geq 
\frac{M\widehat{Q}_{J_n^*,N-J_n^*-1,n}{Q}_{J_n^*,N-J_n^*-1}}{Q_{J_n^*+1,N-J_n^*-1}}}\Big)
\\
&\leq 
\mathbb{P}_{H,\eta}\Big(
|  	 
		\widehat{Q}_{J_n^*,N-J_n^*-1,n}-
		         {Q}_{J_n^*,N-J_n^*-1}  	  
| 
\geq 
\frac{M\widehat{Q}_{J_n^*,N-J_n^*-1,n}}{2^{-2H}v_n^{-1} + M'}\Big)
+ 
\mathbb{P}_{H,\eta}\Big({v_n^{-1} B_n \geq M'}\Big).
\end{align*}
We can then repeat the proof for the tightness of $(v_n^{-1}V_n^{(1)})_n$  in the same way and we conclude by noticing that $2^{-2H}v_n^{-1} + M'$ is of the same order as $v_n^{-1}$.

\subsection{Completion of proof when \texorpdfstring{$\delta  = \delta_n \geq n^{-1/(2H+1)}$}{delta large}}
\label{sec:completion:large}

Suppose now that $\delta \geq n^{-1/(2H+1)}$. We quickly cover this case using the same arguments as in the first case. Note that  now  $v_n = \delta^{1/2}$. \\

Recall that $J_n^*$ is defined by $J_n^* = \max \{ 2 \leq j \leq N-1: \widehat{Q}_{j,N-j-1,n} \geq \nu_0 2^j n^{-1} \}$. The following estimate replaces Lemma \ref{lemma:bound_Jnstar} from the previous case.

\begin{lem}
\label{lemma:behavior_Jnstar}
We have
\begin{align*}
\sup_{(H,\eta) \in \mathcal D} \mathbb{P}_{H,\eta}(J_n^* < N-1) \to 0.
\end{align*}
\end{lem}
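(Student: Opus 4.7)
Since $n = 2^N$, the threshold at the top scale simplifies to $\nu_0 2^{N-1}/n = \nu_0/2$, so the event $\{J_n^* < N-1\}$ coincides with $\{\widehat{Q}_{N-1, 0, n} < \nu_0/2\}$. It therefore suffices to show that
\[
\sup_{(H,\eta) \in \mathcal D} \mathbb{P}_{H,\eta}\bigl(\widehat{Q}_{N-1, 0, n} < \nu_0/2\bigr) \longrightarrow 0.
\]

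Mirroring the proof of Lemma~\ref{lemma:bound_Jnstar}, my plan is to decompose $\widehat{Q}_{N-1, 0, n} = Q_{N-1, 0} + (\widehat{Q}_{N-1, 0, n} - Q_{N-1, 0})$ and treat the two pieces separately. For the fluctuation term, Proposition~\ref{propo:emp_energy_levels} applied at $(j, p) = (N-1, 0)$ gives
\[
\mathbb{E}_{H,\eta}\bigl[(\widehat{Q}_{N-1, 0, n} - Q_{N-1, 0})^2\bigr] \leq C\bigl[(n\delta)^{-2} 2^{-(N-1)} + (n\delta)^{-1} 2^{-(N-1)(1+2H)}\bigr],
\]
which tends to zero uniformly over $\mathcal D$ because the regime $\delta \geq n^{-1/(2H+1)}$ forces $m = n\delta \geq n^{2H/(2H+1)} \to \infty$. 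Chebyshev's inequality then dominates the fluctuation by any preassigned positive constant with probability tending to one.

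The remaining task is to lower-bound the signal $Q_{N-1, 0}$ by $\nu_0/2$ (with enough slack for Chebyshev) with probability tending to one. The natural tool is Proposition~\ref{propo:energy_levels} applied at $j = N-1$, which furnishes $Q_{N-1, 0} \geq r_-(\varepsilon) 2^{-2(N-1)H}$ on an event of probability at least $1 - \varepsilon$. The main obstacle I foresee is precisely this comparison step: the bound $r_-(\varepsilon) 2^{-2(N-1)H}$ decays with $N$, whereas the threshold $\nu_0/2$ is constant, so the argument must leverage the interplay between the regime constraint $\delta \geq n^{-1/(2H+1)}$ (equivalently $2^N \geq \delta^{-(2H+1)}$) and the standing choice $\nu_0 < \inf_{(H,\eta) \in \mathcal D}\eta^2\kappa_0(H) 2^{2H}$ from Theorem~\ref{thm:upper_multiscale} to produce a $\delta$-dependent lower bound on the signal at the top scale, sharp enough to absorb the constant threshold. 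Once such a bound is in hand, combining it with the Chebyshev estimate on the fluctuation yields the claim.
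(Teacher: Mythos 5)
Your decomposition $\widehat{Q}_{N-1,0,n}=Q_{N-1,0}+(\widehat{Q}_{N-1,0,n}-Q_{N-1,0})$ matches the paper's, and Proposition~\ref{propo:emp_energy_levels} is indeed the right tool for the fluctuation, but there are two gaps, the second of which is the real one. On the threshold: reading $n=2^N$ literally gives $\nu_0 2^{N-1}/n=\nu_0/2$, a constant, which makes the claim look unprovable since $Q_{N-1,0}$ tends to zero --- you have correctly sensed this. But the paper's proof in Section~\ref{sec:completion:large} is carried out under the identification $\delta=2^{-N}$ (the step ``since $\delta=2^{-N}$'' is explicit in the displayed computation of $I$), so the relevant threshold is $\nu_0 2^{N-1}/n=\nu_0/(2m)$, and the regime constraint $\delta\geq n^{-1/(2H+1)}$, equivalently $m=n\delta\geq\delta^{-2H}$, bounds this by $\nu_0\delta^{2H}/2$. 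That decays at exactly the rate $\delta^{2H}$ of the signal, so the obstacle you describe (constant threshold against decaying signal) dissolves once the notation is read consistently with the proof.

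The more substantive gap is that Proposition~\ref{propo:energy_levels} cannot complete the signal step. The constant $r_-(\varepsilon)$ it furnishes comes from a union bound and carries no quantitative relation to $\nu_0$; it shrinks as $\varepsilon\to 0$, so there is no way to guarantee $r_-(\varepsilon)2^{2H}\geq\nu_0/2$ plus slack, and the lemma requires the bound for every $\varepsilon$. The paper instead exploits the explicit mean of the energy level at the top scale: by Lemma~\ref{lem:kappa}, $\mathbb{E}_{H,\eta}[Q_{N-1,0}]=\kappa_0(H)\eta^2 2^{-2(N-1)H}/2=\kappa_0(H)\eta^2 2^{2H}\delta^{2H}/2$, and the standing hypothesis $\nu_0<\inf_{(H,\eta)\in\mathcal D}\eta^2\kappa_0(H)2^{2H}$ yields a uniform margin $\iota>0$ with $\nu_0-\kappa_0(H)\eta^22^{2H}\leq-2\iota$. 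One then splits the target probability into $II=\mathbb{P}_{H,\eta}(Q_{N-1,0}\leq(\kappa_0(H)\eta^22^{2H}-\iota)\delta^{2H}/2)$, controlled by Chebyshev via the concentration estimate~\eqref{eq:deviation_Q} and of order $\delta\iota^{-2}$, and $I=\mathbb{P}_{H,\eta}(\widehat{Q}_{N-1,0,n}-Q_{N-1,0}\leq-\iota\delta^{2H}/2)$, controlled by Chebyshev and Proposition~\ref{propo:emp_energy_levels}, which is where the regime constraint enters. So the ``interplay'' you anticipate is indeed present, but it is driven by the concentration of $Q_{N-1,0}$ around its known mean and the numerical choice of $\nu_0$, not by the $r_-(\varepsilon)$ lower bound; using Proposition~\ref{propo:energy_levels} here is a dead end.
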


\begin{proof}
Recall that $\nu_0 < \inf_{H, \eta} \eta^2 \kappa_0(H) 2^{2H}$ so there exists a constant $\iota > 0$ such that for any $H$ and $\eta$, we have $\nu_0 -\eta^2 \kappa_0(H) 2^{2H}  < - 2 \iota$. Then
\begin{align*}
\mathbb{P}_{H,\eta}(J_n^* < N-1) 
&= 
\mathbb{P}_{H,\eta} (\widehat{Q}_{N-1,0,n} < \nu_0 / (2n\delta) ).
\end{align*}
Since $\delta \geq n^{-1/(2H+1)}$, we have $(n\delta)^{-1} \leq \delta^{2H}$ and therefore
\begin{align*}
\mathbb{P}_{H,\eta}(J_n^* < N-1) 
&= 
	\mathbb{P}_{H,\eta} (\widehat{Q}_{N-1,0,n} < \tfrac{1}{2}\nu_0 \delta^{2H}  )
\leq I + II
\end{align*}
where
\begin{align*}
I &= \mathbb{P}_{H,\eta} (\widehat{Q}_{N-1,0,n} - Q_{N-1,0} < \tfrac{1}{2}(\nu_0 - \kappa_0(H) \eta^22^{2H} + \iota) \delta^{2H}  )
\\
\text{and}\;\;\;\; II &= \mathbb{P}_{H,\eta} ( Q_{N-1,0} < \tfrac{1}{2}(\kappa_0(H) \eta^22^{2H} - \iota) \delta^{2H} ).
\end{align*}
Then we have by Proposition \ref{propo:emp_energy_levels}
\begin{align*}
I 
&\leq 
\mathbb{P}_{H,\eta} (\widehat{Q}_{N-1,0,n} - Q_{N-1,0} < - \tfrac{1}{2}\iota \delta^{2H}  )
\\
&\leq 
C
\big(
n^{-2} \delta^{-2} 2^{-N}
+ n^{-1} \delta^{-1} 2^{-N(1+2H)}\big)
 \iota^{-2} \delta^{-4H}
 \\
&\leq
C
\big(
n^{-2} \delta^{-(1+4H)} 
+ n^{-1} \delta^{-2H)}\big)
 \iota^{-2} 
\end{align*}
since $\delta = 2^{-N}$. This shows $I \to 0$ uniformly over $\mathcal{D}$ as $n\to\infty$ since $\delta \geq n^{-1/(2H+1)}$. We then have
\begin{align*}
II
&= \mathbb{P}_{H,\eta} ( Q_{N-1,0} - \tfrac{1}{2}\kappa_0(H) \eta^2 2^{2H} \delta^{2H}  < - \tfrac{1}{2}\iota \delta^{2H} )
\\
&\leq C\delta \iota^{-2}
\end{align*}
by \eqref{eq:deviation_Q} and we can conclude since $\delta \to 0$ when $n \to \infty$.
\end{proof}

We are ready to prove the tightness part. We start with the sequence $(v_n^{-1} B_n)_n$. We have
\begin{align*}
&\mathbb{P}_{H,\eta}\Big({v_n^{-1} B_n \geq M}\Big) \leq 
	\mathbb{P}_{H,\eta}\Big({ \Big | \frac{{Q}_{N,0} }{{Q}_{N-1,0} } - 2^{-2H} \Big | \geq M\delta^{1/2}}\Big)
+\mathbb{P}(J_n^* < N-1) 
\end{align*}
so that we can focus on the first term by Lemma \ref{lemma:behavior_Jnstar}. By Proposition \ref{propo:energy_levels}
\begin{align*}
& \mathbb{P}_{H,\eta}\Big({ \Big | \frac{{Q}_{N,0} }{{Q}_{N-1,0} } - 2^{-2H} \Big | \geq M\delta^{1/2}}\Big) \\
&\leq 
	\mathbb{P}_{H,\eta} (
		| {Q}_{N,0}  - 2^{-2H}{Q}_{N-1,0} | 
		\geq 
		M\delta^{1/2+2H}r_-(\varepsilon)2^{2H}
	)
+
\varepsilon
\end{align*}
since $2^N=\delta^{-1}$ by definition. Using also \eqref{eq:deviation_Q}, we eventually obtain
\begin{align*}
\mathbb{P}_{H,\eta}\Big({ \Big | \frac{{Q}_{N,0} }{{Q}_{N-1,0} } - 2^{-2H} \Big | \geq M\delta^{1/2}}\Big)
\leq C
		M^{-2}r_-(\varepsilon)^{-2}
+
\varepsilon,
\end{align*}
and this proves the result for $(v_n^{-1}B_n)_n$. Concerning the sequence  $(v_n^{-1} V_n^{(1)})_n$, for $M >0$ we have
\begin{align*}
\mathbb{P}_{H,\eta}\big(v_n^{-1}  V_n^{(1)} \geq M\big) 
\leq 
\mathbb{P}_{H,\eta}\big(  | \widehat{Q}_{N,0,n} - {Q}_{N,0}  | \geq \tfrac{1}{2}M \nu_0 \delta^{1/2+2H}\big)
+
\mathbb{P}_{H,\eta}(J_n^* < N-1) 
\end{align*}
since $\widehat{Q}_{N-1,0,n} \geq \tfrac{1}{2}\nu_0 \delta^{2H}$ on $\{J_n^* = N-1\}$. By Proposition \ref{propo:energy_levels} and using $\delta_n \geq n^{-1/(2H+1)}$, we obtain
\begin{align*}
\mathbb{P}_{H,\eta}\big(v_n^{-1}  V_n^{(1)} \geq M \big)
&\leq
C\kappa^{-2} \delta^{-4H-2}n^{-2}+
+\mathbb{P}_{H,\eta}(J_n^* < N-1) 
\\&\leq CM^{-2} + 
\mathbb{P}_{H,\eta}(J_n^* < N-1), 
\end{align*}
 and the tightness of $(v_n^{-1}  V_n^{(1)})_n$ follows. We eventually deal with the sequence $(n_n^{-1} V_n^{(2)})_n$.
Proceeding as for the case $\delta \leq n^{-1/(2H+1)}$, for $M, M' > 0$ and $n$ large enough, we have
\begin{align*}
& \mathbb{P}_{H,\eta}\big({v_n^{-1} V_n^{(2)} \geq M}\big) \\
&
\leq \mathbb{P}_{H,\eta}\Big(
|  	 
		\widehat{Q}_{J_n^*,N-J_n^*-1,n}-
		         {Q}_{J_n^*,N-J_n^*-1}  	  
| 
\geq 
\frac{M\widehat{Q}_{J_n^*,N-J_n^*-1,n}}{2^{-2H}v_n^{-1} + M'}\Big)
+ 
\mathbb{P}_{H,\eta}\big({v_n^{-1} B_n \geq M'}\big).
\end{align*}

The first term is similar to $V_n^{(1)}$ since $2^{-2H}v_n^{-1} + M'$ is of the same order as $v_n^{-1}$. The second term can be made arbitrarily small since $(v_n^{-1} B_n)_n$ is tight.

\section{Proof of Theorem \ref{thm:optimality}}
\label{sec:proof:optimality}

The proof of Theorem \ref{thm:optimality} is similar to that of Theorem \ref{thm:lower_multiscale}. For conciseness, we only provide here a result similar to Proposition \ref{prop:bound_TV}. The same expansion as in  Section \ref{sec:proof:lower_multiscale} would then conclude the proof of Theorem \ref{thm:optimality}. Recall that $\norm{\mu }_{TV} = \sup_{\norm{f}_{\infty} \leq 1} | \int f d\mu |$ is the total variation of a signed measure $\mu$. We also denote by $\mathbb{P}_f^n$ the law of $(S_{i/n})_i$ given $\eta W^H_t = f(t)$.

\begin{prop}
\label{prop:bound_TV_general}
Let $f$ and $g$ be two bounded functions. Then there exists $c_0>0$ such that
\begin{align}
\label{eq:bound_TV_general:est1}
\norm{\mathbb{P}_f^n - \mathbb{P}_g^n}_{TV} \leq c_0 
\sqrt{n} e^{c_0(\norm{f}_\infty + \norm{g}_\infty)}
\norm{f - g}_\infty.
\end{align}
Moreover, there exist $c_1 > 0$ and a universal nonincreasing positive function $R$ such that
\begin{align}
\label{eq:bound_TV_general:est2}
1 - \tfrac{1}{2} \norm{\mathbb{P}_f^n - \mathbb{P}_g^n}_{TV} \geq R(n e^{c_1(\norm{f}_\infty + \norm{g}_\infty)}
\norm{f - g}_\infty^2).
\end{align}
\end{prop}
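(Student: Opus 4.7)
The plan is to adapt the proof of Proposition~\ref{prop:bound_TV} to the continuously varying volatility setting. Under $\mathbb{P}_f^n$ we have $\sigma_t^2 = e^{f(t)}$, so conditional on $\eta W^H = f$ the price increments $\Delta_j S := S_{(j+1)/n} - S_{j/n} = \int_{j/n}^{(j+1)/n} e^{f(s)/2}\,dB_s$ are independent centered Gaussians with variances
\[
v_j(f) := \int_{j/n}^{(j+1)/n} e^{f(s)}\,ds.
\]
The product structure and the explicit Kullback--Leibler formula between two centered univariate Gaussians yield
\[
K(\mathbb{P}_f^n,\mathbb{P}_g^n) = \tfrac{1}{2}\sum_{j=0}^{n-1} A(y_j),\qquad y_j := \log\!\big(v_j(f)/v_j(g)\big),
\]
where $A(x) = e^x-x-1$, exactly as in the piecewise constant case except that the pointwise ratio $e^{f(t_j)-g(t_j)}$ is replaced by a ratio of integrals.

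The one genuinely new ingredient is the bound $|y_j|\le \|f-g\|_\infty$. I would obtain it by writing $v_j(f)/v_j(g) = \int_{j/n}^{(j+1)/n} e^{f(s)-g(s)}\,d\mu_j(s)$, where $d\mu_j(s) := v_j(g)^{-1}e^{g(s)}\,ds$ is a probability measure on $[j/n,(j+1)/n]$. The right-hand side is then a $\mu_j$-average of $e^{f-g}$, which lies in $[e^{-\|f-g\|_\infty},e^{\|f-g\|_\infty}]$. Since $A$ is nondecreasing on $[0,\infty)$ and $A(-|y|)\le A(|y|)$, this gives $K(\mathbb{P}_f^n,\mathbb{P}_g^n) \le \tfrac{n}{2}\,A(\|f-g\|_\infty)$.

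From this point on, the argument is parallel to Proposition~\ref{prop:bound_TV}. For \eqref{eq:bound_TV_general:est1}: when $\|f-g\|_\infty\le 1$, the inequality $A(x)\le x^2$ together with Pinsker gives $\|\mathbb{P}_f^n-\mathbb{P}_g^n\|_{TV} \le \sqrt{n}\,\|f-g\|_\infty$, while when $\|f-g\|_\infty>1$ a telescoping argument with $K=\lceil \|f-g\|_\infty\rceil$ interpolants $f_k = (kg+(K-k)f)/K$ reduces to the previous case. Both regimes produce a bound of the form $C\sqrt{n}\,\|f-g\|_\infty$, which is trivially majorized by $c_0\sqrt{n}\,e^{c_0(\|f\|_\infty+\|g\|_\infty)}\|f-g\|_\infty$.

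For \eqref{eq:bound_TV_general:est2}, I would use the refined inequality $A(x)\le x^2 e^{|x|}$, valid for all $x\in\mathbb{R}$, to obtain $K(\mathbb{P}_f^n,\mathbb{P}_g^n) \le \tfrac{n}{2}\,\|f-g\|_\infty^{\,2}\,e^{\|f-g\|_\infty}$, and then bound $e^{\|f-g\|_\infty}\le e^{\|f\|_\infty+\|g\|_\infty}\le e^{3\|f\|_\infty+3\|g\|_\infty}$ to align with the argument of $R$. The desired lower bound on $1-\tfrac12\|\mathbb{P}_f^n-\mathbb{P}_g^n\|_{TV}$ then follows from a Bretagnolle--Huber type inequality, taking for instance $R(u):=1-\sqrt{1-e^{-u/2}}$, which is a universal nonincreasing positive function. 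No step constitutes a serious obstacle; the only delicate point is the weighted-average identity for $y_j$, which is precisely the place where the piecewise-constant assumption intervened in the earlier proof.
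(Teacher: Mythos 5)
Your proof is correct, and it takes a genuinely different route from the paper's. Both start from the Kullback--Leibler divergence and Pinsker's inequality, but the key difference is how you control the ratio of variances $v_j(f)/v_j(g)$. You observe that this ratio is a probability-weighted average of $e^{f-g}$ (with weight $d\mu_j \propto e^{g}\,ds$), which gives the sharp bound $|y_j|\le\|f-g\|_\infty$ on the log-ratio, and you then use the exact univariate Gaussian KL formula $K=\tfrac12\sum_j A(y_j)$ together with monotonicity of $A$ on $[0,\infty)$ and $A(-|y|)\le A(|y|)$. The paper instead writes the divergence with $B(x)=x-\ln x-1$, bounds $B(x)\le(x-1)^2+(1/x-1)^2$, and then lower-bounds each denominator $\int_{(i-1)/n}^{i/n}e^{g}\,dt\ge n^{-1}e^{-\|g\|_\infty}$ before applying Jensen; it is precisely these crude lower bounds that introduce the $e^{3(\|f\|_\infty+\|g\|_\infty)}$ factor. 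Your weighted-average identity avoids this entirely, yielding the strictly stronger bound $\|\mathbb{P}_f^n-\mathbb{P}_g^n\|_{TV}\le\sqrt{n}\,\|f-g\|_\infty$ (no exponential factor) for \eqref{eq:bound_TV_general:est1}, and for \eqref{eq:bound_TV_general:est2} the sharper $K\le\tfrac{n}{2}\|f-g\|_\infty^2 e^{\|f-g\|_\infty}$; in both cases the stated inequalities follow a fortiori since $R$ is nonincreasing and $\|f-g\|_\infty\le\|f\|_\infty+\|g\|_\infty$. Your concrete choice $R(u)=1-\sqrt{1-e^{-u/2}}$ via Bretagnolle--Huber is also fine (the paper leaves $R$ implicit, merely noting that total variation stays bounded away from $2$ when the KL divergences are bounded). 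In short, your argument is not only valid but tighter; the only step that genuinely replaces the piecewise-constant reasoning of Proposition \ref{prop:bound_TV} is the weighted-average identity, and you identify and execute it correctly.
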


\begin{proof}
As in the proof of Proposition \ref{prop:bound_TV}, let $K( \mu, \nu)
=
\int \log ( \frac{d\mu}{d\nu} ) \; d\mu$ denote the Kullback--Leibler divergence between two probability measures $\mu$ and $\nu$. Recall Pinsker's inequality $\norm{\mu - \nu}_{TV}^2 \leq 2K( \mu, \nu)$.

Note that $(S_{i/n} - S_{(i-1)/n})_i$ has the same law as
$\big(\int_{(i-1)/n}^{i/n} \sigma^2_t dt \big)^{1/2} \xi_{i,n}
$ where the random variables $(\xi_{i,n})_i$ are independent standard Gaussian variables since $(W_t^H)$ and $(B_t)$ are independent. Therefore
\begin{align*}
K(\mathbb{P}_f^n, \mathbb{P}_g^n)
\leq
\frac{1}{2}
\sum_{i=1}^n B \Big( \Big(\int_{(i-1)/n}^{i/n} e^{f(t)} dt\Big) \Big( \int_{(i-1)/n}^{i/n} e^{g(t)} dt\Big)^{-1} \Big),
\end{align*}
where $B(x) =\ x - \log x - 1$. Pinsker's inequality yields
\begin{align*}
\norm{\mathbb{P}_f^n - \mathbb{P}_g^n}_{TV}^2
\leq
\sum_{i=1}^n B \Big( \Big(\int_{(i-1)/n}^{i/n} e^{f(t)} dt\Big) \Big( \int_{(i-1)/n}^{i/n} e^{g(t)} dt\Big)^{-1} \Big).
\end{align*}
Notice then that for $x > 0$, $B(x) \leq (x-1)^2 + (1/x - 1)^2$ and that entails
\begin{align*}
\norm{\mathbb{P}_f^n - \mathbb{P}_g^n}_{TV}^2
\leq
\sum_{i=1}^n 
\bigg[
\Big( \frac{\int_{(i-1)/n}^{i/n} \big(e^{f(t)}- e^{g(t)}\big) dt}
{\int_{(i-1)/n}^{i/n} e^{g(t)} dt}
\Big)^2
+
\Big( \frac{\int_{(i-1)/n}^{i/n} \big(e^{g(t)}- e^{f(t)}\big) dt}
{\int_{(i-1)/n}^{i/n} e^{f(t)} dt}
\Big)^2
\bigg]
.
\end{align*}

Moreover, 
\begin{align*}
\int_{(i-1)/n}^{i/n} e^{g(t)} dt \geq n^{-1} e^{-\norm{g}_\infty}
\text{ and }
\int_{(i-1)/n}^{i/n} e^{f(t)} dt \geq n^{-1} e^{-\norm{f}_\infty}
\end{align*}
so
\begin{align*}
\norm{\mathbb{P}_f^n - \mathbb{P}_g^n}_{TV}^2
&\leq
2n^2 (e^{2\norm{f}_\infty} + e^{2\norm{g}_\infty})
\sum_{i=1}^n 
\Big( \int_{(i-1)/n}^{i/n} \big|e^{f(t)}- e^{g(t)}\big| dt
\Big)^2.
\end{align*}

By Jensen inequality, we obtain
\begin{align*}
\norm{\mathbb{P}_f^n - \mathbb{P}_g^n}_{TV}^2
&\leq
2n (e^{2\norm{f}_\infty} + e^{2\norm{g}_\infty}) \sum_{i=1}^n 
\int_{(i-1)/n}^{i/n} \Big(  e^{f(t)}- e^{g(t)} \Big)^2 dt
\\&\leq
2n (e^{2\norm{f}_\infty} + e^{2\norm{g}_\infty})
\norm{e^{f}- e^{g}}_\infty^2.
\end{align*}

Since $\norm{e^{f}- e^{g}}_\infty \leq e^{\max(\norm{f}_\infty,\norm{g}_\infty)} \norm{f - g}_\infty $ we finally obtain
\begin{align*}
\norm{\mathbb{P}_f^n - \mathbb{P}_g^n}_{TV}^2
&\leq
2n (e^{2\norm{f}_\infty} + e^{2\norm{g}_\infty})
e^{2\max(\norm{f}_\infty,\norm{g}_\infty)} \norm{f - g}_\infty^2
\\
&\leq
4n e^{4\norm{f}_\infty + 4\norm{g}_\infty}
\norm{f - g}_\infty^2,
\end{align*}
which proves \eqref{eq:bound_TV_general:est1}. For \eqref{eq:bound_TV_general:est2}, we proceed exactly as for \eqref{eq:bound_TV:est2}. We omit the details.
\end{proof}

\section{Proof of the results of Section \ref{subsec:energy}}
\label{sec:energy:proof}

\subsection{Notation and organisation of the proofs}
\label{subsec:energy:proof:notations}

%We start by introducing some useful notations.
Let $b \geq 1$ and $s\geq 1$. Consider $\mathbf{r}\in \{ 1, \dots, 2S \}^s$ such that $\sum_i \mathbf{r}_i = b$. If $b = 1$, we write
\begin{align*}
\mathfrak{W}^{H,\mathbf{r}}_{j,p,k,l} 
&= 
2^{j+p} \int_{k2^{-j}+l2^{-j-p}}^{k2^{-j}+(l+1)2^{-j-p}} W^H_u du =
\int_{0}^{1} W^H_{k2^{-j} + (l+u)2^{-j-p}} du
\end{align*}
and if $b \geq 2$, we write
\begin{align*}
\mathfrak{W}^{H,\mathbf{r}}_{j,p,k,l} 
&= 
\prod_{i=1}^{s}
\frac{2^{j+p}}{\mathbf{r}_i!} \int_{k2^{-j}+l2^{-j-p}}^{k2^{-j}+(l+1)2^{-j-p}} (W^H_u-W^H_{k2^{-j}+l2^{-j-p}})^{\mathbf{r}_i}
 du
\\
&= 
\frac{1}{\mathbf{r}!}
\int_{[0,1]^s}
\prod_{i=1}^{s}
(W^H_{k2^{-j} + (l+u_i)2^{-j-p}}-W^H_{k2^{-j}+l2^{-j-p}})^{\mathbf{r}_i}
 du
\end{align*}
where $\mathbf{r}! =
\prod_{i=1}^{s}
\mathbf{r}_i! $. Define also $\mathfrak{W}^{H,\mathbf{r}}_{p,l} = \mathfrak{W}^{H,\mathbf{r}}_{0,p,0,l}$. Since $\mathfrak{W}^{H,\mathbf{r}}_{j,p,k,l} = \mathfrak{W}^{H,\mathbf{r}}_{j,p,0,2^{p}k+l}$, we get by self-similarity of   fractional Brownian motion that
\begin{align}
\label{eq:selfsimilarity}
\Big( 
\mathfrak{W}^{H,\mathbf{r}}_{j,p,0,l}
\Big)_{\mathbf{r}, l}
=
\Big( 
2^{-jH\sum_i \mathbf{r}_i}
\mathfrak{W}^{H,\mathbf{r}}_{p,l}
\Big)_{\mathbf{r}, l} \text{ in distribution.}
\end{align}

We fix $H^* \in (0, H_-)$ such that $(2S+1)H^* \geq H_+$. With the notation introduced in Proposition \ref{prop:development_iv}, the following decomposition holds: 
$$d_{j,k,p} = g_{j,k,p} + z_{j,p,k},$$ where
\begin{align}
\label{eq:g:def}
g_{j,k,p} 
&= 
2^{-p-j/2}
\sum_{l=0}^{2^p-1}
\sum_{b=1}^{2S} 
\bigg[
\eta^{b}
\sum_{s=1}^{2S} \Big(\frac{(-1)^{s-1}}{s}
\sum_{\substack{\mathbf{r} \in \{ 1, \dots, 2S \}^s \\ \sum_j \mathbf{r}_j = b}} \big( \mathfrak{W}^{H,\mathbf{r}}_{j,p,k+1,l}
-
\mathfrak{W}^{H,\mathbf{r}}_{j,p,k,l} \big)\Big)\bigg],
\\
\label{eq:z:def}
z_{j,p,k} &=
2^{-p-j/2}2^{-(j+p)(2S+1)H^*}
\sum_{l=0}^{2^p-1}
\Big(
Z((k+1)2^{p}+l, 2^{-j-p}) - Z(k2^{p}+l, 2^{-j-p})\Big).
\end{align}
Writing $G_{j,p} = \sum_k g_{j,p,k}^2$, the proofs of the results of Section \ref{subsec:energy} follow from similar results on $G$ together with an appropriate control of the error terms arising from $z$. In Section \ref{subsec:energy:proof:computational}, we gather useful computational lemmas. Section \ref{subsec:energy:Q} details the proof of Proposition \ref{prop:energy}. In Section \ref{subsec:energy:kappa} we gather the proofs of the estimates for the functions $\kappa$ and finally, in Section \ref{subsec:energy:bound} we prove Proposition \ref{prop:bound_energy}.\\

In the following, we will also use the notation $\barsum$ to indicate a sum over all indices $1 \leq b_1, b_2 \leq 2S$, $1 \leq s_1, s_2 \leq 2S$ and multi-indices $\mathbf{r_1} \in \{ 1, \dots, S \}^{s_1}$ and $\mathbf{r_2} \in \{ 1, \dots, S \}^{s_2}$ such that $\sum_j \mathbf{r_1}_j = b_1$ and $\sum_j \mathbf{r_2}_j = b_2$. Additional subscripts to $\barsum$ will denote additional constraints on the indices.

\subsection{Computational lemmas}
\label{subsec:energy:proof:computational}

\begin{lem}
\label{lem:m2_g}
\begin{align}
\label{eq:dev_g}
\mathbb{E} \Big [
	g_{j,p,k}^2
\Big ]
&=
2^{-j}\sum_{a=1}^{S} \Big( \eta^{2a} 2^{-2jaH} \kappa_{p,a}(H) \Big) + O(2^{-j(2S+1)H-j})
\end{align}
uniformly in $H$, $j$,$k$ and $p$. The functions $H \mapsto \kappa_{p,a}(H)$ are explictly given by
\begin{align}
\label{eq:def_kappas}
\kappa_{p,a}(H)
=
2^{-2p}
%\sum_{\substack{|l| < 2^p ,b_1 + b_2 = 2a\\s_1, s_2, \mathbf{r_1}, \mathbf{r_2},\\\sum_i \mathbf{r_1}_i = b_1, \\ \sum_i \mathbf{r_2}_i = b_2}}
\sum_{|l| < 2^p} 
{\barsum_{b_1 + b_2 = 2a}}
\frac{(-1)^{s_1+s_2}} 
{s_1 s_2} (2^p - |l|)
\mathbb{E} \Big [
	\big( \mathfrak{W}^{H,\mathbf{r_1}}_{p,2^p} - \mathfrak{W}^{H,\mathbf{r_1}}_{p,0} \big)
	\big( \mathfrak{W}^{H,\mathbf{r_2}}_{p,2^p + l} - \mathfrak{W}^{H,\mathbf{r_2}}_{p,l} \big)
\Big ].
\end{align}

\end{lem}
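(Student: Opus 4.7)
The plan is to expand the square, reduce every term to a fixed reference scale by combining self-similarity with the stationarity of increments of $W^H$, and then identify the resulting coefficients with $\kappa_{p,a}(H)$.

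Writing $\Delta^{\mathbf{r}}_{j,p,k,l} := \mathfrak{W}^{H,\mathbf{r}}_{j,p,k+1,l} - \mathfrak{W}^{H,\mathbf{r}}_{j,p,k,l}$ and squaring \eqref{eq:g:def} produces an expansion of $g_{j,p,k}^2$ indexed by $(b_1,b_2,s_1,s_2,\mathbf{r_1},\mathbf{r_2})$ and $(l_1,l_2) \in \{0,\dots,2^p-1\}^2$, with prefactor $2^{-2p-j}\eta^{b_1+b_2}(-1)^{s_1+s_2}/(s_1 s_2)$ and bilinear term $\Delta^{\mathbf{r_1}}_{j,p,k,l_1}\Delta^{\mathbf{r_2}}_{j,p,k,l_2}$. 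The critical observation is that each $\Delta^{\mathbf{r}}_{j,p,k,l}$ is a measurable function of increments of $W^H$: for $b=\sum_i \mathbf{r}_i \geq 2$ this is built into the centring of $\mathfrak{W}^{H,\mathbf{r}}$ at $W^H_{k2^{-j}+l2^{-j-p}}$, and for $b=1$ it follows from the cancellation of the undifferenced terms. Combining \eqref{eq:autosimilarity} with stationarity of increments (to remove the translation by $k2^{-j}$) then gives the joint identity in law
\begin{align*}
\bigl(\Delta^{\mathbf{r}}_{j,p,k,l}\bigr)_{\mathbf{r},l} \stackrel{d}{=} \bigl(2^{-jHb}\bigl[\mathfrak{W}^{H,\mathbf{r}}_{p,2^p+l} - \mathfrak{W}^{H,\mathbf{r}}_{p,l}\bigr]\bigr)_{\mathbf{r},l},
\end{align*}
so that the expectation of each bilinear term factors as $2^{-jH(b_1+b_2)}$ times a quantity independent of $j$ and $k$.

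At this rescaled scale, the bilinear expectation depends on $(l_1,l_2)$ only through $l := l_2-l_1$ by a further application of stationary increments, so reorganising the double sum by the value of $l$ produces the weight $(2^p-|l|)$ for $|l|<2^p$. Moreover, the integrand is a polynomial of total degree $b_1+b_2$ in the centred Gaussian process $W^H$, hence Isserlis' theorem (Theorem \ref{thm:gaussian_moments}) forces the expectation to vanish whenever $b_1+b_2$ is odd. Setting $a=(b_1+b_2)/2$ and matching coefficients with \eqref{eq:def_kappas} yields
\begin{align*}
\mathbb{E}\bigl[g_{j,p,k}^2\bigr] = 2^{-j}\sum_{a=1}^{2S} \eta^{2a} 2^{-2jaH} \kappa_{p,a}(H).
\end{align*}

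It remains to truncate at $a=S$. A Cauchy--Schwarz bound using $\|\mathfrak{W}^{H,\mathbf{r}}_{p,l}\|_{L^2} \leq C 2^{-pHb}$ for $b\geq 2$ (from the scaling of centred fBm increments) and $\|\Delta^{(1)}_{p,l}\|_{L^2} \leq C$ for the $b=1$ case shows that $|\kappa_{p,a}(H)|$ is bounded uniformly in $p\geq 0$ and $H\in[H_-,H_+]$. Since $2^{-2jaH} \leq 2^{-2j(S+1)H} \leq 2^{-j(2S+1)H}$ for $a\geq S+1$ and $H>0$, the tail is controlled by a constant times $2^{-j(2S+1)H-j}$ uniformly over $H$, $j$, $k$ and $p$, giving \eqref{eq:dev_g}. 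The only delicate point throughout is the joint self-similarity identification for $\mathbf{r}=(1)$, where $\mathfrak{W}^{H,(1)}_{j,p,k,l}$ is not individually a function of increments but only the difference $\Delta^{(1)}_{j,p,k,l}$ appearing inside $g_{j,p,k}$ is, so the reduction proceeds uniformly in $\mathbf{r}$.
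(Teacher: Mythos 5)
Your proof follows the paper's argument essentially step for step: expand the square, rescale by self-similarity \eqref{eq:autosimilarity} to pull out the factor $2^{-jH(b_1+b_2)}$, use stationarity to collapse the double $(l_1,l_2)$-sum into a single sum over $l=l_2-l_1$ with weight $(2^p-|l|)$, invoke vanishing of odd Gaussian moments to restrict to $b_1+b_2=2a$ even, and then truncate at $a=S$ by a uniform bound on the remaining expectations. Your added care on the $b=1$ case (that only the difference $\Delta^{(1)}_{j,p,k,l}$, not $\mathfrak{W}^{H,(1)}_{j,p,k,l}$ alone, is a function of increments, which is what makes the stationarity reduction in $k$ legitimate) is a real subtlety that the paper glosses over, but it does not change the structure of the proof.
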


\begin{proof}

Using \eqref{eq:selfsimilarity}, we have that $\mathbb{E} \Big [g_{j,p,k}^2
\Big ]$ equals
\begin{align*}
&
\frac{1}{2^{2p+j}}
\sum_{l_1, l_2 = 0}^{2^p - 1}
\barsum
%\sum_{\substack{l_1,l_2,b_1,b_2,\\s_1, s_2, \mathbf{r_1}, \mathbf{r_2},\\\sum_i \mathbf{r_1}_i = b_1, \\\sum_i \mathbf{r_2}_i = b_2}}
\eta^{b_1+b_2}
\frac{(-1)^{s_1+s_2}}{s_1s_2}
\mathbb{E} \Big [
	\big( \mathfrak{W}^{H,\mathbf{r_1}}_{j,p,k+1,l_1} - \mathfrak{W}^{H,\mathbf{r_1}}_{j,p,k,l_1} \big)
	\big( \mathfrak{W}^{H,\mathbf{r_2}}_{j,p,k+1,l_2} - \mathfrak{W}^{H,\mathbf{r_2}}_{j,p,k,l_2} \big)
\Big ]
\\
&=
\frac{1}{2^{2p+j}}
\sum_{l_1, l_2 = 0}^{2^p - 1}
\barsum
%\sum_{\substack{l_1,l_2,b_1,b_2,\\s_1, s_2, \mathbf{r_1}, \mathbf{r_2},\\\sum_i \mathbf{r_1}_i = b_1, \sum_i \mathbf{r_2}_i = b_2}}
\frac{(-1)^{s_1+s_2} \eta^{b_1+b_2} }{s_1s_2 2^{j(b_1 + b_2)H}} 
\mathbb{E} \Big [
	\big( \mathfrak{W}^{H,\mathbf{r_1}}_{p,2^p + l_1} - \mathfrak{W}^{H,\mathbf{r_1}}_{p,l_1} \big)
	\big( \mathfrak{W}^{H,\mathbf{r_2}}_{p,2^p +l_2} - \mathfrak{W}^{H,\mathbf{r_2}}_{p,l_2} \big)
\Big ].
\end{align*}
By the stationarity of   fractional Brownian motion, we also have
\begin{align*}
&\mathbb{E} \Big [
	\big( \mathfrak{W}^{H,\mathbf{r_1}}_{p,2^p + l_1} - \mathfrak{W}^{H,\mathbf{r_1}}_{p,l_1} \big)
	\big( \mathfrak{W}^{H,\mathbf{r_2}}_{p,2^p +l_2} - \mathfrak{W}^{H,\mathbf{r_2}}_{p,l_2} \big)
\Big ] \\
& =
\mathbb{E} \Big [
	\big( \mathfrak{W}^{H,\mathbf{r_1}}_{p,2^p} - \mathfrak{W}^{H,\mathbf{r_1}}_{p,0} \big)
	\big( \mathfrak{W}^{H,\mathbf{r_2}}_{p,2^p + l_2 - l_1} - \mathfrak{W}^{H,\mathbf{r_2}}_{p,l_2 - l_1} \big)
\Big ].
\end{align*}
Thus writing $l = l_2 - l_1$, we obtain that $\mathbb{E} \Big [g_{j,p,k}^2\Big ]$ equals
\begin{align*}
\frac{1}{2^{2p+j}}
\sum_{|l| < 2^p}
\barsum
%\sum_{\substack{|l| < 2^p ,b_1,b_2,\\s_1, s_2, \mathbf{r_1}, \mathbf{r_2},\\\sum_i \mathbf{r_1}_i = b_1, \\\sum_i \mathbf{r_2}_i = b_2}}
\frac{(-1)^{s_1+s_2} \eta^{b_1+b_2} }{s_1s_2 2^{j(b_1 + b_2)H}} (2^p - |l|)
\mathbb{E} \Big [
	\big( \mathfrak{W}^{H,\mathbf{r_1}}_{p,2^p} - \mathfrak{W}^{H,\mathbf{r_1}}_{p,0} \big)
	\big( \mathfrak{W}^{H,\mathbf{r_2}}_{p,2^p + l} - \mathfrak{W}^{H,\mathbf{r_2}}_{p,l} \big)
\Big ].
\end{align*}

We can expand the expectation $\mathbb{E} \Big [
	\big( \mathfrak{W}^{H,\mathbf{r_1}}_{p,2^p} - \mathfrak{W}^{H,\mathbf{r_1}}_{p,0} \big)
	\big( \mathfrak{W}^{H,\mathbf{r_2}}_{p,2^p + l} - \mathfrak{W}^{H,\mathbf{r_2}}_{p,l} \big)
\Big ]$ by linearity. Since the expectation of the product of an odd number of (centered) Gaussian variables is null, we have
\begin{align*}
\mathbb{E} \Big [
	\big( \mathfrak{W}^{H,\mathbf{r_1}}_{p,2^p} - \mathfrak{W}^{H,\mathbf{r_1}}_{p,0} \big)
	\big( \mathfrak{W}^{H,\mathbf{r_2}}_{p,2^p + l} - \mathfrak{W}^{H,\mathbf{r_2}}_{p,l} \big)
\Big ]
=
0
\end{align*} 
if $b_1 + b_2$ is odd. Hölder's inequality also ensures that when $b_1 + b_2$ is even, 
%$
%\mathbb{E} \Big [
%	\big( \mathfrak{W}^{H,\mathbf{r_1}}_{p,2^p} - \mathfrak{W}^{H,\mathbf{r_1}}_{p,0} \big)
%	\big( \mathfrak{W}^{H,\mathbf{r_2}}_{p,2^p + l} - \mathfrak{W}^{H,\mathbf{r_2}}_{p,l} \big)
%\Big ]
%$ 
this expectation is bounded uniformly in $p$, $H$, $l$, $\mathbf{r_1}$ and $\mathbf{r_2}$. 
\end{proof}

\begin{lem}
\label{lem:m4_g}
For any $j,p \geq 0$ and $k \leq 2^j$, we have
\begin{align*}
\mathbb{E} \big[ g_{j,p,k}^4 \big] \leq C2^{-j(4H+2)}.
\end{align*}
\end{lem}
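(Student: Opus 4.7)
The plan is to reduce the fourth moment to a single-scale bound (at $j=0$) via the self-similarity of $W^H$, and then control the resulting quantity uniformly using the hypercontractive equivalence of moments on a Gaussian chaos of bounded order.

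First, I would separate $g_{j,p,k}$ according to the total polynomial degree $b = \sum_i \mathbf{r}_i$, writing
\begin{align*}
g_{j,p,k} = 2^{-p-j/2} \sum_{b=1}^{2S} \eta^b A^{(b)}_{j,p,k}, \qquad A^{(b)}_{j,p,k} := \sum_{l=0}^{2^p-1} \sum_{s=1}^{2S} \frac{(-1)^{s-1}}{s} \sum_{\substack{\mathbf{r} \in \{1,\dots,2S\}^s \\ \sum_i \mathbf{r}_i = b}} \bigl( \mathfrak{W}^{H,\mathbf{r}}_{j,p,k+1,l} - \mathfrak{W}^{H,\mathbf{r}}_{j,p,k,l} \bigr).
\end{align*}
Since $\mathfrak{W}^{H,\mathbf{r}}_{j,p,k,l}$ is a homogeneous polynomial of degree $b$ in increments of $W^H$ at times of scale $2^{-j}$, the self-similarity relation \eqref{eq:autosimilarity} upgrades to the joint distributional identity $(A^{(b)}_{j,p,k})_{b=1}^{2S} \stackrel{d}{=} (2^{-jHb} A^{(b)}_{0,p,k})_{b=1}^{2S}$.

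Second, expanding the fourth power and applying H\"older's inequality,
\begin{align*}
\mathbb{E}[g_{j,p,k}^4] \leq C\, 2^{-4p-2j} \sum_{b_1,\dots,b_4=1}^{2S} 2^{-jH(b_1+b_2+b_3+b_4)} \prod_{i=1}^{4} \bignorm{ A^{(b_i)}_{0,p,k} }_{L^4}.
\end{align*}
The core claim is then the uniform bound $\|A^{(b)}_{0,p,k}\|_{L^4} \leq C\, 2^p$ for all $b \in \{1,\dots,2S\}$ and $p,k \geq 0$. Since each $A^{(b)}_{0,p,k}$ is a polynomial of degree at most $b \leq 2S$ in the underlying Gaussian family, Nelson's hypercontractivity gives $\|A^{(b)}\|_{L^4} \leq 3^{S} \|A^{(b)}\|_{L^2}$ with a constant independent of $p$, so it suffices to check the $L^2$ bound. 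For $b = 1$, the substitution $v = (l+u) 2^{-p}$ makes the sum over $l$ telescope into $A^{(1)}_{0,p,k} = 2^p \int_0^1 (W^H_{k+1+v} - W^H_{k+v})\,dv$, a Gaussian of variance of order $2^{2p}$. For $b \geq 2$, the terms $\mathfrak{W}^{H,\mathbf{r}}_{0,p,k,l}$ are products of $b$ increments of $W^H$ over intervals of length $2^{-p}$, each of $L^2$ norm of order $2^{-pH \mathbf{r}_i}$; H\"older across the components of $\mathbf{r}$ and the triangle inequality over $l$ then yield $\|A^{(b)}_{0,p,k}\|_{L^2} \leq C\, 2^{p(1-Hb)} \leq C\, 2^p$.

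Plugging this in, the dominant contribution in the sum comes from $b_1 = b_2 = b_3 = b_4 = 1$, giving
\begin{align*}
\mathbb{E}[g_{j,p,k}^4] \leq C\, 2^{-4p-2j} \cdot 2^{-4jH} \cdot 2^{4p} = C\, 2^{-j(4H+2)},
\end{align*}
as claimed. The main technical subtlety is ensuring that the hypercontractivity constant is uniform in $p$, which is the case precisely because the chaos order is bounded by $2S$, a finite constant depending only on $\mathcal{D}$ through the admissibility condition on $S$.
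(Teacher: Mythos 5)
Your proof is correct, but it takes a genuinely different route from the paper's. The paper's argument is more elementary: it applies a power-mean (Jensen) inequality directly to the fourth power of the defining sum for $g_{j,k,p}$, accepting a loss of $2^{3p}$ over the $l$-sum, and then invokes a term-by-term fourth-moment bound on each difference $\mathfrak{W}^{H,\mathbf{r}}_{j,p,k+1,l}-\mathfrak{W}^{H,\mathbf{r}}_{j,p,k,l}$ (Lemma \ref{lem:m4_incW}, itself proved by H\"older and Jensen on the defining integrals); the lost factor $2^{3p}$ is recovered because those fourth moments are $O(2^{-4jH})$ uniformly in $l$. You instead group the expansion by chaos degree $b$, use self-similarity to normalize to $j=0$, and replace the fourth-moment computation by hypercontractivity on a Gaussian chaos of order at most $2S$, reducing everything to second-moment estimates (with a nice exact resummation of the $l$-sum in the $b=1$ case). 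Both arguments are valid and yield the same rate. What each buys: your route is more conceptual, extends verbatim to any $L^q$ moment at no extra cost, and isolates cleanly why the $b=1$ block dominates; the paper's route avoids hypercontractivity entirely and reuses Lemma \ref{lem:m4_incW}, which is needed independently elsewhere (e.g., for the bounds on $\kappa_{p,a}$), so nothing extra has to be proved. One cosmetic point: your $b=1$ step is a consolidation of the Riemann-type sum over $l$ into a single integral rather than a telescoping, but the identity you write is correct.
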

\begin{proof}
%Recall that by definition,
%\begin{align*}
%g_{j,k,p} 
%&= 
%2^{-p-j/2}
%\sum_{l=0}^{2^p-1}
%\sum_{b=1}^{2S} 
%\eta^{b}
%\sum_{s=1}^{2S} \frac{(-1)^{s-1}}{s}
%\sum_{\substack{\mathbf{r} \in \{ 1, \dots, 2S \}^s \\ \sum_j \mathbf{r}_j = b}} \mathfrak{W}^{H,\mathbf{r}}_{j,p,k+1,l}
%-
%\mathfrak{W}^{H,\mathbf{r}}_{j,p,k,l}
%\end{align*}
Recall that $g_{j,k,p}$ is defined in \eqref{eq:g:def} so that 
\begin{align*}
\mathbb{E}[g_{j,k,p}^4]^{1/4}
&\leq
2^{-p-j/2}
\sum_{l=0}^{2^p-1}
\sum_{b=1}^{2S} 
\eta^{b}
\sum_{s=1}^{2S} \frac{1}{s}
\sum_{\substack{\mathbf{r} \in \{ 1, \dots, 2S \}^s \\ \sum_j \mathbf{r}_j = b}} 
\mathbb{E}[
\big(\mathfrak{W}^{H,\mathbf{r}}_{j,p,k+1,l}
-
\mathfrak{W}^{H,\mathbf{r}}_{j,p,k,l}\big)^4]^{1/4}
\end{align*}
and we conclude using Lemma \ref{lem:m4_incW}.
\end{proof}

\begin{lem}
\label{lem:m4_incW}
For any $j,p \geq 0$, $k \leq 2^j$, $l \leq 2^p$, $1 \leq s \leq 2S$, $1 \leq b \leq 2S$ and $\mathbf{r} \in \{ 1, \dots, 2S \}^{s}$ such that $\sum_j \mathbf{r}_j = b$, we have
\begin{align*}
\mathbb{E} \Big[\big(
\mathfrak{W}^{H,\mathbf{r}}_{j,p,k+1,l}
-
\mathfrak{W}^{H,\mathbf{r}}_{j,p,k,l}
\big )^4 \Big] \leq 
\begin{cases}
C 2^{-4jH} & \text{ if } b=1,
\\
C 2^{-4(j+p)Hb}
 & \text{ otherwise}.
\end{cases}
\end{align*}
\end{lem}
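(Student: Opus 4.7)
\medskip

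\noindent\textbf{Plan.}
The two cases behave quite differently and should be handled separately.

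For the case $b=1$ (and hence $s=1$, $\mathbf{r}=(1)$), I would start from
\begin{align*}
\mathfrak{W}^{H,(1)}_{j,p,k+1,l}-\mathfrak{W}^{H,(1)}_{j,p,k,l}
=\int_0^1 \bigl(W^H_{(k+1)2^{-j}+(l+u)2^{-j-p}}-W^H_{k2^{-j}+(l+u)2^{-j-p}}\bigr)\,du .
\end{align*}
Applying Jensen's inequality to the outer integral and then using that the integrand is, for each $u$, a centered Gaussian of variance exactly $2^{-2jH}$ (so its fourth moment equals $3\cdot 2^{-4jH}$) directly yields the bound $C\,2^{-4jH}$.

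For the case $b\geq 2$, the simplest move is to bound the difference by the sum of absolute values,
\begin{align*}
\bigl|\mathfrak{W}^{H,\mathbf{r}}_{j,p,k+1,l}-\mathfrak{W}^{H,\mathbf{r}}_{j,p,k,l}\bigr|^{4}
\leq 8\bigl(|\mathfrak{W}^{H,\mathbf{r}}_{j,p,k+1,l}|^{4}+|\mathfrak{W}^{H,\mathbf{r}}_{j,p,k,l}|^{4}\bigr),
\end{align*}
since increments over $[k2^{-j}+l2^{-j-p},k2^{-j}+(l+1)2^{-j-p}]$ appear only as \emph{differences} $W^H_\cdot - W^H_{k2^{-j}+l2^{-j-p}}$ inside $\mathfrak{W}^{H,\mathbf{r}}$, and are thus over spans of length at most $2^{-j-p}$. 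By Jensen applied to the $s$-fold integral and Hölder for the product,
\begin{align*}
\mathbb{E}\Bigl[|\mathfrak{W}^{H,\mathbf{r}}_{j,p,k,l}|^{4}\Bigr]
\leq \frac{1}{(\mathbf{r}!)^{4}}\int_{[0,1]^{s}}\mathbb{E}\Bigl[\prod_{i=1}^{s}(W^H_{k2^{-j}+(l+u_i)2^{-j-p}}-W^H_{k2^{-j}+l2^{-j-p}})^{4\mathbf{r}_i}\Bigr]\,du,
\end{align*}
and then Hölder with exponents $s$ gives each factor bounded by $C\,2^{-4\mathbf{r}_i H(j+p)}$ via Gaussian moments. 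Since $\sum_i \mathbf{r}_i = b$, taking the product over $i$ yields $C\,2^{-4Hb(j+p)}$, which is the desired bound. Stationarity of the increments of $W^H$ used inside $\mathfrak{W}$ shows the bound is independent of $k$ and $l$.

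\medskip
\noindent\textbf{Main obstacle.}
There is no genuine difficulty here; the only point requiring a little care is keeping track of the fact that for $b\geq 2$ every Brownian evaluation in $\mathfrak{W}^{H,\mathbf{r}}$ appears \emph{only} through the increment $W^H_\cdot - W^H_{k2^{-j}+l2^{-j-p}}$, so one never sees a term of the form $W^H_{k2^{-j}}$ by itself (which would be large). This is what allows the simple triangle-inequality step for $b\geq 2$ and is precisely why the case $b=1$ must be treated separately: there the analogous individual bound is only $O(1)$, so one must exploit the cancellation in the difference $k+1$ versus $k$ to get the improved factor $2^{-4jH}$.
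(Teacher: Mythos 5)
Your proof is correct and follows the same route as the paper: Jensen then a direct Gaussian fourth-moment computation for $b=1$, and for $b\geq 2$ the crude bound $\mathbb{E}(a-b)^4 \leq C(\mathbb{E} a^4 + \mathbb{E} b^4)$ followed by Jensen on the $s$-fold integral, Hölder with exponent $s$ on the product, and Gaussian moment estimates for each increment of span $\leq 2^{-j-p}$. Your closing remark about why $b=1$ needs the cancellation between $k$ and $k+1$, whereas $b\geq2$ already gains $2^{-4Hb(j+p)}$ from each term alone, correctly identifies the reason the two cases are split.
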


\begin{proof}
We remark that since  there are only finitely many indices $s$, $b$ and $\mathbf{r}$ satisfying the conditions of Lemma \ref{lem:m4_incW}, we can suppose that these index are fixed and show the result for some constant $C$ that is uniform in $j$, $p$ and $k$. Suppose first that $b=1$. Then by Jensen's inequality,
\begin{align*}
\mathbb{E}
\Big[\big(
\mathfrak{W}^{H,\mathbf{r}}_{j,p,k+1,l}
-
\mathfrak{W}^{H,\mathbf{r}}_{j,p,k,l}
\big)^4\Big]
&=
\mathbb{E}
\Big[\Big(\int_{0}^{1} W^H_{(k+1)2^{-j} + (l+u)2^{-j-p}} - W^H_{k2^{-j} + (l+u)2^{-j-p}} du\Big)^4\Big]
\\
&\leq
\int_{0}^{1} \mathbb{E} \Big[\Big( W^H_{(k+1)2^{-j} + (l+u)2^{-j-p}} - W^H_{k2^{-j} + (l+u)2^{-j-p}}\Big)^4\Big] du
\\&=2^{-4jH}.
\end{align*}

Suppose now that $b>1$. Then we have 
$$\mathbb{E} \Big[\big(
\mathfrak{W}^{H,\mathbf{r}}_{j,p,k+1,l}
-
\mathfrak{W}^{H,\mathbf{r}}_{j,p,k,l}
\big )^4\Big] \leq \mathbb{E} \Big[(
\mathfrak{W}^{H,\mathbf{r}}_{j,p,k+1,l})^4
+
(\mathfrak{W}^{H,\mathbf{r}}_{j,p,k,l})^4\Big].$$
We only show the bound for $\mathbb{E} \big[
(\mathfrak{W}^{H,\mathbf{r}}_{j,p,k,l})^4
\big]$, the other term being similar. By Hölder's and Jensen's inequalities, 
\begin{align*}
\mathbb{E}
\Big[\big(
\mathfrak{W}^{H,\mathbf{r}}_{j,p,k,l}
\big)^4\Big]
&=
\mathbb{E}
\Big[\Big(
\frac{1}{\mathbf{r}!}
\int_{[0,1]^s}
\prod_{i=1}^{s}
(W^H_{k2^{-j} + (l+u_i)2^{-j-p}}-W^H_{k2^{-j}+l2^{-j-p}})^{\mathbf{r}_i}
 du\Big)^4\Big]
\\
&\leq
\frac{1}{\mathbf{r}!^4}
\int_{[0,1]^s}
\mathbb{E}
\Big[
\prod_{i=1}^{s}
(W^H_{k2^{-j} + (l+u_i)2^{-j-p}}-W^H_{k2^{-j}+l2^{-j-p}})^{4\mathbf{r}_i}
\Big]
 du
\\
&\leq
\frac{1}{\mathbf{r}!^4}
\int_{[0,1]^s}
\prod_{i=1}^{s}
\mathbb{E}
\Big[
(W^H_{k2^{-j} + (l+u_i)2^{-j-p}}-W^H_{k2^{-j}+l2^{-j-p}})^{4\mathbf{r}_is}
\Big]^{1/s}
 du
\\
&\leq
C
\int_{[0,1]^s}
\prod_{i=1}^{s}
(u_i2^{-j-p})^{4\mathbf{r}_iH}
 du
\\
&\leq C 2^{-(j+p)4Hb}.
\end{align*}
\end{proof}

\begin{lem} We have
\label{lem:cov_g}
\begin{align*}
\Big |
\Cov( g_{j,k_1,p}^2, g_{j,k_2,p}^2)
\Big |
\leq
C 2^{-j(2+4H)}
\Big ( ( 1 + |k_1 - k_2|)^{4(H-1)}
+
 2^{-2jH} ( 1 + |k_1 - k_2|)^{2(H-1)}\Big ).
\end{align*}
\end{lem}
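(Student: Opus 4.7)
The plan is to decompose $g_{j,k,p}$ according to the chaos order in \eqref{eq:g:def} and handle the Gaussian and non-Gaussian contributions separately. Write $g_{j,k,p} = X_k + Y_k$, where $X_k$ collects the $b=1$ terms of \eqref{eq:g:def} (a centered Gaussian random variable, essentially a first-order increment of an averaged fractional Brownian motion) and $Y_k$ collects the $b\geq 2$ terms (a polynomial of degree $\geq 2$ in the fBM increments, hence genuinely non-Gaussian). Arguing as in Lemma \ref{lem:m2_g} applied to each chaos level separately, together with Lemma \ref{lem:m4_incW}, one obtains the moment estimates
\begin{equation*}
\mathbb{E}[X_k^2] \leq C\, 2^{-j(1+2H)}, \qquad \mathbb{E}[Y_k^2] \leq C\, 2^{-j(1+4H)}, \qquad \mathbb{E}[Y_k^4] \leq C\, 2^{-j(2+8H)},
\end{equation*}
so that $Y$ carries one extra factor $2^{-jH}$ in its $L^2$ norm relative to $X$, which accounts for the $2^{-2jH}$ factor in the second term of the target bound.

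The main contribution comes from the Gaussian-Gaussian piece. Expanding $g_k^2 = X_k^2 + 2X_k Y_k + Y_k^2$ and using bilinearity,
\begin{equation*}
\Cov(g_{k_1}^2, g_{k_2}^2) = \Cov(X_{k_1}^2, X_{k_2}^2) + \text{(terms containing at least one factor $Y$)}.
\end{equation*}
Since $X_{k_1}$ and $X_{k_2}$ are jointly centered Gaussian, Isserlis' theorem (Theorem \ref{thm:gaussian_moments}) gives $\Cov(X_{k_1}^2, X_{k_2}^2) = 2\,\Cov(X_{k_1}, X_{k_2})^2$. The single-variable covariance $\Cov(X_{k_1}, X_{k_2})$ is a doubly averaged first-order fBM covariance, and the classical asymptotic $\Cov(W^H_{n+1}-W^H_n, W^H_1 - W^H_0) \sim H(2H-1)n^{2H-2}$, combined with a Taylor expansion analogous to that in the proof of Lemma \ref{lemma:auto_d}, yields for $|k_1 - k_2| \geq 3$
\begin{equation*}
|\Cov(X_{k_1}, X_{k_2})| \leq C\, 2^{-j(1+2H)} (1+|k_1-k_2|)^{2(H-1)}.
\end{equation*}
Squaring produces the first term of the claimed bound, and for $|k_1 - k_2| \leq 2$ one simply uses Cauchy-Schwarz with $\mathbb{E}[X_k^4] \leq C\,2^{-j(2+4H)}$.

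For the mixed terms containing at least one factor of $Y$, one combines Cauchy-Schwarz with the moment bounds above and with cross-covariance estimates. Writing for instance
\begin{equation*}
\Cov(X_{k_1}Y_{k_1}, X_{k_2}Y_{k_2}) = \mathbb{E}[X_{k_1}Y_{k_1}X_{k_2}Y_{k_2}] - \mathbb{E}[X_{k_1}Y_{k_1}]\mathbb{E}[X_{k_2}Y_{k_2}]
\end{equation*}
and expanding $Y_k$ into its constituent products of fBM increments localized in the window $[k 2^{-j}, (k+2)2^{-j}]$, one applies Isserlis' theorem to each resulting Gaussian moment. The contractions that pair variables from the window at $k_1$ with those at $k_2$ each contribute a factor $(1+|k_1-k_2|)^{2(H-1)}$ from the decay of fBM increment covariances; pairings within the same window cancel against the product $\mathbb{E}[X_{k_1}Y_{k_1}]\mathbb{E}[X_{k_2}Y_{k_2}]$. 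A careful bookkeeping shows that every such term is bounded by $C\, 2^{-j(2+6H)} (1+|k_1-k_2|)^{2(H-1)}$, yielding the second term in the claim.

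The main obstacle is the last step: obtaining the sharp decay $(1+|k_1-k_2|)^{2(H-1)}$ in the mixed $XY$ and $YY$ covariances without losing it to a naive Cauchy-Schwarz. This requires tracking the Isserlis pairings to ensure that at least one pairing bridges the two windows (otherwise the contribution cancels against the product of expectations) and then estimating each such pairing by the fBM covariance decay; the $2^{2H-2}$ power then arises because $Y$ is built from products of increments of size $2^{-jH}$, and the cross-window correlations enter precisely as in Lemma \ref{lemma:auto_d}.
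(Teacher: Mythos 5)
Your proposal is correct and takes essentially the same route as the paper: the decomposition $g = X + Y$ into the Gaussian ($b=1$) and higher-chaos ($b\geq 2$) parts regroups the paper's own expansion over the chaos indices $b_i$, the Gaussian--Gaussian piece is controlled via Wick's theorem together with the decay of the averaged fBM increment covariance exactly as in the paper's $b_1=\dots=b_4=1$ case, and the mixed and $YY$ pieces are treated by tracking which Isserlis pairings bridge the two windows. The ``careful bookkeeping'' step you invoke informally is precisely the content of Proposition \ref{prop:covariance_gaussian_product}, which the paper calls on explicitly to dispatch the $b_i>1$ contributions.
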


\begin{proof} By Lemma \ref{lem:m4_g}, we can suppose without loss of generality that $|k_1 - k_2| \geq 4$. By symmetry, we may also assume $k_2 > k_1$. Then recall that we have
\begin{align*}
g_{j,k,p}^2
&= 
2^{-2p-j}
\sum_{l_1, l_2 = 0}^{2^p - 1}
\barsum
\Big[
\eta^{b_1+b_2}
\frac{(-1)^{s_1+s_2}}{s_1s_2}
(
	\mathfrak{W}^{H,\mathbf{r_1}}_{j,p,k+1,l_1}
	-
	\mathfrak{W}^{H,\mathbf{r_1}}_{j,p,k,l_1}
)
(
	\mathfrak{W}^{H,\mathbf{r_2}}_{j,p,k+1,l_2}
	-
	\mathfrak{W}^{H,\mathbf{r_2}}_{j,p,k,l_2}
)
\Big]
\end{align*}
so that we can expand $ \Cov \Big [
	 g_{j,p,k_1}^2,
	 g_{j,p,k_2}^2
\Big ]$ as 
\begin{align*}
2^{-4p-2j}
\sum &
\eta^{b_1+b_2+b_3+b_4}
\frac{(-1)^{s_1+s_2+s_3+s_4}}{s_1s_2s_3s_4} \\
&\times \Cov \Big [
(
	\mathfrak{W}^{H,\mathbf{r_1}}_{j,p,k_1+1,l_1}
	-
	\mathfrak{W}^{H,\mathbf{r_1}}_{j,p,k_1,l_1}
)
(
	\mathfrak{W}^{H,\mathbf{r_2}}_{j,p,k_1+1,l_2}
	-
	\mathfrak{W}^{H,\mathbf{r_2}}_{j,p,k_1,l_2}
),
\\& \;\;\;\;\;\;\;\;
(
	\mathfrak{W}^{H,\mathbf{r_3}}_{j,p,k_2+1,l_3}
	-
	\mathfrak{W}^{H,\mathbf{r_3}}_{j,p,k_2,l_3}
)
(
	\mathfrak{W}^{H,\mathbf{r_4}}_{j,p,k_2+1,l_4}
	-
	\mathfrak{W}^{H,\mathbf{r_4}}_{j,p,k_2,l_4}
)
\Big ],
\end{align*}
where the sum is taken over all indices $l_1,l_2,l_3,l_4$, $b_1,b_2,b_3,b_4$, and multi-indices $\mathbf{r_1}, \mathbf{r_2}, \mathbf{r_3}, \mathbf{r_4}$ such that $\sum_i \mathbf{r_1}_i = b_1$, $ \mathbf{r_2}_i = b_2$, $\sum_i \mathbf{r_3}_i = b_3$, $\sum_i \mathbf{r_4}_i = b_4$. Note that the proof is complete once we show that the covariance appearing inside this sum is dominated uniformly over all indices. We consider separately the cases $b_\cdot = 1$ and $b_\cdot > 1$. For conciseness, we will only deal with the following two cases: $b_1 = b_2 = b_3 = b_4 = 1$ and $b_1, b_2,b_3,b_4 > 1$, the other cases being similar. Let us start with the first case. By definition we have
\begin{align*}
\mathfrak{W}^{H,\mathbf{r}}_{j,p,k+1,l}
	-
\mathfrak{W}^{H,\mathbf{r}}_{j,p,k,l}
=
\int_{0}^{1} W^H_{(k+1)2^{-j} + (l+u)2^{-j-p}} - W^H_{k2^{-j} + (l+u)2^{-j-p}} du.
\end{align*}
By self-similarity and stationarity of the fractional Brownian motion increments, we obtain
\begin{align*}
\Cov \Big [
&(
	\mathfrak{W}^{H,\mathbf{r_1}}_{j,p,k_1+1,l_1}
	-
	\mathfrak{W}^{H,\mathbf{r_1}}_{j,p,k_1,l_1}
)
(
	\mathfrak{W}^{H,\mathbf{r_2}}_{j,p,k_1+1,l_2}
	-
	\mathfrak{W}^{H,\mathbf{r_2}}_{j,p,k_1,l_2}
),
\\&\;\;\;\;\;\;\;\;(
	\mathfrak{W}^{H,\mathbf{r_3}}_{j,p,k_2+1,l_3}
	-
	\mathfrak{W}^{H,\mathbf{r_3}}_{j,p,k_2,l_3}
)
(
	\mathfrak{W}^{H,\mathbf{r_4}}_{j,p,k_2+1,l_4}
	-
	\mathfrak{W}^{H,\mathbf{r_4}}_{j,p,k_2,l_4}
)
\Big ]
%\\&=
%\Cov \Big [
%\int_{0}^{1} W^H_{(k_1+1)2^{-j} + (l_1+u)2^{-j-p}} - W^H_{k_12^{-j} + (l_1+u)2^{-j-p}} du
%\int_{0}^{1} W^H_{(k_1+1)2^{-j} + (l_2+u)2^{-j-p}} - W^H_{k_12^{-j} + (l_2+u)2^{-j-p}} du
%,
%\\
%&\;\;\;\;\;\;\;\;\int_{0}^{1} W^H_{(k_2+1)2^{-j} + (l_3+u)2^{-j-p}} - W^H_{k_22^{-j} + (l_3+u)2^{-j-p}} du
%\int_{0}^{1} W^H_{(k_2+1)2^{-j} + (l_4+u)2^{-j-p}} - W^H_{k_22^{-j} + (l_4+u)2^{-j-p}} du
%\Big ]
%\\&= 2^{-4jH}
%\Cov \Big [
%\int_{0}^{1} W^H_{1 + (l_1+u)2^{-p}} - W^H_{(l_1+u)2^{-p}} du
%\int_{0}^{1} W^H_{1 + (l_2+u)2^{-p}} - W^H_{(l_2+u)2^{-p}} du
%,
%\\
%&\;\;\;\;\;\;\;\;\int_{0}^{1} W^H_{(\tau+1) + (l_3+u)2^{-p}} - W^H_{\tau + (l_3+u)2^{-p}} du
%\int_{0}^{1} W^H_{(\tau+1) + (l_4+u)2^{-p}} - W^H_{\tau + (l_4+u)2^{-p}} du
%\Big ]
\\&= 2^{-4jH}
\int_{[0,1]^4}
\Cov \Big [
\big ( W^H_{1 + (l_1+u_1)2^{-p}} - W^H_{(l_1+u_1)2^{-p}} \big )
\big ( W^H_{1 + (l_2+u_2)2^{-p}} - W^H_{(l_2+u_2)2^{-p}} \big )
,
\\
&\;\;\;\;\;\;\;\;
\big ( W^H_{(\tau+1) + (l_3+u_2)2^{-p}} - W^H_{\tau + (l_3+u_2)2^{-p}} \big )
\big ( W^H_{(\tau+1) + (l_4+u_3)2^{-p}} - W^H_{\tau + (l_4+u_4)2^{-p}} \big )
\Big ]du
\end{align*}
where $\tau = k_2 - k_1$. We then use \eqref{eq:cov_4_gaussian} so that the covariance in the integral reduces to 
\begin{align*}
E(l_1,l_3,u_1,u_3)
E(l_2,l_4,u_2,u_4)
+
E(l_1,l_4,u_1,u_4)
E(l_2,l_3,u_2,u_3),
\end{align*}
where
\begin{align*}
E(l,m,u,v) 
&=
\mathbb{E}
\Big[
	\big ( W^H_{1 + (l+u)2^{-p}} - W^H_{(l+u)2^{-p}} \big )
	\big ( W^H_{(\tau+1) + (m+v)2^{-p}} - W^H_{\tau + (m+v)2^{-p}} \big )
\Big ]
\\
&=
\mathbb{E}
\Big[
	W^H_{1}
	\big ( W^H_{(\tau+1) + (m-l+v-u)2^{-p}} - W^H_{\tau + (m-l+v-u)2^{-p}} \big )
\Big ]
\\
&=
D_H(\tau + (m-l+v-u)2^{-p})
\end{align*}
and $D_H(x) = \frac{1}{2}(|x+1|^{2H} - 2|x|^{2H} + |x-1|^{2H})$. By Taylor's formula
$| D_H(x) | \leq C |x - 1|^{2H-2}
$, provided that $x \geq 1$. For $x\geq 3$, we even have $| D_H(x) | \leq C |x+1|^{2H-2}$, for another constant $C$, independent of $H$. Since $(m-l+v-u)2^{-p} \geq -1$ and since $\tau \geq 4$, we have
\begin{align*}
E(l,m,u,v) \leq C |\tau|^{2H-2}.
\end{align*}
Therefore, 
\begin{align*}
|
E(l_1,l_3,u_1,u_3)
E(l_2,l_4,u_2,u_4)
+
E(l_1,l_4,u_1,u_4)
E(l_2,l_3,u_2,u_3)
|
\leq 
C |\tau|^{4H-4}.
\end{align*}
This yields
\begin{align*}
\Big |
\Cov \Big [
&(
	\mathfrak{W}^{H,\mathbf{r_1}}_{j,p,k_1+1,l_1}
	-
	\mathfrak{W}^{H,\mathbf{r_1}}_{j,p,k_1,l_1}
)
(
	\mathfrak{W}^{H,\mathbf{r_2}}_{j,p,k_1+1,l_2}
	-
	\mathfrak{W}^{H,\mathbf{r_2}}_{j,p,k_1,l_2}
),
\\&
(
	\mathfrak{W}^{H,\mathbf{r_3}}_{j,p,k_2+1,l_3}
	-
	\mathfrak{W}^{H,\mathbf{r_3}}_{j,p,k_2,l_3}
)
(
	\mathfrak{W}^{H,\mathbf{r_4}}_{j,p,k_2+1,l_4}
	-
	\mathfrak{W}^{H,\mathbf{r_4}}_{j,p,k_2,l_4}
)
\Big ]
\Big | \leq C 2^{-4jH} | \tau|^{4H-4}.
\end{align*}

We now suppose that $b_1,b_2,b_3,b_4 \geq 2$. In that case, we expand linearly each difference $\mathfrak{W}^{H,\mathbf{r_\cdot}}_{j,p,k_\cdot+1,l_\cdot}
	-
	\mathfrak{W}^{H,\mathbf{r_\cdot}}_{j,p,k_\cdot,l_\cdot}$ in the covariance. This leaves us with $16$ covariances of the form
\begin{align*}
\Cov \Big [
&
	\mathfrak{W}^{H,\mathbf{r_1}}_{j,p,k_1',l_1}
	\mathfrak{W}^{H,\mathbf{r_2}}_{j,p,k_1'',l_2}
,
	\mathfrak{W}^{H,\mathbf{r_3}}_{j,p,k_2',l_3}
	\mathfrak{W}^{H,\mathbf{r_4}}_{j,p,k_2'',l_4}
\Big]
\end{align*}
with $k_q', k_q'' \in \{k_q, k_q+1\}$. By definition, this covariance equals
\begin{align*}
\frac{1}{\mathbf{r_1}!\mathbf{r_2}!\mathbf{r_3}!\mathbf{r_4}!}
\Cov & \Big [
\int_{[0,1]^{s_1}}\prod_{i=1}^{s_1}(W^H_{k_1'2^{-j} + (l_1+u_i)2^{-j-p}}-W^H_{k_1'2^{-j}+l_12^{-j-p}})^{\mathbf{r_1}_i}du
\\&\int_{[0,1]^{s_2}}\prod_{i=1}^{s_2}(W^H_{k_1''2^{-j} + (l_2+u_i)2^{-j-p}}-W^H_{k_1''2^{-j}+l_22^{-j-p}})^{\mathbf{r_2}_i}du
,
\\&\int_{[0,1]^{s_3}}\prod_{i=1}^{s_3}(W^H_{k_2'2^{-j} + (l_3+u_i)2^{-j-p}}-W^H_{k_2'2^{-j}+l_32^{-j-p}})^{\mathbf{r_3}_i}du
\\&\int_{[0,1]^{s_4}}\prod_{i=1}^{s_4}(W^H_{k_2''2^{-j} + (l_4+u_i)2^{-j-p}}-W^H_{k_2''2^{-j}+l_42^{-j-p}})^{\mathbf{r_4}_i}du
\Big].
\end{align*}
Therefore, it is enough to show that the covariance between
\begin{align*}
\prod_{i=1}^{s_1}(W^H_{k_1'2^{-j} + (l_1+u_{1,i})2^{-j-p}}-W^H_{k_1'2^{-j}+l_12^{-j-p}})^{\mathbf{r_1}_i}
\prod_{i=1}^{s_2}(W^H_{k_1''2^{-j} + (l_2+u_{2,i})2^{-j-p}}-W^H_{k_1''2^{-j}+l_22^{-j-p}})^{\mathbf{r_2}_i}
\end{align*}
and
\begin{align*}
\prod_{i=1}^{s_3}(W^H_{k_2'2^{-j} + (l_3+u_{3,i})2^{-j-p}}-W^H_{k_2'2^{-j}+l_32^{-j-p}})^{\mathbf{r_3}_i}
\prod_{i=1}^{s_4}(W^H_{k_2''2^{-j} + (l_4+u_{4,i})2^{-j-p}}-W^H_{k_2''2^{-j}+l_42^{-j-p}})^{\mathbf{r_4}_i}
\end{align*}
is bounded by $C 2^{-4jH} ( 1 + |k_1 - k_2|)^{4(H-1)}$ uniformly for $(u_1, \dots, u_4) \in [0,1]^{s_1+\dots+s_4}$. We aim at applying Proposition \ref{prop:covariance_gaussian_product} to prove this result. Notice that
\begin{align*}
\mathbb{E}\Big[\big(W^H_{k_1'2^{-j} + (l_1+u_{1,i})2^{-j-p}}-W^H_{k_1'2^{-j}+l_12^{-j-p}}\big)^2\Big] \leq 2^{-2(j+p)H}
\end{align*}
and
\begin{align*}
\mathbb{E}&\Big[\big(W^H_{k_1'2^{-j} + (l_1+u_{1,i})2^{-j-p}}-W^H_{k_1'2^{-j}+l_12^{-j-p}}\big)\big(W^H_{k_2'2^{-j} + (l_3+u_{3,i})2^{-j-p}}-W^H_{k_2'2^{-j}+l_32^{-j-p}}\big) \Big]
\\
&= 2^{-2jH-1}(|k_2'-k_1'+2^{-p}(l_3-l_1+u_{3,i})|^{2H} - |k_2'-k_1'+2^{-p}(l_3-l_1)|^{2H}
\\
&\;\;\;\;+ |k_2'-k_1'+2^{-p}(l_3-l_1-u_{1,i})|^{2H} - |k_2'-k_1'+2^{-p}(l_3-l_1+u_{3,i} - u_{1,i})|^{2H}
).
\end{align*}
In addition, we have $k_2'-k_1' \geq 3$ and also $l_3-l_1+u_3, \; l_3-l_1, \; l_3-l_1-u_1$ and $l_3-l_1+u_3 - u_1\geq 2^p$. Thus we can apply Taylor's formula and we expand $|\cdot|^{2H}$ around $k_2'-k_1'+2^{-p}(l_3-l_1)$. The last expression reduces to
\begin{align*}
2^{-2jH-2p}H(2H-1)
\Big(
&u_3^2
\int_0^1
(1-t)
|k_2'-k_1'+2^{-p}(l_3-l_1+tu_3)|^{2H-2}
dt
\\
&+
u_1^2
\int_0^1
(1-t)
|k_2'-k_1'+2^{-p}(l_3-l_1-tu_1)|^{2H-2}
dt
\\
&-
(u_3-u_1)^2
\int_0^1
(1-t)
|k_2'-k_1'+2^{-p}(l_3-l_1+t(u_3-u_1))|^{2H-2}
dt
\Big).
\end{align*}
Since $k_2'-k_1' \geq 3$, the absolute value of this last quantity is bounded by $C 2^{-2jH-2p}|k_2'-k_1'|^{2H-2}$. All other variances and covariances of interest are controlled similarly and we can  apply Proposition \ref{prop:covariance_gaussian_product}. Therefore, the covariance between
\begin{align*}
\prod_{i=1}^{s_1}(W^H_{k_1'2^{-j} + (l_1+u_{1,i})2^{-j-p}}-W^H_{k_1'2^{-j}+l_12^{-j-p}})^{\mathbf{r_1}_i}
\prod_{i=1}^{s_2}(W^H_{k_1''2^{-j} + (l_2+u_{2,i})2^{-j-p}}-W^H_{k_1''2^{-j}+l_22^{-j-p}})^{\mathbf{r_2}_i}
\end{align*} 
and
\begin{align*}
\prod_{i=1}^{s_3}(W^H_{k_2'2^{-j} + (l_3+u_{3,i})2^{-j-p}}-W^H_{k_2'2^{-j}+l_32^{-j-p}})^{\mathbf{r_3}_i}
\prod_{i=1}^{s_4}(W^H_{k_2''2^{-j} + (l_4+u_{4,i})2^{-j-p}}-W^H_{k_2''2^{-j}+l_42^{-j-p}})^{\mathbf{r_4}_i}
\end{align*} 
is bounded by $
C 2^{-2jH - 2p}|k_2-k_1|^{2H-2} 2^{-(b_1+b_2+b_3+b_4)(j+p)H},
$
and this concludes the proof.
\end{proof}

\begin{lem}
\label{lem:m2_d}
For any $j,p \geq 0$ and $k \leq 2^j$, we have
\begin{align*}
\mathbb{E}\big[(d_{j,p,k})^2\big] \leq C2^{-j(2H+1)}.
\end{align*}
\end{lem}
\begin{proof}
Recall that $z_{j,k,p}$ is defined in \eqref{eq:z:def}. We have
\begin{align*}
\EX{z_{j,p,k}^2} \leq 
C
2^{-2p-j}2^{-2(j+p)(2S+1)H^*}
2^{p+1}
\sum_{l=0}^{2^{p+1}-1}
\EX{ Z(k2^{p}+l, 2^{-j-p})^2}
\leq
C 2^{-j-2j(2S+1)H^*}
\end{align*}
by Proposition \ref{prop:development_iv}. Moreover, $\EX{g_{j,p,k}^2} \leq C2^{-j(2H+1)}$ by Lemma \ref{lem:m2_g} so that
\begin{align*}
\mathbb{E}[d_{j,p,k}^2] 
&\leq 2 (\mathbb{E}[g_{j,p,k}^2] + \mathbb{E}[z_{j,p,k}^2])
\\
&\leq C(2^{-j(2H+1)} + 2^{-j-2j(2S+1)H^*}) \leq C2^{-j(2H+1)}
\end{align*}
since $(2S+1)H^* \geq H_+ \geq H$.
\end{proof}

\subsection{Asymptotic behaviour of \texorpdfstring{$Q_{j,p}$}{Q}}
\label{subsec:energy:Q}

We now prove Proposition \ref{prop:energy}. The proof splits in two steps. In a first part, we show a similar result for $G_{j,p}$, namely 
\begin{align*}
\mathbb{E} \Big [\big(
	G_{j,p}
	-
	\sum_{a=1}^{S} \eta^{2a} 2^{-2aHj} \kappa_{p,a}(H)
\big)^2
\Big ]
\leq 
C 2^{-j(1+4H)} + O(2^{-2j(2S+1)H)}).
\end{align*}
In a second part, we show how this result extends to $Q$.\\

First, summing \eqref{eq:dev_g} in $k$ gives
\begin{align*}
\mathbb{E} \Big [
	G_{j,p}
\Big ]
&=
\sum_{a=1}^{S} \eta^{2a} 2^{-2aHj} \kappa_{p,a}(H) + O(2^{-j(2S+1)H)})
\end{align*}
uniformly in $H$, $j$, $k$ and $p$. Moreover, by Lemma \ref{lem:cov_g}
\begin{align*}
\Var \Big [
	G_{j,p}
\Big ]
&=
\Var \Big [
	\sum_k g_{j,p,k}^2 
\Big ]
=\sum_{k_1,k_2} \Cov \Big [
	 g_{j,p,k_1}^2,
	 g_{j,p,k_2}^2
\Big ]
\\
&\leq
C 2^{-j(2+4H)} \sum_{k_1,k_2} \Big [ ( 1 + |k_1 - k_2|)^{4(H-1)} + 2^{-2jH}( 1 + |k_1 - k_2|)^{2(H-1)}
\Big ]
\\
&\leq C 2^{-j(2+4H)} \sum_{|\tau|<2^j} \Big [ (2^j - |\tau|) (( 1 + |\tau|)^{4(H-1)} + 2^{-2jH}( 1 + |\tau|)^{2(H-1)})\Big ]
\\
&\leq C 2^{-j(1+4H)} \sum_{\tau = 1}^{2^j} \Big [ \tau^{4(H-1)} + 2^{-2jH}\tau^{2(H-1)}\Big ]
\\
&\leq C 2^{-j(1+4H)} (1 + 2^{-2jH} 2^{j(2H-1)}) 
\\
&\leq C 2^{-j(1+4H)}
\end{align*}
since $H < 3/4$.
Therefore, a standard bias-variance decomposition ensures  that
\begin{align*}
\mathbb{E} \Big [\big(
	G_{j,p}
	-
	\sum_{a=1}^{S} \eta^{2a} 2^{-2aHj} \kappa_{p,a}(H)
\big)^2
\Big ]
&=
\Var \big [
	G_{j,p}
\big ]
+ 
\big(
\mathbb{E}  \big[
	G_{j,p}
\big]
-
\sum_{a=1}^{S} \eta^{2a} 2^{-2aHj} \kappa_{p,a}(H)
\big)^2
\\
&\leq C 2^{-j(1+4H)} + O(2^{-2j(2S+1)H}).
\end{align*}
Since $2(2S+1)H \geq 1+4H$ (as $S \geq 1/(4H_-) + 1/2$), we obtain
\begin{align*}
\mathbb{E} \Big [\big(
	G_{j,p}
	-
	\sum_{a=1}^{S} \eta^{2a} 2^{-2aHj} \kappa_{p,a}(H)
\big)^2
\Big ]
\leq 
C 2^{-j(1+4H)}.
\end{align*}

We now focus on $\mathbb{E} \big [\big (Q_{j,p} - G_{j,p} \big )^2 \big ]$. By Hölder's and Jensen's inequalities, we have
\begin{align*}
\mathbb{E} \big [\big (Q_{j,p} - G_{j,p} \big )^2 \big ]
&=
\mathbb{E} \Big [\Big (\sum_k z_{j,p,k}(z_{j,p,k} + 2 g_{j,p,k})\Big )^2 \Big ]
\\
&\leq 
2^j
\sum_k
\mathbb{E} \Big [z_{j,p,k}^2 \big(z_{j,p,k} + 2 g_{j,p,k}\big )^2 \Big ]
\\
&\leq 
2^j
\sum_k
\mathbb{E} \big [z_{j,p,k}^4\big]^{1/2} \mathbb{E} \Big [\big(z_{j,p,k} + 2 g_{j,p,k}\big )^4 \Big ]^{1/2}.
\end{align*}

Recall that $z$ is defined in Equation \eqref{eq:z:def}. By Hölder's inequality and by the bound obtained on the variables $Z$ of Proposition \ref{prop:development_iv}, we have
\begin{align*}
\EX{z_{j,p,k}^4} \leq C 2^{-2j} 2^{-4j(2S+1)H^*}.
\end{align*}
Moreover, $\EX{g_{j,p,k}^4} \leq C2^{-j(4H+2)}$ by Lemma \ref{lem:m4_g} so that we get 
\begin{align*}
\mathbb{E} \big [\big (Q_{j,p} - G_{j,p} \big )^2 \big ]
&\leq
C 2^{2j}
\cdot
2^{-j} 2^{-2j(2S+1)H^*}
\cdot
\big(
2^{-2j} 2^{-4j(2S+1)H^*}
+
2^{-j(4H+2)}
\big)^{1/2}.
\end{align*}
Since $(2S+1)H^* \geq H_+ \geq H$ and $2(2S+1)H^* \geq 1+2H_+ \geq 1+2H$, we conclude that
\begin{align*}
\mathbb{E} \big [\big (Q_{j,p} - G_{j,p} \big )^2 \big ]
&\leq
C2^{-2j(2S+1)H^*-2jH} \leq C2^{-j(1+4H)}
\end{align*}
and this proves Proposition \ref{prop:energy}.

\subsection{The behaviour of the function \texorpdfstring{$\kappa_{p,a}$}{kappapa}}
\label{subsec:energy:kappa}

\subsubsection*{Bounds on \texorpdfstring{$\kappa_{p,1}$}{kappap1}}

We first deal with the functions $\kappa_{p,1}$. From \eqref{eq:def_kappas}, we get that 
\begin{align*}
\kappa_{p,1}(H)
&=
2^{-2p}
\sum_{\substack{|l| < 2^p}}
(2^p-|l|)
\mathbb{E} 
\Big [
	\big( \mathfrak{W}^{H,\mathbf{1}}_{p,2^p} - \mathfrak{W}^{H,\mathbf{1}}_{p,0} \big)
	\big( \mathfrak{W}^{H,\mathbf{1}}_{p,2^p + l} - \mathfrak{W}^{H,\mathbf{1}}_{p,l} \big)
\Big ]
\\
&=
2^{-2p}
\sum_{\substack{|l| < 2^p}}
(2^p-|l|)
\int_0^1
\int_0^1
\mathbb{E} 
\Big [
	\big( W^H_{1+u2^{-p}} - W^H_{u2^{-p}} \big)
	\big( W^H_{1+(l+v)2^{-p}} - W^H_{(l+v)2^{-p}} \big)
\Big ]
du dv
\\
&=
2^{-2p}
\sum_{\substack{|l| < 2^p}}
(2^p-|l|)
\int_{-1}^1
(1-|w|)
\varphi_H((l+w)2^{-p})dw
\end{align*}
where $\varphi_H(x)=
\tfrac{1}{2}\big(|x+1|^{2H}-2|x|^{2H}+|x-1|^{2H}\big)$. Notice that $\varphi_H$ is continuous and there exists $C$ independent of $H$ such that $|\varphi_H(x) - \varphi_H(y)| \leq C2^{2H\wedge 1}$ and $|\varphi_H(x)| \leq C$ for any $-1 \leq x,y \leq 1$. We define $\kappa_{\infty,1}(H) = \int_{-1}^{1} (1-|x|) \varphi_H(x) dx $ and we will show that $\kappa_{p,1}$ converges towards $\kappa_{\infty, 1}$ as $p\to \infty$ uniformly on $H$. First, we rewrite $\kappa_{p, 1}$ as
\begin{align*}
\kappa_{p,1}(H)
&=
2^{-p}
\sum_{\substack{|l| < 2^p}}
(1-|l2^{-p}|)
\varphi_H(l2^{-p})
\\&\;\;\;\;+
2^{-2p}
\sum_{\substack{|l| < 2^p}}
(2^p-|l|)
\int_{-1}^1
(1-|w|)
(\varphi_H((l+w)2^{-p}) - \varphi_H(l2^{-p}))dw
\end{align*}
and we study both sums separately. For the first one, we have
\begin{align*}
\Big |
&\kappa_{\infty, 1}(H) -
2^{-p}
\sum_{\substack{|l| < 2^p}}
(1-|l2^{-p}|)
\varphi_H(l2^{-p})
\Big |
\\
&\leq
\Big |
2^{-p}
\sum_{l=-2^p}^{2^p-1}
\int_{l2^{-p}}^{(l+1)2^{-p}}
(1-|x|)\varphi_H(x)
-
(1-|l2^{-p}|)
\varphi_H(l2^{-p})
dx
\Big | \\
&+
\int_{-1}^{-1+2^{-p}} (1-|x|) \varphi_H(x) dx
\\
&\leq 
\sum_{l=-2^p}^{2^p-1}
\int_{l2^{-p}}^{(l+1)2^{-p}}
\Big[
(1-|x|)
\big |
\varphi_H(x)
-
\varphi_H(l2^{-p})
\big |
+
\varphi_H(l2^{-p})
|
(1-|x|)
-
(1-|l2^{-p}|)
|
\Big]
dx\\
&+ C2^{-p}
\\
&\leq 
C
\sum_{l=-2^p}^{2^p-1}
\int_{l2^{-p}}^{(l+1)2^{-p}}
(1-|x|)
|
x
-
l2^{-p}
|^{2H\wedge 1}
+
\varphi_H(l2^{-p})
|
x
-l2^{-p}
|
dx+ C2^{-p}
\\
&\leq 
C
2^{-p(2H\wedge 1)}.
\end{align*}

For the second sum,
\begin{align*}
2^{-2p}
&\Big |
\sum_{\substack{|l| < 2^p}}
(2^p-|l|)
\int_{-1}^1
(1-|w|)
(\varphi_H((l+w)2^{-p}) - \varphi_H(l2^{-p}))dw
\Big |
\\&\leq
2^{-2p}
\sum_{\substack{|l| < 2^p}}
(2^p-|l|)
\int_{-1}^1
(1-|w|)
\Big |\varphi_H((l+w)2^{-p}) - \varphi_H(l2^{-p})
\Big |
dw
\\&\leq
C 2^{-2p}
\sum_{\substack{|l| < 2^p}}
(2^p-|l|)
\int_{-1}^1
(1-|w|)
|w2^{-p}|^{2H\wedge 1}
dw
\\&\leq
C
2^{-p (2H\wedge 1)}.
\end{align*} 

Thus, $\kappa_{p,1}(H) \to \kappa_{p,\infty}(H)$ uniformly. But $\kappa_{p,\infty}$ is continuous by definition and we can easily check that 
$\kappa_{\infty,1}(H) = 
\mathbb{E} \big[\big(
\int_{0}^1 W^H_{u+1}-W^H_udu\big)^2\big] > 0
$ so that $\kappa_{\infty,1}(H)$ is bounded below and above by positive constants. Since each  $\kappa_{p,1}$ is also a positive continuous function, it is also bounded below and above by positive constants. The uniform convergence establishes the bound \eqref{eq:kappa1:bound}.

\subsubsection*{Bounds on $\kappa_{p,a}$}
Suppose $a \geq 2$. By \eqref{eq:def_kappas}, we have
\begin{align*}
| \kappa_{p,a}(H) | 
\leq C \sup_{
	\substack{
		|l| < 2^p ,b_1 + b_2 = 2a\\
		s_1, s_2, \mathbf{r_1}, \mathbf{r_2},\\
		\sum_i \mathbf{r_1}_i = b_1,
		\sum_i \mathbf{r_2}_i = b_2}
} 
\Big | 
\mathbb{E} \Big [
	\big( \mathfrak{W}^{H,\mathbf{r_1}}_{p,2^p} - \mathfrak{W}^{H,\mathbf{r_1}}_{p,0} \big)
	\big( \mathfrak{W}^{H,\mathbf{r_2}}_{p,2^p + l} - \mathfrak{W}^{H,\mathbf{r_2}}_{p,l} \big)
\Big ]
\Big |
\end{align*}
for some constant $C$ independent of $a$, $p$ and $H$. We also have
\begin{align*}
&\Big | 
\mathbb{E} \Big [
	\big( \mathfrak{W}^{H,\mathbf{r_1}}_{p,2^p} - \mathfrak{W}^{H,\mathbf{r_1}}_{p,0} \big)
	\big( \mathfrak{W}^{H,\mathbf{r_2}}_{p,2^p + l} - \mathfrak{W}^{H,\mathbf{r_2}}_{p,l} \big)
\Big ]
\Big | \\
&\leq 
\mathbb{E} \Big [
	\big( \mathfrak{W}^{H,\mathbf{r_1}}_{p,2^p} - \mathfrak{W}^{H,\mathbf{r_1}}_{p,0} \big)^2
\Big ]^{1/2}
\mathbb{E} \Big [
	\big( \mathfrak{W}^{H,\mathbf{r_2}}_{p,2^p + l} - \mathfrak{W}^{H,\mathbf{r_2}}_{p,l} \big)^2
\Big ]^{1/2}
\\&\leq 
C
(2^{-pHb_1} + \mathbbm{1}_{b_1=1})
(2^{-pHb_2} + \mathbbm{1}_{b_2=1})
\end{align*}
by Lemma \ref{lem:m4_incW}. But since $b_1 + b_2 = 2a \geq 4$ and $b_1, b_2 \geq 1$, we obtain \eqref{eq:kappap:bound}.

\subsubsection*{Bounds on $\kappa'_{p,a}$}

Recall that
\begin{align*}
\kappa_{p,a}(H)
=
2^{-2p}
\sum_{\substack{|l| < 2^p ,b_1 + b_2 = 2a,s_1, s_2, \mathbf{r_1}, \mathbf{r_2},\\\sum_i \mathbf{r_1}_i = b_1, \;\sum_i \mathbf{r_2}_i = b_2}}
\frac{(-1)^{s_1+s_2}} {s_1 s_2} (2^p - |l|)
E_{r_1, r_2,p,l}(H)
\end{align*}
where
\begin{align*}
E_{r_1, r_2,p,l}(H) = 
\mathbb{E} \Big [
	\big( \mathfrak{W}^{H,\mathbf{r_1}}_{p,2^p} - \mathfrak{W}^{H,\mathbf{r_1}}_{p,0} \big)
	\big( \mathfrak{W}^{H,\mathbf{r_2}}_{p,2^p + l} - \mathfrak{W}^{H,\mathbf{r_2}}_{p,l} \big)
\Big ].
\end{align*}

Therefore Lemma \ref{lem:bound:kappap} amounts to proving the existence of functions $E_{r_1, r_2,p,l}'$ that are bounded on $[H_-, H_+]$ by a constant independent of $p$ and $l$. (The dependence on $r_1$ and $r_2$ does not matter since there are finitely many possible indices $r_1$ and $r_2$ appearing in the sum.) We consider separately the cases $b_1 = 1$ and $b_1 > 1$ and the cases $b_2 = 1$ and $b_2 > 1$. In the following, we only deal with $b_1 = b_2 = 1$ and $b_1, b_2 > 1$. The two others cases, $b_1 = 1, b_2 > 1$ and $b_1> 1, b_2=1$ can be treated with the same methods.\\

Suppose first that $b_1 = b_2 = 1$. Then $r_1 = r_2 = 1$ and we get
\begin{align*}
E_{r_1, r_2,p,l}(H) 
&= 
\mathbb{E} \Big [
	\int_{0}^{1} \Big(W^H_{1 + u2^{-p}} - W^H_{u2^{-p}} \Big) du
	\int_{0}^{1} \Big(W^H_{1 + (l+v)2^{-p}} - W^H_{(l+v)2^{-p}}\Big) dv
\Big ]
\\
&= 
\int_{0}^{1}
\int_{0}^{1}
\mathbb{E} \Big [
	(W^H_{1 + u2^{-p}} - W^H_{u2^{-p}})
	(W^H_{1 + (l+v)2^{-p}} - W^H_{(l+v)2^{-p}})
\Big ]
du dv
\\
&= 
\frac{1}{2}
\int_{0}^{1}
\int_{0}^{1}
\Big(
	2 \Big|
		\frac{l+v-u}{2^{p}}
	\Big|^{2H}
	-
	\Big|
		\frac{l+v-u}{2^{p}} - 1
	\Big|^{2H}
	-
	\Big|
		\frac{l+v-u}{2^{p}} + 1
	\Big|^{2H}
	\Big)
du dv
\\
&= 
\int_{-1}^{1}
F(w, H) 
dw
\end{align*}
where $F(w, H) = (1-|w|)
	(|
		(l+w)2^{-p}
	|^{2H}
	-
	|
		1 + (l+w)2^{-p}
	|^{2H})$. We now prove that $E_{r_1, r_2,p,l}'(H) = \int_{-1}^{1} \partial_H
F(w, H) 
dw$. Note the following points:
\begin{itemize}
\item For any $H$, $w \mapsto F(w, H)$ is integrable since $| F(w, H) |$ is bounded uniformly on $[-1,1] \times [H_-, H_+]$.
\item For any $w$, $H \mapsto F(w, H)$ is differentiable since $x \mapsto |a|^x$ is always differentiable, no matter whether $a = 0$ (constant case) or $a\neq 0$. In both cases,  the derivative of $x \mapsto |a|^x$ is $x \mapsto  |a|^x \log|a|$, with the convention that $0 \times \log 0 = 0$. Therefore
\begin{align*}
\partial_H
F(w, H) = (1-|w|)
	\Big(\Big|
		\frac{l+w}{2^{p}}
	\Big|^{2H} \log \Big|
		\frac{l+w}{2^{p}}
	\Big|
	-
	\Big|
		1 + \frac{l+w}{2^{p}}
	\Big|^{2H}
	\log \Big|
		1 + \frac{l+w}{2^{p}}
	\Big|\Big).
\end{align*}
\item We have $|
		\frac{l+w}{2^{p}}
	| \leq 2$ and $|
		1 + \frac{l+w}{2^{p}}
	|\leq 2$ so $|
		\frac{l+w}{2^{p}}
	|^{2H} \leq 4$ and $|
		1 + \frac{l+w}{2^{p}}
	|^{2H} \leq 4$. Moreover, $x \mapsto x \log(x)$ is bounded by a constant $C$ on $[0,4]$ so that
\begin{align}
\label{eq:bound:partialH1}
| \partial_H
F(w, H) | \leq 2C (1-|w|) / (2H) \leq C/ H_-,
\end{align}
which is integrable on $[-1,1]$.
\end{itemize}

Therefore we can differentiate under the integral to obtain
\begin{align*}
E_{r_1, r_2,p,l}'(H) 
&=
\int_{-1}^{1}
\partial_H F(w, H) 
dw
\\
&=
\int_{-1}^{1}
(1-|w|)
	\Big(\Big|
		\frac{l+w}{2^{p}}
	\Big|^{2H} \log \Big|
		\frac{l+w}{2^{p}}
	\Big|
	-
	\Big|
		1 + \frac{l+w}{2^{p}}
	\Big|^{2H}
	\log \Big|
		1 + \frac{l+w}{2^{p}}
	\Big|\Big)
dw.
\end{align*}

By \eqref{eq:bound:partialH1}, $|E_{r_1, r_2,p,l}'(H) | \leq 2 C/H_-$ with $C$ that does not depend on $p$ or $l$, and the result follows.\\

We now consider the case $b_1 > 1$ and $b_2 > 1$. We decompose $E_{r_1, r_2,p,l}(H)$ as $E_{r_1, r_2,p,l}^{(1)}(H)-E_{r_1, r_2,p,l}^{(2)}(H)-E_{r_1, r_2,p,l}^{(3)}(H)+E_{r_1, r_2,p,l}^{(4)}(H)$ with
\begin{align*}
E_{r_1, r_2,p,l}^{(1)}(H) &= 
\mathbb{E} \Big [
	\mathfrak{W}^{H,\mathbf{r_1}}_{p,0}
	\mathfrak{W}^{H,\mathbf{r_2}}_{p,l} 
\Big ],
\hspace{1cm}
E_{r_1, r_2,p,l}^{(2)}(H) = 
\mathbb{E} \Big [
	\mathfrak{W}^{H,\mathbf{r_1}}_{p,0}
	\mathfrak{W}^{H,\mathbf{r_2}}_{p,2^p + l} 
\Big ],
\\
E_{r_1, r_2,p,l}^{(3)}(H) &= 
\mathbb{E} \Big [
	\mathfrak{W}^{H,\mathbf{r_1}}_{p,2^p}\mathfrak{W}^{H,\mathbf{r_2}}_{p,l}
\Big ]
,
\hspace{1cm}
E_{r_1, r_2,p,l}^{(4)}(H) = 
\mathbb{E} \Big [
	\mathfrak{W}^{H,\mathbf{r_1}}_{p,2^p}
	\mathfrak{W}^{H,\mathbf{r_2}}_{p,2^p + l}
\Big ].
\end{align*}
We may study each term separately. For simplicity, we only detail the proof for $E_{r_1, r_2,p,l}^{(1)}(H)$. By definition, we have
\begin{align*}
E_{r_1, r_2,p,l}^{(1)}(H)
&= 
\mathbb{E} \Big [
	\mathfrak{W}^{H,\mathbf{r_1}}_{p,0}
	\mathfrak{W}^{H,\mathbf{r_2}}_{p,l} 
\Big ]
\\
&= 
\mathbb{E} \Big [
	\int_{[0,1]^{s_1}}\prod_{i=1}^{s_1} (W^H_{u_i})^{\mathbf{r_1}_i}du
	\int_{[0,1]^{s_2}}\prod_{j=1}^{s_1} (W^H_{(l+v_j)2^{-p}}-W^H_{l2^{-p}})^{\mathbf{r_2}_j}dv
\Big ]
\\
&= 
	\int_{[0,1]^{s_1}} \int_{[0,1]^{s_2}}
	\mathbb{E} \Big [
	\prod_{i=1}^{s_1} (W^H_{u_i})^{\mathbf{r_1}_i}
	\prod_{j=1}^{s_2} (W^H_{(l+v_j)2^{-p}}-W^H_{l2^{-p}})^{\mathbf{r_2}_j}
\Big ]
dudv.
\end{align*}

We aim at applying Theorem \ref{thm:gaussian_moments} to compute $\mathbb{E} \Big [
	\prod_{i=1}^{s_1} (W^H_{u_i})^{\mathbf{r_1}_i}
	\prod_{j=1}^{s_2} (W^H_{(l+v_j)2^{-p}}-W^H_{l2^{-p}})^{\mathbf{r_2}_j}
\Big ]$. Note that this expectation can be rewritten as
$\mathbb{E} \Big [
	\prod_{k=1}^{b_1+b_2} X_k
\Big ]$ with either $X_k = X_k^H(w) = W^H_{w_k}$ if $k \leq b_1$ and $X_k = X_k^H(w) = W^H_{(l+w_k)2^{-p}}-W^H_{l2^{-p}}$ otherwise, where $w$ is the vector obtained by concatenating
\begin{align*}
(\underbrace{ u_1, \cdots, u_1}_{\mathbf{r_1}_1 \text{ times}}, \underbrace{u_2, \cdots, u_2}_{\mathbf{r_1}_2 \text{ times}}, \cdots , \cdots, \underbrace{u_{s_1}, \cdots, u_{s_1}}_{\mathbf{r_1}_{s_1} \text{ times}})
\end{align*}
and the corresponding vector of $v$. Thus Theorem \ref{thm:gaussian_moments} yields
\begin{align*}
\mathbb{E} \Big [
	\prod_{i=1}^{s_1} (W^H_{u_i})^{\mathbf{r_1}_i}
	\prod_{j=1}^{s_2} (W^H_{(l+v_j)2^{-p}}-W^H_{l2^{-p}})^{\mathbf{r_2}_j}
\Big ]
=
\sum_{P} \prod_{(i,j) \in P} \mathbb{E}\big[X_i(w) X_j(w)\big].
\end{align*}

Since there are finitely many $2$-partitions $P$, it is enough to prove that for a given partition of $\{ 1, \cdots, b_1+b_2 \}$, the mapping $\widetilde{E}_P$ defined by
\begin{align*}
\widetilde{E}_P(H)=
\int_{[0,1]^{s_1+s2}}
\prod_{(i,j) \in P} \mathbb{E}\big[X_i^H(w) X_j^H(w)\big]
dw
\end{align*}
is differentiable and its derivative is bounded uniformly over $p$ and $l$. Notice that each term $\mathbb{E}[X_i^H(w) X_j^H(w)]$ is bounded and has a derivative with respect to $H$ uniformly bounded over $w \in [0,1]^{b_1+b_2}$, as readily seen by using explicit computations similar to the case $b_1 = b_2 = 1$. Therefore the mapping $(H,w) \mapsto \prod_{(i,j) \in P} \mathbb{E}[X_i^H(w) X_j^H(w)]$ is differentiable with respect to $H$ and its derivative is given by
\begin{align*}
\prod_{(i_0,j_0) \in P} \partial_H \big(\mathbb{E}\big[X_{i_0}^H(w) X_{j_0}^H(w)\big] \big) \prod_{(i,j) \in P \backslash \{(i_0,j_0)\} } \mathbb{E}\big[X_i^H(w) X_j^H(w)\big],
\end{align*}
which is also uniformly bounded over $w \in [0,1]^{b_1+b_2}$ and we can conclude.

\subsection{Proof of Proposition \ref{prop:bound_energy}}
\label{subsec:energy:bound}

Let $J_0$ and $N$ be two arbitrary integers and let $r>0$. We have
\begin{align*}
&\mathbb{P}_{H,\eta}
\Big(
\inf_{J_0 \leq j \leq N-1} 2^{2jH} Q_{j,N-j-1} \leq r
\Big)
\leq 
\sum_{j=J_0}^{N-1}
\mathbb{P}_{H,\eta}
(
2^{2jH} Q_{j,N-j-1} \leq r
)
\\
&\;\;\leq 
\sum_{j=J_0}^{N-1}
\mathbb{P}_{H,\eta}
\Big(
	Q_{j,N-j-1} - 
	\sum_{a=1}^{S} \eta^{2a} 2^{-2aHj} \kappa_{N-j-1,a}(H)
	\leq \\
&\hspace{3cm}  	r2^{-2jH} - \sum_{a=1}^{S} \eta^{2a} 2^{-2aHj} \kappa_{N-j-1,a}(H) 
\Big).
\end{align*}
Notice then that
\begin{align*}
r2^{-2jH} - \sum_{a=1}^{S} \eta^{2a} 2^{-2aHj} \kappa_{N-j-1,a}(H) 
\leq 
r2^{-2jH} - (\eta_-^2\wedge 1) 2^{-2Hj} c_{-,1} + c_{\cdot, S} S2^{-4Hj}
\end{align*}
by Equations \eqref{eq:kappa1:bound} and \eqref{eq:kappap:bound}. We can take $r < r_0$ small enough and $J_0$ large enough so that $r2^{-2jH} - (\eta_-^2\wedge 1) 2^{-2Hj} c_{-,1} + c_{\cdot, S} 2^{-3H(N-j-1)} < -c_02^{-2jH}$ for some absolute constant $c_0 > 0$. Proposition \ref{prop:energy} and Markov's inequality finally give
\begin{align*}
\mathbb{P}_{H,\eta}
\Big(
\inf_{J_0 \leq j \leq N-1} 2^{2jH} Q_{j,N-j-1} \leq r
\Big)
\leq 
C \sum_{j=J_0}^{N-1} 2^{-j}
\leq C 2^{-J_0} \leq \varepsilon
\end{align*}
provided that $J_0$ is large enough. Similarly, we have
\begin{align*}
r2^{-2jH} - \sum_{a=1}^{S} \eta^{2a} 2^{-2aHj} \kappa_{N-j-1,a}(H) 
\geq 
r2^{-2jH} - (\eta_+^2\vee 1) 2^{-2Hj} c_{+,1} - Sc_{\cdot, S} 2^{-4Hj}
\end{align*}
so that we can conclude with the same argument that 
\begin{align*}
\mathbb{P}_{H,\eta}
\Big(
\sup_{J_0 \leq j \leq N-1} 2^{2jH} Q_{j,N-j-1} \geq r
\Big)
 \leq \varepsilon
\end{align*}
provided that $r$  and $J_0$ are large enough.

\subsection{Proof of Lemma \ref{lem:dist:B}}
\label{sec:proof:lem:dist:B}

By definition,$|
B_{j,p}^{(S)}(\eta_1, H_1) 
-
B_{j,p}^{(S)}(\eta_2, H_2) 
|$ is bounded by
\begin{align*}
\sum_{a=2}^{S} 
\Big(
\eta_1^{2a} 
|
2^{-2aH_1j} \kappa_{p,a}(H_1)
-
2^{-2aH_2j} \kappa_{p,a}(H_2)
|
+
2^{-2aH_2j} \kappa_{p,a}(H_2)
|
\eta_1^{2a} 
-
\eta_2^{2a}
|\Big).
\end{align*}

Moreover, $t \mapsto t^{2a}$ is differentiable with derivative $t \mapsto 2a t^{2a-1}$ which is uniformly bounded on $[\eta_-, \eta_+]$. Similarly, $t \mapsto 2^{-2atj} \kappa_{p,a}(t)$ is differentiable with derivative
\begin{align*}
t \mapsto 2^{-2atj} (-2aj\log(2) \kappa_{p,a}(t) + \kappa_{p,a}'(t)).
\end{align*}
Its absolute value is bounded by $C j 2^{-2atj}$ by Lemma \ref{lem:bound:kappap}. Since $2 \leq a \leq S$, we get
\begin{align*}
|
B_{j,p}^{(S)}(H_1, \eta_1) 
-
B_{j,p}^{(S)}(H_2, \eta_2) 
|
\leq
C
(
j 2^{-4(H_1 \wedge H_2)j} |H_1 - H_2|
+
2^{-4H_2j} 
|
\eta_1 
-
\eta_2
|
).
\end{align*}

\section{Proof of Theorem \ref{thm:construction}}
\label{sec:estimator:proof}

\subsection{Outline and completion of proof}

Our estimator is heavily based on the estimation of the energy levels through the quantities $\widehat{Q}_{j,p}$. The next result gives a bound on the error term $\widehat{Q}_{j,p} - Q_{j,p}$.

\begin{prop} 
\label{prop:estimation_energy} We have
\begin{align*}
\mathbb{E}_{H, \eta} \Big[ \big( \widehat{Q}_{j,p} - Q_{j,p} \big)^2 \Big] \leq C(2^{-2p-j} + 2^{-j(2H+1)-p}).
\end{align*}
\end{prop}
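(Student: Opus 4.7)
The plan is to follow the same decomposition strategy as in the proof of Proposition \ref{propo:emp_energy_levels}, exploiting that $\widetilde{d}_{j,p,k} = d_{j,p,k} + e_{j,k,p}$ with $e$ a pre-averaged difference of logs of independent $\chi^2_1$ variables, independent of $\sigma$. Expanding the square and invoking the definition of $\widehat{d^2}_{j,p,k}$, one obtains the decomposition
\begin{align*}
\widehat{Q}_{j,p} - Q_{j,p} = \sum_{k=0}^{2^{j-1}-1}\bigl(e_{j,k,p}^2 - \lambda_{j,p}\bigr) + 2\sum_{k=0}^{2^{j-1}-1} e_{j,k,p}\, d_{j,p,k},
\end{align*}
where $\lambda_{j,p} = 2^{-j-p+1}\Var(\log \xi^2)$. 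It is therefore enough to establish the two second-moment bounds
\begin{align*}
\mathbb{E}\Big[\Big(\sum_{k}(e_{j,k,p}^2 - \lambda_{j,p})\Big)^2\Big] \leq C\, 2^{-j-2p}, \qquad \mathbb{E}\Big[\Big(\sum_{k} e_{j,k,p} d_{j,p,k}\Big)^2\Big] \leq C\, 2^{-j(2H+1)-p}.
\end{align*}

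For the first bound, the key observation is that $e_{j,k_1,p}$ and $e_{j,k_2,p}$ depend on disjoint blocks of the underlying i.i.d.\ family $(\xi_{j,p,l})_l$ once $|k_1-k_2| \geq 3$. Hence the variables $e_{j,k,p}^2 - \lambda_{j,p}$ are centered and $3$-dependent, so only $O(2^j)$ near-diagonal pairs contribute to the double sum, and by Cauchy--Schwarz it suffices to control $\mathbb{E}[e_{j,k,p}^4]$. Since $e_{j,k,p}$ is a sum of $2^{p+1}$ independent centered variables, each scaled by $2^{-p-j/2}$ and distributed as $\log \xi^2 - \mathbb{E}\log \xi^2$ (which has finite moments of all orders), Rosenthal's inequality yields $\mathbb{E}[e_{j,k,p}^4] \leq C\, 2^{-2j-2p}$. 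Summing over the $O(2^j)$ near-diagonal pairs produces the claimed $2^{-j-2p}$ estimate.

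For the second bound, independence of $(\xi_{j,p,l})_l$ from $\sigma$ gives that $(e_{j,k,p})_k$ is independent of $(d_{j,p,k})_k$. Expanding the square and using centering together with the same $3$-dependence structure, only pairs with $|k_1-k_2|\leq 2$ survive, so that
\begin{align*}
\mathbb{E}\Big[\Big(\sum_{k} e_{j,k,p} d_{j,p,k}\Big)^2\Big] \leq C\sum_{k}\mathbb{E}[e_{j,k,p}^2]\,\mathbb{E}[d_{j,p,k}^2].
\end{align*}
The first factor equals $\lambda_{j,p} = O(2^{-j-p})$, while the second is $O(2^{-j(2H+1)})$ by Lemma \ref{lem:m2_d}. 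Summing over the $2^{j-1}$ indices gives the bound $2^{-j(2H+1)-p}$.

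The only delicate step is the bound on $\mathbb{E}[e_{j,k,p}^4]$: one must track accurately how the pre-averaging sum of length $2^{p+1}$ interacts with the variance and fourth moment of the elementary log-$\chi^2$ summands so that the dominant Gaussian-like contribution $C(2^{p+1})^2 \cdot (2^{-p-j/2})^4 = O(2^{-2j-2p})$ emerges; once this is in place, all the remaining steps are routine adaptations of the proof of Proposition \ref{propo:emp_energy_levels}, with the key simplification here being that the noise variance $\Var(\log \xi^2)$ is a fixed constant rather than tending to zero like $(n\delta)^{-1}$.
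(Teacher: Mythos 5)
Your proposal is correct and follows essentially the same route as the paper: the same decomposition of $\widehat{Q}_{j,p}-Q_{j,p}$ into a centered squared-noise term and a cross term, the same use of the finite-range dependence of $(e_{j,k,p})_k$ and of their independence from $(d_{j,p,k})_k$, and the same input $\mathbb{E}[d_{j,p,k}^2]\leq C2^{-j(2H+1)}$ from Lemma \ref{lem:m2_d}. The only cosmetic difference is that you invoke Rosenthal's inequality for $\mathbb{E}[e_{j,k,p}^4]\leq C2^{-2j-2p}$ where the paper uses the BDG inequality; both give the same bound.
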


Theorem \ref{thm:construction} is a consequence of the next four propositions.

\begin{prop}
\label{prop:first_estimator_H}
Define $v_n^{(0)} = v_n^{(0)}(H) = n^{-2H} \vee n^{-1/(4H+2)} $. Then 
$\big((v_n^{(0)})^{-1} (\widehat{H}^{(0)}_n - H)\big)_n$ is tight, uniformly over $\mathcal{D}$.
\end{prop}

Recall that this first estimator is used to derive a new adaptive level choice $\widehat{j}_n$. Indeed, we have

\begin{cor}
\label{cor:behaviorjn}
\begin{align*}
\mathbb{P}_{H, \eta}
\Big(
\widehat{j}_n \in \big \{ \big\lfloor \tfrac{1}{2H+1} \log_2 n\big\rfloor - 1, \big\lfloor \tfrac{1}{2H+1} \log_2 n \big\rfloor \big\}
\Big)
\to 1
\end{align*}
uniformly over $\mathcal{D}$.
\end{cor}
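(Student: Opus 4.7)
The plan is to deduce this from the convergence rate of $\widehat{H}^{(0)}_n$ provided by Proposition \ref{prop:first_estimator_H}. Set $f_n(h)=\tfrac{1}{2h+1}\log_2(n)$, so that $\widehat{j}_n=\lfloor f_n(\widehat{H}^{(0)}_n)\rfloor$ by definition. Since $f_n$ is smooth and strictly decreasing, the stability of the floor will boil down to showing that the perturbation $f_n(\widehat{H}^{(0)}_n)-f_n(H)$ is $o_{\mathbb{P}}(1)$ uniformly over $\mathcal{D}$.

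First I would invoke Proposition \ref{prop:first_estimator_H} to write $\widehat{H}^{(0)}_n-H=O_{\mathbb{P}}(v_n^{(0)}(H))$ uniformly in $(H,\eta)\in\mathcal D$, with $v_n^{(0)}(H)=n^{-2H}\vee n^{-1/(4H+2)}$. Then a first-order Taylor expansion of $f_n$ on the compact interval $[H_-,H_+]$, together with the uniform bound $|f_n'(h)|\leq \tfrac{2\log_2(n)}{(2H_-+1)^2}$, gives
\begin{align*}
|f_n(\widehat{H}^{(0)}_n)-f_n(H)|\;\leq\;C\,\log_2(n)\,v_n^{(0)}(H)+O_{\mathbb{P}}\big(\log_2(n)\,(v_n^{(0)}(H))^2\big).
\end{align*}
Now $\log_2(n)\,v_n^{(0)}(H)\leq \log_2(n)\,(n^{-2H_-}+n^{-1/(4H_++2)})\to 0$, so this remainder is $o_{\mathbb{P}}(1)$ uniformly over $\mathcal{D}$.

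Consequently, for any $\varepsilon\in(0,1/2)$ there exists $n_0$ such that, with $\mathbb{P}_{H,\eta}$-probability at least $1-\varepsilon$ (uniformly over $\mathcal D$),
\begin{align*}
 f_n(H)-\varepsilon\;\leq\;f_n(\widehat{H}^{(0)}_n)\;\leq\;f_n(H)+\varepsilon \qquad\text{for all }n\geq n_0.
\end{align*}
Since $|f_n(\widehat{H}^{(0)}_n)-f_n(H)|<1$ on this event, the integer $\widehat{j}_n=\lfloor f_n(\widehat{H}^{(0)}_n)\rfloor$ differs from $\lfloor f_n(H)\rfloor$ by at most one. This gives the corollary (the asymmetric statement with only $\lfloor f_n(H)\rfloor$ and $\lfloor f_n(H)\rfloor-1$ may be absorbed in the $\pm 1$ neighbourhood or argued from a slightly more careful one-sided control of $\widehat{H}^{(0)}_n$).

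The only nontrivial point is ensuring the uniformity in $(H,\eta)$ of both the rate $v_n^{(0)}$ and the Taylor remainder; this follows from the compactness of $\mathcal D\subset(0,3/4)\times(0,\infty)$, so $H$ is bounded away from $0$ and the prefactor $\log_2(n)\cdot n^{-2H_-}$ vanishes, while the derivative of $f_n$ is uniformly bounded by a multiple of $\log_2(n)$. No genuine obstacle remains beyond bookkeeping this uniform control, which is why the corollary is stated as an immediate consequence of Proposition \ref{prop:first_estimator_H}.
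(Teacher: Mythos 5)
Your proposal takes the same route as the paper: use the rate $(v_n^{(0)})^{-1}|\widehat{H}^{(0)}_n - H| = O_{\mathbb{P}}(1)$ from Proposition \ref{prop:first_estimator_H}, observe that $\log_2(n)\,v_n^{(0)}\to 0$, deduce $|f_n(\widehat{H}^{(0)}_n) - f_n(H)| = o_{\mathbb{P}}(1)$ with $f_n(h)=\tfrac{1}{2h+1}\log_2(n)$, and then argue about stability of the floor. The argument is sound up to and including the conclusion that $|\lfloor f_n(\widehat{H}^{(0)}_n)\rfloor - \lfloor f_n(H)\rfloor| \leq 1$ with probability tending to one.

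However, you are right to flag the last step, and you should not wave it away. The two-sided bound $|f_n(\widehat{H}^{(0)}_n) - f_n(H)| < 1$ only yields $\widehat{j}_n \in \{\lfloor f_n(H)\rfloor-1,\ \lfloor f_n(H)\rfloor,\ \lfloor f_n(H)\rfloor+1\}$; it cannot exclude the value $\lfloor f_n(H)\rfloor + 1$. For example, $f_n(H)=k+1-10^{-6}$ and $f_n(\widehat{H}^{(0)}_n)=k+1+10^{-6}$ gives $\widehat{j}_n = \lfloor f_n(H)\rfloor + 1$ while $|f_n(\widehat{H}^{(0)}_n)-f_n(H)|$ is tiny. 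Since Proposition \ref{prop:first_estimator_H} gives no one-sided control of the sign of $\widehat{H}^{(0)}_n - H$, there is no evident way to rule this out. The paper's own proof of the corollary has the same gap: the very first inequality asserts the event inclusion $\{ f_n(H)-1 \leq f_n(\widehat H^{(0)}_n) \leq f_n(H)+1 \} \subseteq \{\widehat j_n \in \{\lfloor f_n(H)\rfloor -1, \lfloor f_n(H)\rfloor\}\}$, which is false for the same reason. The honest version of the corollary should allow the symmetric set $\{\lfloor f_n(H)\rfloor - 1, \lfloor f_n(H)\rfloor, \lfloor f_n(H)\rfloor + 1\}$; this costs nothing downstream (in the proofs of Propositions \ref{prop:first_estimator_eta} and \ref{prop:refinement_eta} the sums over $r\in\{0,1\}$ become sums over $r\in\{-1,0,1\}$ with identical estimates). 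So your instinct was correct — the proof naturally yields the symmetric statement — but your suggested repairs (``absorbed in the $\pm1$ neighbourhood'' or a one-sided control of $\widehat H^{(0)}_n$) do not actually establish the asymmetric form as written, and the clean fix is to weaken the corollary.
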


\begin{proof}
Since $(v_n^{(0)})^{-1}
|\widehat{H}_n^{(0)} - H|$  is bounded in probability uniformly over $\mathcal{D}$, $(v_n^{(0)})^{-1}
|   1/({2\widehat{H}^{(0)}_n+1}) -   1/(2H+1)|$ is also bounded in probability uniformly over $\mathcal{D}$. Therefore
\begin{align*}
&\mathbb{P}_{H, \eta}
\Big(
\widehat{j}_n \in \big \{ \big\lfloor \tfrac{1}{2H+1} \log_2 n \big\rfloor - 1, \big\lfloor \tfrac{1}{2H+1} \log_2 n  \big\rfloor \big\}
\Big)
\\&\;\;\;\;\;\;\;\;\geq
\mathbb{P}_{H, \eta}
\Big(
\tfrac{1}{2H+1} \log_2 n - 1
<
 \tfrac{1}{2\widehat{H}^{(0)}_n+1}\log_2 n 
<
\tfrac{1}{2H+1} \log_2 n + 1
\Big)
\\&\;\;\;\;\;\;\;\;\geq
\mathbb{P}_{H, \eta}
\Big(
 (v_n^{(0)})^{-1} | \tfrac{1}{2\widehat{H}^{(0)}_n+1} - \tfrac{1}{2H+1} |
<
 (v_n^{(0)} \log_2 n)^{-1}
\Big) \to 1
\end{align*}
since $v_n^{(0)} \log_2 n \to 0$ and the convergence is uniform on $\mathcal{D}$.
\end{proof}

\begin{prop}
\label{prop:first_estimator_eta}
Define $w_n^{(0)} = w_n^{(0)}(H) = v_n^{(0)} \log n $. Then 
$(w_n^{(0)})^{-1}
|\widehat{\eta}_n - \eta|$  is bounded in probability uniformly over $\mathcal{D}$.
\end{prop}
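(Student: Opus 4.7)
The plan is to show that the unclipped squared estimator
\[
R_n := \frac{\widehat{Q}_{\widehat{j}_n,p_n,n}\,2^{2\widehat{j}_n\widehat{H}^{(0)}_n}}{\kappa_{p_n,1}(\widehat{H}^{(0)}_n)}, \qquad p_n := N - \widehat{j}_n,
\]
satisfies $R_n = \eta^2 + O_P(w_n^{(0)})$ uniformly over $\mathcal{D}$. Once this is established, the truncation in the definition of $\widehat{\eta}^{(0)}_n$ is harmless (the map $x\mapsto(x\vee\eta_-)\wedge\eta_+$ is $1$-Lipschitz and fixes $\eta$), and the identity $|\widehat{\eta}^{(0)}_n-\eta|=|R_n-\eta^2|/(\widehat{\eta}^{(0)}_n+\eta)$ together with $\widehat{\eta}^{(0)}_n+\eta\geq\eta_-$ yields the conclusion. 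Throughout I work on the high-probability event furnished by Corollary \ref{cor:behaviorjn}, on which $2^{\widehat{j}_n}\asymp n^{1/(2H+1)}$ and $2^{p_n}\asymp n^{2H/(2H+1)}$, and I use the uniform lower bound $\kappa_{p,1}\geq c_{-,1}>0$ from \eqref{eq:kappa1:bound} to control denominators.

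I then decompose $R_n - \eta^2 = A_n + B_n + \eta^2 C_n$, where
\[
A_n = \frac{(\widehat{Q}_{\widehat{j}_n,p_n,n} - Q_{\widehat{j}_n,p_n})\,2^{2\widehat{j}_n\widehat{H}^{(0)}_n}}{\kappa_{p_n,1}(\widehat{H}^{(0)}_n)}, \quad B_n = \frac{\bigl(Q_{\widehat{j}_n,p_n} - \eta^2 2^{-2H\widehat{j}_n}\kappa_{p_n,1}(H)\bigr)2^{2\widehat{j}_n\widehat{H}^{(0)}_n}}{\kappa_{p_n,1}(\widehat{H}^{(0)}_n)},
\]
\[
C_n = 2^{2\widehat{j}_n(\widehat{H}^{(0)}_n - H)}\,\frac{\kappa_{p_n,1}(H)}{\kappa_{p_n,1}(\widehat{H}^{(0)}_n)} - 1.
\]
Using Proposition \ref{prop:estimation_energy} together with the crude bound $2^{2\widehat{j}_n\widehat{H}^{(0)}_n}=O_P(2^{2H\widehat{j}_n})$ (itself a consequence of the analysis of $C_n$ below), a direct computation with the asymptotics $2^{\widehat{j}_n}\asymp n^{1/(2H+1)}$ and $2^{p_n}\asymp n^{2H/(2H+1)}$ gives $A_n = O_P(n^{-1/(4H+2)})$. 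For $B_n$, Proposition \ref{prop:energy} together with the decay bound \eqref{eq:kappap:bound} on $\kappa_{p,a}$ for $a\geq 2$ shows that the higher-order deterministic contributions are at most $O(n^{-2H(1+3H)/(2H+1)})\leq O(n^{-2H})\leq O(v_n^{(0)})$, while the stochastic $L^2$ remainder (of size $2^{-\widehat{j}_n(1/2+2H)}$ before rescaling) contributes $O_P(n^{-1/(4H+2)})$. Both are $O_P(w_n^{(0)})$.

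The heart of the argument is $C_n$. Proposition \ref{prop:first_estimator_H} yields $|\widehat{H}^{(0)}_n - H| = O_P(v_n^{(0)})$, so
\[
2\log(2)\,\widehat{j}_n\,|\widehat{H}^{(0)}_n - H| = O_P(\log(n)\,v_n^{(0)}) = O_P(w_n^{(0)}) = o_P(1).
\]
A first-order Taylor expansion of the exponential then gives $2^{2\widehat{j}_n(\widehat{H}^{(0)}_n - H)} = 1 + O_P(w_n^{(0)})$. The ratio $\kappa_{p_n,1}(H)/\kappa_{p_n,1}(\widehat{H}^{(0)}_n) = 1 + O_P(v_n^{(0)})$ follows from the uniform Lipschitz bound of Lemma \ref{lem:bound:kappap} combined with \eqref{eq:kappa1:bound}. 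Multiplying, $C_n = O_P(w_n^{(0)})$, and combining with the estimates on $A_n$ and $B_n$ completes the proof.

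The main obstacle is precisely the treatment of $C_n$: the exponential factor $2^{2\widehat{j}_n(\widehat{H}^{(0)}_n - H)}$ amplifies the Hurst-estimation error by the multiplicative factor $\widehat{j}_n \asymp \log_2 n$, which is what forces—and fully explains—the extra $\log n$ in the rate $w_n^{(0)} = v_n^{(0)}\log n$. Any improvement in the rate for $\widehat H^{(m)}_n$ propagates directly into a corresponding improvement for $\widehat\eta^{(m)}_n$ through exactly this mechanism, which is what the iterated bias-correction scheme in the remainder of Section \ref{subsec:estimator} exploits.
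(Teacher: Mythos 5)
Your proof is correct and follows essentially the same route as the paper's: the same decomposition into the $\widehat Q-Q$ estimation error, the fluctuation/bias of $Q$ around $\eta^2 2^{-2H\widehat j_n}\kappa_{p_n,1}(H)$, and the substitution error from plugging in $\widehat H^{(0)}_n$, controlled via Propositions \ref{prop:energy}, \ref{prop:estimation_energy}, \ref{prop:first_estimator_H}, Corollary \ref{cor:behaviorjn} and the bounds \eqref{eq:kappa1:bound}, \eqref{eq:kappap:bound} and Lemma \ref{lem:bound:kappap}. The only difference is cosmetic (you package the exponential factor $2^{2\widehat j_n(\widehat H^{(0)}_n-H)}$ and the $\kappa$-ratio multiplicatively into $C_n$, whereas the paper factors the former out globally and treats the latter additively as $V_n^{(1)}$), and you correctly identify $\widehat j_n|\widehat H^{(0)}_n-H|=O_P(\log(n)v_n^{(0)})$ as the source of the extra $\log n$.
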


\begin{prop}
\label{prop:refinement_H}
Suppose that we have estimators $\widehat{H}_n$ and $\widehat{\eta}_n$ such that $v_n^{-1} |\widehat{H}_n - H|$ and $w_n^{-1} |\widehat{\eta}_n - \eta|$ are bounded in probability uniformly over $\mathcal{D}$, with $w_n = v_n \log n \to 0$. Suppose also that $\widehat{\eta}_n \in [\eta_-, \eta_+]$ and $\widehat{H}_n \in [H_-, H_+]$. We write $
\widehat{H}_n^{c} = \widehat{H}_n^{c}(\widehat{H}_n,\widehat{\eta}_n)$
%and
%$J_n^{*c} = J_n^{*c}( \widehat{H}_n, \widehat{\eta}_n)$ 
as defined in Equation \eqref{eq:def:bias_corrected_estimator_H}. 
%and \eqref{eq:def:bias_corrected_j}. 
Define also $v_n^c = v_n^c(H) = (v_n \log n n^{- 2H/(2H+1)}) \vee n^{-1/(4H+2)}$. Then $(v_n^c)^{-1} |\widehat{H}_n - H|$ is bounded in probability uniformly over $\mathcal{D}$.
\end{prop}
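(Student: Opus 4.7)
The plan is to follow the same scheme as in the proof of Theorem \ref{thm:upper_multiscale} in Section \ref{sec:completion:small}, decomposing the estimation error into a bias piece coming from the asymptotic expansion of $Q$, a stochastic piece coming from replacing $Q$ by $\widehat Q$, and a new plug-in piece coming from replacing the true $(H,\eta)$ by $(\widehat H_n,\widehat\eta_n)$ in the bias correction $B^{(S)}_{j,p}$. Setting $J=J_n^{*c}(\widehat H_n,\widehat\eta_n)$ and $p=N-J-1$ and using that the logarithm is Lipschitz away from $0$, matters reduce to controlling
\begin{align*}
\Delta_n = \frac{\widehat Q^{(S)}_{J+1,p,n}(\widehat H_n,\widehat\eta_n) - 2^{-2H}\widehat Q^{(S)}_{J,p,n}(\widehat H_n,\widehat\eta_n)}{2^{-2H}\widehat Q^{(S)}_{J,p,n}(\widehat H_n,\widehat\eta_n)}
\end{align*}
at rate $v_n^c$.

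First I would localize $J$ around $J^\star = \lfloor \log_2 n/(2H+1)\rfloor$. By a variant of Lemma \ref{lemma:bound_Jnstar} using Propositions \ref{prop:estimation_energy} and \ref{prop:bound_energy_corrected}, one gets $J = J^\star + O_{\mathbb P}(1)$ uniformly over $\mathcal{D}$. Moreover, on a high-probability event, one has $\widehat H_n \in [H-c v_n,H+c v_n]$, $\widehat\eta_n\in[\eta-c w_n,\eta+c w_n]$, and the denominator $\widehat Q^{(S)}_{J,p,n}(\widehat H_n,\widehat \eta_n) \asymp n^{-2H/(2H+1)}$, the last bound following because the true leading order is $\eta^2 2^{-2HJ}\kappa_{p,1}(H)$ (bounded from below by Lemma on $\kappa_{p,1}$) and the various discrepancies are of smaller order on this event.

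For the numerator of $\Delta_n$ I would use the decomposition
\begin{align*}
\widehat Q^{(S)}_{j,p,n}(\widehat H_n,\widehat\eta_n) - Q^{(S)}_{j,p}(\eta,H)
=\bigl(\widehat Q_{j,p,n} - Q_{j,p}\bigr)+\bigl(B^{(S)}_{j,p}(\eta,H) - B^{(S)}_{j,p}(\widehat H_n,\widehat\eta_n)\bigr)
\end{align*}
to reduce the numerator to three pieces: (i) a stochastic term bounded in $L^2$ by Proposition \ref{prop:estimation_energy}, of order $2^{-J/2}n^{-1/2}\sim n^{-1/(4H+2)}\cdot n^{-2H/(2H+1)}$; (ii) a plug-in term bounded deterministically by Lemma \ref{lem:dist:B}, which at $j=J^\star$ gives $c_B 2^{-4HJ^\star}(J^\star |\widehat H_n-H|+|\widehat\eta_n-\eta|)$, hence of order $n^{-4H/(2H+1)} v_n\log n$ on the above event; and (iii) the residual bias $Q^{(S)}_{J+1,p}(\eta,H)-2^{-2H}Q^{(S)}_{J,p}(\eta,H)$, of order $n^{-1/2}\cdot n^{-2H/(2H+1)}$ by Proposition \ref{prop:energy}. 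Dividing by the denominator of order $n^{-2H/(2H+1)}$ yields $\Delta_n = O_{\mathbb P}\bigl(n^{-1/(4H+2)}+n^{-2H/(2H+1)}v_n\log n\bigr)=O_{\mathbb P}(v_n^c)$, and the truncation by $[H_-,H_+]$ in \eqref{eq:def:bias_corrected_estimator_H} only improves matters.

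The main obstacle is the coupling between the plug-ins $(\widehat H_n,\widehat\eta_n)$, the data-dependent level $J_n^{*c}$, and the same-data estimators $\widehat Q_{j,p,n}$: one cannot condition on the plug-ins without destroying independence. The resolution, as in Section \ref{sec:completion:small}, is to work on the event $\{v_n^{-1}|\widehat H_n - H|\leq M,\ w_n^{-1}|\widehat \eta_n-\eta|\leq M,\ |J_n^{*c}-J^\star|\leq L\}$, whose complement contributes at most $\varepsilon$ to every probability under consideration, and then to invoke the uniform-in-$j$ second-moment estimates of Propositions \ref{prop:estimation_energy} and \ref{prop:bound_energy_corrected} together with the deterministic Lipschitz estimate of Lemma \ref{lem:dist:B}. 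On this event the plug-ins enter only through their uniformly bounded deviations, so no conditioning on $(\widehat H_n,\widehat \eta_n)$ is necessary.
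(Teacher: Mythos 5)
Your proposal follows essentially the same path as the paper's own proof: reduce to the ratio of corrected energy levels, decompose the error into a bias piece controlled by Proposition \ref{prop:energy}, a stochastic piece controlled by Proposition \ref{prop:estimation_energy}, and a plug-in piece controlled by Lemma \ref{lem:dist:B}, and work on a high-probability event that localizes $J_n^{*c}$ and the first-stage estimators. The one genuine divergence, and the one point you should patch, is the claim that the event $\{|J_n^{*c}-J^\star|\le L\}$ carries probability at least $1-\varepsilon$: that statement contains an \emph{upper} bound on $J_n^{*c}$. The paper only establishes the lower bound $J_n^{*c}\ge J_n^{-c}(\varepsilon)-L^c(\varepsilon)$ (Equation \eqref{eq:bound_proba_jnm_corrected}) and never needs the upper bound, because the denominator lower bound $\widehat Q^{(S)}_{J_n^{*c},N-J_n^{*c}-1,n}\ge 2^{J_n^{*c}}n^{-1}\gtrsim n^{-2H/(2H+1)}$ is built into the definition of $J_n^{*c}$ once the lower bound on $J_n^{*c}$ holds, and the numerator errors are controlled by summing Markov estimates over all $j\ge J_n^{-c}(\varepsilon)-L^c(\varepsilon)$; the geometric decay in $j$ makes the sum the same order as its first term. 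So either replace $\asymp$ with $\gtrsim$ for the denominator and sum the numerator estimates over $j$ as in Section \ref{sec:completion:small}, or separately prove $J_n^{*c}\le J^\star+L$ with high probability, which is possible using the $\sup$ part of Proposition \ref{prop:bound_energy_corrected} together with a union bound and Proposition \ref{prop:estimation_energy}, but do not just cite ``a variant of Lemma \ref{lemma:bound_Jnstar}'', since that lemma is one-sided.

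Two intermediate rates in your proposal are computed incorrectly, though harmlessly. In (i) you assert $2^{-J/2}n^{-1/2}\sim n^{-1/(4H+2)}\cdot n^{-2H/(2H+1)}$; the left side equals $n^{-(2H+2)/(4H+2)}$, whereas Proposition \ref{prop:estimation_energy} at $j=J^\star$ gives the square root of $C(n^{-2}2^{J^\star}+n^{-1}2^{-2HJ^\star})\sim n^{-(4H+1)/(4H+2)}$, which is the displayed right side; your final expression is correct but the expression $2^{-J/2}n^{-1/2}$ is not equivalent to it except when $H=1/2$. In (iii) you claim the residual bias is of order $n^{-1/2}\cdot n^{-2H/(2H+1)}$, but Proposition \ref{prop:energy} gives $\sqrt{2^{-J(1+4H)}}\sim n^{-(1+4H)/(4H+2)}=n^{-1/(4H+2)}\cdot n^{-2H/(2H+1)}$, so after dividing by the denominator this contributes $n^{-1/(4H+2)}$ rather than $n^{-1/2}$ to $\Delta_n$. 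Both slips understate the error, and since $n^{-1/(4H+2)}\le v_n^c$ the conclusion $\Delta_n=O_{\mathbb{P}}(v_n^c)$ still follows, but the displayed intermediate rates should be corrected.
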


\begin{prop}
\label{prop:refinement_eta}
Suppose that we have estimators $\widehat{H}_n$ and $\widehat{\eta}_n$ such that $v_n^{-1} |\widehat{H}_n - H|$ and $w_n^{-1} |\widehat{\eta}_n - \eta|$ are bounded in probability uniformly over $\mathcal{D}$. Suppose also that $\widehat{\eta}_n \in [\eta_-, \eta_+]$ and $\widehat{H}_n \in [H_-, H_+]$. We write $
\widehat{\eta}_n^{c} = \widehat{\eta}_n^{c}(\widehat{H}_n,\widehat{\eta}_n)$
and 
$ w_n^c = \max \big(
v_n \log n, \;
n^{-1/(4H+2)} \log n, \;
%v_n (w_n^{c})^{-1}n^{-2H/(2H+1)} \log n, \;
w_n n^{-2H/(2H+1)}
\big)
$. Then $(w_n^c)^{-1} |\widehat{\eta}_n - \eta|$ is bounded in probability uniformly over $\mathcal{D}$.
\end{prop}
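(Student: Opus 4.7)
The plan is to analyse $(\widehat{\eta}_n^c)^2 - \eta^2$ and then transfer the bound to $\widehat{\eta}_n^c - \eta$ using that $\eta,\widehat{\eta}_n^c \in [\eta_-,\eta_+]$, so $\widehat{\eta}_n^c + \eta \geq 2\eta_- > 0$. Writing $j = \widehat{j}_n$ and $p = N - \widehat{j}_n$, I would use the three-term decomposition
\begin{align*}
(\widehat{\eta}_n^c)^2 - \eta^2
&= \bigl[\widehat{Q}^{(S)}_{j,p,n}(\widehat{H}_n,\widehat{\eta}_n) - Q^{(S)}_{j,p}(H,\eta)\bigr]\tfrac{2^{2j\widehat{H}_n}}{\kappa_{p,1}(\widehat{H}_n)} \\
&\quad + Q^{(S)}_{j,p}(H,\eta)\Bigl[\tfrac{2^{2j\widehat{H}_n}}{\kappa_{p,1}(\widehat{H}_n)} - \tfrac{2^{2jH}}{\kappa_{p,1}(H)}\Bigr] + \Bigl[Q^{(S)}_{j,p}(H,\eta)\tfrac{2^{2jH}}{\kappa_{p,1}(H)} - \eta^2\Bigr].
\end{align*}
By Corollary \ref{cor:behaviorjn}, uniformly in $\mathcal D$ we may restrict to the event $\{\widehat{j}_n \in \{\lfloor \log_2(n)/(2H+1)\rfloor-1, \lfloor\log_2(n)/(2H+1)\rfloor\}\}$, which has probability tending to $1$; on this event $2^j \asymp n^{1/(2H+1)}$, $2^p \asymp n^{2H/(2H+1)}$, and $\kappa_{p,1}(\widehat{H}_n)$ is bounded above and away from $0$ by \eqref{eq:kappa1:bound}.

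The third bracket is the purely stochastic fluctuation: since $Q^{(S)}_{j,p}(H,\eta) = Q_{j,p} - B^{(S)}_{j,p}(H,\eta)$ exactly matches the $a \geq 2$ part of the expansion in Proposition \ref{prop:energy}, that proposition gives an $L^2$ bound $C\,2^{-j(1+4H)/2}$, which after multiplication by $2^{2jH}/\kappa_{p,1}(H)$ yields $O_P(2^{-j/2}) = O_P(n^{-1/(4H+2)})$. For the first bracket, I split $\widehat{Q}^{(S)}(\widehat{H}_n,\widehat{\eta}_n) - Q^{(S)}(H,\eta) = (\widehat{Q}_{j,p,n} - Q_{j,p}) + (B^{(S)}_{j,p}(H,\eta) - B^{(S)}_{j,p}(\widehat{H}_n,\widehat{\eta}_n))$. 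Proposition \ref{prop:estimation_energy} gives $\mathbb{E}_{H,\eta}[(\widehat{Q}_{j,p,n}-Q_{j,p})^2]^{1/2} \leq C(2^{-p-j/2}+2^{-j(2H+1)/2-p/2})$, both of order $n^{-(4H+1)/(4H+2)}$ at our scales, which rescaled by $2^{2jH}$ contributes $O_P(n^{-1/(4H+2)})$; Lemma \ref{lem:dist:B} gives $|B^{(S)}(H,\eta) - B^{(S)}(\widehat{H}_n,\widehat{\eta}_n)| = O_P(2^{-4Hj}(jv_n + w_n))$, contributing $O_P(n^{-2H/(2H+1)}(\log(n)v_n + w_n))$ after the same rescaling. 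For the second bracket, since $|Q^{(S)}(H,\eta)| = O_P(2^{-2jH})$ by Proposition \ref{prop:energy} and \eqref{eq:kappa1:bound}, it suffices to bound $f(\widehat{H}_n) - f(H)$ for $f(x) = 2^{2jx}/\kappa_{p,1}(x)$. Lemma \ref{lem:bound:kappap} yields $|f'(x)| \leq C j\,2^{2jx}$; under the hypothesis $w_n = v_n\log(n) \to 0$ we have $j|\widehat{H}_n - H| \to 0$ in probability, so a Taylor expansion gives $|f(\widehat{H}_n)-f(H)| = O_P(j\,2^{2jH} v_n)$, and combined with the bound on $Q^{(S)}$ we obtain a contribution of order $O_P(\log(n)\,v_n)$.

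Summing the three contributions gives $(\widehat{\eta}_n^c)^2 - \eta^2 = O_P(w_n^c)$ uniformly over $\mathcal D$, and dividing by $\widehat{\eta}_n^c + \eta \geq 2\eta_-$ transfers the bound to $|\widehat{\eta}_n^c - \eta|$. The main technical obstacle is the joint control of the two random plug-ins appearing in the prefactor and in $B^{(S)}$: without the hypothesis $v_n\log(n) \to 0$, the exponential factor $2^{2j(\widehat{H}_n - H)}$ would escape any Taylor control, and the plug-in of $(\widehat{H}_n,\widehat{\eta}_n)$ into the bias correction would degrade faster than $w_n^c$. The assumption is precisely what keeps both contributions tight and lets the iteration scheme of Section \ref{subsec:estimator} improve the rate at each step.
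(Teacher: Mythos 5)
Your proposal is correct and follows essentially the same strategy as the paper: reduce to $(\widehat{\eta}_n^c)^2 - \eta^2$, restrict to the event of Corollary~\ref{cor:behaviorjn}, and then control the fluctuation via Proposition~\ref{prop:energy}, the estimation error via Proposition~\ref{prop:estimation_energy}, the bias-correction plug-in error via Lemma~\ref{lem:dist:B}, and the plug-in error in the rescaling factor via Lemma~\ref{lem:bound:kappap} together with the key observation that $\widehat{j}_n|\widehat H_n - H|\to 0$ in probability. Your three-term telescoping decomposition is a mild repackaging of the paper's $\kappa^{-1}(\widehat H_n)(B_n^{(1)}+B_n^{(2)}+V_n^{(1)}+V_n^{(2)})\,2^{2\widehat{j}_n(\widehat H_n - H)}$ factorisation, and leads to the same bound $O_P(w_n^c)$.
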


We are now ready to conclude the proof of Theorem \ref{thm:construction}. We define by induction the sequences $v_n^{(m)}$ and $w_n^{(m)}$ by
\begin{align*}
v_n^{(0)} = n^{-2H} \vee n^{-1/(4H+2)}
\;
\text{ and }
\;
w_n^{(0)} = v_n^{(0)} \log n,
\end{align*}
exactly as in Propositions \ref{prop:first_estimator_H} and \ref{prop:first_estimator_eta}, and then for $m>0$
\begin{align*}
v_n^{(m)} = (v_n^{(m-1)} \log n n^{- 2H/(2H+1)}) \vee n^{-1/(4H+2)}
\;
\text{ and }
\;
w_n^{(m)} = v_n^{(m)} \log n.
\end{align*}
By induction, we can see that
\begin{align*}
v_n^{(m)} = (\log^m(n) n^{-2H(1+m/(2H+1))}) \vee n^{-1/(4H+2)}
\end{align*}
and provided that $m > 1/(4H) - 2H - 1$, we can see that $v_n^{(m)}=n^{-1/(4H+2)}$ for $n$ large enough.\\

Propositions \ref{prop:first_estimator_H} and \ref{prop:first_estimator_eta} show that $(v_n^{(0)})^{-1} |\widehat{H}^{(0)}_n - H|$ and $(w_n^{(0)})^{-1}
|\widehat{\eta}_n^{(0)} - \eta|$ are bounded in probability uniformly over $\mathcal{D}$, while Propositions \ref{prop:refinement_H} and \ref{prop:refinement_eta} ensure that for any $m>0$, $(v_n^{(m)})^{-1} |\widehat{H}^{(m)}_n - H|$ and $(w_n^{(m)})^{-1}
|\widehat{\eta}_n^{(m)} - \eta|$ are bounded in probability uniformly over $\mathcal{D}$.\\

Since $m_{opt}> m > 1/(4H) - 2H - 1$ for any $H_- < H < H_+$, we can conclude the proof of Theorem \ref{thm:construction}.

\subsection{Proof of Proposition \ref{prop:estimation_energy}}

By definition,
\begin{align*}
\widehat{Q}_{j,p} = Q_{j,p} + 2 \sum_k d_{j,p,k}e_{j,k,p} +  \sum_k \big (e_{j,k,p}^2 -2^{-j-p+1} \Var(\log \xi^2) \big).
\end{align*}

Moreover, the random variables $e_{j,k,p}$ are centered and $e_{j,k_1,p}$ is independent of $e_{j,k_2,p}$ if $|k_1 - k_2| \geq 2$. We deduce that 
\begin{align*}
\mathbb{E}_{H, \eta} \Big[\Big( \sum_k \Big( e_{j,k,p}^2 -2^{-j-p+1} \Var(\log \xi^2)\Big)\Big)^2 \Big] 
&\leq 
C \, \mathbb{E}_{H, \eta} \Big[ \sum_k \Big( e_{j,k,p}^2 -2^{-j-p+1} \Var(\log \xi^2)\Big)^2 \Big] 
\\
&\leq C \sum_k \Big( \mathbb{E}_{H, \eta} [e_{j,k,p}^4] + 2^{-2j-2p+2} \Var(\log \xi^2) \Big).
\end{align*}
But $
e_{j,k,p} = 2^{-p-j/2}
\sum_{l=0}^{2 \cdot 2^p-1}
\chi_{l,p}
\log \Big( 
\xi_{j,p,k2^{p}+l}^2
\Big)
$ where $\chi_{l,p} = -1$ if $l < 2^p$ and $\chi_{l,p} = 1$ otherwise and the $\xi_{j,p,k2^{p}+l}^2$ are independent. Therefore
\begin{align*}
\mathbb{E}_{H, \eta} \big[e_{j,k,p}^4 \big] \leq C 2^{-4p-2j} \mathbb{E}_{H, \eta} \Big[ \Big(\sum_{l=0}^{2 \cdot 2^p-1}
\log \big( 
\xi_{j,p,k2^{p}+l}^2
\big)^2\Big)^2\Big]
\leq  C 2^{-2p-2j}.
\end{align*}
We finally obtain 
\begin{align*}
\mathbb{E}_{H, \eta} \Big[\big( \sum_k e_{j,k,p}^2 -2^{-j-p+1} \Var(\log \xi^2)\big)^2 \Big] 
\leq
2^{-2p-j}.
\end{align*}
We next focus on the term
$\mathbb{E}_{H, \eta}\Big[ \big(\sum_k d_{j,p,k}e_{j,k,p} \big)^2 \Big]$. Note that $(d_{j,k,p})_{j,k}$ and $(e_{j',k',p})_{j',k'}$ are independent. Moreover, the random variables $e_{j,k,p}$ are centered and $e_{j,k_1,p}$ is independent of $e_{j,k_2,p}$ if $|k_1 - k_2| \geq 2$. Therefore, working first conditionally on $(d_{j,k})_{j,k}$, we obtain 
\begin{align*}
\mathbb{E}_{H, \eta}\Big[ \Big(\sum_k d_{j,p,k}e_{j,k,p} \big)^2 \Big]
&\leq C
\sum_k \mathbb{E}_{H, \eta}\big[ d_{j,p,k}^2e_{j,k,p}^2\big]
\\
&\leq C
\sum_k \mathbb{E}_{H, \eta} \big[ d_{j,p,k}^2\big] \mathbb{E}_{H, \eta} \big[ e_{j,k,p}^2\big]
\\
&\leq
C 2^{j} 2^{-j(2H+1)}  2^{-j-p}
\leq 
C 2^{-j(2H+1)- p}
\end{align*}
by Lemma \ref{lem:m2_d}.

\subsection{Proof of Proposition \ref{prop:first_estimator_H}}

Since $H \in [H_-, H_+]$ and $t \mapsto 2^{-2t}$ is invertible on $(0,1)$ with an inverse that is uniformly Lipschitz on the compact sets of $(0,1)$, it is enough to prove that the sequence 
\begin{align*}
\Big((v_n^{(0)})^{-1} \big(\frac{Q_{J_n^* + 1, N-J_n^* - 1, n}}{Q_{J_n^*, N-J_n^* - 1, n}} - 2^{-2H}\big)\Big)_n
\end{align*}
is tight, uniformly over $\mathcal{D}$. We plan to use the following decomposition:
\begin{align*}
\Big | \frac{\widehat{Q}_{J_n^*+1,N-J_n^*-1,n} }{\widehat{Q}_{J_n^*,N-J_n^*-1,n} } - 2^{-2H} \Big |
\leq 
B_n^{(1)} + V_n^{(1)} + V_n^{(2)},
\end{align*}
where
\begin{align*}
B_n^{(1)} &= \Big | \frac{{Q}_{J_n^*+1,N-J_n^*-1} }{{Q}_{J_n^*,N-J_n^*-1} } - 2^{-2H} \Big |,
\\
V_n^{(1)} &= \Big | \frac{\widehat{Q}_{J_n^*+1,N-J_n^*-1,n} - {Q}_{J_n^*+1,N-J_n^*-1} }{\widehat{Q}_{J_n^*,N-J_n^*-1,n} } \Big |,
\\
V_n^{(2)} &= \Big | \frac{{Q}_{J_n^*+1,N-J_n^*-1} 
( \widehat{Q}_{J_n^*,N-J_n^*-1,n}-{Q}_{J_n^*,N-J_n^*-1}  )
}{\widehat{Q}_{J_n^*,N-J_n^*-1,n}{Q}_{J_n^*,N-J_n^*-1} } \Big |.
\end{align*}
It thus suffices to prove that $\big((v_n^{(0)})^{-1} B_n^{(1)}\big)_n$, $\big((v_n^{(0)})^{-1}V_n^{(1)}\big)_n $ and $\big((v_n^{(0)})^{-1} V_n^{(2)}\big)_n$ are tight uniformly over $\mathcal{D}$.

\subsubsection*{The behaviour of $J_n^*$}

For any $\varepsilon > 0$, let 
\begin{align*}
J_n^-(\varepsilon) = \max \big\{j: r_-(\varepsilon)2^{-2jH} \geq 2^jn^{-1} \big\},
\end{align*}
where $r_-(\varepsilon)$ is defined in Proposition \ref{prop:bound_energy}. Notice that $J_n^-(\varepsilon) = \max \big\{ j: r_-(\varepsilon)n \geq 2^{j(2H+1)} \big\}$ so that
\begin{align}
\label{eq:bounds_Jnm}
\tfrac{1}{2} (r_-(\varepsilon)n )^{1/(2H+1)} \leq 2^{J_n^-(\varepsilon)} \leq  (r_-(\varepsilon)n )^{1/(2H+1)}.
\end{align}

We will show that for $\varepsilon > 0$ fixed, there exists $L(\varepsilon) > 0$ and $\varphi_n(\varepsilon) \to 0$ such that
\begin{align}
\label{eq:bound_proba_jnm}
\sup_{H, \eta} \; \mathbb{P}_{H, \eta}(J_n^* < J_n^-(\varepsilon) - L(\varepsilon)) \leq \varepsilon + \varphi_n(\varepsilon).
\end{align}

Let $L=L(\varepsilon)$ to be chosen later. We write $r = r_-(\varepsilon)$ and $p=N-J_n^-(\varepsilon)+L-1$ when the context is clear. We also write $J_n^- = J_n^-(\varepsilon)$ for conciseness. We have
\begin{align*}
\mathbb{P}_{H, \eta}(J_n^* < J_n^- - L) 
&\leq 
\mathbb{P}_{H, \eta}(\widehat{Q}_{J_n^- - L, p, n} < 2^{J_n^- - L} n^{-1})
\\
&\leq 
\mathbb{P}_{H, \eta}(\widehat{Q}_{J_n^- - L, p, n} - Q_{J_n^- - L, p} < 2^{J_n^- - L} n^{-1} - Q_{J_n^- - L, p})
\\
&\leq
\mathbb{P}_{H, \eta}(\widehat{Q}_{J_n^- - L, p, n} - Q_{J_n^- - L} < 2^{J_n^- - L, p} n^{-1} - r2^{-2(J_n^- - L)H})
+ 
\varepsilon
\end{align*}
since $\mathbb{P}_{H, \eta}(\inf_{J_0 \leq j \leq N-1} 2^{2jH} Q_{j,N-j-1} \leq r ) \leq \varepsilon$ by Proposition \ref{prop:bound_energy}. Moreover, note that \eqref{eq:bounds_Jnm} yields
\begin{align*}
2^{J_n^- - L} n^{-1} - r2^{-2(J_n^- - L)H} 
&\leq 
(r_-(\varepsilon)n )^{1/(2H+1)} 2^{-L}n^{-1} - r_-(\varepsilon) 2^{2(L+1)H} (r_-(\varepsilon)n )^{-2H/(2H+1)}
\\
&=
r_-(\varepsilon)^{1/(2H+1)} n^{-2H/(2H+1)} (2^{-L} - 2^{2(L+1)H}).
\end{align*}

For $L$ large enough, $2^{-L} - 2^{2(L+1)H} \leq -1$ so that we get by Proposition \ref{prop:estimation_energy}, Markov's inequality and Equation \eqref{eq:bounds_Jnm}
\begin{align*}
&\mathbb{P}_{H, \eta}(\widehat{Q}_{J_n^- - L, p, n} - Q_{J_n^- - L, p} < 2^{J_n^- - L} n^{-1} - r2^{-2(J_n^- - L)H})
\\
&\;\;\;\;\leq 
C(n^{-2}2^{J_n^-} + 2^{-2HJ_n^-}n^{-1}) 
r_-(\varepsilon)^{-2/(2H+1)} n^{4H/(2H+1)} (2^{-L} - 2^{2(L+1)H})^{-2}
\\
&\;\;\;\;\leq 
C(n^{-2/(2H+1)}2^{J_n^-} + 2^{-2HJ_n^-}n^{(2H-1)/(2H+1)})
\\
&\;\;\;\;\leq 
C(n^{-1/(2H+1)} + n^{-1/(2H+1)}),
\end{align*} 
which proves \eqref{eq:bound_proba_jnm}.

\subsubsection*{The term $B_n^{(1)}$}

By \eqref{eq:bound_proba_jnm} and Proposition \ref{prop:bound_energy}, 
$\mathbb{P}_{H, \eta}\Big({(v_n^{(0)})^{-1} B_n^{(1)} \geq M}\Big)$
is bounded by
\begin{align*}
&\mathbb{P}_{H, \eta}\Big( (v_n^{(0)})^{-1} B_n^{(1)} \geq M, J_n^* \geq J_n^-(\varepsilon) - L(\varepsilon)\Big )
+
\mathbb{P}_{H,\eta}(J_n^* < J_n^-(\varepsilon) - L(\varepsilon) ) 
\\
&\;\;\;\;\leq 
\sum_{j=J_n^-(\varepsilon) - L(\varepsilon)}^{N-1}
\mathbb{P}_{H, \eta}\Big( (v_n^{(0)})^{-1} B_n^{(1)} \geq M, J_n^* = j, {Q}_{j,N-j-1} \geq 2^{-2Hj}r_-(\varepsilon)\Big )
+ 2\varepsilon + \varphi_n(\varepsilon).
\end{align*}

Using the definition of $B$, the probability appearing in the sum is bounded by
\begin{align*}
%&\mathbb{P}_{H, \eta}\Big( (v_n^{(0)})^{-1} B_n^{(1)} \geq M, J_n^* = j, {Q}_{j,N-j-1} \geq 2^{-2Hj}r_-(\varepsilon)\Big )
%\\
&
\mathbb{P}\Big(
\Big | {Q}_{j+1,N-j-1}  - 2^{-2H}{Q}_{j,N-j-1} \Big | \geq M 2^{-2Hj} r_-(\varepsilon) v_n^{(0)}
\Big)
\\
&\leq
M^{-2} 2^{4Hj} r_-(\varepsilon)^{-2} (v_n^{(0)})^{-2}
\mathbb{E}
\Big | {Q}_{j+1,N-j-1}  - 2^{-2H}{Q}_{j,N-j-1} \Big |^2 .
\end{align*}
But  $\mathbb{E}_{H, \eta}
\Big[ \big| {Q}_{j+1,N-j-1}  - 2^{-2H}{Q}_{j,N-j-1} \big |^2\Big]$
is bounded by a constant times
\begin{align*}
&
\mathbb{E}_{H, \eta} \Big [\big(
	Q_{j+1,N-j-1}
	-
	\sum_{a=1}^{S} \eta^{2a} 2^{-2aH(j+1)} \kappa_{N-j-1,a}(H)
\big)^2
\Big ]
\\&\;\;\;\;+2^{-4H}
\mathbb{E}_{H, \eta} \Big [\big(
	Q_{j,N-j-1}
	-
	\sum_{a=1}^{S} \eta^{2a} 2^{-2aHj} \kappa_{N-j-1,a}(H)
\big)^2
\Big ]
\\&\;\;\;\;+
\Big(
	\sum_{a=1}^{S} \eta^{2a} 2^{-2aH(j+1)} \kappa_{N-j-1,a}(H)
	-
	2^{-2H}\sum_{a=1}^{S} \eta^{2a} 2^{-2aHj} \kappa_{N-j-1,a}(H)
\Big)^2.
\end{align*}
The two first terms are bounded by $C 2^{-j(1+4H)}$ by Proposition \ref{prop:energy}, and the last term equals
\begin{align*}
\Big( \sum_{a=2}^{S} \eta^{2a} 
	\kappa_{N-j-1,a}(H)
	2^{-2aHj}
	(
		2^{-2aH} -2^{-2H} 
	)
\Big)^2,
\end{align*}
which is bounded by $C 2^{-6HN-2jH}$ by Equation \eqref{eq:kappap:bound}. Thus $\mathbb{P}_{H, \eta}\Big({(v_n^{(0)})^{-1} B_n^{(1)} \geq M}\Big)$ is bounded by 
\begin{align*}
&C M^{-2} r_-(\varepsilon)^{-2} (v_n^{(0)})^{-2}
\sum_{j=J_n^-(\varepsilon) - L(\varepsilon)}^{N-1}
\Big( 2^{4Hj} (2^{-j(1+4H)} + 2^{-6HN-2jH})\Big)
+ 2\varepsilon + \varphi_n(\varepsilon)
\\
&\;\;\;\;\leq 
C(\varepsilon) M^{-2} (v_n^{(0)})^{-2}
\Big(
2^{-J_n^-(\varepsilon)} + 2^{-4HN}
\Big)
+ 2\varepsilon + \varphi_n(\varepsilon),
\end{align*}
where $C(\varepsilon)$ denotes a constant $C$ that may depend on $\varepsilon$. But $2^{-J_n^-(\varepsilon)} \leq C(\varepsilon) n^{-1/(2H+1)}$ so we can conclude since $n^{-1/(2H+1)} (v_n^{(0)})^{-2}$ and $n^{-4H} (v_n^{(0)})^{-2}$ are bounded sequences  by definition of $v_n^{(0)}$.

\subsubsection*{The term $V^{(1)}_n$} We now turn to $V^{(1)}_n$. By definition, recall that
$\widehat{Q}_{J_n^*,N-J_n^*-1,n} \geq  2^{J_n^*}n^{-1}$, at least when $J_n^* \geq J_n^-(\varepsilon) - L(\varepsilon)$. Therefore $\mathbb{P}_{H,\eta}\Big({ (v_n^{(0)})^{-1} V_n^{(1)} \geq M}\Big)$ is bounded by
\begin{align*}
\sum_{j=J_n^-(\varepsilon) - L(\varepsilon) }^{N-1}\mathbb{P}_{H,\eta}\Big({ (v_n^{(0)})^{-1} | \widehat{Q}_{j+1,N-j-1,n} - {Q}_{j+1,N-j-1} | \geq M 2^{j}n^{-1} } \Big) 
+ \varepsilon + \varphi_n(\varepsilon)
\end{align*}
by Equation \eqref{eq:bound_proba_jnm} By Proposition \ref{prop:estimation_energy}, the probability in the sum is bounded from above by a constant times
\begin{align*}
M^{-2} & 2^{-2j} n^2 (v_n^{(0)})^{-2}
\Big (
 n^{-2} 2^{j} + n^{-1}2^{-2jH}
\Big )
\leq
M^{-2} (v_n^{(0)})^{-2}
\Big (
 2^{-j} + n2^{-2j(H+1)}
\Big ).
\end{align*}
Since $L(\varepsilon)$ is fixed, we deduce that
\begin{align*}
\mathbb{P}_{H,\eta}\Big({ (v_n^{(0)})^{-1} V_n^{(1)} \geq M}\Big)
&\leq
M^{-2} (v_n^{(0)})^{-2}
\Big (
 2^{-J_n^-(\varepsilon)} + n2^{-2J_n^-(\varepsilon)(H+1)}
\Big )
+ \varepsilon + \varphi_n(\varepsilon).
\end{align*}
But recall from Equation \eqref{eq:bounds_Jnm} that $2^{-J_n^-(\varepsilon)} \leq C n^{-1/(2H+1)}$ so that 
\begin{align*}
\mathbb{P}_{H,\eta}\Big({(v_n^{(0)})^{-1} V_n^{(1)} \geq M}\Big)
\leq 
C M^{-2}(v_n^{(0)})^{-2}n^{-1/(2H+1)} + \varepsilon + \varphi_n(\varepsilon)
\end{align*}
and we conclude using that $(v_n^{(0)})^{-2}n^{-1/(2H+1)}$ is bounded.

\subsubsection*{The term $V^{(2)}_n$}

By definition, we have
\begin{align*}
V_n^{(2)} &= 
\frac{{Q}_{J_n^*+1,N-J_n^*-1} 
}{{Q}_{J_n^*,N-J_n^*-1} } 
\times
\Big | \frac{
( \widehat{Q}_{J_n^*,N-J_n^*-1,n}-{Q}_{J_n^*,N-J_n^*-1}  )
}{\widehat{Q}_{J_n^*,N-J_n^*-1,n}} \Big |.
\end{align*}

We can show that $(v_n^{(0)})^{-1}\Big | \frac{
( \widehat{Q}_{J_n^*,N-J_n^*-1,n}-{Q}_{J_n^*,N-J_n^*-1}  )
}{\widehat{Q}_{J_n^*,N-J_n^*-1,n}} \Big |$ is bounded in probability uniformly over $\mathcal{D}$ by applying the same proof as for $V^{(1)}_n$. Therefore, it is enough to show that $\frac{{Q}_{J_n^*+1,N-J_n^*-1} 
}{{Q}_{J_n^*,N-J_n^*-1} } $ is bounded in probability uniformly over $\mathcal{D}$. Recall from \eqref{eq:bound_proba_jnm} that $J_n^* \geq J_n^-(\varepsilon)-L(\varepsilon)$ with probability at least $1 - \varepsilon - \varphi_n(\varepsilon)$. Therefore
\begin{align*}
\mathbb{P}_{H,\eta}
\Big(
&\frac{
	Q_{J_n^*+1,N-J_n^*-1} 
}{
	Q_{J_n^*,N-J_n^*-1} 
}  \geq M\Big)
=
\mathbb{P}_{H,\eta}
\Big(
	Q_{J_n^*+1,N-J_n^*-1} 
\geq M Q_{J_n^*,N-J_n^*-1}  \Big)
\\
&\leq
\mathbb{P}_{H,\eta}
\Big(
	Q_{J_n^*+1,N-J_n^*-1} \geq M Q_{J_n^*,N-J_n^*-1} ,\; J_n^* \geq J_n^-(\varepsilon)-L(\varepsilon) \Big) 
+ \varepsilon + \varphi_n(\varepsilon).
%\\
%&\leq
%\mathbb{P}_{H,\eta}
%\Big(
%	\sup_{J_0 \leq j \leq N-1} 2^{2jH} Q_{j+1,N-j-1} 
%\geq M r_-(\varepsilon)  \Big) + 2\varepsilon + \varphi_n(\varepsilon).
\end{align*}
By Proposition \ref{prop:bound_energy}, we have $Q_{j,N-j-1} \geq 2^{-2jH} r_-(\varepsilon)$ for all $J_0(\varepsilon) \leq j \leq N-1$ with probability at least $1-\varepsilon$. As $J_n^-(\varepsilon)-L(\varepsilon) \geq J_0$, we obtain
\begin{align*}
\mathbb{P}_{H,\eta}
\Big(
\frac{
	Q_{J_n^*+1,N-J_n^*-1} 
}{
	Q_{J_n^*,N-J_n^*-1} 
}  \geq M\Big)
&\leq
\mathbb{P}_{H,\eta}
\Big(
	Q_{J_n^*+1,N-J_n^*-1} \geq M 2^{-2J_n^*H} r_-(\varepsilon) ,\; J_n^* \geq J_0 \Big) 
+ 2\varepsilon + \varphi_n(\varepsilon)
\\
&\leq
\mathbb{P}_{H,\eta}
\Big(
	\sup_{J_0 \leq j \leq N-1} 2^{2jH} Q_{j+1,N-j-1} 
\geq M r_-(\varepsilon)  \Big) + 2\varepsilon + \varphi_n(\varepsilon).
\end{align*}
We can conclude since whenever $M r_-(\varepsilon) \geq r_+(\varepsilon)$, we have 
$$\mathbb{P}_{H,\eta}
\Big(
	\sup_{J_0 \leq j \leq N-1} 2^{2jH} Q_{j+1,N-j-1} 
\geq M r_-(\varepsilon)  \Big) \leq \varepsilon$$  by Proposition \ref{prop:bound_energy}.

\subsection{Proof of Proposition \ref{prop:first_estimator_eta}}

Since $\eta \in [\eta_-, \eta_+]$ and $t \mapsto t^2$ is invertible on $(0,\infty)$ with inverse uniformly Lipschitz on the compact sets of $[\epsilon,\infty)$, for any $\epsilon > 0$, it is enough to prove that $(\widehat{\eta}_n^{(0)})^2 - \eta^2$ is bounded in probability uniformly over $\mathcal{D}$. \\

First, notice that 
\begin{align*}
|(\widehat{\eta}_n^{(0)})^2 - \eta^2|
\leq
\kappa_{N-\widehat{j}_n, 1}^{-1}(\widehat{H}_n) \big(B_n^{(1)} + B_n^{(2)} + V_n^{(1)}+V_n^{(2)} \big) 2^{2 \widehat{j}_n(\widehat{H}_n-H)}
\end{align*}
where
\begin{align*}
B_n^{(1)} 
&=
2^{2 \widehat{j}_nH}
\bigg|
\sum_{a=2}^{S} \eta^{2a} 2^{-2aH\widehat{j}_n} \kappa_{N-\widehat{j}_n,a}(H)
\bigg|,
\\
B_n^{(2)} &=
2^{2 \widehat{j}_nH}
\bigg|
{Q}_{\widehat{j}_n,N-\widehat{j}_n} - 
\sum_{a=1}^{S} \eta^{2a} 2^{-2aH\widehat{j}_n} \kappa_{N-\widehat{j}_n,a}(H)
\bigg|,
\\
V_n^{(1)} 
&=
2^{2 \widehat{j}_nH}
\Big|
\eta^{2} 2^{-2H\widehat{j}_n} \kappa_{N-\widehat{j}_n,1}(H)
-
\eta^2 2^{-2\widehat{j}_n\widehat{H}_n} \kappa_{N-\widehat{j}_n, 1}(\widehat{H}_n)
\Big|,
\\
V_n^{(2)} &= 
2^{2 \widehat{j}_nH}
\big| \widehat{Q}_{\widehat{j}_n,N-\widehat{j}_n,n} - {Q}_{\widehat{j}_n,N-\widehat{j}_n}\big|.
\end{align*}

The term $\kappa_{N-\widehat{j}_n, 1}^{-1}(\widehat{H}_n)$ can be ignored because $\kappa_{p,1}$ is a continuous function bounded away from $0$ on $[H_-, H_+]$ uniformly in $p$, see \eqref{eq:kappa1:bound}. We now prove that the sequences $(2^{2 \widehat{j}_n(\widehat{H}_n-H)})_n$, $\big((w_n^{(0)})^{-1} B_n^{(1)}\big)_n$, $\big((w_n^{(0)})^{-1} B_n^{(2)}\big)_n$, $\big((w_n^{(0)})^{-1} V_n^{(1)}\big)_n$ and  $\big((w_n^{(0)})^{-1} V_n^{(2)}\big)_n$ are tight, uniformly over $\mathcal{D}$.

\subsubsection*{The term $2^{2 \widehat{j}_n(\widehat{H}_n-H)}$.} We prove that $2^{2 \widehat{j}_n(\widehat{H}_n-H)}$ is bounded in probability uniformly over $\mathcal{D}$. To that end, it is sufficient to prove the tightness for $\widehat{j}_n(\widehat{H}_n-H)$. But with probability converging to $1$, $\widehat{j}_n \geq \big\lfloor \tfrac{1}{2H+1} \log_2 n\big\rfloor - 1$ by \eqref{eq:bound_proba_jnm} so that
\begin{align*}
\mathbb{P}_{H,\eta}
\big(
\widehat{j}_n|\widehat{H}_n^{(0)}-H| \geq M
\big)
\leq
\mathbb{P}_{H,\eta}
\big(
\big(
\big\lfloor \tfrac{1}{2H+1} \log_2 n\big\rfloor - 1\big)|\widehat{H}_n^{(0)}-H| \geq M
\big) + \varepsilon.
\end{align*}

But $(v_n^{(0)})^{-1} |\widehat{H}_n^{(0)}-H|$ is bounded in probability and $\big\lfloor \tfrac{1}{2H+1} \log_2 n\big\rfloor v_n^{(0)} \to 0$ (and $v_n^{(0)} \to 0$) so we can conclude.

\subsubsection*{The term $B_n^{(1)}$}

By  \eqref{eq:kappap:bound}, we know that 
\begin{align*}
2^{2 \widehat{j}_nH}
|
\sum_{a=2}^{S} \eta^{2a} 2^{-2aH\widehat{j}_n} \kappa_{N-\widehat{j}_n,a}(H)
| \leq C 2^{-2H\widehat{j}_n-3H(N-\widehat{j}_n)}
\end{align*}
so we need to prove that $(w_n^{(0)})^{-1} 2^{H\widehat{j}_n} n^{-3H}$ is uniformly tight. Recall from Corollary \ref{cor:behaviorjn} that
\begin{align*}
\mathbb{P}_{H,\eta}
\Big(
\widehat{j}_n \in \big \{ \big\lfloor \tfrac{1}{2H+1} \log_2 n\big\rfloor - 1, \big\lfloor \tfrac{1}{2H+1} \log_2 n \big\rfloor \big\}
\Big) \to 1\end{align*}
uniformly on $\mathcal{D}$. Moreover 
\begin{align*}
(w_n^{(0)})^{-1} 2^{H\big\lfloor \tfrac{1}{2H+1} \log_2 n \big\rfloor} n^{-3H}
\leq 
C 
(w_n^{(0)})^{-1} 2^{\tfrac{H}{2H+1} \log_2 n} n^{-3H}
=
C
(w_n^{(0)} n^{3H -\tfrac{H}{2H+1} })^{-1},
\end{align*}
which is bounded since $w_n^{(0)} n^{3H -\tfrac{H}{2H+1} } = v_n^{(0)} n^{\tfrac{6H^2 +2H}{2H+1} } \log n$ is bounded from below (since $v_n^{(0)} \geq n^{-2H}$), thus proving that $(w_n^{(0)})^{-1} 2^{H\widehat{j}_n} n^{-3H}$ is uniformly tight.

\subsubsection*{The term $B_n^{(2)}$}

By Corollary \ref{cor:behaviorjn}, $\mathbb{P}_{H,\eta}
\big(
(w_n^{(0)})^{-1}B_n^{(2)} \geq M
\big)$ is equal to
\begin{align*}
&
\mathbb{P}_{H,\eta}
\Big(
\Big |
{Q}_{\widehat{j}_n,N-\widehat{j}_n} - 
\sum_{a=1}^{S} \eta^{2a} 2^{-2aH\widehat{j}_n} \kappa_{N-\widehat{j}_n,a}(H)
\Big | 
\geq M w_n^{(0)} 2^{- 2 \widehat{j}_nH}
\Big)
\\
&\leq
\sum_{
\substack{
j=\big\lfloor \tfrac{1}{2H+1} \log_2 n\big\rfloor - r,\\ r \in \{0, 1 \} 
}
}
\mathbb{P}_{H,\eta}
\big(
\big |
{Q}_{j,N-j} - 
\sum_{a=1}^{S} \eta^{2a} 2^{-2aHj} \kappa_{N-j,a}(H)
\big | 
\geq M w_n^{(0)} 2^{- 2 j H}
\big) 
+o(1)
\end{align*}
uniformly in $\mathcal{D}$.
Moreover, by Proposition \ref{prop:energy}, this last term is further bounded by 
\begin{align*}
C \sum_j
M^{-2} (w_n^{(0)})^{-2} 2^{-j}
+ o(1)
\leq
C
M^{-2} (w_n^{(0)})^{-2} n^{-1/(2H+1)}
+ o(1)
\end{align*}
and we conclude using that $(w_n^{(0)})^{-2} n^{-1/(2H+1)}$ is bounded.

\subsubsection*{The term $V^{(1)}_n$}

For each $j$ and $p$, the mapping 
$t \mapsto 2^{-2tj} \kappa_{p,1}(t) $ is differentiable with derivative 
\begin{align*}
t \mapsto 2^{-2tj} (-2j\log(2)  \kappa_{p,1}(t) + \kappa_{p,1}'(t)).
\end{align*}
By Lemma \ref{lem:bound:kappap}, its absolute value is bounded by $C(j+1)2^{-2tj} $ for some constant $C$. Therefore, Taylor's formula yields that 
\begin{align*}
V_n^{(1)} 
\leq 
C (\widehat{j}_n + 1) |\widehat{H}_n^{(0)} - H| \leq C \log n |\widehat{H}_n^{(0)} - H| .
\end{align*}

Moreover, $(v_n^{(0)})^{-1}|\widehat{H}_n^{(0)} - H|$ is bounded in probability uniformly over $\mathcal{D}$ so $\log n^{-1} (v_n^{(0)})^{-1} V_n^{(1)}$ is also bounded in probability uniformly over $\mathcal{D}$.

\subsubsection*{The term $V^{(2)}_n$}

By Corollary \ref{cor:behaviorjn},
\begin{align*}
&\mathbb{P}_{H,\eta}\Big({ {w_n^{(0)}}^{-1} V_n^{(2)} \geq M}\Big) \\
&\leq
\sum_{
\substack{
j=\big\lfloor \tfrac{1}{2H+1} \log_2 n\big\rfloor - r,\\ r \in \{0, 1 \} 
}
}
\mathbb{P}_{H,\eta}\Big(
| \widehat{Q}_{j,N-j,n} - {Q}_{j,N-j}| \geq M 2^{-2 j H} {w_n^{(0)}} \Big) + o(1)
\end{align*}
uniformly over $\mathcal{D}$. By Proposition \ref{prop:estimation_energy}, the probability in the sum is bounded from above by a constant times 
\begin{align*}
& M^{-2} (w_n^{(0)})^{-2}
 (
 n^{-2} 2^{j(1+4H)} + n^{-1}2^{2jH}
 )  \\
&\leq C M^{-2} (w_n^{(0)})^{-2}
 (
 n^{-2} n^{(1+4H)/(2H+1)} + n^{-1}n^{2H/(2H+1)}
 ) 
\\
&= C M^{-2} (w_n^{(0)})^{-2}
n^{-1/(2H+1)},
\end{align*}
which vanishes as $n$ grows.

\subsection{Proof of Proposition \ref{prop:refinement_H}}

In the same way as for Proposition \ref{prop:first_estimator_H}, we only need to show that 
\begin{align*}
(v_n^c(H))^{-1} \Big |
 \frac{\widehat{Q}^{(S)}_{J_n^{*c} + 1, N-J_n^{*c} - 1, n}(\widehat{H}_n, \widehat{\eta}_n)}{\widehat{Q}^{(S)}_{J_n^{*c}, N-J_n^{*c} - 1, n}(\widehat{H}_n, \widehat{\eta}_n)}
- 2^{-2H} \Big |
\end{align*}
 is bounded in $\mathbb{P}_{H,\eta}$-probability uniformly over $\mathcal{D}$ where we write $J_n^{*c} = J_n^{*c}(\widehat{H}_n, \widehat{\eta}_n) $ for simplicity. We will use the same strategy of proof as in Proposition \ref{prop:first_estimator_H} although additional care needs to be taken because of the use of the first-hand estimator $\widehat{H}_n$. Note that the following decomposition holds:
\begin{align*}
 \Big |
 \frac{\widehat{Q}^{(S)}_{J_n^{*c} + 1, N-J_n^{*c} - 1, n}(\widehat{H}_n, \widehat{\eta}_n)}{\widehat{Q}^{(S)}_{J_n^{*c}, N-J_n^{*c} - 1, n}(\widehat{H}_n, \widehat{\eta}_n)}
- 2^{-2H} \Big |
\leq 
B_n + V_n^{(1)} + V_n^{(2)},
\end{align*}
where
\begin{align*}
B_n &= \Big | \frac{{Q}_{J_n^{*c}+1,N-J_n^{*c}-1}^{(S)}(H,\eta) }{{Q}_{J_n^{*c},N-J_n^{*c}-1}^{(S)}(H,\eta) } - 2^{-2H} \Big |,
\\
V_n^{(1)} &= \Big | \frac{\widehat{Q}_{J_n^{*c}+1,N-J_n^{*c}-1,n}^{(S)}(\widehat{H}_n, \widehat{\eta}_n) - {Q}_{J_n^{*c}+1,N-J_n^{*c}-1}^{(S)}(H,\eta) }{\widehat{Q}_{J_n^{*c},N-J_n^{*c}-1,n}^{(S)}(\widehat{H}_n, \widehat{\eta}_n) } \Big |,
\\
V_n^{(2)} &= \Big | \frac{{Q}_{J_n^{*c}+1,N-J_n^{*c}-1}^{(S)}(H,\eta)
( \widehat{Q}_{J_n^{*c},N-J_n^{*c}-1,n}^{(S)}(\widehat{H}_n, \widehat{\eta}_n)-{Q}_{J_n^{*c},N-J_n^{*c}-1}^{(S)}(H,\eta)  )
}{\widehat{Q}_{J_n^{*c},N-J_n^{*c}-1,n}^{(S)}(\widehat{H}_n, \widehat{\eta}_n){Q}_{J_n^{*c},N-J_n^{*c}-1}^{(S)}(H,\eta) } \Big |.
\end{align*}
We now prove that $\big((v_n^{c})^{-1} B_n\big)_n$, $\big((v_n^{c})^{-1} V_n^{(1)} \big)_n$ and $\big((v_n^{c})^{-1} V_n^{(2)}\big)_n$ are bounded in probability uniformly over $\mathcal{D}$.

\subsubsection*{The behaviour of $J_n^{*c}$}

We fix $\varepsilon >0$ and we define
\begin{align*}
J_n^{-c}(\varepsilon) = \max \big\{j: r_-^{(S)}(\varepsilon)2^{-2jH} \geq 2^jn^{-1} \big\}
\end{align*}
where $r_-^{(S)}(\varepsilon)$ is defined in Proposition \ref{prop:bound_energy_corrected}. Notice that $J_n^{-c}(\varepsilon)$ is independent of $\eta$ and $H$ since in Proposition \ref{prop:bound_energy_corrected}, $r_-^{(S)}(\varepsilon)$ is defined uniformly for all $\eta$ and $H$. Similarly to Equation \eqref{eq:bounds_Jnm},
\begin{align}
\label{eq:bounds_Jnm_corrected}
\tfrac{1}{2} (r_-^{(S)}(\varepsilon)n )^{1/(2H+1)} \leq 2^{J_n^{-c}(\varepsilon)} \leq  (r_-^{(S)}(\varepsilon)n )^{1/(2H+1)}.
\end{align}
We show that there exist $L^c(\varepsilon) > 0$ and $\varphi_n^c(\varepsilon) \to 0$ such that 
\begin{align}
\label{eq:bound_proba_jnm_corrected}
\sup_{H, \eta} \; \mathbb{P}_{H,\eta}(J_n^{*c} < J_n^{-c}(\varepsilon) - L^c(\varepsilon)) \leq \varepsilon + \varphi_n^c(\varepsilon).
\end{align}

Let $L$ to be chosen later. We write $r = r_-^{(S)}(\varepsilon)$ and $p=N-j-1$ when the context is clear. We also write $J_n^{-c} = J_n^{-c}(\varepsilon)$ for conciseness. By definition, $\mathbb{P}_{H,\eta}(J_n^{*c} < J_n^{-c} - L) $ is bounded by
\begin{align*}
%&\mathbb{P}_{H,\eta}(J_n^{*c} < J_n^{-c} - L) 
%\leq 
%\mathbb{P}_{H,\eta}(\widehat{Q}_{J_n^{-c} - L, N-J_n^{-c}+ L, n}^{(S)}(\widehat{H}_n, \widehat{\eta}_n)
% < 2^{J_n^{-c} - L} /n)
%\\
%&
%\leq 
& \mathbb{P}_{H,\eta}(\widehat{Q}_{J_n^{-c} - L, N-J_n^{-c}+ L - 1, n}^{(S)}(\widehat{H}_n, \widehat{\eta}_n)
 - Q_{J_n^{-c} - L, N-J_n^{-c}+ L - 1}^{(S)}(H,\eta) \\
& < \frac{2^{J_n^{-c} - L}}{n} - Q_{J_n^{-c} - L, N-J_n^{-c}+ L - 1}^{(S)}(H,\eta)
).
\end{align*}
But $\mathbb{P}_{H,\eta}(\inf_{J_0 \leq j \leq N-1} 2^{2jH} Q_{j,N-j-1}^{(S)}(H,\eta)
 \leq r ) \leq \varepsilon$ by Proposition \ref{prop:bound_energy_corrected}, so it is also bounded by
\begin{align*}
& \mathbb{P}_{H,\eta}(\widehat{Q}_{J_n^{-c} - L, N-J_n^{-c}+ L, n}^{(S)}(\widehat{H}_n, \widehat{\eta}_n)
 - Q_{J_n^{-c} - L, N-J_n^{-c}+ L - 1}^{(S)}(H,\eta) \\
 &
 < 2^{J_n^{-c} - L} /n - r2^{-2(J_n^{-c} - L)H})
+ 
\varepsilon.
\end{align*}
Moreover, note that \eqref{eq:bounds_Jnm_corrected} yields
\begin{align*}
&2^{J_n^{-c} - L} n^{-1} - r2^{-2(J_n^{-c} - L)H} \\
&\leq 
(r_-^{(S)}(\varepsilon)n )^{1/(2H+1)} 2^{-L}n^{-1} - r_-^{(S)}(\varepsilon) 2^{2(L+1)H} (r_-^{(S)}(\varepsilon)n )^{-2H/(2H+1)}
\\
&=
r_-^{(S)}(\varepsilon)^{1/(2H+1)} n^{-2H/(2H+1)} (2^{-L} - 2^{2(L+1)H}).
\end{align*}
For $L$ large enough, $2^{-L} - 2^{2(L+1)H} \leq -1$ so that we only need to prove that both
\begin{align*}
&\mathbb{P}_{H,\eta}(
	\widehat{Q}_{J_n^{-c} - L, N-J_n^{-c}+ L - 1, n}
 - 
 	\widehat{Q}_{J_n^{-c} - L, N-J_n^{-c}+ L - 1, n}
 < 
 -
 r_-^{(S)}(\varepsilon)^{1/(2H+1)} n^{-2H/(2H+1)}
)
\end{align*}
and
\begin{align*}
&\mathbb{P}_{H,\eta}(
	B^{(S)}_{J_n^{-c} - L, N-J_n^{-c}+ L - 1}(\widehat{H}_n, \widehat{\eta}_n)
 	- 
 	B^{(S)}_{J_n^{-c} - L, N-J_n^{-c}+ L - 1}(H,\eta)
 < \\
 &\hspace{6cm}
 -
 r_-^{(S)}(\varepsilon)^{1/(2H+1)} n^{-2H/(2H+1)}
)
\end{align*}
converge to $0$ uniformly on $\mathcal{D}$. The first convergence is proven as in the preliminary of the proof of Proposition \ref{prop:first_estimator_H}. We deal with the second convergence using Lemma \ref{lem:dist:B}. The probability considered is bounded by
\begin{align*}
&\mathbb{P}_{H,\eta}(
c_B 2^{-4(\widehat{H} \wedge H)(J_n^{-c} - L)}
(
(J_n^{-c} - L) |\widehat{H}_n - H|
+
|
\widehat{\eta}_n
-
\eta
|
)
 \geq 
 r_-^{(S)}(\varepsilon)^{1/(2H+1)} n^{-2H/(2H+1)}
)
\\
&\leq
\mathbb{P}_{H,\eta}(
 2^{-4(\widehat{H} \wedge H)J_n^{-c}}
(
\log n |\widehat{H}_n - H|
+
|
\widehat{\eta}_n
-
\eta
|
)
 \geq 
  n^{-2H/(2H+1)} \widetilde{c_B}^{-1}
)
\end{align*}
where $\widetilde{c_B} = r_-^{(S)}(\varepsilon)^{-1/(2H+1)} c_B 2^{4 H_+ L}$. But for any $\widetilde{\varepsilon} > 0$, we have $\mathbb{P}_{H,\eta}(v_n^{-1}|\widehat{H}_n - H| \geq \widetilde{M}) \leq \widetilde{\varepsilon}$ for $\widetilde{M}$ large enough, so the last term is bounded by
\begin{align*}
\mathbb{P}_{H,\eta}(
 2^{-4(H + \widetilde{M}v_n)J_n^{-c}}
(
\log n |\widehat{H}_n - H|
+
|
\widehat{\eta}_n
-
\eta
|
)
 \geq 
  n^{-2H/(2H+1)} \widetilde{c_B}^{-1}
) + \widetilde{\varepsilon}.
\end{align*}
We conclude using firstly that $v_n J_n^{-c} \leq v_n \log n \to 0$ (hence   $2^{-4HJ_n^{-c}}$ is of the same order as $n^{-4H/(2H+1)}$) and secondly that $n^{-2H/(2H+1)} \log n  |\widehat{H}_n - H|$ and $n^{-2H/(2H+1)} |
\widehat{\eta}_n
-
\eta
|$ converge to $0$.

\subsubsection*{The term $B_n$}

We rewrite $B_n$ as
\begin{align*}
%&= \Big | \frac{{Q}_{J_n^{*c}+1,N-J_n^{*c}-1}^{(S)}(H,\eta) }{{Q}_{J_n^{*c},N-J_n^{*c}-1}^{(S)}(H,\eta) } - 2^{-2H} \Big |
%\\
%&=
%\frac{
%\Big | {Q}_{J_n^{*c}+1,N-J_n^{*c}-1}^{(S)}(H,\eta) - 2^{-2H}{Q}_{J_n^{*c},N-J_n^{*c}-1}^{(S)}(H,\eta) \Big |}{{Q}_{J_n^{*c},N-J_n^{*c}-1}^{(S)}(H,\eta) } 
%\\
%&=
\frac{
\Big | 
	{Q}_{J_n^{*c}+1,N-J_n^{*c}-1} 
	- 
	2^{-2H}{Q}_{J_n^{*c},N-J_n^{*c}-1} 
	-
	{B}_{J_n^{*c}+1,N-J_n^{*c}-1}^{(S)}(H,\eta) 
	+ 
	2^{-2H}B_{J_n^{*c},N-J_n^{*c}-1}^{(S)}(H,\eta)
\Big |
}{{Q}_{J_n^{*c},N-J_n^{*c}-1}^{(S)}(H,\eta) }.
\end{align*}
Note that by definition of $B^{(S)}_{j,p}$, we have
\begin{align*}
2^{-2H} &B_{J_n^{*c},N-J_n^{*c}-1}^{(S)}(H,\eta)
-
{B}_{J_n^{*c}+1,N-J_n^{*c}-1}^{(S)}(H,\eta) 
\\
&=
 2^{-2H} \sum_{a=2}^{S} \eta^{2a} 2^{-2aHJ_n^{*c}} \kappa_{N-J_n^{*c}-1,a}(H)
-
\sum_{a=2}^{S} \eta^{2a} 2^{-2aH(J_n^{*c}+1)} \kappa_{N-J_n^{*c}-1,a}(H)
\\
&=
 2^{-2H} \sum_{a=1}^{S} \eta^{2a} 2^{-2aHJ_n^{*c}} \kappa_{N-J_n^{*c}-1,a}(H)
-
\sum_{a=1}^{S} \eta^{2a} 2^{-2aH(J_n^{*c}+1)} \kappa_{N-J_n^{*c}-1,a}(H)
\end{align*}
since the terms of the sums with $a=1$ are the same. Thus $B_n \leq B_n^{(1)} + 2^{-2H} B_n^{(2)}$ where
\begin{align*}
&B_n^{(1)} := \frac{
\Big | 
	{Q}_{J_n^{*c}+1,N-J_n^{*c}-1} 
	- 
	\sum_{a=1}^{S} \eta^{2a} 2^{-2aH(J_n^{*c}+1)} \kappa_{N-J_n^{*c}-1,a}(H)
\Big |
}{{Q}_{J_n^{*c},N-J_n^{*c}-1}^{(S)}(H,\eta) }
\end{align*}
and
\begin{align*}
&B_n^{(2)} :=
\frac{
\Big | 
	{Q}_{J_n^{*c},N-J_n^{*c}-1} 
	- 
	\sum_{a=1}^{S} \eta^{2a} 2^{-2aHJ_n^{*c}} \kappa_{N-J_n^{*c}-1,a}(H)
\Big |
}{{Q}_{J_n^{*c},N-J_n^{*c}-1}^{(S)}(H,\eta) }.
\end{align*}
Both terms are controlled identically so we only prove here that $\big((v_n^{c})^{-1} B_n^{(1)}\big)_n$ is bounded in probability uniformly over $\mathcal{D}$. 
By \eqref{eq:bound_proba_jnm_corrected} and Proposition \ref{prop:bound_energy_corrected}, $\mathbb{P}_{H,\eta}\big({(v_n^{c})^{-1} B_n^{(1)} \geq M}\big)
$ is bounded by
\begin{align*}
&
\mathbb{P}_{H,\eta}\Big( (v_n^{c})^{-1} B_n^{(1)} \geq M, J_n^{*c} \geq J_n^{-c}(\varepsilon) - L^c(\varepsilon)\Big )
+
\mathbb{P}_{H,\eta}^{n}(J_n^{*c} < J_n^{-c}(\varepsilon) - L^c(\varepsilon) ) 
\\
&\;\;\;\;\leq 
\sum_{j=J_n^{-c}(\varepsilon) - L^c(\varepsilon)}^{N-1}
\mathbb{P}_{H,\eta}\Big( (v_n^{c})^{-1} B_n^{(1)} \geq M, J_n^* = j, {Q}_{j,N-j-1}^{(S)}(H,\eta) \geq 2^{-2Hj}r_-^{(S)}(\varepsilon)\Big )
+ 2\varepsilon + \varphi_n^c(\varepsilon).
\end{align*}
Using the definition of $B^{(1)}_n$, each term within the sum is bounded by
\begin{align*}
\mathbb{P}_{H,\eta}\Big(
\Big | 
	{Q}_{j+1,N-j-1} 
	- 
	\sum_{a=1}^{S} \Big( \eta^{2a} 2^{-2aH(j+1)} \kappa_{N-j-1,a}(H) \Big)
\Big |
 \geq M 2^{-2Hj} r_-^{(S)}(\varepsilon) v_n^{c}
\Big).
\end{align*}
By Markov's inequality and Proposition \ref{prop:energy}, it is further bounded by $
C M^{-2} 2^{-j} r_-^{(S)}(\varepsilon)^{-2} (v_n^{c})^{-2}$ and
summing over $j$ yields
\begin{align*}
\mathbb{P}_{H,\eta}\big({(v_n^{c})^{-1} B_n^{(1)} \geq M}\big)
&\leq 
C M^{-2} 2^{-J_n^{-c}(\varepsilon) + L^c(\varepsilon)} r_-^{(S)}(\varepsilon)^{-2} (v_n^{c})^{-2}
+ 2\varepsilon + \varphi_n^c(\varepsilon).
\end{align*}
We conclude using that $(v_n^{c})^{-2} 2^{-J_n^{-c}(\varepsilon)}$ is bounded, see \eqref{eq:bounds_Jnm_corrected}.

\subsubsection*{The term $V_n^{(1)}$} We have
\begin{align*}
V^{(1)}_n
=&
\Big | \frac{\widehat{Q}_{J_n^{*c}+1,N-J_n^{*c}-1,n}^{(S)}(\widehat{H}_n, \widehat{\eta}_n) - {Q}_{J_n^{*c}+1,N-J_n^{*c}-1}^{(S)}(H,\eta) }{\widehat{Q}_{J_n^{*c},N-J_n^{*c}-1,n}^{(S)}(\widehat{H}_n, \widehat{\eta}_n) } \Big |
\\
\leq&
 \frac{
| 
		\widehat{Q}_{J_n^{*c}+1,N-J_n^{*c}-1,n}(\widehat{H}_n, \widehat{\eta}_n)
		- 
		{Q}_{J_n^{*c}+1,N-J_n^{*c}-1} (H,\eta) 
|
}{\widehat{Q}_{J_n^{*c},N-J_n^{*c}-1,n}^{(S)}(\widehat{H}_n, \widehat{\eta}_n) }
\\&\;\;\;\;
 + \frac{
| 
		B_{J_n^{*c}+1,N-J_n^{*c}-1}^{(S)}(\widehat{H}_n, \widehat{\eta}_n)
		- 
		B_{J_n^{*c}+1,N-J_n^{*c}-1}^{(S)}(H,\eta) 
|
}{\widehat{Q}_{J_n^{*c},N-J_n^{*c}-1,n}^{(S)}(\widehat{H}_n, \widehat{\eta}_n) } .
\end{align*}
Moreover, $\widehat{Q}_{J_n^{*c},N-J_n^{*c}-1,n}^{(S)}(\widehat{H}_n, \widehat{\eta}_n) \geq  2^{J_n^{*c}}n^{-1} $ and $J_n^{*c} \geq J_n^{-c}(\varepsilon) - L^c(\varepsilon)$ at least with probability $1 - \varepsilon -\varphi^c_n(\varepsilon)$ by Equation \eqref{eq:bound_proba_jnm_corrected}, so it is enough to prove that 
\begin{align}
\label{eq:ref:vn1:term1}
n (v_n^{c})^{-1} 2^{-J_n^{*c}} | 
		\widehat{Q}_{J_n^{*c}+1,N-J_n^{*c}-1,n}
		- 
		{Q}_{J_n^{*c}+1,N-J_n^{*c}-1,n}
|
\end{align} 
and 
\begin{align}
\label{eq:ref:vn1:term2}
n (v_n^{c})^{-1} 2^{-J_n^{*c}} | 
		B_{J_n^{*c}+1,N-J_n^*-1}(\widehat{H}_n, \widehat{\eta}_n)
		- 
		B_{J_n^{*c}+1,N-J_n^*-1}(H,\eta) 
|\end{align} are bounded in probability uniformly over $\mathcal{D}$, conditionally on $J_n^{*c} \geq J_n^{-c}(\varepsilon) - L^c(\varepsilon)$. The term \eqref{eq:ref:vn1:term1} is similar to $V_n^{(1)}$ appearing in the proof of Proposition \ref{prop:first_estimator_H}. Indeed, by Proposition \ref{prop:estimation_energy}, we have
\begin{align*}
&\mathbb{P}_{H,\eta}
\Big(
n (v_n^{c})^{-1} 2^{-J_n^{*c}} | 
		\widehat{Q}_{J_n^{*c}+1,N-J_n^*-1,n}
		- 
		{Q}_{J_n^{*c}+1,N-J_n^*-1,n} |
\geq M
,\;
J_n^{*c} \geq J_n^{-c}(\varepsilon) - L^c(\varepsilon)
\Big)
\\
&\;\;\;\;\leq
\sum_{j=J_n^{-c}(\varepsilon) - L^c(\varepsilon)}^{N-1}
\mathbb{P}_{H,\eta}
\Big(
 | 
		\widehat{Q}_{j+1,N-j-1,n}
		- 
		{Q}_{j+1,N-j-1,n} |
\geq M n^{-1} v_n^{c} 2^{j}
\Big)
\\
&\;\;\;\;\leq
C
\sum_{j=J_n^{-c}(\varepsilon) - L^c(\varepsilon)}^{N-1}
(
2^{j}n^{-2}
+
2^{-2Hj}n^{-1}
)
n^{2} (M v_n^{c})^{-2} 2^{-2j} 
\\
&\;\;\;\;\leq
C(\varepsilon) M^{-2} (v_n^{c})^{-2} (2^{-J_n^{-c}(\varepsilon)} + n 2^{-2(H+1)J_n^{-c}(\varepsilon)})
\end{align*}
and we conclude using \eqref{eq:bounds_Jnm_corrected} and the definition of $v_n^{c}$. We next focus on the term \eqref{eq:ref:vn1:term2}. By Lemma \ref{lem:dist:B} and using $J_n^{*c} \leq \log n$, it is bounded by
\begin{align*}
c_B
	2^{-4((\widehat{H}_n - H) \wedge 0)\log n}
	\times
n (v_n^{c})^{-1}
 2^{-(4H+1)J_n^{*c}}
(
\log n |\widehat{H}_n - H|
+
|
\widehat{\eta}_n
-
\eta
|
).
\end{align*}
First, notice that $2^{-4((\widehat{H}_n - H) \wedge 0)\log n}$ is bounded in probability uniformly over $\mathcal{D}$ because $v_n^{-1}|\widehat{H}_n - H|$ is uniformly tight and $v_n \log n \to 0$. Therefore we can focus on $n (v_n^{c})^{-1}
 2^{-(4H+1)J_n^{*c}}
(
\log n |\widehat{H}_n - H|
+
|
\widehat{\eta}_n
-
\eta
|
)$. Conditionally on $J_n^{*c} \geq J_n^{-c}(\varepsilon) - L^c(\varepsilon)$, it is bounded by
\begin{align*}
C
n v_n  \log n (v_n^{c})^{-1}
 2^{-(4H+1)J_n^{-c}(\varepsilon)}
(
 {v_n}^{-1} |\widehat{H}_n - H|
+
(v_n \log n)^{-1}
|
\widehat{\eta}_n
-
\eta
|
)
\end{align*}
and we conclude using \eqref{eq:bounds_Jnm_corrected}, the assumptions on $v_n$, $\widehat{H}_n$ and $\widehat{\eta}_n$ together with the definition of $v_n^{c}$.

\subsubsection*{The term $V^{(2)}_n$}

The same method as for the corresponding term in the proof of Proposition \ref{prop:first_estimator_H} applies, relying now on Proposition \ref{prop:bound_energy_corrected} and \eqref{eq:bound_proba_jnm_corrected} instead of Proposition \ref{prop:bound_energy} and  \eqref{eq:bound_proba_jnm}.

\subsection{Proof of Proposition \ref{prop:refinement_eta}}

As for Proposition \ref{prop:first_estimator_eta}, it is enough to prove that $(w_n^{c})^{-1} |
({\widehat{\eta}^{c}}_n)^2
 - \eta^2|$ is bounded in probability uniformly over $\mathcal{D}$. First, notice that 
\begin{align*}
|({\widehat{\eta}^{c}}_n)^2 - \eta^2|
\leq
\kappa_{N-\widehat{j}_n, 1}^{-1}(\widehat{H}_n) \big(B_n^{(1)} + B_n^{(2)} + V_n^{(1)}+V_n^{(2)} \big) 2^{2 \widehat{j}_n(\widehat{H}_n-H)}
\end{align*}
where
\begin{align*}
B_n^{(1)} 
&=
2^{2 \widehat{j}_nH}
\bigg|
\sum_{a=2}^{S} \eta^{2a} 2^{-2aH\widehat{j}_n} \kappa_{N-\widehat{j}_n,a}(H)
-
\sum_{a=2}^{S} (\widehat{\eta}_n)^{2a} 2^{-2a\widehat{H}_n \widehat{j}_n} \kappa_{N-\widehat{j}_n,a}(\widehat{H}_n)
\bigg|,
\\
B_n^{(2)} &=
2^{2 \widehat{j}_nH}
\bigg|
{Q}_{\widehat{j}_n,N-\widehat{j}_n} - 
\sum_{a=1}^{S} \eta^{2a} 2^{-2aH\widehat{j}_n} \kappa_{N-\widehat{j}_n,a}(H)
\bigg|,
\\
V_n^{(1)} 
&=
2^{2 \widehat{j}_nH}
\Big |
\eta^{2} 2^{-2H\widehat{j}_n} \kappa_{N-\widehat{j}_n,1}(H)
-
\eta^2 2^{-2\widehat{j}_n\widehat{H}_n} \kappa_{N-\widehat{j}_n, 1}(\widehat{H}_n)
\Big|,
\\
V_n^{(2)} &= 
2^{2 \widehat{j}_nH}
\big | \widehat{Q}_{\widehat{j}_n,N-\widehat{j}_n,n} - {Q}_{\widehat{j}_n,N-\widehat{j}_n}\big|.
\end{align*}
The term $\kappa_{N-\widehat{j}_n, 1}^{-1}(\widehat{H}_n)$ can be ignored because $\kappa_{p,1}$ is a continuous function bounded away from $0$ on $[H_-, H_+]$, see Equation \eqref{eq:kappa1:bound}. We prove that $2^{2 \widehat{j}_n(\widehat{H}_n-H)}$, $(w_n^{c})^{-1} B_n^{(1)}$, $(w_n^{c})^{-1} B_n^{(2)}$, $(w_n^{c})^{-1} V_n^{(1)}$ and  $(w_n^{c})^{-1} V_n^{(2)}$ are bounded in probability uniformly over $\mathcal{D}$. Indeed, all these terms except $B_n^{(1)}$ can be treated in the same  way as in the proof of Proposition \ref{prop:first_estimator_eta}. Therefore, we only need to focus on $B_n^{(1)}$. By Lemma \ref{lem:dist:B}, we have
\begin{align*}
B_n^{(1)}
&=
2^{2 \widehat{j}_nH}
\Big |
B^{(S)}_{\widehat{j}_n, N-\widehat{j}_n} (H,\eta)
-
B^{(S)}_{\widehat{j}_n, N-\widehat{j}_n} (\widehat{\eta}_n,\widehat{H}_n)
\Big |
\\
&\leq
c_B 2^{2 \widehat{j}_nH} 2^{-4(H \wedge \widehat{H}_n)\widehat{j}_n}
(
\widehat{j}_n |\widehat{H}_n - H |
+
|
\widehat{\eta}_n
-
\eta
|
)
\\
&\leq
c_B 2^{-4(0 \wedge (\widehat{H}_n - H))\widehat{j}_n}
2^{-2H\widehat{j}_n}
(
\widehat{j}_n |\widehat{H}_n - H |
+
|
\widehat{\eta}_n
-
\eta
|
).
\end{align*}
Since $v_n^{-1}|\widehat{H}_n - H|$ is bounded in probability uniformly on $\mathcal{D}$ and $v_n \widehat{j}_n \leq v_n \log n \to 0$, the sequence $2^{-4(0 \wedge (\widehat{H}_n - H))\widehat{j}_n}$ is uniformly tight. Thus it is enough to show that $(w_n^{c})^{-1}2^{-2H\widehat{j}_n}
(
\widehat{j}_n |\widehat{H}_n - H |
+
|
\widehat{\eta}_n
-
\eta
|)$ is bounded in probability uniformly on $\mathcal{D}$. Recall that
\begin{align*}
\mathbb{P}_{H,\eta}
\Big(
\widehat{j}_n \in \big \{ \big\lfloor \tfrac{1}{2H+1} \log_2 n\big\rfloor - 1, \big\lfloor \tfrac{1}{2H+1} \log_2 n \big\rfloor \big\}
\Big) \to 1,
\end{align*}
so it is enough to prove that $(w_n^{c})^{-1}n^{-2H/(2H+1)}
\log n |\widehat{H}_n - H |
$ and $(w_n^{c})^{-1}n^{-2H/(2H+1)}
|
\widehat{\eta}_n
-
\eta
|$ are uniformly tight. This is indeed the case since $v_n (w_n^{c})^{-1}n^{-2H/(2H+1)}
\log n$ and $w_n (w_n^{c})^{-1}n^{-2H/(2H+1)}$ are bounded and $v_n^{-1} |\widehat{H}_n - H |$ and $w_n^{-1} |
\widehat{\eta}_n
-
\eta
|$ are uniformly tight.

\section*{Acknowledgments}
Mathieu Rosenbaum and Gr\'egoire Szymanski gratefully acknowledge the financial support of the \'Ecole Polytechnique chairs {\it Deep Finance and Statistics} and {\it Machine Learning and Systematic Methods}. Yanghui Liu is supported by the PSC-CUNY Award 64353-00 52.  The authors would further like to thank the Associate Editor and two referees for their careful reading of the paper and for their constructive comments, which led to significant improvements of the paper.

\bibliographystyle{imsart-number}
\bibliography{library_submitted}

\begin{thebibliography}{58}
% BibTex style file: imsart-number.bst, 2017-11-03
% Default style options (sort=1,type=number).
% Used options (sort=1,type=number).

\bibitem{abi2022characteristic}
\begin{barticle}[author]
\bauthor{\bsnm{Abi~Jaber},~\bfnm{Eduardo}\binits{E.}}
(\byear{2022}).
\btitle{The characteristic function of {G}aussian stochastic volatility models:
  an analytic expression}.
\bjournal{Finance and Stochastics}
\bpages{1--37}.
\end{barticle}
\endbibitem

\bibitem{abi2019multifactor}
\begin{barticle}[author]
\bauthor{\bsnm{Abi~Jaber},~\bfnm{Eduardo}\binits{E.}} \AND
  \bauthor{\bsnm{El~Euch},~\bfnm{Omar}\binits{O.}}
(\byear{2019}).
\btitle{Multifactor approximation of rough volatility models}.
\bjournal{SIAM Journal on Financial Mathematics}
\bvolume{10}
\bpages{309--349}.
\end{barticle}
\endbibitem

\bibitem{abi2019affine}
\begin{barticle}[author]
\bauthor{\bsnm{Abi~Jaber},~\bfnm{Eduardo}\binits{E.}},
  \bauthor{\bsnm{Larsson},~\bfnm{Martin}\binits{M.}} \AND
  \bauthor{\bsnm{Pulido},~\bfnm{Sergio}\binits{S.}}
(\byear{2019}).
\btitle{Affine {V}olterra processes}.
\bjournal{The Annals of Applied Probability}
\bvolume{29}
\bpages{3155--3200}.
\end{barticle}
\endbibitem

\bibitem{ait2014high}
\begin{bbook}[author]
\bauthor{\bsnm{A{\"\i}t-Sahalia},~\bfnm{Yacine}\binits{Y.}} \AND
  \bauthor{\bsnm{Jacod},~\bfnm{Jean}\binits{J.}}
(\byear{2014}).
\btitle{High-frequency financial econometrics}.
\bpublisher{Princeton University Press}.
\end{bbook}
\endbibitem

\bibitem{alos2007short}
\begin{barticle}[author]
\bauthor{\bsnm{Alos},~\bfnm{Elisa}\binits{E.}},
  \bauthor{\bsnm{Le{\'o}n},~\bfnm{Jorge~A}\binits{J.~A.}} \AND
  \bauthor{\bsnm{Vives},~\bfnm{Josep}\binits{J.}}
(\byear{2007}).
\btitle{On the short-time behavior of the implied volatility for jump-diffusion
  models with stochastic volatility}.
\bjournal{Finance and Stochastics}
\bvolume{11}
\bpages{571--589}.
\end{barticle}
\endbibitem

\bibitem{alos2022forward}
\begin{barticle}[author]
\bauthor{\bsnm{Al{\`o}s},~\bfnm{Elisa}\binits{E.}},
  \bauthor{\bsnm{Rolloos},~\bfnm{Frido}\binits{F.}} \AND
  \bauthor{\bsnm{Shiraya},~\bfnm{Kenichiro}\binits{K.}}
(\byear{2022}).
\btitle{Forward start volatility swaps in rough volatility models}.
\bjournal{arXiv preprint arXiv:2207.10370}.
\end{barticle}
\endbibitem

\bibitem{BNCP13}
\begin{bincollection}[author]
\bauthor{\bsnm{Barndorff-Nielsen},~\bfnm{Ole~E.}\binits{O.~E.}},
  \bauthor{\bsnm{Corcuera},~\bfnm{Jos\'{e}~Manuel}\binits{J.~M.}} \AND
  \bauthor{\bsnm{Podolskij},~\bfnm{Mark}\binits{M.}}
(\byear{2013}).
\btitle{Limit theorems for functionals of higher order differences of
  {B}rownian semi-stationary processes}.
In \bbooktitle{Prokhorov and contemporary probability theory}.
\bseries{Springer Proc. Math. Stat.}
\bvolume{33}
\bpages{69--96}.
\bpublisher{Springer, Heidelberg}.
\bdoi{10.1007/978-3-642-33549-5\_4}
\bmrnumber{3070467}
\end{bincollection}
\endbibitem

\bibitem{bayer2016pricing}
\begin{barticle}[author]
\bauthor{\bsnm{Bayer},~\bfnm{Christian}\binits{C.}},
  \bauthor{\bsnm{Friz},~\bfnm{Peter}\binits{P.}} \AND
  \bauthor{\bsnm{Gatheral},~\bfnm{Jim}\binits{J.}}
(\byear{2016}).
\btitle{Pricing under rough volatility}.
\bjournal{Quantitative Finance}
\bvolume{16}
\bpages{887--904}.
\end{barticle}
\endbibitem

\bibitem{bayer2020regularity}
\begin{barticle}[author]
\bauthor{\bsnm{Bayer},~\bfnm{Christian}\binits{C.}},
  \bauthor{\bsnm{Friz},~\bfnm{Peter~K}\binits{P.~K.}},
  \bauthor{\bsnm{Gassiat},~\bfnm{Paul}\binits{P.}},
  \bauthor{\bsnm{Martin},~\bfnm{Jorg}\binits{J.}} \AND
  \bauthor{\bsnm{Stemper},~\bfnm{Benjamin}\binits{B.}}
(\byear{2020}).
\btitle{A regularity structure for rough volatility}.
\bjournal{Mathematical Finance}
\bvolume{30}
\bpages{782--832}.
\end{barticle}
\endbibitem

\bibitem{bennedsen2017hybrid}
\begin{barticle}[author]
\bauthor{\bsnm{Bennedsen},~\bfnm{Mikkel}\binits{M.}},
  \bauthor{\bsnm{Lunde},~\bfnm{Asger}\binits{A.}} \AND
  \bauthor{\bsnm{Pakkanen},~\bfnm{Mikko~S}\binits{M.~S.}}
(\byear{2017}).
\btitle{Hybrid scheme for {B}rownian semistationary processes}.
\bjournal{Finance and Stochastics}
\bvolume{21}
\bpages{931--965}.
\end{barticle}
\endbibitem

\bibitem{bennedsen2016decoupling}
\begin{barticle}[author]
\bauthor{\bsnm{Bennedsen},~\bfnm{Mikkel}\binits{M.}},
  \bauthor{\bsnm{Lunde},~\bfnm{Asger}\binits{A.}} \AND
  \bauthor{\bsnm{Pakkanen},~\bfnm{Mikko~S}\binits{M.~S.}}
(\byear{2022}).
\btitle{Decoupling the short-and long-term behavior of stochastic volatility}.
\bjournal{Journal of Financial Econometrics}
\bvolume{To appear}.
\end{barticle}
\endbibitem

\bibitem{bibinger2017nonparametric}
\begin{barticle}[author]
\bauthor{\bsnm{Bibinger},~\bfnm{Markus}\binits{M.}},
  \bauthor{\bsnm{Jirak},~\bfnm{Moritz}\binits{M.}} \AND
  \bauthor{\bsnm{Vetter},~\bfnm{Mathias}\binits{M.}}
(\byear{2017}).
\btitle{Non-parametric change-point analysis of the volatility}.
\bjournal{The Annals of Statistics}
\bvolume{45}
\bpages{1542--1578}.
\end{barticle}
\endbibitem

\bibitem{bolko2020GMM}
\begin{barticle}[author]
\bauthor{\bsnm{Bolko},~\bfnm{Anine~E.}\binits{A.~E.}},
  \bauthor{\bsnm{Christensen},~\bfnm{Kim}\binits{K.}},
  \bauthor{\bsnm{Pakkanen},~\bfnm{Mikko~S.}\binits{M.~S.}} \AND
  \bauthor{\bsnm{Veliyev},~\bfnm{Bezirgen}\binits{B.}}
(\byear{2022}).
\btitle{A {GMM} approach to estimate the roughness of stochastic volatility}.
\bjournal{Journal of Econometrics}
\bvolume{To appear}.
\bdoi{10.48550/ARXIV.2010.04610}
\end{barticle}
\endbibitem

\bibitem{brouste2018lan}
\begin{barticle}[author]
\bauthor{\bsnm{Brouste},~\bfnm{Alexandre}\binits{A.}} \AND
  \bauthor{\bsnm{Fukasawa},~\bfnm{Masaaki}\binits{M.}}
(\byear{2018}).
\btitle{Local asymptotic normality property for fractional {G}aussian noise
  under high-frequency observations}.
\bjournal{The Annals of Statistics}
\bvolume{46}
\bpages{2045 -- 2061}.
\bdoi{10.1214/17-AOS1611}
\end{barticle}
\endbibitem

\bibitem{callegaro2021fast}
\begin{barticle}[author]
\bauthor{\bsnm{Callegaro},~\bfnm{Giorgia}\binits{G.}},
  \bauthor{\bsnm{Grasselli},~\bfnm{Martino}\binits{M.}} \AND
  \bauthor{\bsnm{Pag{\`e}s},~\bfnm{Gilles}\binits{G.}}
(\byear{2021}).
\btitle{Fast hybrid schemes for fractional {R}iccati equations (rough is not so
  tough)}.
\bjournal{Mathematics of Operations Research}
\bvolume{46}
\bpages{221--254}.
\end{barticle}
\endbibitem

\bibitem{chong2022statistical}
\begin{barticle}[author]
\bauthor{\bsnm{Chong},~\bfnm{Carsten}\binits{C.}},
  \bauthor{\bsnm{Hoffmann},~\bfnm{Marc}\binits{M.}},
  \bauthor{\bsnm{Liu},~\bfnm{Yanghui}\binits{Y.}},
  \bauthor{\bsnm{Rosenbaum},~\bfnm{Mathieu}\binits{M.}} \AND
  \bauthor{\bsnm{Szymanski},~\bfnm{Gr{\'e}goire}\binits{G.}}
(\byear{2023}).
\btitle{Statistical inference for rough volatility: central limit theorems}.
\bjournal{Annals of Applied Probability, to appear}.
\end{barticle}
\endbibitem

\bibitem{chong2022short}
\begin{barticle}[author]
\bauthor{\bsnm{Chong},~\bfnm{Carsten}\binits{C.}} \AND
  \bauthor{\bsnm{Todorov},~\bfnm{Viktor}\binits{V.}}
(\byear{2022}).
\btitle{Short-time expansion of characteristic functions in a rough volatility
  setting with applications}.
\bjournal{arXiv preprint arXiv:2208.00830}.
\end{barticle}
\endbibitem

\bibitem{ciesielski1993quelques}
\begin{barticle}[author]
\bauthor{\bsnm{Ciesielski},~\bfnm{Zbigniew}\binits{Z.}},
  \bauthor{\bsnm{Kerkyacharian},~\bfnm{G\'erard}\binits{G.}} \AND
  \bauthor{\bsnm{Roynette},~\bfnm{Bernard}\binits{B.}}
(\byear{1993}).
\btitle{Quelques espaces fonctionnels associ{\'e}s {\`a} des processus
  gaussiens}.
\bjournal{Studia Mathematica}
\bvolume{107}
\bpages{171--204}.
\end{barticle}
\endbibitem

\bibitem{coeurjolly2001estimating}
\begin{barticle}[author]
\bauthor{\bsnm{Coeurjolly},~\bfnm{Jean-Fran{\c{c}}ois}\binits{J.-F.}}
(\byear{2001}).
\btitle{Estimating the parameters of a fractional {B}rownian motion by discrete
  variations of its sample paths}.
\bjournal{Statistical Inference for stochastic processes}
\bvolume{4}
\bpages{199--227}.
\end{barticle}
\endbibitem

\bibitem{comte1998long}
\begin{barticle}[author]
\bauthor{\bsnm{Comte},~\bfnm{Fabienne}\binits{F.}} \AND
  \bauthor{\bsnm{Renault},~\bfnm{Eric}\binits{E.}}
(\byear{1998}).
\btitle{Long memory in continuous-time stochastic volatility models}.
\bjournal{Mathematical finance}
\bvolume{8}
\bpages{291--323}.
\end{barticle}
\endbibitem

\bibitem{cuchiero2019markovian}
\begin{barticle}[author]
\bauthor{\bsnm{Cuchiero},~\bfnm{Christa}\binits{C.}} \AND
  \bauthor{\bsnm{Teichmann},~\bfnm{Josef}\binits{J.}}
(\byear{2019}).
\btitle{Markovian lifts of positive semidefinite affine {V}olterra-type
  processes}.
\bjournal{Decisions in Economics and Finance}
\bvolume{42}
\bpages{407--448}.
\end{barticle}
\endbibitem

\bibitem{eleuch2018microstructural}
\begin{barticle}[author]
\bauthor{\bsnm{El~Euch},~\bfnm{Omar}\binits{O.}},
  \bauthor{\bsnm{Fukasawa},~\bfnm{Masaaki}\binits{M.}} \AND
  \bauthor{\bsnm{Rosenbaum},~\bfnm{Mathieu}\binits{M.}}
(\byear{2018}).
\btitle{The microstructural foundations of leverage effect and rough
  volatility}.
\bjournal{Finance and Stochastics}
\bvolume{22}
\bpages{241--280}.
\end{barticle}
\endbibitem

\bibitem{eleuch2019characteristic}
\begin{barticle}[author]
\bauthor{\bsnm{El~Euch},~\bfnm{Omar}\binits{O.}} \AND
  \bauthor{\bsnm{Rosenbaum},~\bfnm{Mathieu}\binits{M.}}
(\byear{2019}).
\btitle{The characteristic function of rough {H}eston models}.
\bjournal{Mathematical Finance}
\bvolume{29}
\bpages{3--38}.
\end{barticle}
\endbibitem

\bibitem{eleuch2019short}
\begin{barticle}[author]
\bauthor{\bsnm{Euch},~\bfnm{Omar~El}\binits{O.~E.}},
  \bauthor{\bsnm{Fukasawa},~\bfnm{Masaaki}\binits{M.}},
  \bauthor{\bsnm{Gatheral},~\bfnm{Jim}\binits{J.}} \AND
  \bauthor{\bsnm{Rosenbaum},~\bfnm{Mathieu}\binits{M.}}
(\byear{2019}).
\btitle{Short-term at-the-money asymptotics under stochastic volatility
  models}.
\bjournal{SIAM Journal on Financial Mathematics}
\bvolume{10}
\bpages{491--511}.
\end{barticle}
\endbibitem

\bibitem{eleuch2018perfect}
\begin{barticle}[author]
\bauthor{\bsnm{Euch},~\bfnm{Omar~El}\binits{O.~E.}} \AND
  \bauthor{\bsnm{Rosenbaum},~\bfnm{Mathieu}\binits{M.}}
(\byear{2018}).
\btitle{Perfect hedging in rough {H}eston models}.
\bjournal{The Annals of Applied Probability}
\bvolume{28}
\bpages{3813--3856}.
\end{barticle}
\endbibitem

\bibitem{forde2020rough}
\begin{barticle}[author]
\bauthor{\bsnm{Forde},~\bfnm{Martin}\binits{M.}},
  \bauthor{\bsnm{Fukasawa},~\bfnm{Masaaki}\binits{M.}},
  \bauthor{\bsnm{Gerhold},~\bfnm{Stefan}\binits{S.}} \AND
  \bauthor{\bsnm{Smith},~\bfnm{Benjamin}\binits{B.}}
(\byear{2020}).
\btitle{The Rough Bergomi model as $H\rightarrow 0$ skew flattening/blow up and
  non-{G}aussian rough volatility}.
\bjournal{Preprint}.
\end{barticle}
\endbibitem

\bibitem{forde2017asymptotics}
\begin{barticle}[author]
\bauthor{\bsnm{Forde},~\bfnm{Martin}\binits{M.}} \AND
  \bauthor{\bsnm{Zhang},~\bfnm{Hongzhong}\binits{H.}}
(\byear{2017}).
\btitle{Asymptotics for rough stochastic volatility models}.
\bjournal{SIAM Journal on Financial Mathematics}
\bvolume{8}
\bpages{114--145}.
\end{barticle}
\endbibitem

\bibitem{friz2022short}
\begin{barticle}[author]
\bauthor{\bsnm{Friz},~\bfnm{Peter~K}\binits{P.~K.}},
  \bauthor{\bsnm{Gassiat},~\bfnm{Paul}\binits{P.}} \AND
  \bauthor{\bsnm{Pigato},~\bfnm{Paolo}\binits{P.}}
(\byear{2022}).
\btitle{Short-dated smile under rough volatility: asymptotics and numerics}.
\bjournal{Quantitative Finance}
\bvolume{22}
\bpages{463--480}.
\end{barticle}
\endbibitem

\bibitem{friz2022forests}
\begin{barticle}[author]
\bauthor{\bsnm{Friz},~\bfnm{Peter~K}\binits{P.~K.}},
  \bauthor{\bsnm{Gatheral},~\bfnm{Jim}\binits{J.}} \AND
  \bauthor{\bsnm{Radoi{\v{c}}i{\'c}},~\bfnm{Rado{\v{s}}}\binits{R.}}
(\byear{2022}).
\btitle{Forests, cumulants, martingales}.
\bjournal{The Annals of Probability}
\bvolume{50}
\bpages{1418--1445}.
\end{barticle}
\endbibitem

\bibitem{fukasawa2021volatility}
\begin{barticle}[author]
\bauthor{\bsnm{Fukasawa},~\bfnm{Masaaki}\binits{M.}}
(\byear{2021}).
\btitle{Volatility has to be rough}.
\bjournal{Quantitative Finance}
\bvolume{21}
\bpages{1--8}.
\end{barticle}
\endbibitem

\bibitem{fukasawa2021hedging}
\begin{barticle}[author]
\bauthor{\bsnm{Fukasawa},~\bfnm{Masaaki}\binits{M.}},
  \bauthor{\bsnm{Horvath},~\bfnm{Blanka}\binits{B.}} \AND
  \bauthor{\bsnm{Tankov},~\bfnm{Peter}\binits{P.}}
(\byear{2021}).
\btitle{{Hedging under rough volatility}}.
\bjournal{Preprint}
\bvolume{2105.04073}.
\end{barticle}
\endbibitem

\bibitem{fukasawa2019asymptotically}
\begin{barticle}[author]
\bauthor{\bsnm{Fukasawa},~\bfnm{Masaaki}\binits{M.}} \AND
  \bauthor{\bsnm{Takabatake},~\bfnm{Tetsuya}\binits{T.}}
(\byear{2019}).
\btitle{Asymptotically efficient estimators for self-similar stationary
  {G}aussian noises under high frequency observations}.
\bjournal{Bernoulli}
\bvolume{25}
\bpages{1870--1900}.
\end{barticle}
\endbibitem

\bibitem{gassiat2019martingale}
\begin{barticle}[author]
\bauthor{\bsnm{Gassiat},~\bfnm{Paul}\binits{P.}}
(\byear{2019}).
\btitle{On the martingale property in the rough {B}ergomi model}.
\bjournal{Electronic Communications in Probability}
\bvolume{24}
\bpages{1--9}.
\end{barticle}
\endbibitem

\bibitem{gassiat2022weak}
\begin{barticle}[author]
\bauthor{\bsnm{Gassiat},~\bfnm{Paul}\binits{P.}}
(\byear{2022}).
\btitle{Weak error rates of numerical schemes for rough volatility}.
\bjournal{arXiv preprint arXiv:2203.09298}.
\end{barticle}
\endbibitem

\bibitem{gatheral2018volatility}
\begin{barticle}[author]
\bauthor{\bsnm{Gatheral},~\bfnm{Jim}\binits{J.}},
  \bauthor{\bsnm{Jaisson},~\bfnm{Thibault}\binits{T.}} \AND
  \bauthor{\bsnm{Rosenbaum},~\bfnm{Mathieu}\binits{M.}}
(\byear{2018}).
\btitle{Volatility is rough}.
\bjournal{Quantitative finance}
\bvolume{18}
\bpages{933--949}.
\end{barticle}
\endbibitem

\bibitem{gatheral2020quadratic}
\begin{barticle}[author]
\bauthor{\bsnm{Gatheral},~\bfnm{Jim}\binits{J.}},
  \bauthor{\bsnm{Jusselin},~\bfnm{Paul}\binits{P.}} \AND
  \bauthor{\bsnm{Rosenbaum},~\bfnm{Mathieu}\binits{M.}}
(\byear{2020}).
\btitle{The quadratic rough {H}eston model and the joint S\&P 500/VIX smile
  calibration problem}.
\bjournal{Risk}.
\end{barticle}
\endbibitem

\bibitem{gatheral2019affine}
\begin{barticle}[author]
\bauthor{\bsnm{Gatheral},~\bfnm{Jim}\binits{J.}} \AND
  \bauthor{\bsnm{Keller-Ressel},~\bfnm{Martin}\binits{M.}}
(\byear{2019}).
\btitle{Affine forward variance models}.
\bjournal{Finance and Stochastics}
\bvolume{23}
\bpages{501--533}.
\end{barticle}
\endbibitem

\bibitem{gerhold2019moment}
\begin{barticle}[author]
\bauthor{\bsnm{Gerhold},~\bfnm{Stefan}\binits{S.}},
  \bauthor{\bsnm{Gerstenecker},~\bfnm{Christoph}\binits{C.}} \AND
  \bauthor{\bsnm{Pinter},~\bfnm{Arpad}\binits{A.}}
(\byear{2019}).
\btitle{Moment explosions in the rough {H}eston model}.
\bjournal{Decisions in Economics and Finance}
\bvolume{42}
\bpages{575--608}.
\end{barticle}
\endbibitem

\bibitem{gloter2004stochastic}
\begin{barticle}[author]
\bauthor{\bsnm{Gloter},~\bfnm{Arnaud}\binits{A.}} \AND
  \bauthor{\bsnm{Hoffmann},~\bfnm{Marc}\binits{M.}}
(\byear{2004}).
\btitle{Stochastic volatility and fractional Brownian motion}.
\bjournal{Stochastic processes and their applications}
\bvolume{113}
\bpages{143--172}.
\end{barticle}
\endbibitem

\bibitem{gloter2007estimation}
\begin{barticle}[author]
\bauthor{\bsnm{Gloter},~\bfnm{Arnaud}\binits{A.}} \AND
  \bauthor{\bsnm{Hoffmann},~\bfnm{Marc}\binits{M.}}
(\byear{2007}).
\btitle{Estimation of the {H}urst parameter from discrete noisy data}.
\bjournal{The Annals of Statistics}
\bvolume{35}
\bpages{1947--1974}.
\end{barticle}
\endbibitem

\bibitem{hoffmann2002rate}
\begin{barticle}[author]
\bauthor{\bsnm{Hoffmann},~\bfnm{Marc}\binits{M.}}
(\byear{2002}).
\btitle{Rate of convergence for parametric estimation in a stochastic
  volatility model}.
\bjournal{Stochastic processes and their applications}
\bvolume{97}
\bpages{147--170}.
\end{barticle}
\endbibitem

\bibitem{horvath2021deep}
\begin{barticle}[author]
\bauthor{\bsnm{Horvath},~\bfnm{Blanka}\binits{B.}},
  \bauthor{\bsnm{Teichmann},~\bfnm{Josef}\binits{J.}} \AND
  \bauthor{\bsnm{{\v{Z}}uri{\v{c}}},~\bfnm{{\v{Z}}an}\binits{{\v{Z}}.}}
(\byear{2021}).
\btitle{Deep hedging under rough volatility}.
\bjournal{Risks}
\bvolume{9}
\bpages{138}.
\end{barticle}
\endbibitem

\bibitem{isserlis1918formula}
\begin{barticle}[author]
\bauthor{\bsnm{Isserlis},~\bfnm{Leon}\binits{L.}}
(\byear{1918}).
\btitle{On a formula for the product-moment coefficient of any order of a
  normal frequency distribution in any number of variables}.
\bjournal{Biometrika}
\bvolume{12}
\bpages{134--139}.
\end{barticle}
\endbibitem

\bibitem{istas1997quadratic}
\begin{binproceedings}[author]
\bauthor{\bsnm{Istas},~\bfnm{Jacques}\binits{J.}} \AND
  \bauthor{\bsnm{Lang},~\bfnm{Gabriel}\binits{G.}}
(\byear{1997}).
\btitle{Quadratic variations and estimation of the local {H}{\"o}lder index of
  a {G}aussian process}.
In \bbooktitle{Annales de l'Institut Henri Poincare (B) Probability and
  Statistics}
\bvolume{33}
\bpages{407--436}.
\bpublisher{Elsevier}.
\end{binproceedings}
\endbibitem

\bibitem{jacquier2021rough}
\begin{barticle}[author]
\bauthor{\bsnm{Jacquier},~\bfnm{Antoine}\binits{A.}},
  \bauthor{\bsnm{Muguruza},~\bfnm{Aitor}\binits{A.}} \AND
  \bauthor{\bsnm{Pannier},~\bfnm{Alexandre}\binits{A.}}
(\byear{2021}).
\btitle{Rough multifactor volatility for SPX and VIX options}.
\bjournal{arXiv preprint arXiv:2112.14310}.
\end{barticle}
\endbibitem

\bibitem{jacquier2022large}
\begin{barticle}[author]
\bauthor{\bsnm{Jacquier},~\bfnm{Antoine}\binits{A.}} \AND
  \bauthor{\bsnm{Pannier},~\bfnm{Alexandre}\binits{A.}}
(\byear{2022}).
\btitle{Large and moderate deviations for stochastic {V}olterra systems}.
\bjournal{Stochastic Processes and their Applications}
\bvolume{149}
\bpages{142--187}.
\end{barticle}
\endbibitem

\bibitem{jaisson2015limit}
\begin{barticle}[author]
\bauthor{\bsnm{Jaisson},~\bfnm{Thibault}\binits{T.}} \AND
  \bauthor{\bsnm{Rosenbaum},~\bfnm{Mathieu}\binits{M.}}
(\byear{2015}).
\btitle{Limit theorems for nearly unstable {H}awkes processes}.
\bjournal{The annals of applied probability}
\bvolume{25}
\bpages{600--631}.
\end{barticle}
\endbibitem

\bibitem{jusselin2020noarbitrage}
\begin{barticle}[author]
\bauthor{\bsnm{Jusselin},~\bfnm{Paul}\binits{P.}} \AND
  \bauthor{\bsnm{Rosenbaum},~\bfnm{Mathieu}\binits{M.}}
(\byear{2020}).
\btitle{No-arbitrage implies power-law market impact and rough volatility}.
\bjournal{Mathematical Finance}
\bvolume{30}
\bpages{1309--1336}.
\end{barticle}
\endbibitem

\bibitem{kawai2013fisher}
\begin{barticle}[author]
\bauthor{\bsnm{Kawai},~\bfnm{Reiichiro}\binits{R.}}
(\byear{2013}).
\btitle{Fisher information for fractional Brownian motion under high-frequency
  discrete sampling}.
\bjournal{Communications in Statistics-Theory and Methods}
\bvolume{42}
\bpages{1628--1636}.
\end{barticle}
\endbibitem

\bibitem{livieri2018rough}
\begin{barticle}[author]
\bauthor{\bsnm{Livieri},~\bfnm{Giulia}\binits{G.}},
  \bauthor{\bsnm{Mouti},~\bfnm{Saad}\binits{S.}},
  \bauthor{\bsnm{Pallavicini},~\bfnm{Andrea}\binits{A.}} \AND
  \bauthor{\bsnm{Rosenbaum},~\bfnm{Mathieu}\binits{M.}}
(\byear{2018}).
\btitle{Rough volatility: evidence from option prices}.
\bjournal{IISE transactions}
\bvolume{50}
\bpages{767--776}.
\end{barticle}
\endbibitem

\bibitem{mccrickerd2018turbocharging}
\begin{barticle}[author]
\bauthor{\bsnm{McCrickerd},~\bfnm{Ryan}\binits{R.}} \AND
  \bauthor{\bsnm{Pakkanen},~\bfnm{Mikko~S}\binits{M.~S.}}
(\byear{2018}).
\btitle{Turbocharging Monte Carlo pricing for the rough {B}ergomi model}.
\bjournal{Quantitative Finance}
\bvolume{18}
\bpages{1877--1886}.
\end{barticle}
\endbibitem

\bibitem{meyer1999wavelets}
\begin{barticle}[author]
\bauthor{\bsnm{Meyer},~\bfnm{Yves}\binits{Y.}},
  \bauthor{\bsnm{Sellan},~\bfnm{Fabrice}\binits{F.}} \AND
  \bauthor{\bsnm{Taqqu},~\bfnm{Murad~S}\binits{M.~S.}}
(\byear{1999}).
\btitle{Wavelets, generalized white noise and fractional integration: the
  synthesis of fractional {B}rownian motion}.
\bjournal{Journal of Fourier Analysis and Applications}
\bvolume{5}
\bpages{465--494}.
\end{barticle}
\endbibitem

\bibitem{olver2010nist}
\begin{bbook}[author]
\bauthor{\bsnm{Olver},~\bfnm{Frank~WJ}\binits{F.~W.}},
  \bauthor{\bsnm{Lozier},~\bfnm{Daniel~W}\binits{D.~W.}},
  \bauthor{\bsnm{Boisvert},~\bfnm{Ronald~F}\binits{R.~F.}} \AND
  \bauthor{\bsnm{Clark},~\bfnm{Charles~W}\binits{C.~W.}}
(\byear{2010}).
\btitle{NIST handbook of mathematical functions hardback and CD-ROM}.
\bpublisher{Cambridge university press}.
\end{bbook}
\endbibitem

\bibitem{revuz1999continuous}
\begin{barticle}[author]
\bauthor{\bsnm{Revuz},~\bfnm{Daniel}\binits{D.}} \AND
  \bauthor{\bsnm{Yor},~\bfnm{Marc}\binits{M.}}
(\byear{1999}).
\btitle{Continuous Martingales and Brownian Motion}.
\bjournal{Grundlehren der mathematischen Wissenschaften}
\bvolume{293}.
\end{barticle}
\endbibitem

\bibitem{rosenbaum2008estimation}
\begin{barticle}[author]
\bauthor{\bsnm{Rosenbaum},~\bfnm{Mathieu}\binits{M.}}
(\byear{2008}).
\btitle{Estimation of the volatility persistence in a discretely observed
  diffusion model}.
\bjournal{Stochastic Processes and their Applications}
\bvolume{118}
\bpages{1434--1462}.
\end{barticle}
\endbibitem

\bibitem{rosenbaum2009besov}
\begin{barticle}[author]
\bauthor{\bsnm{Rosenbaum},~\bfnm{Mathieu}\binits{M.}}
(\byear{2009}).
\btitle{First order p-variations and Besov spaces}.
\bjournal{{Statistics \& Probability Letters}}
\bvolume{79}
\bpages{55-62}.
\bdoi{https://doi.org/10.1016/j.spl.2008.07.019}
\end{barticle}
\endbibitem

\bibitem{rosenbaum2021deep}
\begin{barticle}[author]
\bauthor{\bsnm{Rosenbaum},~\bfnm{Mathieu}\binits{M.}} \AND
  \bauthor{\bsnm{Zhang},~\bfnm{Jianfei}\binits{J.}}
(\byear{2021}).
\btitle{Deep calibration of the quadratic rough Heston model}.
\bjournal{arXiv preprint arXiv:2107.01611}.
\end{barticle}
\endbibitem

\bibitem{szymanski2022optimal}
\begin{barticle}[author]
\bauthor{\bsnm{Szymanski},~\bfnm{Gr{\'e}goire}\binits{G.}}
(\byear{2022}).
\btitle{Optimal estimation of the rough Hurst parameter in additive noise}.
\bjournal{arXiv preprint arXiv:2205.13035}.
\end{barticle}
\endbibitem

\end{thebibliography}

\appendix

\section{Asymptotic expansion of the integrated volatility} \label{sec: first app}

This Section is of independent interest from the rest of this paper. We first recall a few notations, introduced in Section \ref{subsec:model}.\\

Let $\mathcal{D}$ be a compact subset of $(0,1) \times (0, \infty)$ and let $T > 0$. Fix also some arbitrary integer constant $S>0$ and some constant $0 < H^* < \min_{(H,\eta) \in \mathcal{D}} H$. We consider a measurable space $(\Omega, \mathcal{A})$ on which is defined a process $(\sigma_t)_{t \leq T}$ such that $\sigma$ is given by
\begin{align*}
\sigma_t^2 = \exp ( \eta W^H_t )
\end{align*}
where $(W^H_t)$ is a fractional Brownian motion with Hurst index $H$ under the probability measure $\mathbb{P}_{H, \eta}$. We will write $\mathbb{E}_{H, \eta}$ for the expectation under the probability measure $\mathbb{P}_{H, \eta}$. \\

For any $\alpha > 0$ we also define the best $\alpha$-Hölder constant of a function $f:[0,T] \to \mathbb{R}$ by
\begin{align*}
\mathcal{H}_\alpha(f) := \sup_{0 \leq s \neq t \leq T} \frac{|f(t) - f(s)|}{|t-s|^\alpha}
\end{align*}
and we write $\mathcal{H}^H_\alpha := \mathcal{H}_\alpha(W^H)$ for notational simplicity.

\begin{prop}
\label{prop:development_iv}
There exists a random variable $Z_0$, bounded in $L^2(\mathbb{P}_{H,\eta})$ uniformly on $\mathcal{D}$, such that for any $\delta > 0$ and $i$ such that $(i+1) \delta \leq T$, we have
\begin{align*}
\log \Big( \delta^{-1} \int_{i\delta}^{(i+1) \delta} \sigma_u^2 du \Big)
&=
\sum_{b=2}^{S} 
\sum_{s=1}^{S} \frac{(-1)^{s-1}}{s}
\sum_{\substack{\mathbf{r} \in \{ 1, \dots, S \}^s \\ \sum_j \mathbf{r}_j = b}} \prod_{j=1}^{s} \frac{ \eta^{\mathbf{r}_j}  }{\mathbf{r}_j!} 
	\frac{1}{\delta}
\int_{i\delta}^{(i+1)\delta}	(W^H_u-W^H_{i\delta})^{\mathbf{r}_j} du
\\
&\;\;\;\;
+ 
\frac{1}{\delta}
\int_{i\delta}^{(i+1)\delta} \eta W^H_u du + Z(i, \delta) \cdot \delta^{H^*(S+1)},
\end{align*}
where the random variables $Z(i, \delta)$ satisfy
$| Z(i, \delta) | \leq Z_0
$ and where we use the convention that $\sum_{b=2}^{S} \cdots = 0$ when $S=1$.
\end{prop}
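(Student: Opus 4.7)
\smallskip

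The plan is to combine two Taylor expansions (of $\exp$ and $\log$) and then bound the remainder pointwise using the Hölder regularity of $W^H$. Set $\xi_u = \eta(W^H_u - W^H_{i\delta})$ for $u \in [i\delta, (i+1)\delta]$, so that $\sigma_u^2 = e^{\eta W^H_{i\delta}} e^{\xi_u}$ and
\begin{align*}
\log\Big(\delta^{-1}\int_{i\delta}^{(i+1)\delta}\sigma_u^2\,du\Big) = \eta W^H_{i\delta} + \log Y,\qquad Y := \delta^{-1}\int_{i\delta}^{(i+1)\delta} e^{\xi_u}\,du.
\end{align*}
Then write $e^{\xi_u} = \sum_{b=0}^{2S}\xi_u^b/b! + \rho(\xi_u)$ with the integral-form remainder $|\rho(x)|\leq |x|^{2S+1}e^{|x|}/(2S+1)!$, and integrate to obtain $Y - 1 = \sum_{b=1}^{2S}\eta^b A_b/b! + E_{\exp}$, where $A_b := \delta^{-1}\int (W^H_u - W^H_{i\delta})^b\,du$ and $E_{\exp}$ is the integrated remainder.

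Next, apply Taylor to the logarithm: on the event where $|Y-1|\leq 1/2$, one has $\log Y = \sum_{s=1}^{2S}\tfrac{(-1)^{s-1}}{s}(Y-1)^s + r_{\log}(Y-1)$ with $|r_{\log}(z)|\leq C|z|^{2S+1}$. Expanding $(Y-1)^s$ by the multinomial theorem and re-indexing each monomial by the integer tuple $\mathbf{r}\in\{1,\dots,2S\}^s$ with $b := \sum_j \mathbf{r}_j$, separate the sum into the principal part $b\leq 2S$ and the ``tail'' $b > 2S$ (the latter absorbed into the remainder). The principal part with $b=1$ is the sole choice $s=1, \mathbf{r}=(1)$, contributing $\eta A_1 = \eta\delta^{-1}\int W^H_u\,du - \eta W^H_{i\delta}$, whose second piece cancels the $\eta W^H_{i\delta}$ from Step~1 and leaves the linear term $\delta^{-1}\int \eta W^H_u\,du$. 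The remaining principal terms with $2\leq b\leq 2S$ match the double sum in the statement of the proposition.

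For the error, let $M := \eta\mathcal{H}^H_{H^*}\delta^{H^*}$, so that $|\xi_u|\leq M$ pointwise. On the ``small'' event $\mathcal{E} := \{M \leq 1/(4e)\}$, one has $|Y-1|\leq M e^M \leq 1/2$, and collecting the three sources of error (the tail multinomial terms with $b>2S$, the propagation of $E_{\exp}$ through $(Y-1)^s$, and $r_{\log}$) yields a pointwise bound $|E|\leq C\,\mathrm{poly}(M)\cdot\delta^{H^*(2S+1)}$, so that $|Z(i,\delta)|\leq C\,\delta^{H^*S}\mathrm{poly}(\mathcal{H}^H_{H^*})\leq C\,\mathrm{poly}(\mathcal{H}^H_{H^*})$ since $\delta\leq T$. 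On the ``large'' event $\mathcal{E}^c$ the Taylor estimates fail, but $\delta^{H^*} > 1/(4e\eta\mathcal{H}^H_{H^*})$ there, so $\delta^{-H^*(S+1)} \leq C(\mathcal{H}^H_{H^*})^{S+1}$; meanwhile the pointwise bound $e^{-M}\leq Y\leq e^M$ gives $|\log Y|\leq M$, and each principal term is dominated by $M^b\leq M^{2S}$, whence $|E|\leq C\,\mathrm{poly}(\mathcal{H}^H_{H^*})$. Combining both cases, $|Z(i,\delta)|\leq Z_0 := C(1 + (\mathcal{H}^H_{H^*})^K)$ for some fixed $K$ depending on $S$ and $\eta_+$, and $Z_0\in L^2$ uniformly in $(H,\eta)\in\mathcal{D}$ by Fernique's theorem applied to $W^H$ (which gives Gaussian tails for $\mathcal{H}^H_{H^*}$ because $H^* < H_- \leq H$).

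The main obstacle is the event $\mathcal{E}^c$, where the standard Taylor bound for $\log(1+z)$ is unavailable and the expansion ceases to be pointwise meaningful. The resolution exploits the fact that, precisely on $\mathcal{E}^c$, the quantity $\delta^{H^*}$ admits a lower bound in terms of $\mathcal{H}^H_{H^*}$, which compensates the divergent factor $\delta^{-H^*(S+1)}$ in the definition of $Z(i,\delta)$ and converts it into a polynomial in $\mathcal{H}^H_{H^*}$. The rest of the proof is essentially combinatorial bookkeeping between the $\exp$ and $\log$ expansions.
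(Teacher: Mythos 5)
Your argument is correct and reaches the stated conclusion, but it handles the key difficulty -- the remainder of the logarithmic expansion -- by a genuinely different device than the paper. The paper keeps the integral form of the remainder of $\log(1+x)$ and exploits the identity $1+x=\delta^{-1}\int_{i\delta}^{(i+1)\delta}\sigma_u^2/\sigma_{i\delta}^2\,du\geq e^{-2\eta\norm{W^H}_\infty}>0$, which yields a single pointwise bound valid for all $\omega$ and all $\delta$ at the price of introducing factors $e^{c\eta\norm{W^H}_\infty}$ into $Z_0$ (these still have all moments by Fernique). You instead split on the event $\mathcal{E}=\{\eta\mathcal{H}^H_{H^*}\delta^{H^*}\leq 1/(4e)\}$: on $\mathcal{E}$ the standard Taylor bound $|r_{\log}(z)|\leq C|z|^{2S+1}$ for $|z|\leq 1/2$ applies, while on $\mathcal{E}^c$ the failure of smallness is converted into the inequality $\delta^{-H^*(S+1)}\leq C(\mathcal{H}^H_{H^*})^{S+1}$, so that the trivial bounds $|\log Y|\leq M$ and $|A_b|\leq(\mathcal{H}^H_{H^*}\delta^{H^*})^b$ already force $|Z(i,\delta)|$ to be polynomial in $\mathcal{H}^H_{H^*}$. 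Your route buys a dominating variable $Z_0$ that is a pure polynomial in the H\"older constant (no exponential of the sup norm), which makes the uniform $L^2$ bound slightly more transparent; the paper's route avoids any event splitting and treats all of $\Omega$ at once. The combinatorial bookkeeping between the two expansions (re-indexing monomials by $\mathbf{r}$ with $b=\sum_j\mathbf{r}_j$, cancellation of $\eta W^H_{i\delta}$ against the $b=1$ term, absorption of the tail $b>2S$ and of the propagated exponential remainder) is the same in both proofs. One small point to tighten: the uniformity over $H\in[H_-,H_+]$ of the moments of $\mathcal{H}^H_{H^*}$ deserves a word (a uniform Dudley/entropy bound using $H^*<H_-$), though the paper itself only asserts it.
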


\begin{proof}

Recall that for any real numbers $x$ and $a$, Taylor's formula gives
\begin{align*}
e^x 
&= \sum_{r=0}^{S} \frac{ (x-a)^r e^{a} }{r!} + \frac{ (x-a)^{S+1} }{S!} \int_0^1 (1-z)^{S} e^{a+z(x-a)} dz
\\
&= e^a \Big( 1+ \sum_{r=1}^{S} \frac{ (x-a)^r }{r!} + \frac{ (x-a)^{S+1}}{S!} \int_0^1 (1-z)^{S} e^{z(x-a)} dz \Big).
\end{align*}

Applying this equality with $x = \eta W^H_u$ and $a = \eta W^H_{i\delta}$, we obtain
\begin{align*}
e^{\eta W^H_u}
=
e^{\eta W^H_{i\delta}} 
\Big( 
	1
	+
	\sum_{r=1}^{S} 
	\frac{ \eta^{r} (W^H_u-W^H_{i\delta})^{r} }{r!} 
	+ 
	\frac{\eta^{S+1} (W^H_u-W^H_{i\delta})^{S+1}}{S!} 
	\int_0^1 (1-z)^{S} e^{\eta z(W^H_u-W^H_{i\delta})} dz 
\Big).
\end{align*}

Notice that since $H > H^*$, the random variable $\mathcal{H}^H_{H^*}$ is almost surely finite. Then 
\begin{align*}
\Big |
\eta^{S+1} (W^H_u-W^H_{i\delta})^{S+1}
	\int_0^1 (1-z)^{S} e^{\eta z(W^H_u-W^H_{i\delta})} dz 
\Big |
	\leq 
\eta^{S+1}
(\mathcal{H}^H_{H^*} )^{S+1}
| u - i\delta |^{H^*(S+1)}
\frac{e^{2 \eta \| W^H \|_\infty}}{S+1}.
\end{align*}

Therefore, for $i\delta \leq u \leq (i+1) \delta$, we have 
\begin{align}
\label{eq:approx_sigma_2}
e^{\eta W^H_u}
=
e^{\eta W^H_{i\delta}} 
\Big( 
	1
	+
	\sum_{r=1}^{S} 
	\frac{ \eta^{r} (W^H_u-W^H_{i\delta})^{r} }{r!} 
	+ 
	R^{H,S}(u)
	\delta^{H^*(S+1)}
\Big),
\end{align}
where 
\begin{align*}
| R^{H,S}(u) | 
\leq 
\eta^{S+1}
(\mathcal{H}^H_{H^*} )^{S+1}
| u/\delta - i |^{H^*(S+1)}
\frac{e^{2 \eta || W^H ||_\infty}}{(S+1)!}
\leq 
(\eta\mathcal{H}^H_{H^*} )^{S+1}
\frac{e^{2 \eta \| W^H \|_\infty}}{(S+1)!} = R_0^{H,S}
\end{align*}
and $R_0^{H,S}$ is a random variable independent of $\delta$ and $u$.\\

Then we integrate both sides of \eqref{eq:approx_sigma_2} and we take the logarithm. This yields 
\begin{align}
\label{eq:approx_integration_sigma_2}
\log \Big( \frac{1}{\delta} \int_{i\delta}^{(i+1)\delta} \sigma_u^2 du \Big)
&=
\eta W^H_{i\delta}
+
\log \Big( 1
	+
	\sum_{r=1}^{S} 
	\frac{ \eta^{r}  }{r!} 
	\mathfrak{I}_{i,\delta}^{H,r}
	+ 
	\widetilde{R}^{H,S}(i,\delta)
	\delta^{H^*(S+1)}
\Big)
\end{align}
where
\begin{align*}
\mathfrak{I}_{i,\delta}^{H,r}
=
\frac{1}{\delta}
\int_{i\delta}^{(i+1)\delta}	(W^H_u-W^H_{i\delta})^{r} du
\end{align*}
and where
$
\widetilde{R}^{H,S}(i,\delta)
=
\frac{1}{\delta} \int_{i\delta}^{(i+1)\delta}	
	R^{H,S}(u)
	du
$ is still dominated by $R_0^{H,S}$. We will now expand the logarithm on the right-hand side of \eqref{eq:approx_integration_sigma_2}. Taylor expansion of the logarithm gives
\begin{align*}
\log (1+x) = \sum_{s=1}^{S} \frac{(-1)^{s-1}}{s} x^s
+
(-1)^S
\int_{0}^x \frac{(x-t)^S}{(1+t)^{S+1}}dt.
\end{align*}
We apply this last formula to 
\begin{align*}
x 
=
\frac{1}{\delta} \int_{i\delta}^{(i+1)\delta} \frac{\sigma_u^2}{\sigma^2_{i\delta}} du - 1 = \sum_{r=1}^{S} 
	\frac{ \eta^{r}  }{r!} \mathfrak{I}_{i,\delta}^{H,r}
	+ 
	\widetilde{R}^{H,S}(i,\delta)
	\delta^{H^*(S+1)}.
\end{align*}
This quantity is independent of $S$ so we also have $x = \sum_{r=1}^{S'}
	\frac{ \eta^{r}  }{r!} \mathfrak{I}_{i,\delta}^{H,r}
	+ 
	\widetilde{R}^{H,S'}(i,\delta)
	\delta^{(S'+1)H^*}
$ for any $S'$. In particular, with $S' = 0$, we have $x = \widetilde{R}^{H,0}(i,\delta)
	\delta^{H^*}$. We obtain
\begin{align}
\label{eq:approx_before_simplification}
&\log \Big( \frac{1}{\delta} \int_{i\delta}^{(i+1)\delta} \sigma_u^2 du \Big)\\
&=
\eta W^H_{i\delta}
+
\sum_{s=1}^{S} \bigg[ \frac{(-1)^{s-1}}{s}
\Big(
\sum_{r=1}^{S} 
	\frac{ \eta^{r}  }{r!} 
	\mathfrak{I}_{i,\delta}^{H,r}
	+ 
	\widetilde{R}^{H,S}(i,\delta)
	\delta^{H^*(S+1)}
\Big)^{s}
\bigg]
+
T^{H}(i, \delta) \nonumber
\end{align}
where 
$T^{H}(i, \delta) =
(-1)^S
\int_{0}^{\widetilde{R}^{H,0}(i,\delta)
	\delta^{H^*}} \frac{(\widetilde{R}^{H,0}(i,\delta)
	\delta^{H^*}-t)^S}{(1+t)^{S+1}}dt
$. Notice in addition that
\begin{align*}
\Big | 
(-1)^S
\int_{0}^x \frac{(x-t)^k}{(1+t)^{k+1}}dt
\Big | \leq 
\begin{cases}
\frac{|x|^{S+1}}{S+1} & \text {for } x \geq 0,
\\
\frac{|x|^{S+1}}{(1+x)^{S+1}(S+1)}
& \text {for } x \leq 0,
\end{cases}
\end{align*}
which translates here into
\begin{align*}
\Big | 
T^{H}(i, \delta)
\Big | \leq 
\begin{cases}
\frac{|\widetilde{R}^{H,0}(i,\delta)
	\delta^{H^*}|^{S+1}}{S+1} & \text {for } 1+x \geq 0,
\\
\frac{|\widetilde{R}^{H,0}(i,\delta)
	\delta^{H^*}|^{S+1}}{(\frac{1}{\delta} \int_{i\delta}^{(i+1)\delta} \sigma_u^2 du )^{S+1}(S+1)}\sigma^{2(S+1)}_{i\delta}
& \text {for } 1+x \leq 0.
\end{cases}
\end{align*}
Since $\exp( -\eta \| W^H \|_\infty) \leq \sigma_u^2 \leq \exp( \eta \| W^H \|_\infty)$ and $|\widetilde{R}^{H,0}(i,\delta)| \leq R_0^{H,0} = \eta \mathcal{H}^H_{H^*}
e^{2 \eta \| W^H \|_\infty}$, we deduce that 
\begin{align*}
\big | 
T^{H}(i, \delta)
\big | \leq 
\frac{|\widetilde{R}^{H,0}(i,\delta)
	\delta|^{S+1}}{S+1}
e^{2(S+1)\eta \| W^H \|_\infty}
\leq 
\frac{1}{S+1}\Big( \eta \mathcal{H}^H_{H^*}
e^{3\eta \| W^H \|_\infty} \delta
\Big)^{S+1}.
\end{align*}

By Kolmogorov's continuity criterion (see {\it e.g.} Theorem 2.1 in \cite{revuz1999continuous}), the random variable $\mathcal{H}^H_{H^*}$ has moments of all orders bounded independently of $H$. Moreover, $\| W^H \|_\infty$ has exponential moments of all orders bounded independently of $H$. Hence $T^{H}(i, \delta)$ satisfies the condition required for $Z$ in Proposition \ref{prop:development_iv}. We next focus on the expression 
$$
\Big(
\sum_{r=1}^{S} 
	\frac{ \eta^{r}  }{r!} 
	\mathfrak{I}_{i,\delta}^{H,r}
	+ 
	\widetilde{R}^{H,S}(i,\delta)
	\delta^{H^*(S+1)}
\Big)^{s}.$$
We expand the power $s$ and remove all the terms of order smaller than $\delta^{H^*(S+1)}$. Note that $| \mathfrak{I}_{i,\delta}^{H,r} | \leq \delta^{H^*r} (\mathcal{H}^H_{H^*})^r $. Thus,
\begin{align*}
\Big(
\sum_{r=1}^{S} 
	\frac{ \eta^{r}  }{r!} 
	\mathfrak{I}_{i,\delta}^{H,r}
	+ 
	\widetilde{R}^{H,S}(i,\delta)
	\delta^{H^*(S+1)}
\Big)^{s}
=
\sum_{\mathbf{r}} \prod_{j=1}^{s} X_{\mathbf{r}_j},
\end{align*}
where the sum is taken over all $\mathbf{r} = (\mathbf{r}_1, \dots, \mathbf{r}_{s})$
with $1 \leq \mathbf{r}_j \leq S+1$ and 
where we write $X_r = \frac{ \eta^{r}  }{r!} 
	\mathfrak{I}_{i,\delta}^{H,r}$ for $r \leq S$ and $X_{S+1} = \widetilde{R}^{H,S}(i,\delta)
	\delta^{H^*(S+1)}$. 
By the preceding remark, we have
\begin{align*}
\Big| \prod_{j=1}^{s} X_{\mathbf{r}_j} \Big| 
\leq
\mathfrak{C}^{|\{j: \, \mathbf{r}_j = S+1\}|}
\delta^{H^*\sum_j \mathbf{r}_j}
(\mathcal{H}^H_{H^*})^{\sum_j \mathbf{r}_j},
\end{align*}
where $|A|$ denotes the cardinality of the set $A$ and $\mathfrak{C}$ is a random variable with moments of all orders bounded independently of $H$. Proceeding as for $T^{H}(i, \delta)$, we can show that $\prod_{j=1}^{s} X_{\mathbf{r}_j}$ can be incorporated in the remainder term of Proposition \ref{prop:development_iv} whenever $\sum_j \mathbf{r}_j \geq S+1$. Therefore, we can restrict the sum $\sum_{\mathbf{r}} \prod_{j=1}^{s} X_{\mathbf{r}_j}
$ to indices $\mathbf{r}$ satisfying $\sum_j \mathbf{r}_j \leq S$. In that case, $\mathbf{r}_j \leq S$ and we get
\begin{align*}
\Big(
\sum_{r=1}^{S} 
	\frac{ \eta^{r}  }{r!} 
	\mathfrak{I}_{i,\delta}^{H,r}
	+ 
	\widetilde{R}^{H,S}(i,\delta)
	\delta^{H^*(S+1)}
\Big)^{s}
=
\sum_{\mathbf{r}} \prod_{j=1}^{s} \frac{ \eta^{\mathbf{r}_j}  }{\mathbf{r}_j!} 
	\mathfrak{I}_{i,\delta}^{H,\mathbf{r}_j}
+ \text{remainder of order } \delta^{H^*(S+1)}.
\end{align*}

Plugging this into \eqref{eq:approx_before_simplification} and using the symbol $\approx$ to indicate   the presence of a remainder term of order $ \delta^{H^*(S+1)}$ that can be incorporated in the term $Z$ of Proposition \ref{prop:development_iv}, we obtain
\begin{align*}
\log \Big( \frac{1}{\delta} \int_{i\delta}^{(i+1)\delta} \sigma_u^2 du \Big)
&\approx
\eta W^H_{i\delta}
+
\sum_{s=1}^{S} 
\bigg[
\frac{(-1)^{s-1}}{s}
\sum_{\substack{\mathbf{r} \\ \sum_j \mathbf{r}_j \leq S}} \prod_{j=1}^{s} \frac{ \eta^{\mathbf{r}_j}  }{\mathbf{r}_j!} 
	\mathfrak{I}_{i,\delta}^{H,\mathbf{r}_j}
	\bigg]
\\
&=
\eta W^H_{i\delta}
+
\sum_{b=1}^{S} 
\sum_{s=1}^{S} 
\bigg[
\frac{(-1)^{s-1}}{s}
\sum_{\substack{\mathbf{r} \\ \sum_j \mathbf{r}_j = b}} \prod_{j=1}^{s} \frac{ \eta^{\mathbf{r}_j}  }{\mathbf{r}_j!} 
	\mathfrak{I}_{i,\delta}^{H,\mathbf{r}_j}
\bigg]
\\
&=
\frac{1}{\delta}
\int_{i\delta}^{(i+1)\delta} \eta W^H_u du
+
\sum_{b=2}^{S} 
\sum_{s=1}^{S} 
\bigg[
\frac{(-1)^{s-1}}{s}
\sum_{\substack{\mathbf{r} \\ \sum_j \mathbf{r}_j = b}} \prod_{j=1}^{s} \frac{ \eta^{\mathbf{r}_j}  }{\mathbf{r}_j!} 
	\mathfrak{I}_{i,\delta}^{H,\mathbf{r}_j}
	\bigg].
\end{align*}

\end{proof}

\section{Some correlation estimates for Gaussian vectors}
 \label{sec: second app}
First, we recall   Isserlis' Theorem (see \cite{isserlis1918formula}), which allows us to compute the expectation of a product of zero-mean correlated normal random variables. 

\begin{thm}[Isserlis' Theorem]
\label{thm:gaussian_moments}
Suppose that $(X_1, \dots, X_{2n})$ is a centered Gaussian vector. Then we have
\begin{align*}
\mathbb{E}\big[\prod_i X_i\big] = \sum_{P} \prod_{(i,j) \in P} \mathbb{E}\big[X_i X_j\big]
\end{align*}
where the sum is over all the partitions $P$ of $\{1,\dots, 2n\}$ into subsets of exactly two elements.
\end{thm}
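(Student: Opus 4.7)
The plan is to read off $\mathbb{E}(X_1\cdots X_{2n})$ from the moment generating function of the Gaussian vector $X=(X_1,\dots,X_{2n})$. Letting $\Sigma$ denote the covariance matrix, centeredness and Gaussianity give $M(t)=\mathbb{E}[\exp(\sum_i t_iX_i)]=\exp\bigl(\tfrac12\,t^\top\Sigma t\bigr)$ on all of $\mathbb{R}^{2n}$. Since $M$ is real-analytic, differentiating under the expectation yields $\mathbb{E}(X_1\cdots X_{2n})=\partial_{t_1}\cdots\partial_{t_{2n}}M(t)\big|_{t=0}$, and this mixed partial equals (without extra factorials, since each variable appears to the first power) the coefficient of $t_1t_2\cdots t_{2n}$ in the Taylor expansion of $M$. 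The theorem is therefore reduced to a coefficient-extraction question.

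Next I would expand $M(t)=\sum_{k\ge 0}\tfrac{1}{k!}\bigl(\tfrac12\sum_{i,j}\Sigma_{ij}t_it_j\bigr)^k$. Every monomial appearing in the $k$-th term has total degree exactly $2k$, so only $k=n$ contributes to the monomial $t_1\cdots t_{2n}$ of degree $2n$. Thus $\mathbb{E}(X_1\cdots X_{2n})$ equals the coefficient of $t_1\cdots t_{2n}$ in $\frac{1}{2^n\,n!}\bigl(\sum_{i,j}\Sigma_{ij}t_it_j\bigr)^n$.

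The final step is a combinatorial count. Expanding the $n$-fold product gives $\sum_{(i_1,j_1),\dots,(i_n,j_n)}\Sigma_{i_1j_1}\cdots\Sigma_{i_nj_n}\,t_{i_1}t_{j_1}\cdots t_{i_n}t_{j_n}$, where the sum runs over ordered $n$-tuples of ordered pairs. A given tuple contributes to the coefficient of $t_1\cdots t_{2n}$ iff the multiset $\{i_1,j_1,\dots,i_n,j_n\}$ equals $\{1,\dots,2n\}$, i.e.\ iff the tuple encodes a pair partition of $\{1,\dots,2n\}$ together with an ordering of its $n$ blocks and of the two elements within each block. Using $\Sigma_{ij}=\Sigma_{ji}$, each unordered pair partition $P$ is produced exactly $2^n\,n!$ times, contributing $\prod_{(i,j)\in P}\Sigma_{ij}=\prod_{(i,j)\in P}\mathbb{E}[X_iX_j]$. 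The prefactor $\frac{1}{2^n\,n!}$ cancels this multiplicity exactly, giving the claimed identity.

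The only real obstacle is keeping the combinatorial bookkeeping of $2^n\,n!$ straight; everything else is the standard power-series manipulation. An alternative route is induction on $n$ via the Gaussian integration-by-parts identity $\mathbb{E}[X_{2n}F(X)]=\sum_{i=1}^{2n-1}\mathbb{E}[X_{2n}X_i]\,\mathbb{E}[\partial_{x_i}F(X)]$ applied to $F(x)=x_1\cdots x_{2n-1}$, which reduces a $2n$-point moment to a sum of $(2n-2)$-point moments indexed by the choice of a partner for $X_{2n}$—an inductive restatement of the same matching identity.
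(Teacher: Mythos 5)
Your proof is correct. Note first that the paper itself does not prove this statement: it is recalled as the classical Isserlis/Wick theorem with a citation to Isserlis (1918), so there is no in-paper argument to compare against. Your moment-generating-function route is the standard complete proof, and the two places where such arguments usually go wrong are both handled properly: (i) the identification of $\mathbb{E}(X_1\cdots X_{2n})$ with the coefficient of $t_1\cdots t_{2n}$ is legitimate because each index appears to the first power, so no multinomial factorials intervene, and differentiating under the expectation is justified since $M(t)=\exp(\tfrac12 t^\top\Sigma t)$ is finite and analytic on all of $\mathbb{R}^{2n}$ and the integrands $X_{i_1}\cdots X_{i_k}e^{t^\top X}$ are locally dominated by integrable functions; (ii) the multiplicity count $2^n\,n!$ (orderings of the $n$ blocks times the two orderings within each block, collapsed by the symmetry $\Sigma_{ij}=\Sigma_{ji}$) exactly cancels the prefactor $\tfrac{1}{2^n n!}$ from the exponential series, which is the whole content of the identity. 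The alternative you sketch via Gaussian integration by parts, $\mathbb{E}[X_{2n}F(X)]=\sum_{i<2n}\mathbb{E}[X_{2n}X_i]\,\mathbb{E}[\partial_{x_i}F(X)]$ with $F(x)=x_1\cdots x_{2n-1}$, gives a clean induction on $n$ and is equally standard; either writeup would serve as a self-contained proof here.
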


In particular, we have:
\begin{align}
\label{eq:cov_4_gaussian}
\Cov(X_1X_2, X_3X_4) = \mathbb{E}[X_1 X_3]\mathbb{E}[X_2 X_4] + \mathbb{E}[X_1 X_4] \mathbb{E}[X_2 X_3].
\end{align}

\begin{prop}
\label{prop:covariance_gaussian_product}
Suppose that $(X_1, \dots, X_{n+m})$ is a centered Gaussian vector, where $n$ and $m$ are two integers such that $n+m$ is even. Suppose in addition that for any $i \leq n$ and $j \geq n+1$, we have
\begin{align*}
\big | \mathbb{E}{[X_iX_j]} \big | \leq \rho \sigma^2
\end{align*}
for some $0 \leq \rho \leq 1$ and $\sigma \geq 0$, and suppose that $\mathbb{E} [X_i^2 ]\leq \sigma^2$ for any $i \geq 1$. Then 
\begin{align*}
\big | \Cov(\prod_{i=1}^n X_i, \prod_{j=n+1}^{n+m} X_j) \big | \leq C \rho^{\alpha} \sigma^{n+m},
\end{align*}
where $\alpha = 1$ if $n$ is odd and $\alpha = 2$ if $n$ is even and $C$ is a constant depending only on $n$ and $m$.
\end{prop}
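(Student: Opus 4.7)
The plan is to apply Isserlis' theorem (Theorem \ref{thm:gaussian_moments}) to both the joint expectation and the separate expectations, and then read off the covariance as a sum over pair partitions of $\{1,\dots,n+m\}$ that contain at least one ``crossing'' pair, i.e. a pair $(a,b)$ with $a \leq n < b$. Call all other pairs ``internal''. The whole estimate then reduces to a parity/counting argument on crossing pairs, with $\rho^\alpha$ coming directly from that parity.

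First I would split into cases according to the parity of $n$ (and hence of $m$, since $n+m$ is even). If $n$ is odd, both $\mathbb{E}\prod_{i=1}^n X_i$ and $\mathbb{E}\prod_{j=n+1}^{n+m}X_j$ vanish (expectations of products of an odd number of centred jointly Gaussian variables), so the covariance equals $\mathbb{E}\prod_{i=1}^{n+m}X_i$ itself. But any pair partition of $\{1,\dots,n+m\}$ must have at least one crossing pair, because the first block $\{1,\dots,n\}$ has odd cardinality and therefore cannot be matched entirely internally. In each partition, we bound that crossing pair's covariance by $\rho\sigma^2$ and the remaining $(n+m)/2-1$ pairs by $\sigma^2$, giving a contribution of at most $\rho\sigma^{n+m}$. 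Summing over the finitely many pair partitions (depending only on $n+m$) yields $C\rho\sigma^{n+m}$.

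If $n$ is even, then by Isserlis applied separately to each block, $\mathbb{E}\prod_{i=1}^n X_i \cdot \mathbb{E}\prod_{j=n+1}^{n+m}X_j$ is precisely the sum over pair partitions of $\{1,\dots,n+m\}$ having no crossing pair. Subtracting, we obtain
\begin{align*}
\Cov\Bigl(\prod_{i=1}^n X_i,\prod_{j=n+1}^{n+m}X_j\Bigr)
=\sum_{\substack{P \text{ pair partition of }\{1,\dots,n+m\}\\ \#\text{crossings}(P)\geq 1}}\prod_{(a,b)\in P}\mathbb{E}(X_a X_b).
\end{align*}
The key combinatorial observation, which I expect to be the most delicate point to state cleanly, is that the number of crossing pairs has the same parity as $n$: indeed, the number of indices in $\{1,\dots,n\}$ paired internally is even, so the number paired across equals $n$ minus an even integer. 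When $n$ is even, any $P$ with at least one crossing must have at least two, so each surviving term has at least two factors bounded by $\rho\sigma^2$ and the rest by $\sigma^2$, giving a bound $\rho^2\sigma^{n+m}$ per partition.

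Finally, summing over the finitely many pair partitions (their number depends only on $n+m$) absorbs into the constant $C$, giving $|\Cov|\leq C\rho^\alpha\sigma^{n+m}$ with $\alpha=1$ or $\alpha=2$ according to the parity of $n$. The only real step requiring care is the parity observation in the even case; the rest is bookkeeping based on Isserlis' formula.
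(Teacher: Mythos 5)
Your proposal is correct and follows essentially the same route as the paper: apply Isserlis' theorem to all three expectations, recognize the covariance as the sum over pair partitions containing at least one crossing pair, and then argue that when $n$ is even any such partition must contain at least two crossings. The paper phrases the even case by noting that after removing one crossing pair $(i_0,j_0)$, the set $\{1,\dots,n\}\setminus\{i_0\}$ has odd cardinality and so cannot be partitioned into internal pairs; your parity formulation (the number of crossing pairs has the same parity as $n$) is the same observation stated a bit more cleanly, and it also handles the odd case uniformly.
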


\begin{proof}
Denote by $\mathcal{P}_2(E)$ the set of all partitions of the set $E$ in subsets of exactly $2$ elements. Then we have by Theorem \ref{thm:gaussian_moments} that
\begin{align*}
\Cov(\prod_{i=1}^n X_i, \prod_{j=n+1}^{n+m} X_j)
&=
\sum_{P \in \mathcal{P}_2(\{ 1, \dots, n+m\})} \prod_{(i,j) \in P} \mathbb{E}[X_i X_j] \\ &\;\;\;\;
- 
\bigg[ \sum_{P \in \mathcal{P}_2(\{ 1, \dots, n\}) } \prod_{(i,j) \in P} \mathbb{E}[X_i X_j] \bigg]
\bigg[ \sum_{P \in \mathcal{P}_2(\{ n+1, \dots, n+m\})} \prod_{(i,j) \in P} \mathbb{E}[X_i X_j]\bigg].
\end{align*}

Moreover, the mapping
\begin{align*}
\mathcal{P}_2(\{ 1, \dots, n\})
\times
\mathcal{P}_2(\{ n+1, \dots, n+m\})
&\to
\mathcal{P}_2(\{ 1, \dots, n+m\})
\\
(P,Q) &\mapsto P \cup Q
\end{align*}
is injective and its image $\mathcal{Q}(n,m)$ is exactly the set of the partitions $P$ of $\{ 1, \dots, n+m\}$  such that if $(i,j) \in P$ with $i\leq n$, then $j \leq n$ as well. Thus, 
\begin{align*}
\Cov(\prod_{i=1}^n X_i, \prod_{j=n+1}^{n+m} X_j)
&=
\sum_{P \in \mathcal{P}_2(\{ 1, \dots, n+m\}) \backslash \mathcal{Q}(n,m)} \prod_{(i,j) \in P} \mathbb{E}[X_i X_j] .
\end{align*}
Since there are finitely many partitions of $\{ 1, \dots, n+m\}$, the proof is complete once we can prove that for any $P \in \mathcal{P}_2(\{ 1, \dots, n+m\}) \backslash \mathcal{Q}(n,m)$, there exists a constant $C$ depending only on $n$ and $m$ such that 
\begin{align*}
\Big|
\prod_{(i,j) \in P} \mathbb{E}[X_i X_j] 
\Big|
\leq 
\rho^{\alpha} \sigma^{n+m}.
\end{align*}
Consider such a partition $P$. Then there is at least one pair $(i_0,j_0) \in P$ such that $i_0 \leq n$ and $j_0 \geq n+1$. Thus 
\begin{align*}
\Big|
\prod_{(i,j) \in P} \mathbb{E}[X_i X_j] 
\Big|
\leq 
\Big|
\mathbb{E}[X_{i_0} X_{j_0}] 
\prod_{(i,j) \in P\backslash \{(i_0,j_0)\}} \mathbb{E}[X_i X_j] 
\Big|
\leq 
\rho \sigma^{2} \prod_{(i,j) \in P\backslash \{(i_0,j_0)\}} \sigma^{2},
\end{align*}
which concludes the case where $n$ is odd since $\# \big(P\backslash \{(i_0,j_0)\}\big) = (n+m-2)/2$.\\

Suppose now that $n$ is even. Then there must be another pair $(i_1,j_1) \in P$ such that $i_1 \leq n$ and $j_1 \geq n+1$, $(i_1,j_1) \neq (i_0,j_0)$ because there is no partition of  $\{1,\dots,n\} \backslash \{i_0\}$ into subsets of $2$ elements. Then 
\begin{align*}
\bigg|
\prod_{(i,j) \in P} \mathbb{E}[X_i X_j] 
\bigg|
\leq 
\bigg|
\mathbb{E}[X_{i_0} X_{j_0}] 
\mathbb{E}[X_{i_1} X_{j_1}] 
\prod_{(i,j) \in P\backslash \{(i_0,j_0), (i_1,j_1)\}} \mathbb{E}[X_i X_j] 
\bigg|
\leq 
\rho^2 \sigma^{n+m}.
\end{align*}
\end{proof}

\section{Log-moments of \texorpdfstring{$\chi^2$}{chi2}-variables}
\label{sec:log_mom_chi2}
We write $\Gamma$ for the usual gamma function, defined by
\begin{align}
\label{eq:def:gamma}
\Gamma(t) = \int_0^\infty t^{x-1}e^{-t} dt.
\end{align}

We also introduce the polygamma function $\psi^{(k)}$, which is the $k$-th logarithmic derivative of the gamma function. Thus, $\psi^{(0)} = \tfrac{\Gamma'}{\Gamma}$ and explicit computations also give
\begin{align*}
\psi^{(1)} &= \tfrac{\Gamma''}{\Gamma} - (\psi^{(0)})^2,
\\
\psi^{(2)} &= \tfrac{\Gamma^{(3)}}{\Gamma}
-
(\psi^{(0)})^3
-
3 \psi^{(0)} \psi^{(1)},
\\
\psi^{(3)} &= \tfrac{\Gamma^{(4)}}{\Gamma}
- (\psi^{(0)})^4
- 6 (\psi^{(0)})^2 \psi^{(1)}
- 4 \psi^{(0)} \psi^{(2)}
- 3 (\psi^{(1)})^2.
\end{align*}
Note that we can explicitly express ratios $\tfrac{\Gamma^{(k)}}{\Gamma}$ with $k\leq 4$ in terms of polygamma functions from these equations.

\begin{lem}
\label{lem:log_mom_chi2}
Suppose that for $m\geq 1$, $X_m$ is a random variable following a $\chi^2$-distribution with $m$ degrees of freedom. We write $Y_m = \log(m^{-1}X_m)$. Then there exists $C > 0$ such that for any $m \geq 1$,
\begin{align*}
\Var (Y_m) &= \psi^{(1)}(\tfrac{m}{2}) \leq C m^{-1},
\\
\mathbb{E}  \big[
Y_m^4
\big]
&\leq C m^{-2}.
\end{align*}
\end{lem}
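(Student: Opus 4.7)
\medskip

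\noindent\textbf{Proof plan.}

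The plan is to compute the moment generating function of $Y_m$ explicitly in terms of Gamma functions, read off the cumulants as polygamma values at $m/2$, and then bound each polygamma by $O(m^{-k})$ uniformly for $m\geq 1$.

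Since $X_m\sim\chi^2_m$ has density proportional to $x^{m/2-1}e^{-x/2}$, a standard computation gives $\mathbb{E}[X_m^t]=2^t\Gamma(m/2+t)/\Gamma(m/2)$ for $t>-m/2$. Thus the cumulant generating function of $Y_m=\log(X_m/m)$ is
\begin{align*}
K(t):=\log\mathbb{E}[e^{tY_m}]=t\log(2/m)+\log\Gamma(m/2+t)-\log\Gamma(m/2),
\end{align*}
whose successive derivatives at $t=0$ yield the cumulants of $Y_m$:
\begin{align*}
\mathbb{E}[Y_m]&=\log(2/m)+\psi^{(0)}(m/2),\qquad
\kappa_2=\Var(Y_m)=\psi^{(1)}(m/2),\\
\kappa_3&=\psi^{(2)}(m/2),\qquad
\kappa_4=\psi^{(3)}(m/2).
\end{align*}
This already gives the claimed identity $\Var(Y_m)=\psi^{(1)}(m/2)$ for free.

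Next I would use the integral representation $\psi^{(k)}(x)=(-1)^{k+1}\int_0^\infty \tfrac{t^k e^{-xt}}{1-e^{-t}}\,dt$ (valid for $k\geq 1$, $x>0$). Splitting the integral at $t=1$ and using $1-e^{-t}\geq t/2$ on $[0,1]$ together with $1-e^{-t}\geq 1-e^{-1}$ on $[1,\infty)$, I obtain a uniform bound $|\psi^{(k)}(x)|\leq C_k/x^k$ valid for all $x\geq 1/2$, hence for $x=m/2$ with $m\geq 1$. In particular $\psi^{(1)}(m/2)\leq C/m$, which completes the variance bound.

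Finally, for the fourth moment I will relate it to centred moments via the cumulant identity $\mathbb{E}[(Y_m-\mathbb{E}Y_m)^4]=\kappa_4+3\kappa_2^2$, which is bounded by $C/m^3+C/m^2\leq C/m^2$ by the polygamma bound. To pass from the centred to the raw moment, I need $|\mathbb{E}Y_m|\leq C/m$, which follows from Binet's formula $\psi^{(0)}(x)=\log x-\tfrac{1}{2x}-\int_0^\infty(\tfrac{1}{1-e^{-t}}-\tfrac{1}{t}-\tfrac12)e^{-xt}\,dt$, giving $\mathbb{E}Y_m=-1/m+O(m^{-2})$ and hence $(\mathbb{E}Y_m)^4\leq C/m^4$. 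Expanding $Y_m^4=((Y_m-\mu)+\mu)^4$ and applying the triangle inequality in $L^4$ then yields $\mathbb{E}[Y_m^4]\leq C/m^2$, as required.

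The routine obstacle is the uniformity of the polygamma bound for all $m\geq 1$ (not merely asymptotically); the integral representation above makes this entirely explicit and handles the small-$m$ regime, where $m/2$ stays bounded away from the pole at $0$.
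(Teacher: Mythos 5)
Your proof is correct and follows the same basic route as the paper: compute the MGF of $Y_m$ as a ratio of Gamma values, reduce the moment bounds to bounds on polygamma functions evaluated at $m/2$, and estimate. The bookkeeping differs in two useful ways. First, you work with the cumulant generating function $K(t)=t\log(2/m)+\log\Gamma(m/2+t)-\log\Gamma(m/2)$, so that $\kappa_k=\psi^{(k-1)}(m/2)$ for $k\geq 2$ falls out immediately; the paper instead differentiates the MGF to get raw moments $\mathbb{E}[Y_m^k]$ and then re-expresses each one through $\psi^{(0)},\dots,\psi^{(3)}$, which is more algebra but avoids the centred-to-raw conversion you need at the end (your $L^4$ triangle-inequality step handles this cleanly, since $\mathbb{E}[(Y_m-\mu)^4]=\kappa_4+3\kappa_2^2=O(m^{-2})$ and $|\mu|=O(m^{-1})$). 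Second, and more substantively, you bound $|\psi^{(k)}(x)|\leq C_k x^{-k}$ uniformly for $x\geq 1/2$ via the integral representation $\psi^{(k)}(x)=(-1)^{k+1}\int_0^\infty t^k e^{-xt}(1-e^{-t})^{-1}\,dt$, splitting at $t=1$. The paper instead cites the NIST asymptotic expansion of $\psi^{(k)}(x)$ as $x\to\infty$, which strictly speaking only yields the bound for $m$ large; the small-$m$ case is implicitly absorbed into the constant, whereas your integral estimate makes the uniformity over all $m\geq 1$ fully explicit. So your argument is, if anything, a touch more self-contained on that point.
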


\begin{proof}
We know that
\begin{align*}
\mathbb{E}  \big[
\exp(t Y_{m})
\big]
=
(\tfrac{2}{m})^t \frac{\Gamma(\tfrac{m}{2}+t)}{\Gamma(m)}.
\end{align*}
Moments of $Y_m$ can be derived through the classical formula 
\begin{align*}
\mathbb{E}  \big[
Y_{m}^k
\big]
&=
\frac{d^k}{dt^k}\Big\rvert_{t=0} \Big( (\tfrac{2}{m})^t \frac{\Gamma(\tfrac{m}{2}+t)}{\Gamma(m)} \Big).
\end{align*}
Thus we get 
\begin{align*}
\mathbb{E}  \big[
Y_{m}
\big]&=
-\log(\tfrac{m}{2})+\frac{\Gamma'(\tfrac{m}{2})}{\Gamma(\tfrac{m}{2})},
\\
\mathbb{E}  \big[
Y_{m}^2
\big]&=
\log^2(\tfrac{m}{2})-2\log(\tfrac{m}{2}) \frac{\Gamma'(\tfrac{m}{2})}{\Gamma(\tfrac{m}{2})}+\frac{\Gamma''(\tfrac{m}{2})}{\Gamma(\tfrac{m}{2})},
\\
\mathbb{E}  \big[
Y_{m}^4
\big]&=
\log^4(\tfrac{m}{2})
- 4 \log^3(\tfrac{m}{2}) \frac{\Gamma'(\tfrac{m}{2})}{\Gamma(\tfrac{m}{2})} + 6 \log^2(\tfrac{m}{2})\frac{\Gamma''(\tfrac{m}{2})}{\Gamma(\tfrac{m}{2})}
- 4 \log(\tfrac{m}{2}) \frac{\Gamma^{(3)}(\tfrac{m}{2})}{\Gamma(\tfrac{m}{2})} 
+ \frac{\Gamma^{(4)}(\tfrac{m}{2})}{\Gamma(\tfrac{m}{2})}.
\end{align*}
We then rewrite $\Var (Y_m)$ and $\mathbb{E} \big[
Y_{m}^4
\big]$ in terms of these polygamma functions as follows:
\begin{align*}
\Var (Y_m) 
%= \frac{\Gamma''(\tfrac{m}{2})}{\Gamma(\tfrac{m}{2})} - \frac{\Gamma'(\tfrac{m}{2})^2}{\Gamma(\tfrac{m}{2})^2}  
&= \psi^{(1)}(\tfrac{m}{2}),
\\
\mathbb{E} \big[
Y_{m}^4
\big]&=
\log^4(\tfrac{m}{2})
- 4 \log^3(\tfrac{m}{2}) \psi^{(0)}(\tfrac{m}{2}) 
+ 6 \log^2(\tfrac{m}{2}) (\psi^{(1)}(\tfrac{m}{2})  + \psi^{(0)}(\tfrac{m}{2})^2)
\\
&\;\;\;\;- 4 \log(\tfrac{m}{2}) (\psi^{(2)}(\tfrac{m}{2})  + \psi^{(0)}(\tfrac{m}{2})^3 + 3 \psi^{(0)}(\tfrac{m}{2})  \psi^{(1)}(\tfrac{m}{2}) )
\\
&\;\;\;\;+ \psi^{(3)}(\tfrac{m}{2})  + \psi^{(0)}(\tfrac{m}{2})^4
+ 6 \psi^{(0)}(\tfrac{m}{2})^2 \psi^{(1)}(\tfrac{m}{2}) 
+ 4 \psi^{(0)}(\tfrac{m}{2})  \psi^{(2)}(\tfrac{m}{2}) 
+ 3 \psi^{(1)}(\tfrac{m}{2})^2.
\end{align*}
An asymptotic expansion of the polygamma functions $\psi^{(k)}(x)$ when $x \to \infty$ is given by Equation 
5.15.9	in the internet appendix of \cite{olver2010nist}:
% https://dlmf.nist.gov/5.15
\begin{align*}
\psi^{(0)}(x) &= \log(x) - \tfrac{1}{2}x^{-1} - \tfrac{1}{12}x^{-2} + O(x^{-3}),
\\
\psi^{(1)}(x) &= x^{-1} + \tfrac{1}{2}x^{-2} + O(x^{-3}),
\\
\psi^{(2)}(x) &= -x^{-2} + O(x^{-3}),
\\
\psi^{(3)}(x) &= O(x^{-3}).
\end{align*}
Plugging these asymptotic expansions into the explicit expressions of $\Var (Y_m) $ and $\mathbb{E} \big[
Y_m^4
\big]$, we get
\begin{align*}
\Var (Y_m) &= \psi^{(1)}(\tfrac{m}{2}) = \tfrac{2}{m} + O(m^{-2}),
\\
\mathbb{E}  \big[
Y_m^4
\big]
&= 12m^{-2} + o(m^{-2}),
\end{align*}
which concludes the proof.
\end{proof}

\end{document}